\newtheorem{theorem}{Theorem}[section]
\newtheorem{lemma}[theorem]{Lemma}
\newtheorem{proposition}[theorem]{Proposition}
\newtheorem{corollary}[theorem]{Corollary}
\newtheorem{conjecture*}[theorem]{Conjecture*}
\newtheorem{itheorem}{Theorem}
\newtheorem{icorollary}{Corollary}
\numberwithin{equation}{section}
\theoremstyle{definition}
\newtheorem{definition}[theorem]{Definition}
\newtheorem{remark}[theorem]{Remark}
\newtheorem{question*}[theorem]{Question*}
  \newcommand{\nc}{\newcommand}
  \newcommand{\renc}{\renewcommand}
\newcommand{\arxiv}[1]{\href{http://arxiv.org/abs/#1}{\tt\nolinkurl{arXiv:#1}}}
\nc{\ep}{\epsilon}
\nc{\hh}{h}
\nc{\fg}{\mathfrak g}
\nc{\fk}{\mathfrak k}
\nc{\fh}{\mathfrak h}
\nc{\kb}{\mathbb{C}}
\nc{\C}{\mathbb{C}}
\nc{\cO}{\mathcal{O}}
\nc{\cC}{\mathcal{C}}
\nc{\cI}{\mathcal{I}}
\nc{\cB}{\mathcal{B}}
\nc{\cP}{\mathcal{P}}
\nc{\cS}{\mathcal{S}}
\nc{\B}{\cB}
\nc{\tc}{\widetilde{\mathbf{c}}}
\nc{\cM}{\mathcal{M}}
\nc{\cN}{\mathcal{N}}
\nc{\mmod}{\operatorname{-mod}}
\nc{\PS}{\mathcal{P\!S}}
\nc{\cA}{\mathcal{A}}
\nc{\cT}{\mathcal{T}}
\nc{\br}{\mathbf{r}}
\nc{\bx}{\mathbf{x}}
\nc{\bd}{\mathbf{d}}
\nc{\bc}{\mathbf{c}}
\nc{\bb}{\mathbf{b}}
\nc{\bh}{\mathbf h}
\nc{\bB}{\mathbf{B}}
\nc{\ba}{\mathbf{a}}
\nc{\bA}{\mathbf{A}}
\nc{\bs}{\mathbf{s}}
\nc{\bR}{\mathbf{R}}
\nc{\tbR}{\widetilde{\bR}}
\nc{\bS}{\mathbf{S}}
\nc{\bV}{\mathbf{V}}
\nc{\wL}{\widetilde{L}}
\nc{\Sym}{\operatorname{Sym}}
\nc{\hSym}{\operatorname{\widehat{Sym}}}
\nc{\Hom}{\operatorname{Hom}}
\nc{\fmod}{\operatorname{-fmod}}
\nc{\om}{\omega}
\nc{\Z}{\mathbb{Z}}
\nc{\si}{\sigma}
\nc{\la}{\lambda}
\nc{\al}{\alpha}
\nc{\Gr}{\mathsf{Gr}}
\nc{\Gp}{G[t]}
\nc{\Gm}{{G_1[[t^{-1}]]}}
\nc{\gp}{\mathfrak{g}[t]}
\nc{\gm}{t^{-1}\mathfrak{g}[[t^{-1}]]}
\nc{\Grlbar}{\overline{\Gr^\lambda}}
\nc{\Grlmbar}{\Gr^{\overline{\lambda}}_\mu}
\nc{\Grblmbar}{\Gr^{\bla}_\mu}
\renc{\O}{\mathcal{O}}
\nc{\val}{\operatorname{val}}
\nc{\pp}{\mathbb{P}^1}
\nc{\excise}[1]{}
\nc{\bla}{{\underline{\boldsymbol{\la}}}}
\nc{\bz}{\mathbf{z}}
\nc{\vlam}{\vec{\lambda}}
\nc{\wY}{\widetilde{Y}}
\nc{\col}{\operatorname{col}}
\nc{\defD}{\mathscr{\tilde{D}}}
\nc{\K}{\mathbbm{k}}
\nc{\defE}{\mathscr{\tilde{E}}}
\nc{\defA}{{\tilde{A}}}
\nc{\wK}{\widetilde{K}}
\nc{\wT}{\widetilde{T}}
\nc{\wc}{\widetilde{c}}
\nc{\wphi}{\widetilde{\phi}}
\nc{\Yml}{Y_\mu^\lambda}
\nc{\gr}{\operatorname{gr}}
\nc{\BB}{\mathbb{B}}
\nc{\Ann}{\operatorname{Ann}}
\nc{\Spec}{\operatorname{Spec}}
\nc{\ord}{\operatorname{ord}}
\nc{\End}{\operatorname{End}}
\nc{\rt}{\gamma}
\nc{\brt}{\boldsymbol{\gamma}}
\nc{\bG}{\mathbf{G}}	
\nc{\bN}{\mathbf{N}}
\nc{\RowR}{Row_{\bR}(\pi)}
\nc{\RowRt}{Row_{\tbR}(\pi)}
\nc{\RowRtc}{Row_{\tbR}(\pi)^{\circ}}
\nc{\Norm}{\Theta}
\nc{\bysame}{\leavevmode\hbox to3em{\hrulefill}\thinspace}
\newcommand{\fp}{\mathfrak{p}}
\newcommand{\fl}{\mathfrak{l}}
\newcommand{\fX}{\mathfrak{X}}
\newcommand{\fu}{\mathfrak{u}}
\newcommand{\fm}{\mathfrak{m}}
\newcommand{\fb}{\mathfrak{b}}
\renc{\al}{\alpha}
\newcommand{\thetitle}{On the quantum Mirkovi\'c-Vybornov isomorphism}
\newcommand{\theauthors}{Ben Webster, Alex Weekes \& Oded Yacobi}
\newcommand{\acom}[1]{\todo[inline,color=green!20]{ Alex: #1 }}
\newcommand{\ocom}[1]{\todo[inline,color=magenta!20]{Oded: #1}}
\newcommand{\bcom}[1]{\todo[inline,color=yellow!20]{Ben: #1}}
\begin{document}
\thispagestyle{plain}

\begin{center} {\Large \bf A quantum Mirkovi\'c-Vybornov isomorphism}
\end{center}
\bigskip

\noindent{\bf Ben Webster}\\
Department\ of Pure Mathematics, University of Waterloo \& \\Perimeter Institute for Theoretical Physics\\ {\tt ben.webster@uwaterloo.ca}\smallskip \\\noindent{\bf Alex Weekes}\\
Perimeter Institute for Theoretical Physics\\ {\tt aweekes@perimeterinstitute.ca}\smallskip \\
{\bf Oded Yacobi}\\
School\ of Mathematics and Statistics, University of Sydney\\ {\tt oded.yacobi@sydney.edu.au}\smallskip \\
\bigskip\\

\renewcommand{\thefootnote}{\fnsymbol{footnote}}

\begin{abstract}
We present a quantization of an isomorphism of Mirkovi\'c and Vybornov which relates the intersection of a Slodowy slice and a nilpotent orbit closure in $\mathfrak{gl}_N$, to a slice between spherical Schubert varieties in the affine Grassmannian of $PGL_n$ (with weights encoded by the Jordan types of the nilpotent orbits).  A quantization of the former variety is provided by a parabolic W-algebra and of the latter by a truncated shifted Yangian.  Building on earlier work of Brundan and Kleshchev, we define an explicit isomorphism between these non-commutative algebras, and show that its classical limit is a variation of the original isomorphism of Mirkovi\'c and Vybornov.  As a corollary, we deduce that the W-algebra is free as a left (or right) module over its Gelfand-Tsetlin subalgebra, as conjectured by Futorny, Molev, and Ovsienko.  
\end{abstract}

\section{Introduction}
In \cite{MV} Mirkovi\'c and Vybornov construct an isomorphism between slices to (spherical) Schubert varieties in the affine Grassmannian of $PGL_n$ on the one hand, and Slodowy slices in $\mathfrak{gl}_N$ intersected with nilpotent orbit closures on the other.  This isomorphism has important applications in geometric representation theory.  To name just a few occurrences, it appears in works on the mathematical definition of the Coulomb branch associated to quiver gauge theories \cite{Nak16}, the analog of the geometric Satake isomorphism for affine Kac-Moody groups \cite{BFII}, and geometric approaches to knot homologies \cite{CK,CKL2}.  

These varieties each have quantizations corresponding to natural Poisson structures on them.  The main aim of this paper is to show that the Mirkovi\'c-Vybornov isomorphism is the classical limit of an isomorphism of these quantizations.  

To be more precise, the Slodowy slice $\cS_e$ through a nilpotent element $e\in \mathfrak{gl}_N$  is quantized by a finite W-algebra.  Finite W-algebras algebras have been extensively studied by Kostant, Lynch, Premet, Gan-Ginzburg, and many others (cf. \cite{GG} and references therein).  The quantization of $\cS_e \cap \overline{\mathbb{O}_{e'}}$, the intersection of $\cS_e$ with the closure of the nilpotent orbit through another nilpotent $e'$, is given by a parabolic W-algebra \cite{L,WebWO}.  Parabolic W-algebras are quotients of finite W-algebras.   

Slices to Schubert varieties in the affine Grassmannian of $PGL_n$ are indexed by pairs $\mu, \lambda$ of dominant coweights of $PGL_n$, such that $\mu \leq \lambda$ in the dominant coroot ordering.  We denote the slice by $\Grlmbar$.  In \cite{KWWY} the present authors, along with Kamnitzer, quantized $\Grlmbar$ using algebras called truncated shifted Yangians. 

The Mirkovi\'c-Vybornov isomorphism is an explicit isomorphism of varieties
\begin{equation}
\label{eq:MVyisom}
\cS_e \cap \overline{\mathbb{O}_{e'}} \cong \Grlmbar,
\end{equation}
where $e,e'$ and are related to $\mu,\lambda$ by a certain combinatorial correspondence (cf. Sections \ref{section:CombData} and \ref{section: MV and Slodowy}).  Naturally one expects that (\ref{eq:MVyisom}) is the classical limit of an isomorphism between the quantizations of these varieties.  That is our main result.

\begin{itheorem}[\mbox{Theorem \ref{thm: main theorem}, part (c)}] \label{thm:main}
Suppose $e,e'$ (respectively $\mu,\lambda$) is a pair of nilpotent elements (respectively dominant coweights) which are related by the Mirkovi\'c-Vybornov isomorphism (\ref{eq:MVyisom}).  Then there is an isomorphism of filtered algebras between 
  the parabolic W-algebra quantizing $\cS_e \cap \overline{\mathbb{O}_{e'}}$ and the truncated shifted Yangian quantizing $\Grlmbar$.
\end{itheorem}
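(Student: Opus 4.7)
The plan is to descend the Brundan--Kleshchev isomorphism between the finite W-algebra $W(e)$ of type $\mathfrak{gl}_N$ and the shifted Yangian $Y_\mu$ to the quotients appearing in the theorem. Both $W^\pi(e,e')$ and $Y_\mu^\lambda$ are defined as quotients by explicitly presented two-sided ideals: on one side, by (the image of) the defining ideal of $\overline{\mathbb{O}_{e'}}$; on the other, by the ideal forcing certain Drinfeld generating series to equal prescribed monic polynomials determined by $\lambda$. The theorem will therefore follow once one checks that the Brundan--Kleshchev isomorphism identifies these two ideals and that the identification is compatible with the Kazhdan/PBW filtrations.

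First I would fix the combinatorial dictionary matching $(e,e')$ with $(\mu,\lambda)$ as laid out in Sections \ref{section:CombData} and \ref{section: MV and Slodowy}, and then recall Brundan--Kleshchev's explicit formulas for $\Phi \colon Y_\mu \xrightarrow{\sim} W(e)$ in terms of the Miura transform. Next I would compute $\Phi$ on the truncation ideal. Rather than attempt a direct term-by-term match, I would argue at the level of associated gradeds: the associated graded of the truncation ideal in $Y_\mu$ cuts out the classical slice $\Grlmbar$ (by the results of \cite{KWWY}), while the associated graded of the parabolic ideal in $W(e)$ cuts out $\cS_e \cap \overline{\mathbb{O}_{e'}}$. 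The classical Mirkovi\'c--Vybornov isomorphism (\ref{eq:MVyisom}) identifies these varieties, so provided the associated graded $\gr \Phi$ recovers (a variation of) MV, the two ideals must already coincide on the nose by the standard filtered-to-graded argument.

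I would then check that $\Phi$ respects the natural ascending filtrations, conclude that the induced map $\overline{\Phi} \colon Y_\mu^\lambda \to W^\pi(e,e')$ is a filtered isomorphism, and finally identify its classical limit with a variant of MV. This packages parts (a), (b), and (c) of Theorem \ref{thm: main theorem} into one argument, with MV serving both as the target of the classical limit and as the geometric input that forces the ideals to match.

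The main obstacle is the comparison of $\Phi$ with MV at the graded level. The Brundan--Kleshchev isomorphism is phrased in terms of Drinfeld generators and Miura-type elements of $W(e)$, whereas MV is stated in terms of matrix coordinates on Slodowy slices and convolution coordinates on the affine Grassmannian; reconciling these two descriptions requires an explicit and somewhat delicate calculation of the leading symbols of the Brundan--Kleshchev generators against MV's coordinates. I expect the "variation" of MV foreshadowed in the theorem statement to arise precisely from the renormalizations or transposes one must introduce to make Brundan--Kleshchev's conventions align with Mirkovi\'c and Vybornov's.
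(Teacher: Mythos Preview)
Your approach has a genuine gap at its core step. You assert that once $\gr\Phi$ is identified with MV, ``the two ideals must already coincide on the nose by the standard filtered-to-graded argument.'' But there is no such standard argument: two ideals $I,J$ in a filtered algebra can satisfy $\gr I=\gr J$ without $I=J$ (take $(x)$ and $(x+1)$ in $\C[x]$ with the degree filtration). To pass from $\gr\Phi(I)=\gr J$ to $\Phi(I)=J$ you need at least one containment $\Phi(I)\subseteq J$ or $J\subseteq\Phi(I)$, and obtaining such a containment is exactly the nontrivial step. Your proposal gives no mechanism for it.

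The paper proceeds quite differently, and in particular does \emph{not} use MV as input to part~(c). It first proves the isomorphism explicitly in the special case $\lambda=N\varpi_1$ (Theorem~\ref{mainthm}). For general $\lambda$ it establishes the needed containment via highest weight theory: Theorem~\ref{thm:parabolic-annihilator} identifies the kernel of $W(\pi)_{\tbR}\to W(\pi,\fp)_{\bR}$ as an intersection of annihilators of simple modules indexed by parabolic-singular permutations, while Corollary~\ref{thm: maps between yangians 2} shows the kernel of $Y_\mu^{N\varpi_1}(\tbR)\to Y_\mu^\lambda(\bR)$ lies inside the intersection of annihilators of the corresponding simples. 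Matching these simples through the highest-weight bijection (part~(b), Proposition~\ref{lem:simples match2}) yields a \emph{surjection} $Y_\mu^\lambda(\brt)\twoheadrightarrow A(\pi,\fp)$; only then is an associated-graded dimension count used to upgrade surjection to isomorphism. The comparison with MV (part~(d)) is proved afterwards and plays no role in establishing~(c).

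There is also a secondary issue with your description of the parabolic W-algebra as the quotient of $W(e)$ by ``the image of the defining ideal of $\overline{\mathbb{O}_{e'}}$.'' That is not its definition: it is constructed by Hamiltonian reduction of the section algebra $A(\fp)$ of twisted differential operators on $G/P$ (Section~\ref{sec:parabolic-w-algebras}), and identifying the kernel $J_\fp\subset W(e)$ concretely is itself a substantial piece of work (Losev's dagger operation, and in this paper the annihilator description of Theorem~\ref{thm:parabolic-annihilator}). So the ``explicitly presented two-sided ideal'' you hope to match is not available at the outset.
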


One can immediately conclude from this theorem that (\ref{eq:MVyisom}) is an isomorphism of \textit{Poisson} varieties (Corollary \ref{cor:Poissonisom}).  Moreover, since truncated shifted Yangians are explicitly presented, this theorem provides a presentation of parabolic W-algebras in type A.  This generalizes Brundan and Kleshchev's foundational work on presentations of finite W-algebras \cite{BK}.


Our final corollary of Theorem \ref{thm:main}  uses the recent interpretation of the truncated shifted Yangian in the setting of SUSY gauge theories.  The parabolic W-algebra has a distinguished maximal commutative subalgebra, called the Gelfand-Tsetlin subalgebra.  In the case where $\lambda$ is a multiple of the first fundamental weight, this agrees with the Gelfand-Tsetlin subalgebra as defined by Futorny, Molev, and Ovsienko.  They conjecture that the finite W-algebra is free as a left (or right) module over its Gelfand-Tsetlin subalgebra  \cite[Conjecture 2]{FMO}.  Using Theorem \ref{thm:main} we obtain (a generalization of) this conjecture by connecting it to work of Braverman, Finkelberg, and Nakajima on the mathematical theory of Coulomb branches for $3d$ $\cN=4$ gauge theories.

\begin{icorollary}[\mbox{Corollary \ref{cor: GT free}}]
The parabolic W-algebra is free as a left (or right) module over its Gelfand-Tsetlin subalgebra.
\end{icorollary}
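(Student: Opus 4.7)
The plan is to reduce the statement, via Theorem~A, to the corresponding freeness assertion for the truncated shifted Yangian $\Yml$, where it follows from the Coulomb branch realization of Braverman--Finkelberg--Nakajima.

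First I would use Theorem~\ref{thm: main theorem}(c) to transport the question across the explicit isomorphism it provides between the parabolic W-algebra and $\Yml$. Under this isomorphism, the Gelfand-Tsetlin subalgebra should correspond to the commutative subalgebra of $\Yml$ generated by the diagonal Cartan-type generators of the shifted Yangian. For $\mu = 0$, this match is essentially part of the Brundan--Kleshchev dictionary between the finite W-algebra and the ordinary shifted Yangian; the extension to general $\mu$ should follow by tracing which explicit elements of $\Yml$ are used in the proof of Theorem~A. Once this identification is in hand, the corollary reduces to showing that $\Yml$ is free as a left (and right) module over this commutative Cartan subalgebra.

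For this reduced statement I would invoke the Braverman--Finkelberg--Nakajima construction, which realizes $\Yml$ as the quantized Coulomb branch algebra of a $3d$ $\cN=4$ quiver gauge theory whose dimension vector is determined by $\mu$ and $\lambda$; under this realization, the Cartan subalgebra corresponds to the image of $H^*_{G\times T_F}(\mathrm{pt})$, where $G$ is the gauge group and $T_F$ the flavour torus. The Coulomb branch algebra is defined as the equivariant Borel--Moore homology of an ind-scheme $\mathcal{R}_{G,\bN}$, which fibres over the affine Grassmannian of $G$ and stratifies into affine-bundle pieces over Schubert cells. A general equivariant-formality argument then shows this BM homology to be free as a module over $H^*_{G \times T_F}(\mathrm{pt})$. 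Combining these ingredients yields the corollary.

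The main obstacle I expect is the precise matching of subalgebras in the first step: the Gelfand-Tsetlin subalgebra of the parabolic W-algebra is defined intrinsically (through Harish-Chandra-type constructions associated to a chain of Levi subalgebras), and one must verify that Theorem~A aligns these generators with the Cartan part of $\Yml$ rather than merely with some abstractly commutative subalgebra of matching rank. Once this combinatorial match is established --- plausibly read off directly from the explicit form of the isomorphism supplied by Theorem~A --- the freeness is a direct transport of the Coulomb branch statement, and the corresponding freeness as a right module follows either by a symmetric argument or by appealing to an anti-involution on $\Yml$ that preserves the Cartan subalgebra.
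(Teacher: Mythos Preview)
Your proposal is correct and matches the paper's approach: the paper deduces the corollary by combining Theorem~\ref{thm: main theorem}(c), which explicitly identifies the Gelfand--Tsetlin subalgebras $\Gamma_\mu^\lambda \stackrel{\sim}{\longrightarrow} \Gamma(\pi,\fp)$ under $\Phi$, with Corollary~\ref{cor: freeness}, itself obtained from equivariant formality of $\mathcal{R}_{\bG,\bN}$ in the BFN Coulomb branch construction. Your anticipated obstacle is not one---the matching of Gelfand--Tsetlin subalgebras is already part of the statement of Theorem~\ref{thm: main theorem}(a) and (c), so no additional verification is required.
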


\begin{remark}\hfill
\begin{enumerate}
\item In \cite{MV}, the authors consider a second family of isomorphisms, based on work of Maffei \cite{Maf}, between Slodowy slices and type A quiver varieties.  This isomorphism has already been quantized by Losev \cite[Th. 5.3.3]{L}.
\item When $\lambda$ is a multiple of the first fundamental weight, then $\overline{\mathbb{O}_{e'}}$ is the nilpotent cone of $\mathfrak{gl}_N$.  In this case, the quantization of $\cS_e \cap \overline{\mathbb{O}_{e'}}$ is  a central quotient of the finite W-algebra, and the isomorphism of Theorem \ref{thm:main} is  a variation of Brundan and Kleshchev's theorem (using the Drinfeld presentation of the Yangian instead of the RTT presentation).  Indeed Losev has speculated that Brundan and Kleshchev's presentation  should be understood as a quantization of the Mirkovi\'c-Vybornov isomorphism \cite[Rmk. 5.3.4]{L}, and Theorem \ref{thm:main} makes this precise. 
\end{enumerate}
\end{remark}

In order to prove Theorem \ref{thm:main}, we need results about the highest weight theory of parabolic W-algebras and truncated shifted Yangians. Brundan and Kleshchev describe the highest weights in category $\cO$ of a finite W-algebra in terms of row tableau.  First we describe those highest weights which descend to the parabolic W-algebra using so-called parabolic-singular elements of the Weyl group (Theorem \ref{thm:parabolicWweight}).  These are elements which are simultaneously longest left coset for a parabolic corresponding to $\mu$ and shortest right coset representatives for a parabolic corresponding to $\la$.  
This allows for the following new description of the parabolic W-algebra:

\begin{itheorem}[Theorem \ref{thm:parabolic-annihilator}]
In type A, the parabolic W-algebra is the quotient of the finite W-algebra by the intersection of annihilators of simple modules corresponding to parabolic-singular permutations.
\end{itheorem}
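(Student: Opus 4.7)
Denote by $W$ the finite $W$-algebra, by $W^{\mathrm{par}}$ the parabolic $W$-algebra, write $I = \ker(W \twoheadrightarrow W^{\mathrm{par}})$, and set
$J = \bigcap_{w}\Ann_W(L_w)$,
where $w$ ranges over parabolic-singular permutations.  The theorem is the equality $I = J$.  The inclusion $I \subseteq J$ is immediate from Theorem~\ref{thm:parabolicWweight}: for each parabolic-singular $w$, the simple $L_w$ lifts to a simple $W^{\mathrm{par}}$-module along the surjection $W \twoheadrightarrow W^{\mathrm{par}}$, so $I$ acts as zero on $L_w$ and hence $I \subseteq \Ann_W(L_w)$ for every such $w$.

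For the reverse inclusion $J \subseteq I$, I would pass to the image $\bar{J} := J/I$, a two-sided ideal of $W^{\mathrm{par}}$.  Using the converse direction of Theorem~\ref{thm:parabolicWweight}, which says that $\{L_w : w \text{ parabolic-singular}\}$ exhausts the simple objects of $\cO(W^{\mathrm{par}})$, we see $\bar{J}$ annihilates every simple of $\cO(W^{\mathrm{par}})$.  The task therefore reduces to a faithfulness statement and an exhaustion statement, which I would handle in two steps.  First, I would deduce from the type~A primitive ideal theory of Losev and Brundan-Kleshchev that $W$ is semiprimitive in the sense that the intersection of annihilators of all simples in $\cO(W)$ vanishes, and that this faithfulness descends through the quotient to $W^{\mathrm{par}}$.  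Second, I would invoke Losev's classification --- every primitive ideal of $W$ is the annihilator of a simple in $\cO(W)$ --- and match these primitives against the parabolic-singular combinatorics to conclude that every primitive ideal of $W$ containing $I$ arises as $\Ann_W(L_w)$ for some parabolic-singular $w$.  Together these force $\bar J = 0$, i.e., $J \subseteq I$.

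\emph{Main obstacle.}  The hardest part is this second step: upgrading Theorem~\ref{thm:parabolicWweight} from a statement about individual highest weights to one about two-sided annihilators, given that distinct highest weights may share an annihilator.  One must verify that the parabolic-singular condition captures \emph{exactly} the set of primitive annihilators containing $I$, with no primitive ideals slipping in strictly between $I$ and $J$.  I expect to handle this via Losev's description of two-sided cells in type~A, which parametrize the primitive ideals and are compatible with parabolic reduction; once this combinatorial matching is made precise, the two-sided annihilator statement follows from the highest-weight statement of Theorem~\ref{thm:parabolicWweight} and the theorem is complete.
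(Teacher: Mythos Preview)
Your easy direction $I \subseteq J$ is correct and matches the paper.  For the hard direction, however, the paper takes a much more direct route that completely sidesteps your ``main obstacle.''

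The paper's argument runs as follows.  For any parabolic-singular $w$, one has the $D$-module $\nabla_w = i_*\mathscr{L}_w$ on $G/P$, and the Whittaker functor $\WGamma$ carries it to a module over $W^{\mathrm{par}}$.  The paper shows geometrically that $\WGamma(\nabla_w)$ is a \emph{faithful} $W^{\mathrm{par}}$-module: on the open Schubert cell $ww_0Nw_0P/P$ the module $\nabla_w$ is the pushforward of the Whittaker functions on an affine space, and the Weyl algebra of an affine space acts faithfully on these.  Next, $W^{\mathrm{par}}$ is a domain, since its associated graded is the coordinate ring of the irreducible variety $\cS_\pi\cap\overline{\mathbb{O}_\tau}$.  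One then invokes the standard fact that a faithful module of finite length over a domain has a faithful composition factor.  Since by Theorem~\ref{thm:parabolicWweight} the composition factors of $\WGamma(\nabla_w)$ are among the $L_{w'}$ for $w'\in\PS(\pi,\fp)$, at least one $L_{w'}$ is already faithful over $W^{\mathrm{par}}$.  Hence the intersection of annihilators is zero in $W^{\mathrm{par}}$, i.e., equals $I$ upstairs.

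Your plan, by contrast, tries to control \emph{every} primitive ideal of $W^{\mathrm{par}}$ via Losev's classification and two-sided-cell combinatorics.  This is more than is needed: a single faithful simple suffices, and the domain property plus one geometric faithfulness lemma supply it.  Moreover, your two steps are not free: that the faithfulness of $\cO$ over $W$ descends to $W^{\mathrm{par}}$, and that every primitive of $W$ containing $I$ is exactly some $\Ann(L_w)$ with $w$ parabolic-singular, are genuine statements for which you have only gestured at an argument.  The paper's route replaces all of this with a two-line observation.
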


Now to prove Theorem \ref{thm:main} we first prove the desired isomorphism in the case where $\lambda$ is a multiple of the first fundamental coweight (Theorem \ref{mainthm}).  This is an explicit calculation with the Brundan-Kleshchev isomorphism, comparing different subquotients of the Yangian of $\mathfrak{sl}_n$ on the one hand, and the Yangian of $\mathfrak{gl}_n$ on the other.  We then use  results about the highest weight theory of the truncated shifted Yangian given by Kamnitzer, Tingley and the authors in \cite{KTWWY}, and the highest weight theory of the parabolic W-algebra from Section \ref{sec:highestweightsA}, to deduce the general result from the special case.   

In Section \ref{section: MV slices} we introduce general ``MV slices'', and prove an easy but useful result that any two MV slices are Poisson isomorphic (Theorem \ref{theorem: MV slices}).  Recently Cautis and Kamnitzer described a variation on the classical Mirkovi\'c-Vybornov isomorphism, which uses MV slices that are transposes of those used by Mirkovi\'c and Vybornov (cf. Section \ref{sec: the mv isomorphism}).  This isomorphism is much simpler to express in coordinates, and we prove that it is the classical limit of our quantum isomorphism.

\begin{itheorem}[Theorem \ref{thm: main theorem}, part (d)]
The classical limit of the quantum Mirkovi\'c-Vybornov isomorphism in Theorem \ref{thm:main} agrees with Cautis and Kamnitzer's version of the classical Mirkovi\'c-Vybornov isomorphism.
\end{itheorem}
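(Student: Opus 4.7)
By part (c) of Theorem \ref{thm: main theorem}, already established, the quantum Mirkovi\'c-Vybornov map is a filtered algebra isomorphism $\Phi$ from the parabolic W-algebra to the truncated shifted Yangian. Taking associated gradeds produces a Poisson algebra isomorphism $\gr \Phi$ between the coordinate rings $\mathcal{O}(\cS_e \cap \overline{\mathbb{O}_{e'}})$ and $\mathcal{O}(\Grlmbar)$, i.e.\ a Poisson isomorphism of the underlying affine varieties. Cautis and Kamnitzer's version of the classical isomorphism is, in contrast, a geometric morphism given by explicit formulas in matrix-entry coordinates, after transposing the Jordan types on the Slodowy side (cf.\ Section \ref{sec: the mv isomorphism}). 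My plan is to verify, by comparing leading symbols on a well-chosen set of generators, that $\gr \Phi$ coincides with the Cautis-Kamnitzer map.

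The method is to pick generators on each side whose symbols give natural matrix-entry coordinates. On the W-algebra side, Brundan-Kleshchev's matrix-type elements have symbols equal to the matrix entries of a generic element of $\cS_e \cap \overline{\mathbb{O}_{e'}}$, under the transpose convention dictated by the Cautis-Kamnitzer model. On the truncated shifted Yangian side, the Drinfeld generators used in proving Theorem \ref{mainthm} have symbols corresponding to the matrix entries of a generic element of the Cautis-Kamnitzer model of $\Grlmbar$. The explicit formulas defining $\Phi$ on generators, which are recorded in the course of proving part (c), are quantum deformations of equally explicit polynomial identities between these two sets of matrix entries; passing to leading symbols kills the subleading quantum corrections, and what remains is precisely the coordinate form of the Cautis-Kamnitzer map. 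Since both $\gr \Phi$ and the Cautis-Kamnitzer morphism are Poisson maps determined by their values on this generating set of functions, the identification then propagates to the whole coordinate ring.

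The main obstacle I anticipate is the combinatorial bookkeeping needed to match indexing conventions: the Brundan-Kleshchev parametrization used in part (c) is indexed by rows of a Young diagram associated with the Jordan type $\lambda$, while the Cautis-Kamnitzer coordinates are naturally expressed in terms of the columns of the transposed diagram. Carefully verifying that transposition intertwines the two labelings of generators, and that this is compatible with the parabolic truncation on each side (ruled by $\mu$), is essentially the crux of the argument. A useful sanity check during the proof is provided by Theorem \ref{theorem: MV slices}, which already guarantees the existence of \emph{some} Poisson isomorphism between the two MV slices in question; what part (d) adds is the identification of our particular $\gr \Phi$ with the specific geometric map of Cautis and Kamnitzer, rather than with some other automorphism of $\Grlmbar$ composed with it.
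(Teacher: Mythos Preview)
Your strategy---pass to associated gradeds and compare on a set of generators whose symbols are explicit coordinate functions---is exactly what the paper does. But two aspects of your plan are off and worth correcting before you execute it.

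First, the paper immediately reduces to the case $\lambda = N\varpi_1$: both the quantum isomorphism $\Phi$ and the Cautis--Kamnitzer map for general $\lambda$ are obtained by restricting the $\lambda = N\varpi_1$ case to a closed subvariety, so it is enough to check agreement there. You should make this reduction explicit rather than carrying the parabolic truncation through the whole computation.

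Second, the ``main obstacle'' you anticipate---matching row-indexed Brundan--Kleshchev labels against column-indexed Cautis--Kamnitzer coordinates via transposition of Young diagrams---is not what the proof actually requires. The crux is instead a direct matrix computation: one shows that Brundan--Kleshchev's series $T(u)$, evaluated at the image point $X \in \cT_\pi$, equals the transpose $g(-u)^T$ of the loop-group representative $g$ (this is a short calculation from the explicit form of $T_{ij;0}^{(r)}$ and the shape of $\cT_\pi$). Once that is established, the Gauss decomposition of $g(-u)^T$ matches the minors defining $A_i^{(r)}, E_i^{(r)}$ on the Grassmannian side, and one finishes by noting (Proposition \ref{prop: birational coordinates}) that these functions are birational coordinates, so checking them suffices. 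No Young-diagram bookkeeping enters.
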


\section*{Acknowledgement}  The authors thank Tomoyuki Arakawa, Joel Kamnitzer, Alex Molev, Stephen Morgan and Dinakar Muthiah for helpful conversations.  We also thank an anonymous referee for very helpful comments. A.W. and B.W. thank the University of Sydney for hosting visits where major parts of this research was conducted, and for providing an International Research Collaboration Award to support B.W.'s visit. O.Y. is supported by the Australian Reseach Council.  A.W. was supported in part by NSERC and the Ontario Ministry of Training, Colleges and Universities.   B.W. was supported in part by the NSF Division of Mathematical Sciences and the Alfred P. Sloan Foundation.  This research was supported in part by Perimeter Institute for Theoretical Physics. Research at Perimeter Institute is supported by the Government of Canada through the Department of Innovation, Science and Economic Development Canada and by the Province of Ontario through the Ministry of Research, Innovation and Science.

\subsection{Notation}
\label{section: notation}

Throughout this paper, we alternate between letting $\fg$ be any simply-laced simple complex Lie algebra, and specializing to the special or general linear Lie algebra.  In the beginning of every section we are careful to note which setting we are in.  

In general, we let $I$ denote the nodes of the Dynkin diagram of $\fg$, and we write $j \sim i$ to mean $j$ and $i$ are connected in the Dynkin diagram. Since Langlands duality often appears in the context of the affine Grassmannian, we will use dual notation, and denote simple coroots by $\{ \alpha_i \}_{i\in I}$ and fundamental coweights by $\{ \varpi_i\}_{i\in I}$, and dually, the simple roots $\{ \alpha_i^\vee\}_{i\in I}$ and fundamental weights by $\{ \varpi_i^\vee\}_{i\in I}$.  We let $\Delta^+$ denote the set of positive roots of $\fg$.  When we specialize to $\fg= \mathfrak{sl}_n$ we set $I= \{1,\ldots, n-1\}$.  

All spaces considered are varieties, schemes, or ind-schemes over $\C$.

\subsection{Combinatorial data}
\label{section:CombData}
Let $\fg= \mathfrak{sl}_n$.
Consider a pair $\lambda, \mu$ of dominant coweights for $\fg$, such that $\lambda \geq \mu$.  Write
\begin{equation}
\label{eq: coweight data}
\lambda = \sum_{i=1}^{n-1} \lambda_i \varpi_{n-i}, \qquad \mu = \sum_{i=1}^{n-1} \mu_i \varpi_{n-i}, \qquad \lambda - \mu = \sum_{i=1}^{n-1} m_i \alpha_{n-i} 
\end{equation}
so that $\lambda \geq \mu$ means precisely that all $m_i \in \mathbb{Z}_{\geq 0}$.  (Our strange indexing conventions above are chosen to match those of \cite{KWWY}.)  Define
\begin{equation}
\label{eq: coweight data 2}
N = \sum_{i=1}^{n-1} i \lambda_{n-i}
\end{equation}
Then $N \varpi_1 \geq \lambda \geq \mu$.  Write $N\varpi_1 - \mu = \sum_i m'_i \alpha_{n-i}$.  

We associate a pair of partitions to the above data as follows: first, the partition $\tau\vdash N$ is defined in exponential notation by \begin{equation}
\label{eq: coweight data 3}
\tau = \big(1^{\lambda_{n-1}}2^{\lambda_{n-2}}\cdots (n-1)^{\lambda_1} \big)^t.
\end{equation}
Second, consider the partition $\pi \vdash N$, 
\begin{equation}
\label{eq: coweight data 4}
\pi=(p_1\leq \cdots \leq p_n),
\end{equation}
defined by 
\begin{equation}
\label{eq: coweight data 5}
p_1=m'_1, p_2=m'_2 -m'_1,...,p_{n-1}=m'_{n-1}-m'_{n-2}, p_n=N-m'_{n-1}.
\end{equation}
Then $\tau \geq \pi$ with respect to the dominance order on partitions.

\begin{remark}
As a matter of convention, we will write partitions as either non-increasing or non-decreasing as appropriate.  
\end{remark}

\section{The affine Grassmannian side}\label{section: Grassmannian}

In this section we recall 
truncated shifted Yangians in type A, and their connection to slices in the affine Grassmannian of $PGL_n$.  Throughout this section  $\fg= \mathfrak{sl}_n$, and we fix a pair $\lambda \geq \mu$ of dominant coweights as in Section \ref{section: notation}.  

\subsection{Slices in the affine Grassmannian}
\label{section: slices in the affine Grassmannian}
Consider (spherical) Schubert cells $\Gr^\mu,\Gr^\lambda$ in the affine Grassmannian $\Gr$ for $PGL_n$.  Our running hypothesis that $\lambda\geq \mu$ implies that $\Gr^\mu \subset \overline{\Gr^\lambda}$, and we let $\Grlmbar$ be the slice to $\Gr^\mu$ in $\overline{\Gr^\lambda}$ at the point $t^{w_0\mu}$.  See \cite[Section 2.2]{KWWY} for more details and precise definitions, as well as Section \ref{section: more on gr slices} below.

$\Grlmbar$ is an irreducible affine variety of dimension $2\langle \rho^\vee, \lambda - \mu\rangle = 2 \sum_i m_i$.  It has a $\C^\times$--action by loop rotation, which contracts it to the unique fixed point $t^{w_0 \mu}$.  $\Grlmbar$ admits a Poisson structure which is homogeneous of degree $-1$ with respect to the loop rotation, as described in \cite[Section 2C]{KWWY}.  

Recall that $\Gr$ admits a description in terms of lattices: every point is given by a $\C[[t]]$--lattice in $\C((t))^n$; this is only well-defined up to multiplication by a power of $t$, but we will consistently choose representatives $\Lambda$ such that $\Lambda \subset \Lambda_0=\C[[t]]^n$.  Denote  $E_\pi = \left\{ t^{p_i-1} e_i, \ldots, t e_i, e_i: \forall i\right\}$ and $E_p = \left\{ t^{p-1} e_i,\ldots, e_i : \forall i \right\}$, where $e_1,...,e_n$ is the standard basis of $\C^n$. Explicitly, we can identify:
\begin{equation}
\label{eq: def of GrNwm}
\Gr^{\overline{N\varpi_1}}_\mu = \left\{ \Lambda \ : \begin{array}{l} (a) \ \ \Lambda \subset \Lambda_0 \text{ a } \C[[t]]\text{--submodule},\\ (b) \ \ \text{image of } E_\pi \text{ gives basis of } \Lambda_0/\Lambda, \\ (c) \ \ \forall i, \ t^{p_i} e_i \in \Lambda + E_{p_i} \end{array}  \right\}
\end{equation}
Since $N \varpi_1\geq\lambda$, we have inclusions of closed subvarieties $\overline{\Gr^\lambda}\subset\overline{\Gr^{N\varpi_1}}$ and $\Grlmbar\subset\Gr^{\overline{N\varpi_1}}_\mu$.  Considering multiplication by $t$ as an endomorphism of $\Lambda_0 / \Lambda$, we can also identify
\begin{equation}
\label{eq: def of Grlm}
\Grlmbar = \left\{ \Lambda \in \Gr^{\overline{N\varpi_1}}_\mu \ : \ t \in \End_\C \left(\Lambda_0/\Lambda\right) \text{ has Jordan type } \leq \tau \right\}
\end{equation}

\subsection{Truncated shifted Yangians}
\label{subsec:aff}
Let $Y=Y(\fg)$ be the Yangian of $\fg$.  This is a filtered $\C$-algebra with generators $E_\alpha^{(r)}, F_\alpha^{(r)}, H_i^{(r)}$ for $\alpha \in \Delta^+, i\in I$, $r\in \Z_{>0}$, and filtration defined by $\deg(X^{(r)})=r$ for any generator $X$.  In fact, $Y$ is generated by the elements $E_i^{(r)} := E_{\alpha_i}^{(r)}, F_i^{(r)} := F_{\alpha_i}^{(r)}$ and $H_i^{(r)}$.  For the defining relations see Theorem 3.5 in \cite{KWWY}.  
 
We will frequently work with the formal generating series
$$
E_i(u) = \sum_{r>0} E_i^{(r)} u^{-r}, \ \ F_i(u) = \sum_{r>0} F_i^{(r)} u^{-r}, \ \ H_i(u) = 1+ \sum_{r>0} H_i^{(r)} u^{-r}$$

\begin{definition}[Definition 3.10, \cite{KWWY}]
\label{def: shifted Yangian}
The \textbf{shifted Yangian} $Y_\mu \subset Y$ is the subalgebra generated by $E_i^{(r)}, H_i^{(r)}$ where $r\geq 1$ and $F_i^{(s)}$ where $s>\mu_i$.
\end{definition}  
    
Introduce formal variables $R_i^{(j)}$ where $i\in I$ and $j=1,...,\la_i$, and consider the tensor product of algebras 
\begin{equation}
\label{eq: YmuR}
Y_\mu[R_i^{(j)}] := Y_\mu \otimes_\C \C[R_i^{(j)}:i\in I, j=1,...,\la_i].
\end{equation}  
Let $R_i(u)=\sum_{j=0}^{\la_i}R_i^{(j)}u^{\la_i-j}$, where we denote $R_i^{(0)}=1$. We define $A_i^{(r)} \in Y_\mu[R_i^{(j)}]$ by
\begin{equation}
\label{eq: H from A}
H_i(u) = r_i(u) \frac{ \prod_{j \sim i} A_j(u - \frac{1}{2})}{A_i(u) A_i(u - 1) },
\end{equation}
where $A_i(u) = 1 + \sum_{r>0} A_i^{(r)} u^{-r}$ and
\begin{equation} \label{eq:rfromc}
r_i(u) = u^{-\lambda_i} R_i(u) \frac{ \prod_{j \sim i} (1- \frac{1}{2} u^{-1})^{m_j}}{(1- u^{-1})^{m_i}}.
\end{equation}
See Sections 4.1 in \cite{KWWY} for details.

\begin{definition}[Section 4.4, \cite{KWWY}]
Let $I_\mu^\la$ be the two-sided ideal of $Y_\mu[R_i^{(j)}]$ generated by $A_i^{(r)}$ for $r>m_i$.  The \textbf{truncated shifted Yangian} is the quotient 
$$Y_\mu^\la:= Y_\mu[R_i^{(j)}]/I_\mu^\la$$  
\end{definition}   
The subalgebra $\Gamma_\mu^\lambda \subset Y_\mu^\lambda$ generated by the images of the elements $A_i^{(r)}, R_i^{(j)}$ is commutative.  In fact, it is freely generated by these elements:
\begin{equation}
\Gamma_\mu^\lambda = \C[ A_i^{(r)}, R_i^{(j)}: i\in I, 1\leq r \leq m_i, 1\leq j \leq \lambda_i],
\end{equation}
as follows e.g.~from Corollary \ref{cor: freeness} below.  We call $\Gamma_\mu^\lambda$  the \textbf{Gelfand-Tsetlin subalgebra} of $Y_\mu^\lambda$.

\begin{remark}
\label{rmk: adjoin formal roots}
  In some situations, it will be more convenient to adjoin formal
  roots $\rt_{i,k}$ for the polynomials $R_i(u)=\prod_{k=1}^{\lambda_i}(u-\frac 12
  \rt_{i,k})$.  We denote the resulting algebra $Y_\mu^\la(\brt)$.  This algebra carries an action of the product of symmetric groups $\Theta=\prod_i S_{\lambda_i}$, and $Y_\mu^\lambda $ is the invariant subalgebra.
\end{remark}

\begin{definition}
\label{def: Br}
A {\bf set of parameters} of weight
$\lambda$ is a tuple $\bR=(\bR_i)_{i\in I}$, where $\bR_i$ is a multiset of $\lambda_i$ complex numbers.
\end{definition}

Given a set of parameters of weight $\lambda$, we can specialize the formal variables $R_i^{(j)}$ via:
\begin{equation} \label{eq:defofc}
R_i(u) = \prod_{c\in \bR_i}(u-\tfrac{1}{2} c)
\end{equation}
We denote by $Y_\mu^\la(\bR)$ the corresponding specialized algebra:
$$
Y_\mu^\la(\bR)=Y_\mu^\la\otimes_{\C[R_i^{(j)}]}\C.
$$
Note that $\bR$ determines the {\it roots} of the specialized polynomial $R_i(u)$, and as a consequence we obtain a specialization of the formal variables $R_i^{(j)} \mapsto \C$.  In terms of elementary symmetric functions, we can make this explicit:
$$
R_i^{(j)} \mapsto (-1)^j e_j(\tfrac{1}{2}\bR_i).
$$
This same algebra arises if we number the elements of
$\bR_i$, and specialize $\gamma_{i,k}$ to the corresponding values.
Thus, no statement about the specializations depends on which version
we use, but certain statements about the families will be cleaner for $Y_\mu^\la(\brt)$.

We will denote $\Gamma_\mu^\lambda(\bR) \subset Y_\mu^\lambda(\bR)$ the image of $\Gamma_\mu^\lambda$, so that
\begin{equation}
\Gamma_\mu^\lambda(\bR) = \C[A_i^{(r)} : i\in I, 1\leq r\leq m_i]
\end{equation}
We call $\Gamma_\mu^\lambda(\bR)$ the Gelfand-Tsetlin subalgebra of $Y_\mu^\lambda(\bR)$.

\subsection{Relationship with functions on slices}
\label{sec: yangians and functions on slices}
The main result of \cite{KMWY} is a proof of \cite[Conjecture 2.20]{KWWY} in the case of $\fg = \mathfrak{sl}_n$.  By \cite[Theorem 4.10]{KWWY}, it follows that:

\begin{theorem}
\label{thm: reducedness in type A}
For any choice of $\bR$, there is an isomorphism
$$ \gr(Y_\mu^\la(\bR)) \cong \C[\Gr_\mu^{\overline{\la}}]$$
of graded Poisson algebras.
\end{theorem}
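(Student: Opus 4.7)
The plan is to deduce this as an immediate combination of two previously established results. First, \cite[Theorem 4.10]{KWWY} asserts that if \cite[Conjecture 2.20]{KWWY} holds for $\fg$, then the associated graded of $Y_\mu^\lambda(\bR)$ with respect to its natural filtration is isomorphic, as a graded Poisson algebra, to $\C[\Grlmbar]$. The Poisson bracket on the left arises from commutators in $Y_\mu^\lambda(\bR)$ modulo one step of the filtration, while on the right it is the canonical Poisson structure recalled in Section \ref{section: slices in the affine Grassmannian}. Second, the main theorem of \cite{KMWY} establishes \cite[Conjecture 2.20]{KWWY} precisely in the case $\fg = \mathfrak{sl}_n$, which is our standing assumption throughout this section.

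For context on the role of \cite[Theorem 4.10]{KWWY}, the strategy there is to construct a surjective graded Poisson homomorphism $\C[\Grlmbar] \twoheadrightarrow \gr Y_\mu^\lambda(\bR)$ by identifying the classical limit of $Y_\mu$, together with the parameters $R_i^{(j)}$, with functions on a ``generalized slice'' inside the affine Grassmannian. The truncation relations $A_i^{(r)} = 0$ for $r > m_i$ then correspond in the classical limit to the scheme-theoretic equations cutting out $\Grlmbar$ as a closed subscheme. The content of \cite[Conjecture 2.20]{KWWY} is that this closed subscheme is reduced, which is what upgrades the surjection above to an isomorphism, and simultaneously guarantees the requisite flatness of the deformation in $\bR$.

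Thus, there is essentially no new work to do in the proof of this statement: one simply cites \cite[Theorem 4.10]{KWWY} and substitutes \cite{KMWY} for the input hypothesis. The only genuine obstacle, the reducedness assertion, is already resolved in \cite{KMWY}; once granted, the conclusion is formal. It may be worth briefly noting that the isomorphism is compatible with specialization of $\bR$, since both the filtration on $Y_\mu^\lambda(\bR)$ and the identification of the classical limit are constructed uniformly in the parameters.
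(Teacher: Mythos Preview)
Your proposal is correct and matches the paper's own argument essentially verbatim: the paper simply states that the main result of \cite{KMWY} establishes \cite[Conjecture 2.20]{KWWY} for $\mathfrak{sl}_n$, and then invokes \cite[Theorem 4.10]{KWWY} to conclude. Your additional paragraph explaining the mechanism behind \cite[Theorem 4.10]{KWWY} is helpful context but not required for the proof.
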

This isomorphism is given explicitly in terms of generalized minors, see \cite[Section 2A]{KWWY} as well as Section \ref{section: more on gr slices} below.

\begin{remark}
The generalization of this theorem to $\fg$ to more general type is obtained in \cite[Appendix B]{BFN+}. 
\end{remark}


\subsection{Shifted Yangians and Coulomb branches}
\label{sec: Coulomb}

Braverman, Finkelberg, and Nakajima have recently developed a mathematical theory of Coulomb branches for $3d$ $\cN=4$ gauge theories \cite{BFN}, \cite{BFN+}.  For any pair $(\bG, \bN)$ of a reductive group $\bG$ and its representation $\bN$ (both over $\C$), they associate a moduli space $\mathcal{R}_{\bG, \bN}$ carrying an action of $\bG$, and an action of $\C^\times$ by loop rotation.  They then define a commutative ring $\cA(\bG, \bN) := H_\ast^{\bG} (\mathcal{R}_{\bG, \bN} )$ via a convolution product, and its deformation quantization $\cA_{\hbar}(\bG, \bN) := H_\ast^{\bG \times \C^\times}(\mathcal{R}_{\bG, \bN})$.  The Coulomb branch is defined as the affine scheme $\cM_C(\bG, \bN) := \operatorname{Spec} \cA(\bG, \bN)$.  

For us, the most relevant cases of this construction are for certain {\bf quiver gauge theories}, and more precisely the type $A$ cases.  Letting $\fg = \mathfrak{sl}_n$ and fixing coweights $\lambda, \mu$ as in Section \ref{section:CombData}, we define vector spaces $W_i = \C^{\lambda_i}$ and $V_i = \C^{m_i}$, for all $1\leq i \leq n-1$.  We can then define a pair $(\bG, \bN)$ as follows:
\begin{equation}
\bG = \prod_{i=1}^{n-1} \operatorname{GL}(V_i), \qquad 
\bN = \bigoplus_{i=1}^{n-2} \Hom(V_i, V_{i+1}) \oplus \bigoplus_{i=1}^{n-1} \Hom(W_i, V_i)
\end{equation}
We can also incorporate the ``flavour symmetry'' group $\mathbf{F} = \prod_{i=1}^{n-1} \operatorname{GL}(W_i)$, and define $$\cA_{\hbar}(\bG, \bN; \mathbf{F}) := H_\ast^{\bG\times \mathbf{F} \times \C^\times}(\mathcal{R}_{\bG, \bN}).$$
We summarize relevant results from \cite{BFN+}:
\begin{theorem}[\mbox{\cite[Theorem 3.10 and Corollary B.28]{BFN+}}]
Consider data $(\bG, \bN)$ associated to $\lambda, \mu$ as above.
\begin{enumerate}
\item[(a)] There is an isomorphism of graded Poisson algebras 
$$
\C[\Gr_\mu^{\overline{\la}}] \cong \cA(\bG, \bN)
$$
In particular, $\Gr_\mu^{\overline{\la}} \cong \cM_C(\bG, \bN)$ is a Coulomb branch.

\item[(b)] The above isomorphism lifts to an isomorphism of filtered algebras
$$
Y_\mu^\lambda \cong \cA_{\hbar = 1}(\bG, \bN; \mathbf{F}),
$$
which identifies the subalgebras $\Gamma_\mu^\lambda \cong H_{\bG \times \mathbf{F}}^\ast(pt)$.
\end{enumerate}
\end{theorem}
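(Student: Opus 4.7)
The plan is to prove part (b) directly and deduce part (a) as the classical limit $\hbar \to 0$. Indeed, after specializing the flavour parameters $R_i^{(j)}$ in $\cA_{\hbar}(\bG,\bN;\mathbf{F})$ and sending $\hbar \to 0$, one recovers $\cA(\bG,\bN)$ on the geometric side and $\gr Y_\mu^\lambda(\bR) \cong \C[\Gr_\mu^{\overline{\la}}]$ on the algebraic side by Theorem~\ref{thm: reducedness in type A}. The isomorphism of Poisson algebras in (a) then follows, once one has verified that the filtrations on both sides match and that the Poisson brackets come from the same $\hbar$-deformation.

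For the construction of a filtered algebra homomorphism $\Phi: Y_\mu^\lambda[R_i^{(j)}] \to \cA_{\hbar=1}(\bG,\bN;\mathbf{F})$, I would follow the Braverman--Finkelberg--Nakajima abelianization strategy. The maximal torus $\mathbf{T} \subset \bG = \prod_i \operatorname{GL}(V_i)$ gives equivariant parameters $\{w_{i,k}: 1\leq k \leq m_i\}$ at each node, which after symmetrization will be the roots of the Cartan series $A_i(u)$; similarly, Chern roots of $\mathbf{F} = \prod_i \operatorname{GL}(W_i)$ supply the roots of the flavour polynomials $R_i(u)$. The Cartan generators $H_i^{(r)}$ are then determined via formula \eqref{eq: H from A}. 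The off-diagonal generators $E_i^{(r)}, F_i^{(r)}$ are realized as dressed minuscule monopole operators: fundamental classes in $H^{\bG\times\mathbf{F}\times\C^\times}_\ast(\mathcal{R}_{\bG,\bN})$ supported on the strata indexed by the minuscule coweights $\pm \varepsilon_{i,k}$ of $\operatorname{GL}(V_i)$, multiplied by appropriate characteristic classes accounting for the matter contribution of $\bN$.

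The main obstacle will be checking that $\Phi$ respects the Drinfeld relations of the shifted Yangian. The Cartan commutations, the bracket $[H_i(u), E_j(v)]$, and quadratic $EE$, $FF$, and $EF$ relations can be verified by localization to $\mathbf{T}$-fixed points, where the convolution product becomes an explicit twisted multiplication of rational functions on the abelianized Coulomb branch, and the polynomials obtained on both sides can be matched by direct manipulation. The genuinely hard step is the cubic Serre relation between $E_i$ and $E_j$ for $i \sim j$: it requires a careful computation of the triple convolution of minuscule classes on a fiber product inside $\mathcal{R}_{\bG,\bN}$, and the Euler class arising from the edge summand $\Hom(V_i, V_{i+1})$ of $\bN$ must combine with the ``bare'' fundamental classes to produce exactly the cancellation demanded by the Serre relation.

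To complete the proof: the truncation relations $A_i^{(r)} = 0$ for $r > m_i$ are automatic, since under $\Phi$ the series $A_i(u)$ becomes a \emph{monic polynomial} of degree $m_i = \dim V_i$ in the Chern roots of $V_i$. Surjectivity of $\Phi$ follows from the general fact that $\cA_\hbar$ is generated by its Cartan part together with minuscule monopole operators, which by construction lie in the image. Injectivity then follows by comparing graded characters: on the Yangian side, the PBW theorem for $Y_\mu^\lambda$ from \cite{KWWY} gives an explicit basis, while on the Coulomb branch side one uses the decomposition of $\mathcal{R}_{\bG,\bN}$ as a union of strata indexed by dominant coweights of $\mathbf{T}$ to compute equivariant Poincar\'e series. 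Matching these series identifies the two filtered algebras, and the identification $\Gamma_\mu^\lambda \cong H^\ast_{\bG\times\mathbf{F}}(pt)$ is then built into the construction.
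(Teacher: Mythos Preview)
This theorem is not proved in the paper: it is quoted verbatim from \cite{BFN+} (Theorem 3.10 and Corollary B.28 there) and used as a black box to deduce Corollary~\ref{cor: freeness}. There is therefore no ``paper's own proof'' to compare your proposal against.

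That said, your sketch is a reasonable outline of the argument actually given in \cite{BFN+}: the map is built by sending Cartan generators to equivariant parameters and $E_i^{(r)}, F_i^{(r)}$ to dressed minuscule monopole operators, relations are checked after abelianization/localization, truncation is automatic because $A_i(u)$ lands in a polynomial ring of the correct degree, and surjectivity plus a size comparison finishes the proof. A couple of points where your sketch is loose relative to the actual argument: injectivity in \cite{BFN+} is not obtained by matching Poincar\'e series against a PBW basis of $Y_\mu^\lambda$ (indeed, a clean PBW theorem for the truncated algebra was not available independently at the time); rather, one first shows the Coulomb branch algebra satisfies the truncated shifted Yangian relations, obtaining a surjection $Y_\mu^\lambda \twoheadrightarrow \cA_{\hbar=1}$, and then uses the known size of $\cA_{\hbar=1}$ (via its identification with functions on the slice, i.e.\ part (a), which is proved first and independently in \cite{BFN+}) together with the upper bound on $\gr Y_\mu^\lambda$ from \cite{KWWY} to conclude the map is an isomorphism. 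So your order of deduction (prove (b), then deduce (a)) is reversed from what actually happens. But again, none of this is carried out in the present paper.
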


More precisely, in the isomorphism (b) the elements $A_i^{(r)}$ correspond to generators of the equivariant cohomology ring $H_{\operatorname{GL}(V_i)}^\ast(pt)$, and the elements $R_i^{(j)}$ to generators of $H_{\operatorname{GL}(W_i)}^\ast(pt)$. Since $\mathcal{R}_{\bG, \bN}$ is equivariantly formal \cite[Section 2]{BFN}, it follows that $\cA_{\hbar=1}(\bG, \bN; \mathbf{F})$ is free over $H_{\bG \times \mathbf{F}}^\ast(pt)$ as a left module (and also as a right module). Thus we deduce:

\begin{corollary}
\label{cor: freeness}
$Y_\mu^\lambda$ is free as a left (or right) module over $\Gamma_\mu^\lambda$, and $Y_\mu^\lambda(\bR)$ is free as a left (or right) module over $\Gamma_\mu^\lambda(\bR)$.
\end{corollary}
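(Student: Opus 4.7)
The plan is to unpack part (b) of the preceding theorem together with the equivariant formality of $\mathcal{R}_{\bG,\bN}$; both ingredients are already in hand in the excerpt, so the corollary will follow with only formal manipulations.

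First, I would transport the first assertion along the isomorphism of part (b). Under this isomorphism, $Y_\mu^\lambda$ corresponds to $\cA_{\hbar=1}(\bG,\bN;\mathbf{F})$ and $\Gamma_\mu^\lambda$ to $H^\ast_{\bG\times \mathbf{F}}(pt)$, so the left-module assertion reduces to showing that $\cA_{\hbar=1}(\bG,\bN;\mathbf{F})$ is free over $H^\ast_{\bG\times \mathbf{F}}(pt)$. The equivariant formality of $\mathcal{R}_{\bG,\bN}$ cited from \cite[Section 2]{BFN} says precisely that $\cA_\hbar(\bG,\bN;\mathbf{F})=H_\ast^{\bG\times \mathbf{F}\times \C^\times}(\mathcal{R}_{\bG,\bN})$ is graded-free over $H^\ast_{\bG\times \mathbf{F}\times \C^\times}(pt)=H^\ast_{\bG\times \mathbf{F}}(pt)[\hbar]$. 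Choosing a homogeneous free basis $\{b_\alpha\}$ and noting that $H^\ast_{\bG\times\mathbf{F}}(pt)[\hbar]$ is itself free over $H^\ast_{\bG\times\mathbf{F}}(pt)$, this same $\{b_\alpha\}$ gives a free $H^\ast_{\bG\times \mathbf{F}}(pt)$-basis, a property preserved by the specialization $\hbar=1$. Transporting back proves the first claim; the right-module statement follows by the symmetric argument.

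For the second assertion, I would use that the $R_i^{(j)}$ are central in $Y_\mu^\lambda$, since they were adjoined as formal commuting variables in $Y_\mu[R_i^{(j)}]$ before passing to the quotient by $I_\mu^\la$. Hence the ideal $J\subset \Gamma_\mu^\lambda$ generated by $R_i^{(j)}-(-1)^j e_j(\tfrac12 \bR_i)$ is two-sided in $Y_\mu^\lambda$, with $Y_\mu^\lambda(\bR)=Y_\mu^\lambda/JY_\mu^\lambda$ and $\Gamma_\mu^\lambda(\bR)=\Gamma_\mu^\lambda/J$. If $\{b_\alpha\}$ is a free left $\Gamma_\mu^\lambda$-basis of $Y_\mu^\lambda$ produced in the previous step, then since $J\subset \Gamma_\mu^\lambda$ one has $JY_\mu^\lambda=\bigoplus_\alpha J b_\alpha$, so the images $\{\bar b_\alpha\}$ form a free $\Gamma_\mu^\lambda(\bR)$-basis of $Y_\mu^\lambda(\bR)$.

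There is no real obstacle: the whole argument is a formal consequence of part (b) of the preceding theorem combined with base change. The one piece of input I would not re-derive is the equivariant formality of $\mathcal{R}_{\bG,\bN}$, which I would simply quote from \cite{BFN}.
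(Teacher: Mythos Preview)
Your proposal is correct and follows essentially the same approach as the paper: transport along the isomorphism of part (b), invoke equivariant formality of $\mathcal{R}_{\bG,\bN}$ from \cite{BFN} to obtain freeness of $\cA_{\hbar=1}(\bG,\bN;\mathbf{F})$ over $H^\ast_{\bG\times\mathbf{F}}(pt)$, and then base-change to the specialization at $\bR$. The paper states this more tersely (one sentence before the corollary), while you have supplied the routine details of the $\hbar=1$ specialization and of the central base change for the second assertion; both are straightforward, and your added detail is sound.
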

This modest application of the theory of Coulomb branches will allow
us to deduce an analogous freeness result for $W$--algebras, see
Corollary \ref{cor: GT free} below. We will use this connection more
intensively in further work on the representation theory of these
algebras \cite{KTWWY2,WWY2}.

\subsection{Highest weights and product monomial crystals}
\label{sec: monomial crystal}
Consider a module $M$ over the algebra $Y_\mu^\lambda(\bR)$.  We call a vector $\mathbf{1} \in M$ a {\bf highest weight vector} if it generates $M$ and 
$$ H_i^{(r)} \mathbf{1} \in \C \mathbf{1}, \qquad E_i^{(r)} \mathbf{1} = 0, \qquad \forall i\in I, r> 0$$
It follows that the series $H_i(u)$ acts on $\mathbf{1}$ by multiplication by some series 
$$J_i(u) = \sum_{r\geq 0 } J_i^{(r)} u^{-r} \in 1 + u^{-1} \C[[u^{-1}]]$$
We call the tuple $J = (J_i(u))_{i\in I}$ the {\bf highest weight} of $M$.

Conversely, given a tuple $J = (J_i(u))_{i\in I}$ of series as above, there is a universal highest weight module $M(J)$ for $Y_\mu^\lambda(\bR)$ (also called a Verma or standard module).  It is generated by a highest weight vector $\mathbf{1}$ with highest weight $J$, and has a unique simple quotient $L(J)$.  The collection of all tuples $J$ such that $M(J) \neq 0$ (equivalently, $L(J) \neq 0$) is called the {\bf set of highest weights} for $Y_\mu^\lambda(\bR)$.

\subsubsection{The product monomial crystal}
The highest weights of $Y_\mu^\lambda(\bR)$ can be classified in terms of the weight $\mu^\ast=-w_0\mu$ elements of the {\bf product monomial crystal} $\cB(\bR)$, where here $w_0 \in S_n$ is the longest permutation.  In this section we briefly overview $\cB(\bR)$ and its relation to highest weights in general. We then give a combinatorial model of $\cB(\bR)$ in type A, using partitions.

\begin{remark}
In this paper we will not make use of the crystal structure on $\cB(\bR)$.  Rather, we will focus on its underlying set.  We refer the reader to \cite[Section 2]{KTWWY} for further details regarding the crystal $\cB(\bR)$.  
Note that in \cite{KTWWY} the product monomial crystal is denoted $\cB(\lambda,\bR)$.
\end{remark}

$\cB(\bR)$ is a subset of the set Laurent monomials in variables $y_{i, c}$ (the ``Nakajima monomial crystal''), where $i\in I, c\in \C$ (although strictly speaking it is only a $\fg$--crystal when the parameters $\bR$ are ``integral'', see Section \ref{section: pmc in type A}).   To define $\cB(\bR)$, one first defines the fundamental monomial crystals $\cB(y_{i, c})$, corresponding a fundamental weight $\varpi_i$ and parameter $c\in \C$.  It is generated by the monomial $y_{i, c}$ by applying Kashiwara operators. For any $c\in \C$, $\cB(y_{i, c})$ is isomorphic to  the fundamental $\fg$--crystal of highest weight $\varpi_i$.

Next, the general product monomial crystal is defined by multiplying together the elements of various fundamental crystals $\cB(y_{i, c})$:
\begin{equation}
\label{eq: monomial crystal}
\cB(\bR) = \prod_{i\in I, c\in \bR_i} \cB(y_{i, c}) := \Big\{ p = \prod_{i\in I, c\in \bR_i} p_{i, c} : \forall i,c, \  p_{i, c} \in \cB(y_{i, c}) \Big\}.
\end{equation}
Here, the product symbol does not signify Cartesian product, but rather the usual product in $\C[y_{i,c}^\pm]$.  
\begin{remark}
Note that with our conventions (\ref{eq: coweight data}), $\lambda = \varpi_i$ corresponds to $\lambda_{n-i} = 1$ and $\lambda_j = 0$ for $j\neq {n-i}$.  In particular a corresponding set of parameters $\bR$ consists of a singleton, namely $\bR_{n-i} = \{c\}$, and $\cB(\bR)$ is isomorphic to the fundamental $\fg$ crystal of highest weight $\varpi_{n-i}$.

We've chosen to follow the conventions of \cite{KWWY}, which differ from those of \cite{KTWWY} by a diagram automorphism.  We pay for this choice here, since $\cB(\bR) \cong \cB(\lambda^*,\bR)$, where $\cB(\lambda^*,\bR)$ is the product monomial crystal as defined in \cite{KTWWY}.  We'll gain from this choice later on, since the formulation of our main results is cleaner with this convention.
\end{remark}

The weight of a monomial is defined as follows:
$$ \text{wt} \big( \prod_{i, k} y_{i,k}^{a_{i, k}} \big) = \sum_{i, k} a_{i,k} \varpi_i $$
where $i\in I, k\in \C$, and only finitely many of the multiplicities $a_{i, k}\in \Z$ are non-zero.  We denote the elements of weight $\mu$ by $\cB(\bR)_\mu$.

For any $i\in I, k\in \C$, define the monomial
$$ z_{i,k} = \frac{y_{i, k} y_{i, k+2}}{\prod_{j\sim i} y_{j, k+1}} $$
Any element $p\in \cB(\bR)$ can be written in the form
\begin{equation}
\label{eq: factorization of monomial}
p = y_\bR z_{\bS}^{-1} := \prod_{i\in I, c\in \bR_i} y_{i, c} \prod_{i\in I, k \in \bS_i} z_{i, k}^{-1},
\end{equation}
for a unique tuple of multisets $\bS = (\bS_i)_{i\in I}$ (where products are taken with multiplicity).  See Section 2 of \cite{KTWWY} for more details.

\subsubsection{Connection to highest weights}
\label{section: monomials to hw}
As described in \cite[Section 3.6]{KTWWY}, elements of $\cB(\bR)_{\mu^*}$ correspond to highest weights for $Y_{\mu}^{\lambda}(\bR)$.  More precisely, a monomial $p = \prod_{i, k} y_{i, k}^{a_{i,k}} $ corresponds to the series
\begin{equation}
\label{eq: monomial to highest weight}
J_i(u) := u^{-\mu_i} \prod_k (u - \tfrac{1}{2} k)^{a_{i, k}},
\end{equation}
where the rational function on the right-hand side is expanded as an element of $1+ u^{-1}\C[[u^{-1}]]$.  

\begin{theorem}[\mbox{\cite[Theorem 1.3]{KTWWY}}]
\label{conj: KTWWY hw conjecture}
The correspondence (\ref{eq: monomial to highest weight}) defines a bijection between $\cB(\bR)_{\mu^*}$ and the set of highest weights for $Y_\mu^\lambda(\bR)$.
\end{theorem}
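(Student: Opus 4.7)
The plan is to parameterize highest weights of $Y_\mu^\lambda(\bR)$ using tuples of monic polynomials $(A_i(u))_{i\in I}$ via formula (\ref{eq: H from A}), and then match this parameterization with elements of $\cB(\bR)_{\mu^*}$ through the factorization (\ref{eq: factorization of monomial}).

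First, I would argue that on any non-zero highest weight module for $Y_\mu^\lambda(\bR)$, the generating series $A_i(u)$ must act as multiplication by a monic polynomial of degree exactly $m_i$. The truncation relations $A_i^{(r)}=0$ for $r>m_i$ defining the ideal $I_\mu^\la$ force the degree of $A_i(u)$ to be at most $m_i$, while the correct leading behavior of $H_i(u)$ on the highest weight vector together with (\ref{eq: H from A}) and (\ref{eq:rfromc}) pins down the degree and normalization.  Writing $A_i(u)=\prod_k(u-\tfrac12 s_{i,k})$, formula (\ref{eq: H from A}) expresses $J_i(u)$ as a rational function whose zeros and poles encode the multisets $\bS_i=\{s_{i,k}\}$ and $\bR_i$. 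Reading these off identifies $J_i(u)$ with the series (\ref{eq: monomial to highest weight}) associated to the monomial $y_\bR z_\bS^{-1}$. This step simultaneously shows that the map is well-defined and injective, and identifies the image as a subset of the family of monomials of the form $y_\bR z_\bS^{-1}$ with $|\bS_i|=m_i$.

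Next, I would check that every such monomial has weight $\mu^*$: since $z_{i,k}$ has weight $-\alpha_i$ and $y_\bR$ has weight $\lambda$, the equation $\lambda-\mu=\sum m_i\alpha_{n-i}$ of Section \ref{section:CombData} gives exactly $\mathrm{wt}(y_\bR z_\bS^{-1}) = \mu^*$. The content of Theorem \ref{conj: KTWWY hw conjecture} then becomes a characterization: which multisets $\bS$ (equivalently, which monic polynomials $A_i(u)$) actually arise from non-zero modules, and which arrangements of $\bS$ yield a monomial lying in the product $\prod_{i,c}\cB(y_{i,c})$.  This combinatorial matching can be carried out fundamental-by-fundamental, using the known explicit description of the Nakajima monomials appearing in each $\cB(y_{i,c})$.

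The main obstacle is the surjectivity, or non-vanishing, direction: for each $p\in\cB(\bR)_{\mu^*}$ one must exhibit a non-zero module $L(J)$ with the predicted highest weight, ruling out collapses forced by the other defining relations of $Y_\mu^\lambda(\bR)$. The natural route is to use the Coulomb branch description from Section \ref{sec: Coulomb}: candidate simple modules should come from equivariant line bundles on components of a relevant fixed-point locus in $\mathcal{R}_{\bG,\bN}$ (equivalently, from perverse sheaves on $\Grlmbar$ supported on MV-type cycles), whose equivariant parameters reproduce the polynomials $A_i(u)$.  When $\la$ is a single fundamental coweight the modules can be built directly, as the relevant representation is minuscule; the general case should follow by a fusion/tensor-product construction in which the multiset $\bR$ controls how fundamental pieces are combined, and the constraint $p\in\cB(\bR)$ becomes precisely the statement that the fusion does not degenerate. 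Matching this geometric/combinatorial enumeration with the monomials in $\cB(\bR)_{\mu^*}$, via the explicit formulas in Step 1, will close the proof.
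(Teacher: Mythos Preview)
This theorem is not proved in the present paper: it is imported from \cite[Theorem 1.3]{KTWWY}, and the subsequent Remark notes that a proof for general Lie type via Coulomb branch methods appears in \cite{KTWWY2}. There is therefore no proof here to compare your proposal against.

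On the proposal itself: your first two paragraphs correctly set up the dictionary between highest weights and monomials $y_\bR z_\bS^{-1}$ with $|\bS_i|=m_i$ (modulo a small slip: $z_{i,k}$ has weight $\alpha_i$, so it is $z_\bS^{-1}$ that lowers the weight). But this only shows that highest weights correspond to \emph{some} monomials of weight $\mu^*$; the substantive content of the theorem is the characterization of \emph{which} ones, i.e., why membership in the product crystal $\cB(\bR)$ is exactly the non-vanishing condition on the universal module $M(J)$. Your third paragraph acknowledges this but does not supply an argument: ``combinatorial matching fundamental-by-fundamental'' and ``fusion/tensor-product construction'' are placeholders rather than steps, and you give no mechanism by which the factorization constraint defining $\cB(\bR)$ translates into an algebraic obstruction or existence result for modules. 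A genuine proof needs either a direct construction of simple modules indexed by $\cB(\bR)_{\mu^*}$ together with an upper bound showing no others exist, or a transport of the problem to a setting where the simples are already classified; the cited references carry this out, but your sketch stops well short of either.
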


\begin{remark}
In \cite{KTWWY2} we  show that this theorem for general Lie type using a
presentation for the Yangian based on its connection to Coulomb
branches, as described in \cite{BFN+}.
\end{remark}

We note that if we write $p = y_\bR z_\bS^{-1}$, the tuple of multisets $\bS = (\bS_i)_{i\in I} $  encodes the action of the elements $A_i^{(r)} \in Y_\mu^\lambda(\bR)$ on a highest weight vector $\mathbf{1}$ of weight $p$:
\begin{equation}
\label{eq: monomial to highest weight 2}
A_i(u) \mathbf{1} = \prod_{k \in \bS_i} (1- \tfrac{1}{2} k u^{-1}) \mathbf{1} 
\end{equation}

\subsubsection{Monomials and partitions}
\label{section: pmc in type A}

There is an alternate description of $\cB(\bR)$ and its combinatorics in terms of tuples of Young diagrams \cite[Section 6.2]{KTWWY} which we'll now explain.  This will be used in Section \ref{section: bijection for general lambda}.

We call a set of parameters $\bR$ \textbf{integral} if for every $i$, $\bR_i$ consists of integers, and moreover, the parity of the elements in $\bR_i$ equals the parity of $i$.  In this case, there is a $\fg$--crystal structure on $\cB(\bR)$.  For arbitrary $\bR$ we can decompose each $\bR_i$ into equivalence classes
$
\bR_i=\bigcup_{\zeta\in\C/2\Z}\bR_i(\zeta),
$   
where $\bR_i(\zeta)=\{c\in\bR_i \hspace{1mm}|\hspace{1mm} c - \zeta \in 2\Z+ i \}$.  We let $\bR=\bigcup_{\zeta}\bR(\zeta)$ be the corresponding decomposition of $\bR$.  

As sets we have that $\cB(\bR) \cong \bigotimes_{\zeta}\cB(\bR(\zeta))$; we can put a $\fg\oplus\cdots\oplus \fg$--crystal structure here, with a copy of $\fg$ acting independently on each equivalence class $\cB(\bR(\zeta))$.  Therefore, to describe $\cB(\bR)$ it suffices to describe each $\cB(\bR(\zeta))$.  Moreover, $\cB(\bR(\zeta)) \cong \cB(\bR(\zeta)-\zeta)$, and hence we can confine ourselves to the case where $\bR$ is integral.  

First let us describe the case of a fundamental crystal $\cB(y_{i, c})$, where $c \equiv i \text{ mod } 2$.  As a set, it is in bijection with the collection of Young diagrams which fit into an $i\times (n -i)$ box.  We picture this by placing the Young diagrams in a skew-grid.  The vertices of the skew-grid are labelled by pairs $(i,\ell)$, where $i\in I$ and $\ell\equiv i \text{ mod } 2$.  The $i\times (n -i)$ box is placed in the grid with its top vertex at the point $(i,c)$.

For example if $n=7, i=3$, and $c=5$ and the Young diagram is $(4,2)$ then we have the following picture.  Here we've circled the vertex $(3,5)$, the $i\times (n-i)$ box is inscribed in blue, and the Young diagram is depicted by placing $1's$ in its boxes:

\begin{equation*}
\begin{tikzpicture}[scale=0.5]

{\color{blue}
\draw[line width = 0.04cm] (4,7)--(1,4);
\draw[line width = 0.04cm] (1,4)--(5,0);
\draw[line width = 0.04cm] (5,0)--(8,3);
\draw[line width = 0.04cm] (8,3)--(4,7);
}


\draw (0.5,5.5)--(2.5,7.5);
\draw (0.5,3.5)--(4.5,7.5);
\draw (0.5,1.5)--(6.5,7.5);

\draw (0.5,-0.5)--(8.5,7.5);
\draw (2.5,-0.5)--(8.5,5.5);
\draw (4.5,-0.5)--(8.5,3.5);
\draw (6.5,-0.5)--(8.5,1.5);

\draw (1.5,-0.5)--(0.5,0.5);
\draw (3.5,-0.5)--(0.5,2.5);
\draw (5.5,-0.5)--(0.5,4.5);
\draw (7.5,-0.5)--(0.5,6.5);
\draw (8.5,0.5)--(1.5,7.5);
\draw (8.5,2.5)--(3.5,7.5);
\draw (8.5,4.5)--(5.5,7.5);
\draw (8.5,6.5)--(7.5,7.5);

\draw (4,7) circle(0.2cm);

\draw node at (4,6) {1};
\draw node at (3,5) {1};
\draw node at (4,4) {1};
\draw node at (5,5) {1};
\draw node at (6,4) {1};
\draw node at (7,3) {1};

\draw node at (2,-1) {\tiny 1};
\draw node at (3,-1) {\tiny 2};
\draw node at (4,-1) {\tiny 3};
\draw node at (5,-1) {\tiny 4};
\draw node at (6,-1) {\tiny 5};
\draw node at (7,-1) {\tiny 6};

\draw node at (0,0) {\tiny -2};
\draw node at (0,1) {\tiny -1};
\draw node at (0,2) {\tiny 0};
\draw node at (0,3) {\tiny 1};
\draw node at (0,4) {\tiny 2};
\draw node at (0,5) {\tiny 3};
\draw node at (0,6) {\tiny 4};
\draw node at (0,7) {\tiny 5};
\end{tikzpicture}
\end{equation*} 

To associate a monomial to such a picture, we multiply $y_{i,c}$ by $z_{j,\ell}^{-1}$, as $(j,\ell)$ ranges over the coordinates of the bottom vertices of all the boxes in the partition.  For example, the diagram above corresponds to the monomial
$$ y_{3, 6} z_{3,4}^{-1} z_{2,3}^{-1} z_{4, 3}^{-1} z_{3, 2 }^{-1} z_{5,2}^{-1} z_{6, 1}^{-1}\in \cB(y_{3,6})$$  The rest of the elements of $\cB(y_{3,6})$ correspond to the other partitions fitting into the blue box.
 
 In general suppose $\bR$ is any integral set of parameters. Then elements of $\cB(\bR)$ are identified with diagrams consisting of circled vertices and numbered boxes.  The circled vertices correspond to the elements of $\bR$: for every $c\in \bR_i$ we circle the vertex at $(i,c)$.  If $c\in \bR_i$ occurs with multiplicity then the vertex is circled multiple times.  
 
 Such a diagram corresponds to an element of $\cB(\bR)$ if and only if it can be decomposed into a tuple of overlayed partitions.  More precisely, we must be able to place partitions at each circled vertex on the grid, in such a way that the number in a given box counts the times that box appears in a partition.  Note that a choice of such partitions may not be unique.      

For example, consider the case where $\fg=\mathfrak{sl}_9$ and we take $\bR_3=\{3,5,5\},\bR_5=\{5\}, \bR_6=\{2,4\}$, and $\bR_7=\{5\}$.  The left picture below depicts a candidate element of $\cB(\bR)$.  To check that it is  an element of $\cB(\bR)$ we must be able to place partitions at the circled vertices so that the number in each box counts the number of partitions that contain it.
The right picture depicts such a choice of partitions, verifying that this diagram is indeed in $\cB(\bR)$ .  Note  that since $5$ occurs twice in $\bR_3$ we are able to place two partitions at $(3,5)$.  
 
\begin{equation*}
\begin{tikzpicture}[scale=0.5]

%
%

%

%

\draw (0.5,5.5)--(2.5,7.5);
\draw (0.5,3.5)--(4.5,7.5);
\draw (0.5,1.5)--(6.5,7.5);

\draw (1.5,0.5)--(8.5,7.5);
\draw (3.5,0.5)--(8.5,5.5);
\draw (5.5,0.5)--(8.5,3.5);
\draw (7.5,0.5)--(8.5,1.5);

\draw (2.5,0.5)--(0.5,2.5);
\draw (4.5,0.5)--(0.5,4.5);
\draw (6.5,0.5)--(0.5,6.5);
\draw (8.5,0.5)--(1.5,7.5);
\draw (8.5,2.5)--(3.5,7.5);
\draw (8.5,4.5)--(5.5,7.5);
\draw (8.5,6.5)--(7.5,7.5);

\draw (3,6) circle(0.2cm);
\draw (3,6) circle(0.3cm);
\draw (3,4) circle(0.2cm);
\draw (5,6) circle(0.2cm);
\draw (7,6) circle(0.2cm);
\draw (6,5) circle(0.2cm);
\draw (6,3) circle(0.2cm);

\draw node at (2,4) {1};
\draw node at (3,3) {2};
\draw node at (4,2) {4};

\draw node at (3,5) {2};
\draw node at (4,4) {1};
\draw node at (5,3) {3};

\draw node at (5,5) {1};
\draw node at (6,4) {2};

\draw node at (7,5) {1};

\draw node at (8,4) {1};
\draw node at (7,3) {2};

\draw node at (1,0) {\tiny 1};
\draw node at (2,0) {\tiny 2};
\draw node at (3,0) {\tiny 3};
\draw node at (4,0) {\tiny 4};
\draw node at (5,0) {\tiny 5};
\draw node at (6,0) {\tiny 6};
\draw node at (7,0) {\tiny 7};
\draw node at (8,0) {\tiny 8};

\draw node at (0,1) {\tiny 0};
\draw node at (0,2) {\tiny 1};
\draw node at (0,3) {\tiny 2};
\draw node at (0,4) {\tiny 3};
\draw node at (0,5) {\tiny 4};
\draw node at (0,6) {\tiny 5};
\draw node at (0,7) {\tiny 6};

\end{tikzpicture} 
\hspace{2cm}
\begin{tikzpicture}[scale=0.5]

\draw[line width = 0.03cm] (3,5.8)--(3.8,5);
\draw[line width = 0.03cm] (3,5.8)--(2.2,5);
\draw[line width = 0.03cm] (3,4.2)--(3.8,5);
\draw[line width = 0.03cm] (3,4.2)--(2.2,5);

{\color{blue}
\draw[line width = 0.04cm] (3,6)--(4,5);
\draw[line width = 0.04cm] (4,5)--(2,3);
\draw[line width = 0.04cm] (2,3)--(1,4);
\draw[line width = 0.04cm] (1,4)--(3,6);
}

{\color{red}
\draw[line width = 0.03cm] (5,5.8)--(6.8,4);
\draw[line width = 0.03cm] (6.8,4)--(4,1.2);
\draw[line width = 0.03cm] (4,1.2)--(2.2,3);
\draw[line width = 0.03cm] (2.2,3)--(5,5.8);
}

\draw[line width = 0.03cm] (7,6.2)--(9.2,4);
\draw[line width = 0.03cm] (9.2,4)--(7,1.8);
\draw[line width = 0.03cm] (7,1.8)--(6,2.8);
\draw[line width = 0.03cm] (6,2.8)--(4,0.8);
\draw[line width = 0.03cm] (4,0.8)--(2.8,2);
\draw[line width = 0.03cm] (2.8,2)--(7,6.2);

\draw[line width = 0.03cm] (3,3.6)--(4.6,2);
\draw[line width = 0.03cm] (4.6,2)--(4,1.4);
\draw[line width = 0.03cm] (4,1.4)--(2.4,3);
\draw[line width = 0.03cm] (2.4,3)--(3,3.6);

{\color{green}
\draw[line width = 0.03cm] (6,5.4)--(8.4,3);
\draw[line width = 0.03cm] (8.4,3)--(7,1.6);
\draw[line width = 0.03cm] (7,1.6)--(6,2.6);
\draw[line width = 0.03cm] (6,2.6)--(4,0.6);
\draw[line width = 0.03cm] (4,0.6)--(2.6,2);
\draw[line width = 0.03cm] (2.6,2)--(6,5.4);

}

\draw (0.5,5.5)--(2.5,7.5);
\draw (0.5,3.5)--(4.5,7.5);
\draw (0.5,1.5)--(6.5,7.5);

\draw (1.5,0.5)--(8.5,7.5);
\draw (3.5,0.5)--(8.5,5.5);
\draw (5.5,0.5)--(8.5,3.5);
\draw (7.5,0.5)--(8.5,1.5);

\draw (2.5,0.5)--(0.5,2.5);
\draw (4.5,0.5)--(0.5,4.5);
\draw (6.5,0.5)--(0.5,6.5);
\draw (8.5,0.5)--(1.5,7.5);
\draw (8.5,2.5)--(3.5,7.5);
\draw (8.5,4.5)--(5.5,7.5);
\draw (8.5,6.5)--(7.5,7.5);

\draw (3,6) circle(0.2cm);
\draw (3,6) circle(0.3cm);
\draw (3,4) circle(0.2cm);
\draw (5,6) circle(0.2cm);
\draw (7,6) circle(0.2cm);
\draw (6,5) circle(0.2cm);
\draw (6,3) circle(0.2cm);

\draw node at (2,4) {1};
\draw node at (3,3) {2};
\draw node at (4,2) {4};

\draw node at (3,5) {2};
\draw node at (4,4) {1};
\draw node at (5,3) {3};

\draw node at (5,5) {1};
\draw node at (6,4) {2};

\draw node at (7,5) {1};

\draw node at (8,4) {1};
\draw node at (7,3) {2};

\draw node at (1,0) {\tiny 1};
\draw node at (2,0) {\tiny 2};
\draw node at (3,0) {\tiny 3};
\draw node at (4,0) {\tiny 4};
\draw node at (5,0) {\tiny 5};
\draw node at (6,0) {\tiny 6};
\draw node at (7,0) {\tiny 7};
\draw node at (8,0) {\tiny 8};

\draw node at (0,1) {\tiny 0};
\draw node at (0,2) {\tiny 1};
\draw node at (0,3) {\tiny 2};
\draw node at (0,4) {\tiny 3};
\draw node at (0,5) {\tiny 4};
\draw node at (0,6) {\tiny 5};
\draw node at (0,7) {\tiny 6};

\end{tikzpicture} 
\end{equation*}

To associate a monomial to such a diagram we multiply $y_\bR$ by $z_{j,l}^{-k}$, where $(j,\ell)$ ranges over the bottom vertices of the numbered boxes, and $k$ is the number of the box.  In the example above, the diagram corresponds to the monomial
$$
y_\bR z_{3,3}^{-2}z_{5,3}^{-1}z_{7,3}^{-1}z_{2,2}^{-1}z_{4,2}^{-1}z_{6,2}^{-2}z_{8,2}^{-1}z_{3,1}^{-2}z_{5,1}^{-3}z_{7,1}^{-2}z_{4,0}^{-4} \in \cB(\bR)
$$  

We reiterate that the assumption that $\bR$ is an integral set of parameters is made only for the sake of convenience.  We could set up the same combinatorics for general $\bR$, where we depict elements of $\cB(\bR)$ by tuples of such diagrams, one for each $\zeta \in \C/2\Z$ such that $\bR(\zeta)$ is nonempty.  

\subsection{Maps between truncated shifted Yangians}
\label{subsec: maps between yangians}

Given $\lambda\geq \mu$, recall that we define $N = \sum_i i \lambda_{n-i}$.  Consider a set of parameters $\bR = (\bR_i)_{i\in I}$ of weight $\lambda$, and a set of parameters $\tbR$ of weight $N\varpi_1$.  Note that the latter is prescribed by the single multiset $\tbR_{n-1}$ of size $N$. For this reason, we will abuse of notation and simply identify $\tbR = \tbR_{n-1}$.  

Our goal is to establish the following commutative diagram: 
\begin{equation}
\label{diagram: maps of yangians}
\xymatrix{
Y_\mu \ar[r]^{\phi'} \ar[dr]_{\phi} & Y_\mu^{N \varpi_1} (\tbR) \ar@{-->}[d]^{\phi''} \\
& Y_\mu^\lambda(\bR)   }
\end{equation}
where $\phi, \phi'$ are the (defining) quotient maps.

\begin{theorem}
\label{thm: maps between yangians}
A map $\phi''$ making the above diagram commute exists iff
\begin{equation}
\label{eq: R tilde}
\tbR = \bigcup_{i=1}^{n-1} \big( \bR_i + (n-i-1) \big) \cup \big( \bR_i + (n-i-3)\big) \cup \cdots \cup \big( \bR_i - (n-i-1)\big)
\end{equation}
as a union of multisets.  In this case, $\phi''$ quantizes the inclusion $\Gr^{\overline{\lambda}}_\mu \subset \Gr^{\overline{N\varpi_1}}_\mu$ as a closed Poisson subvariety.
\end{theorem}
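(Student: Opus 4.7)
The plan is to construct $\phi''$ explicitly on generators via the structural relation (\ref{eq: H from A}). Since $\phi'$ is surjective, the map $\phi''$ is uniquely determined if it exists, and the image $\phi''(H_i(u)) = H_i(u)$ is forced; the freedom lies in the image of $A_i(u)$. I would make the natural ansatz $\phi''(A_i(u)) = A_i(u)\,P_i(u)$ in $Y_\mu^\la(\bR)$, with $P_i(u) = u^{-\ell'_i}\tilde P_i(u)$ for a monic polynomial $\tilde P_i(u) \in \C[u]$ of degree $\ell'_i := m'_i - m_i$. Substituting into (\ref{eq: H from A}) on both sides and imposing $\phi''(H_i(u)) = H_i(u)$ yields a rational function equation. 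Using the identity $\langle \alpha_{n-i}^\vee, N\varpi_1 - \lambda\rangle = N\delta_{i, n-1} - \lambda_i = 2\ell'_i - \sum_{j\sim i}\ell'_j$, all pure powers of $u$ cancel, and the equation simplifies to the polynomial identity
\[
R_i(u)\,\tilde P_i(u)\,\tilde P_i(u-1) \;=\; \tilde R_i(u) \prod_{j \sim i} \tilde P_j\bigl(u - \tfrac{1}{2}\bigr), \qquad i \in I.
\]
Thus the existence of $\phi''$ is equivalent to the existence of such polynomials $\tilde P_i(u)$.

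To show existence assuming (\ref{eq: R tilde}), I would first handle the fundamental case $\lambda = \varpi_{n-j}$ where $\bR_j = \{c\}$ and $\bR_{j'} = \emptyset$ for $j' \neq j$. Here $\ell'_i$ is explicitly computable from the Cartan matrix, and one can write down polynomials $\tilde P_i(u)$ whose roots are appropriate shifts of $c/2$ (for example, $\tilde P_i(u) = \prod_{k=0}^{i-2}\bigl(u - (c - (i-1) + 2k)/2\bigr)$ in the $j = 1$ case). A direct telescoping calculation verifies the polynomial identities for $i \in I$; crucially, the identity for $i = n-1$ simultaneously forces $\tilde R_{n-1}(u)$ to be the specific product of linear factors appearing in (\ref{eq: R tilde}) for this fundamental piece. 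For general $\lambda$, the polynomial identity is multiplicative in $\bR$: taking $\tilde P_i(u) = \prod_{j \in I,\, c \in \bR_j}\tilde P_i^{(j, c)}(u)$, the product of the fundamental contributions, solves all of the equations simultaneously, and the combined formula for $\tilde R_{n-1}(u)$ is exactly (\ref{eq: R tilde}).

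For the necessity direction, suppose that $\phi''$ exists. Then every simple $Y_\mu^\la(\bR)$-module pulls back to a simple $Y_\mu^{N\varpi_1}(\tbR)$-module, so every highest weight series of $Y_\mu^\la(\bR)$ is also a highest weight series of $Y_\mu^{N\varpi_1}(\tbR)$. Via Theorem \ref{conj: KTWWY hw conjecture} and the formula (\ref{eq: monomial to highest weight}), this set-theoretic inclusion translates into a constraint on $\tbR$ in terms of $\bR$; unpacking this constraint (for instance, by tracking the highest weight of the highest weight module) forces (\ref{eq: R tilde}).

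Finally, the Poisson subvariety claim follows by degeneration: since $\phi$ and $\phi'$ are surjective, so is $\phi''$, and passing to associated graded algebras using Theorem \ref{thm: reducedness in type A} gives a surjection of graded Poisson algebras $\C[\Gr^{\overline{N\varpi_1}}_\mu] \twoheadrightarrow \C[\Grlmbar]$. This is precisely the closed Poisson immersion of $\Grlmbar$ in $\Gr^{\overline{N\varpi_1}}_\mu$, and it agrees with the set-theoretic inclusion (\ref{eq: def of Grlm}) because the defining truncations $A_i^{(r)} = 0$ for $r > m_i$ in $Y_\mu^\la(\bR)$ classically cut out exactly the Jordan type bound $\leq \tau$ on $\Lambda_0/\Lambda$. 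The main technical obstacle is the fundamental-case verification: constructing the polynomials $\tilde P_i(u)$ explicitly and checking the telescoping identity, from which the specific shifts in (\ref{eq: R tilde}) emerge.
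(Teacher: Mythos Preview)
Your reduction to the polynomial identity
\[
R_i(u)\,\tilde P_i(u)\,\tilde P_i(u-1) \;=\; \tilde R_i(u)\prod_{j\sim i}\tilde P_j\bigl(u-\tfrac12\bigr)
\]
is exactly what the paper does (Corollary~\ref{cor: existence of f}); your $\tilde P_i$ is the paper's $f_i$. From here the two arguments diverge, and your necessity direction has a gap.

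The paper does not split into ``fundamental case plus multiplicativity'' for existence, nor does it appeal to highest weight theory for necessity. Instead it observes (Lemma~\ref{lemma: no final m}) that the degree $\ell_1' = m_1''$ vanishes, so $\tilde P_1(u)=1$ is forced. Feeding this into the identity for $i=1$ gives $\tilde P_2(u-\tfrac12)=R_1(u)$; the identity for $i=2$ then determines $\tilde P_3$; and so on recursively up to $i=n-2$. The final identity $i=n-1$ then reads off $\tilde R(u)$ uniquely. This single recursion handles existence \emph{and} necessity simultaneously: if $\phi''$ exists, the $\tilde P_i$ must be polynomials (Lemma~\ref{lemma: f is polynomial}), and the recursion shows there is exactly one such family, hence exactly one admissible $\tbR$.

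Your multiplicative construction for existence is a legitimate alternative and would work, but it is more labor than the recursion. Your necessity argument, however, is not complete: saying that the inclusion of highest weight sets ``forces (\ref{eq: R tilde})'' via Theorem~\ref{conj: KTWWY hw conjecture} requires showing that $\cB(\bR)_{\mu^*}\subset\cB(\tbR)_{\mu^*}$ pins down $\tbR$ uniquely, and you have not argued this. Even if true, it invokes a substantially deeper result than needed. The recursion you have already set up gives necessity for free; you should use it.
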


The last claim in the theorem simply follows from the form of the identification $\gr Y_\mu^\lambda(\bR) \cong \C[ \Gr^{\overline{\lambda}}_\mu]$.  Indeed as in \cite{KWWY}, for any $\lambda\geq \mu$ the surjection $Y_\mu \rightarrow Y_\mu^\lambda(\bR)$ corresponds to the inclusion $\Gr^{\overline{\lambda}}_\mu \subset \Gr_\mu$ into the opposite cell $\Gr_\mu$.  So (\ref{diagram: maps of yangians}) expresses the inclusions 
\begin{equation}
\xymatrix{
\Gr_\mu &\; \Gr^{\overline{N\varpi_1}}_\mu \ar@{_{(}->}[l] \\
&\; \Gr^{\overline{\lambda}}_\mu \ar@{_{(}->}[u] \ar@{_{(}->}[lu]  }
\end{equation}
We will prove Theorem \ref{thm: maps between yangians} in Section \ref{sec:proof of them maps between yangians} below.  First we record some consequences.

%

When the map $\phi''$ exists, every highest weight module for $Y_\mu^\lambda(\bR)$ pulls-back to a highest weight module for $Y_\mu^{N\varpi_1}(\tbR)$. Recall from Section \ref{section: monomials to hw} that an element of the monomial crystal, expressed in the variables $y_{i,k}$, explicitly encodes the action of the series $H_i(u)$ on a highest weight vector.  Since $H_i(u) \mapsto H_i(u)$ under $Y_\mu^{N\varpi_1}(\tbR)\rightarrow Y_\mu^\lambda(\bR)$, the pull-back of highest weights corresponds to an inclusion of sets $\cB(\bR)_{\mu^\ast} \subset \cB(\tbR)_{\mu^\ast}$.  Slightly more generally, we have:
\begin{lemma}
\label{lemma:crystalinclusion}
Let $\bR, \tbR$ satisfy (\ref{eq: R tilde}).  Then there is an inclusion of sets
$$\cB(\bR) \subset \cB(\tbR) $$
If $\bR$ is integral, then this is an inclusion of crystals.
\end{lemma}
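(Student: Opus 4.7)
The plan is to deduce the set inclusion directly from the argument sketched in the paragraph preceding the lemma, by varying the weight $\mu$. For each dominant coweight $\mu$ such that $\cB(\bR)_{\mu^*}$ is non-empty (forcing $\mu \leq \lambda \leq N\varpi_1$), Theorem \ref{thm: maps between yangians} furnishes the surjection $\phi'' : Y_\mu^{N\varpi_1}(\tbR) \to Y_\mu^\lambda(\bR)$ with $H_i(u) \mapsto H_i(u)$. Pulling back a simple highest-weight module with highest-weight tuple $J = (J_i(u))_{i\in I}$ yields a highest-weight module for $Y_\mu^{N\varpi_1}(\tbR)$ with the very same tuple $J$. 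Invoking Theorem \ref{conj: KTWWY hw conjecture} on both sides and observing that the monomial-to-series correspondence (\ref{eq: monomial to highest weight}) does not depend on the truncation data, the tuple $J$ is encoded by one and the same monomial $p$ in $\cB(\bR)_{\mu^*}$ and in $\cB(\tbR)_{\mu^*}$. This gives the literal set inclusion $\cB(\bR)_{\mu^*} \subset \cB(\tbR)_{\mu^*}$, and taking the disjoint union over $\mu$ yields $\cB(\bR) \subset \cB(\tbR)$.

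For the crystal part, assume $\bR$ is integral. A quick parity check shows $\tbR$ is also integral: each element $c + (n-i-1-2k)$ of $\tbR_{n-1}$ arising from a $c \in \bR_i$ satisfies $c + (n-i-1-2k) \equiv i + (n-i-1) \equiv n-1 \pmod 2$, which is exactly the required parity at node $n-1$. Hence both $\cB(\bR)$ and $\cB(\tbR)$ inherit $\fg$-crystal structures from Nakajima's monomial crystal, whose Kashiwara operators $\tilde{e}_i, \tilde{f}_i$ act by a rule depending only on the exponents of a Laurent monomial and not on its factorization into fundamental pieces. Therefore the set-theoretic inclusion $\cB(\bR) \subset \cB(\tbR)$ automatically intertwines the Kashiwara operators and is an inclusion of $\fg$-crystals.

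There is no serious obstacle in this argument once Theorem \ref{thm: maps between yangians} is granted as a black box (it is proved later, in Section \ref{sec:proof of them maps between yangians}). The essential point is that $\phi''$ is the identity on the Cartan series $H_i(u)$, so the monomial labelling the action on a highest-weight vector is unchanged under pull-back; everything else is bookkeeping.
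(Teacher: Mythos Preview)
Your approach differs from the paper's and contains a gap. The paper proves the lemma by a direct crystal-combinatorial argument: it establishes the fundamental case $\lambda = \varpi_i$ (analogous to \cite[Lemma~5.31]{KTWWY}) and then deduces the general case by taking products, since $\cB(\bR)$ is by definition a product of fundamental monomial crystals (\ref{eq: monomial crystal}). No Yangian highest-weight theory is invoked.

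Your argument instead tries to assemble the inclusion from the highest-weight theory of truncated shifted Yangians, ranging over dominant $\mu$. The problem is that as $\mu$ ranges over dominant coweights, $\mu^\ast = -w_0\mu$ also ranges only over dominant coweights, so the union $\bigcup_\mu \cB(\bR)_{\mu^\ast}$ captures only the \emph{dominant-weight} elements of $\cB(\bR)$. The crystal $\cB(\bR)$ has many elements of non-dominant weight (already the two-element crystal $\cB(y_{1,c})$ for $\mathfrak{sl}_2$ has an element of weight $-\varpi_1$). For non-dominant $\mu$ the algebras $Y_\mu^\lambda(\bR)$ and Theorem~\ref{thm: maps between yangians} are not available in this paper, so your step ``taking the disjoint union over $\mu$ yields $\cB(\bR)\subset\cB(\tbR)$'' does not go through as written.

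In the integral case the gap can be repaired: every element of $\cB(\bR)$ is obtained from a highest-weight vector of its connected component (necessarily of dominant weight) by applying $\tilde f_i$'s; your argument places those highest-weight vectors in $\cB(\tbR)$, and since $\cB(\tbR)$ is closed under the Kashiwara operators (which depend only on the Laurent monomial, as you note), the rest of $\cB(\bR)$ follows. The non-integral case can then be reduced to the integral one via the decomposition $\bR = \bigcup_\zeta \bR(\zeta)$ of Section~\ref{section: pmc in type A}. With this repair your route is valid but less elementary than the paper's, and it leans on Theorem~\ref{conj: KTWWY hw conjecture} rather than on the direct product description of the monomial crystal.
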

\begin{proof}
The case where $\lambda = \varpi_i$ is analogous to \cite[Lemma 5.31]{KTWWY}, and the general case follows by taking products.  
\end{proof}

\begin{remark}
The above results are analogs of the embedding of $\mathfrak{sl}_n$ representations
$$  (\C^n)^{\otimes \lambda_1}\otimes (\wedge^2 \C^n)^{\otimes \lambda_2} \otimes \cdots \otimes (\wedge^{n-1} \C^n)^{\otimes\lambda_{n-1}}  \subset (\wedge^{n-1} \C^n)^{\otimes N}, $$
and in fact when $\bR$ is sufficiently generic Lemma \ref{lemma:crystalinclusion} can be interpretted as a crystal version of this embedding.
\end{remark}

\begin{corollary}
\label{thm: maps between yangians 2}
When $\phi'': Y_\mu^{N\varpi_1}(\tbR) \rightarrow Y_\mu^\lambda(\bR)$ as above exists, we have a containment
$$
\ker \phi'' \subset \bigcap_{p} \operatorname{Ann} L_p, 
$$
the intersection being over the simple $Y_\mu^{N\varpi_1}(\tbR)$--modules $L_p$ with highest weights $p \in \cB(\bR)_{\mu^\ast} \subset \cB(\tbR)_{\mu^\ast}$.  
\end{corollary}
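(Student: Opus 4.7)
The plan is to exploit the fact that $\phi''$ is the identity on the generating series $H_i(u)$, so that highest weights are preserved under pullback. Fix any $p \in \cB(\bR)_{\mu^\ast}$. By Theorem \ref{conj: KTWWY hw conjecture}, the monomial $p$ corresponds to a genuine highest weight of $Y_\mu^\lambda(\bR)$, so there is a simple $Y_\mu^\lambda(\bR)$-module $L'_p$ whose highest weight vector $\mathbf{1}$ satisfies $H_i(u) \mathbf{1} = J_i(u)\mathbf{1}$ with $J$ determined from $p$ by \eqref{eq: monomial to highest weight}.

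Now pull $L'_p$ back along $\phi''$ to obtain a $Y_\mu^{N\varpi_1}(\tbR)$-module $(\phi'')^\ast L'_p$; tautologically this is annihilated by $\ker \phi''$. Because $\phi''$ sends $H_i(u)$ to $H_i(u)$ and $E_i^{(r)}$ to $E_i^{(r)}$, the vector $\mathbf{1}$ remains a highest weight vector for the $Y_\mu^{N\varpi_1}(\tbR)$-action, with the very same series $J_i(u)$. Under the inclusion $\cB(\bR)_{\mu^\ast} \subset \cB(\tbR)_{\mu^\ast}$ of Lemma \ref{lemma:crystalinclusion}, the monomial $p$ is precisely the highest weight attached to $(\phi'')^\ast L'_p$ as a $Y_\mu^{N\varpi_1}(\tbR)$-module.

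Thus $(\phi'')^\ast L'_p$ is a highest weight module of weight $p$ in the sense of Section \ref{sec: monomial crystal}, so it admits the simple $Y_\mu^{N\varpi_1}(\tbR)$-module $L_p$ of highest weight $p$ as a quotient. Any quotient of a module annihilated by $\ker\phi''$ is itself annihilated by $\ker\phi''$, giving $\ker\phi'' \subset \operatorname{Ann} L_p$. Intersecting over all $p \in \cB(\bR)_{\mu^\ast}$ yields the corollary.

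The argument is essentially formal once Theorem \ref{conj: KTWWY hw conjecture} and Lemma \ref{lemma:crystalinclusion} are in hand; the only point requiring attention is verifying that the monomial-to-highest-weight correspondence \eqref{eq: monomial to highest weight} is compatible with the inclusion $\cB(\bR) \subset \cB(\tbR)$, i.e.\ that the series $J_i(u)$ attached to $p$ does not depend on whether we view $p$ as lying in $\cB(\bR)_{\mu^\ast}$ or $\cB(\tbR)_{\mu^\ast}$. Since $J_i(u)$ is read off directly from the exponents of the variables $y_{i,k}$ in the monomial $p$, and the inclusion of crystals in Lemma \ref{lemma:crystalinclusion} is literally an inclusion of Laurent monomials, this compatibility is immediate.
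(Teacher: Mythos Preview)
Your proof is correct and follows essentially the same approach as the paper. The paper leaves the corollary without explicit proof, relying on the paragraph immediately preceding it: pullbacks of highest weight modules along $\phi''$ are again highest weight modules with the same weight (since $H_i(u)\mapsto H_i(u)$), and $\ker\phi''$ annihilates any such pullback and hence its simple quotient $L_p$. You have simply written this out in full detail.
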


Defining this map in the case where we consider $R_i(u)$ as a formal
polynomial, rather than specializing to numerical values, is slightly
more complicated.  Of course, Theorem  \ref{thm: maps between
  yangians} shows that we have a homomorphism $Y^{N\omega_1}_\mu\to
Y^\la_\mu$ sending
\begin{equation}
\tilde{R}_{n-1}(u)\mapsto \prod_{i=1}^{n-1}\prod_{k=1}^{n-i}
R_{i}(u-\tfrac{n-i-1}{2}+k-1).\label{eq:R-match}
\end{equation}

Unfortunately, this map is not necessarily surjective;  it is more
convenient to consider the enlarged version where we have a surjective
map 
$$
Y^{N\omega_1}_\mu(\tilde{\brt})\to Y^\la_\mu(\brt)
$$ 
of the algebras from Remark \ref{rmk: adjoin formal roots}. This map is defined by sending the roots of the LHS of \eqref{eq:R-match}
to the roots of RHS (by an arbitrary bijection).

\subsubsection{Proof of Theorem \ref{thm: maps between yangians}}
\label{sec:proof of them maps between yangians}
Recall that we set
\begin{equation}
\label{eq: coweight data 6}
\lambda - \mu =\sum_i m_i \alpha_{n-i}, \ \ N\varpi_1 - \mu = \sum_i m_{i}' \alpha_{n-i}
\end{equation}
In addition denote $N\varpi_1 -\lambda = \sum_i m_i'' \alpha_{n-i}$.  In particular $m_i = m_i' - m_i''$. We note the following:

\begin{lemma}
\label{lemma: no final m}
$$ N \varpi_1 - \lambda = \sum_{i=2}^{n-1} \lambda_{n-i} \Big( (i-1)\alpha_1 + (i-2)\alpha_2 + \ldots + \alpha_{i-1}\Big). $$
Thus, we have that the coefficient $m_1'' = 0$.
\end{lemma}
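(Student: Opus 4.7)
My plan is to establish the formula by reducing to the case $\lambda = \varpi_k$ via linearity, verifying that case directly using the Cartan matrix, and then reading off $m_1'' = 0$ as an immediate consequence.

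First, I would observe that both sides of the claimed identity are $\mathbb{Z}$-linear in the tuple $(\lambda_1, \ldots, \lambda_{n-1})$: on the left, $N = \sum_i i\lambda_{n-i}$ depends linearly on the $\lambda_i$'s, and so does $N\varpi_1 - \lambda$; on the right, the formula is manifestly linear. So it suffices to check the identity when $\lambda = \varpi_k$ for a single $k \in \{1, \ldots, n-1\}$. In that case $N = k$, and the statement reduces to the identity
$$k\varpi_1 - \varpi_k = (k-1)\alpha_1 + (k-2)\alpha_2 + \cdots + \alpha_{k-1}$$
for $k \geq 2$, together with the trivial case $k = 1$ where both sides vanish.

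Next, I would verify this displayed identity by pairing both sides with each simple coroot $\alpha_j^\vee$. The left-hand side pairs to $k\delta_{1,j} - \delta_{k,j}$. For the right-hand side, using the tridiagonal Cartan matrix of type $A_{n-1}$ (so $\langle \alpha_\ell, \alpha_j^\vee \rangle = 2$ if $\ell = j$, $-1$ if $|\ell - j| = 1$, and $0$ otherwise), a short telescoping calculation produces the pairing $k$ when $j = 1$, zero for $1 < j < k$, $-1$ when $j = k$, and zero for $j > k$, matching the LHS in each case. I don't expect any real difficulty here — this is essentially a fact about the inverse Cartan matrix in type $A$, and could alternatively be derived in the $\mathbb{Z}^n / \mathbb{Z}(1,\ldots,1)$ description of weights via $\alpha_j = e_j - e_{j+1}$ and $\varpi_k = e_1 + \cdots + e_k$.

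Finally, to conclude $m_1'' = 0$: following the convention $\lambda - \mu = \sum_i m_i \alpha_{n-i}$, the coefficient $m_1''$ is by definition the coefficient of $\alpha_{n-1}$ in $N\varpi_1 - \lambda$. But in the established formula, the $i$-th summand involves only $\alpha_1, \ldots, \alpha_{i-1}$, and $i$ ranges over $2, \ldots, n-1$, so the maximum index of any simple root appearing on the right-hand side is $n-2$. Hence the coefficient of $\alpha_{n-1}$ is zero, giving $m_1'' = 0$.
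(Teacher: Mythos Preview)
Your proof is correct and follows essentially the same route as the paper's: both reduce by linearity to the identity $i\varpi_1 - \varpi_i = (i-1)\alpha_1 + (i-2)\alpha_2 + \cdots + \alpha_{i-1}$, and then read off $m_1'' = 0$ from the absence of $\alpha_{n-1}$ on the right-hand side. The paper simply states this identity as an observation, whereas you supply a verification via pairing with the $\alpha_j^\vee$ (note: in the paper's conventions these are simple \emph{roots}, not coroots, but since the Cartan matrix in type $A$ is symmetric your computation goes through unchanged).
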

\begin{proof}
We have $N\varpi_1 - \lambda = \sum_i \lambda_{n-i} ( i \varpi_1 - \varpi_i) $.  Now observe that 
\begin{equation*}
 i \varpi_1 - \varpi_i = (i-1) \alpha_1 + (i-2)\alpha_2 + \ldots + \alpha_{i-1}.\qedhere 
\end{equation*}
\end{proof}

Recall from Section \ref{subsec:aff} that  $Y_\mu^\lambda(\bR) = Y_\mu / \langle A_i^{(r)} : i\in I, r>m_i \rangle $, where $A_i^{(r)} \in Y_\mu$ are defined by
$$
H_i(u) = r_i(u) \frac{A_{i-1}(u - \frac{1}{2})A_{i+1}(u - \frac{1}{2})}{A_i(u) A_i(u - 1) }
$$
with
$$ 
r_i(u) = \frac{R_i(u)}{u^{\lambda_i}}\frac{  (1- \frac{1}{2} u^{-1})^{m_{i-1}+m_{i+1}}}{(1- u^{-1})^{m_i}}
$$

Similarly $Y_\mu^{N\varpi_1}(\tbR) = Y_\mu / \langle \widetilde{A}_i^{(r)}: i\in I, r>m_i'\rangle $, where $\widetilde{A}_i^{(r)} \in Y_\mu$ are defined by
$$
H_i(u) = \widetilde{r}_i(u) \frac{\widetilde{A}_{i-1}(u - \frac{1}{2})\widetilde{A}_{i+1}(u - \frac{1}{2})}{\widetilde{A}_i(u) \widetilde{A}_i(u - 1) }
$$
where 
$$
\widetilde{r}_i(u) = \left( \frac{ \widetilde{R}(u)}{u^N}\right)^{\delta_{i,n-1}} \frac{  (1- \frac{1}{2} u^{-1})^{m_{i-1}'+m_{i+1}'}}{(1- u^{-1})^{m_i'}}
$$

From the definitions, for all $i$ we therefore have an equality in $Y_\mu$:
$$
 r_i(u) \frac{A_{i-1}(u - \frac{1}{2})A_{i+1}(u - \frac{1}{2})}{A_i(u) A_i(u - 1) } = \widetilde{r}_i(u) \frac{\widetilde{A}_{i-1}(u - \frac{1}{2})\widetilde{A}_{i+1}(u - \frac{1}{2})}{\widetilde{A}_i(u) \widetilde{A}_i(u - 1) }
$$
Using the definition of $r_i(u)$ and $\widetilde{r}_i(u)$, for $i=n-1$ we can rewrite this as
\begin{equation}
\label{eq: yangian maps compatibility 1}
\frac{\widetilde{A}_{n-2}(u-\tfrac{1}{2})}{\widetilde{A}_{n-1}(u)\widetilde{A}_{n-1}(u-1)} = \frac{R_{n-1}(u)}{\widetilde{R}(u)} \frac{u^{m_{n-1}''} (u-1)^{m_{n-1}''}}{(u-\tfrac{1}{2})^{m_{n-2}''}} \frac{A_{n-2}(u - \frac{1}{2})}{A_{n-1}(u) A_{n-1}(u - 1) }
\end{equation}
and for $i=1,\ldots, n-2$ as
\begin{equation}
\label{eq: yangian maps compatibility 2}
\frac{\widetilde{A}_{i-1}(u - \frac{1}{2})\widetilde{A}_{i+1}(u - \frac{1}{2})}{\widetilde{A}_i(u) \widetilde{A}_i(u - 1) } = R_i(u) \frac{u^{m_i''} (u-1)^{m_i''}}{(u-\tfrac{1}{2})^{m_{i-1}'' + m_{i+1}''}}  \frac{A_{i-1}(u - \frac{1}{2})A_{i+1}(u - \frac{1}{2})}{A_i(u) A_i(u - 1) }
\end{equation}

\begin{corollary}
\label{cor: existence of f}
There are unique series $f_i(u) \in u^{m_i''}(1+u^{-1} \C[[u^{-1}]])$ such that
$$ \widetilde{A}_i(u) = \frac{f_i(u)}{u^{m_i''}} A_i(u) $$
These satisfy
\begin{equation}
\label{equation: defining f}
R_{n-1}(u) = \frac{\widetilde{R}(u) f_{n-2}(u-\tfrac{1}{2})}{f_{n-1}(u) f_{n-1}(u-1)}, \ \ R_i(u) = \frac{f_{i-1}(u-\tfrac{1}{2}) f_{i+1}(u-\tfrac{1}{2})}{f_i(u) f_i(u-1)}
\end{equation}
for $i = 1,\ldots, n-2$.
\end{corollary}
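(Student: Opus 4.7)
The plan is to construct scalar series $f_i$ directly satisfying (\ref{equation: defining f}), and then verify that the relation $\widetilde{A}_i(u) = f_i(u)/u^{m_i''} A_i(u)$ holds for this choice. Since $A_i(u)$ and $\widetilde{A}_i(u)$ both lie in the commutative subalgebra of $Y_\mu$ generated by the $H_j^{(r)}$, any candidate $f_i$ satisfying this relation is forced to equal $u^{m_i''}\widetilde{A}_i(u)\, A_i(u)^{-1}$; uniqueness is automatic, and the substance of the corollary is the existence of scalar $f_i$.

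To construct the $f_i$, set $f_0(u) := 1$, reflecting the boundary convention $A_0 = \widetilde{A}_0 = 1$. The equations in (\ref{equation: defining f}) for $1 \leq i \leq n-2$ rewrite recursively as
$$f_{i+1}(u - \tfrac{1}{2}) = R_i(u)\, f_i(u)\, f_i(u-1) \, f_{i-1}(u-\tfrac{1}{2})^{-1},$$
expressing $f_{i+1}$ in terms of $f_1, \ldots, f_i$. The remaining equation at $i = n-1$ pins down $f_1$, and hence the whole tuple, as scalar series in $u^{m_i''}(1+u^{-1}\C[[u^{-1}]])$. Consistency of this final constraint with the recursive construction reduces to a combinatorial identity on shifted products of $R_j(u)$'s, which follows from the matching condition (\ref{eq: R tilde}) of Theorem \ref{thm: maps between yangians}, since $\widetilde{R}(u)$ admits an explicit description as a product of shifts of the $R_j(u)$. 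Indeed, an explicit closed-form expression for each $f_i(u)$ as a product of shifted $R_j(u)$'s can be read off from the recursion.

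Having constructed scalar $f_i$, I set $\widetilde{A}_i'(u) := f_i(u)/u^{m_i''} A_i(u) \in 1 + u^{-1}Y_\mu[[u^{-1}]]$. A direct substitution then shows that $\widetilde{A}_i'$ satisfies the defining recursion
$$H_i(u) = \widetilde{r}_i(u)\, \frac{\widetilde{A}_{i-1}'(u-\tfrac{1}{2})\, \widetilde{A}_{i+1}'(u-\tfrac{1}{2})}{\widetilde{A}_i'(u)\, \widetilde{A}_i'(u-1)}$$
characterizing $\widetilde{A}_i$---the identities (\ref{equation: defining f}) are exactly what is required to convert the recursion with $r_i(u)$ (satisfied by $A_i$) into the one with $\widetilde{r}_i(u)$ (satisfied by $\widetilde{A}_i$). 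Uniqueness of the $\widetilde{A}_i$ under this recursion together with the normalization $\widetilde{A}_i \in 1 + u^{-1} Y_\mu[[u^{-1}]]$ then yields $\widetilde{A}_i = \widetilde{A}_i'$, establishing the existence claim.

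The main obstacle is the consistency check in the second paragraph: showing that the matching condition (\ref{eq: R tilde}) produces exactly the combinatorial identity on shifted factors of $R_j$ needed to make the $i=n-1$ equation compatible with the recursive construction from $i \leq n-2$, rather than overdetermining the system. Once the explicit product formula for each $f_i$ is in hand, both the existence of scalar $f_i$ and this consistency check become transparent.
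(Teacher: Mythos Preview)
Your proposal has a significant logical issue: you invoke the matching condition (\ref{eq: R tilde}) from Theorem \ref{thm: maps between yangians}, but Corollary \ref{cor: existence of f} is stated and needed for \emph{arbitrary} sets of parameters $\bR$ and $\tbR$, with no relation assumed between them. Indeed, the corollary is the key input to Lemma \ref{lemma: f is polynomial} and Proposition \ref{prop: parameters}, which together \emph{derive} condition (\ref{eq: R tilde}) as the necessary and sufficient condition for $\phi''$ to exist. Invoking (\ref{eq: R tilde}) in the proof of the corollary would make the overall argument circular. Relatedly, your ``consistency check'' is not actually needed: the system (\ref{equation: defining f}) is always solvable in the stated class of series, not only when (\ref{eq: R tilde}) holds; what (\ref{eq: R tilde}) buys you later is that the $f_i$ are \emph{polynomials}.

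The paper's proof avoids all of this and is much shorter. Rather than constructing the $f_i$ explicitly, it observes that both $A_i(u)$ and $\widetilde A_i(u)$ lie in $1+u^{-1}Y_\mu[[u^{-1}]]$ and satisfy factorization identities of the same shape for the series $H_i(u)$. By \cite[Lemma 2.1]{GKLO}, any two such solutions differ by multiplication by a \emph{scalar} series in $1+u^{-1}\C[[u^{-1}]]$; this immediately gives existence and uniqueness of scalar $f_i$ with $\widetilde A_i(u)=(f_i(u)/u^{m_i''})A_i(u)$, for any $\bR,\tbR$. The relations (\ref{equation: defining f}) then drop out by substituting this into the identities (\ref{eq: yangian maps compatibility 1}) and (\ref{eq: yangian maps compatibility 2}). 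The explicit product formula for $f_i$ that you allude to is exactly what is established afterwards, in Proposition \ref{prop: parameters}, once (\ref{eq: R tilde}) is available as a hypothesis.
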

\begin{proof}
By \cite[Lemma 2.1]{GKLO}, $A_i(u)$ and $\widetilde{A}_i(u)$ must differ by multiplication by an element of $1+u^{-1}\C[[u^{-1}]]$.  The precise form above follows by rearranging (\ref{eq: yangian maps compatibility 1}) and (\ref{eq: yangian maps compatibility 2}).
\end{proof}

\begin{lemma}
\label{lemma: f is polynomial}
$\ker \phi' \subset \ker \phi$ if and only if $f_i(u) \in \C[u]$.
\end{lemma}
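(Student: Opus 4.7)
The plan is to translate the ideal-containment $\ker \phi' \subset \ker \phi$ into a polynomial identity on $\phi(\widetilde{A}_i(u))$, then read off coefficients using Corollary \ref{cor: existence of f} and the freeness result Corollary \ref{cor: freeness}. Since $\ker \phi'$ is the two-sided ideal generated by the $\widetilde{A}_i^{(r)}$ for $r > m_i'$, the containment $\ker \phi' \subset \ker \phi$ is equivalent to requiring $\phi(\widetilde{A}_i^{(r)}) = 0$ for every $i \in I$ and $r > m_i'$, which in generating-series form says that $\phi(\widetilde{A}_i(u))$ is a polynomial in $u^{-1}$ of degree at most $m_i'$ for each $i$.

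Next, I would set $g_i(u) := f_i(u)/u^{m_i''} \in 1 + u^{-1}\C[[u^{-1}]]$, so that the identity of Corollary \ref{cor: existence of f} becomes $\widetilde{A}_i(u) = g_i(u) A_i(u)$ in $Y_\mu[[u^{-1}]]$, and $f_i(u) \in \C[u]$ if and only if $g_i(u)$ is a polynomial in $u^{-1}$ of degree at most $m_i''$. The forward direction is then immediate: if each $g_i(u)$ has degree $\leq m_i''$ in $u^{-1}$, then multiplying by $\phi(A_i(u))$, a polynomial of degree $\leq m_i$ in $u^{-1}$, produces a polynomial of degree $\leq m_i + m_i'' = m_i'$, as required.

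For the converse, I would expand $g_i(u) = \sum_{k \geq 0} c_k u^{-k}$ and $\phi(A_i(u)) = \sum_{j=0}^{m_i} A_i^{(j)} u^{-j}$, with $c_0 = A_i^{(0)} = 1$; extracting the $u^{-n}$-coefficient of $g_i(u)\phi(A_i(u))$ for any $n > m_i'$ yields the relation
\[ \sum_{j=0}^{m_i} c_{n-j} A_i^{(j)} = 0 \quad \text{in } Y_\mu^\lambda(\bR). \]
By Corollary \ref{cor: freeness}, the Gelfand-Tsetlin subalgebra $\Gamma_\mu^\lambda(\bR)$ is a polynomial ring on the $A_j^{(r)}$ that embeds in $Y_\mu^\lambda(\bR)$, so in particular $1, A_i^{(1)}, \ldots, A_i^{(m_i)}$ are linearly independent over $\C$. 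Hence every scalar $c_{n-j}$ must vanish; letting $n$ range over all integers greater than $m_i'$ forces $c_k = 0$ for all $k > m_i''$, giving $f_i(u) \in \C[u]$. The only nontrivial input is Corollary \ref{cor: freeness} (whose proof relies on the Coulomb branch picture), but since it is already available, the rest of the argument reduces to coefficient bookkeeping.
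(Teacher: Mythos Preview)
Your argument is correct and follows essentially the same approach as the paper: both directions rest on the identity $\widetilde{A}_i(u) = g_i(u) A_i(u)$ from Corollary \ref{cor: existence of f}, with the converse using that $1, A_i^{(1)}, \ldots, A_i^{(m_i)}$ are linearly (in fact algebraically) independent in $\Gamma_\mu^\lambda(\bR)$, which the paper likewise justifies via Corollary \ref{cor: freeness}. Your write-up is somewhat more explicit about the coefficient extraction, but the logic is the same.
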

\begin{proof}
Assume that 
$$\widetilde{A}_i^{(s)}\in \ker \phi = \langle A_i^{(r)}: i\in I, r>m_i\rangle $$
for all $s > m_i'$. Equating coefficients in $ u^{m_i''} \widetilde{A}_i(u) = f_i(u) A_i(u) $, we see that $f_i(u)$ cannot contain any negative powers of $u$. Indeed, if it did then a non-trivial linear combination of elements $\{A_i^{(1)},\ldots, A_i^{(m_i)}\}$ would be zero in $Y_\mu^\lambda(\bR)$.  But these elements are algebraically independent in $\Gamma_\mu^\lambda(\bR)$.

Conversely, if $f_i(u)$ is a polynomial then $\widetilde{A}_i^{(s)}$ is a linear combination of elements from $\ker \phi$.
\end{proof}

Theorem \ref{thm: maps between yangians} follows from the next result:
\begin{proposition}
\label{prop: parameters}
The map $\phi''$ exists iff the following identities hold:
\begin{align*}
\widetilde{R}(u) &= \prod_{i=1}^{n-1} R_i(u+\tfrac{n-i-1}{2}) R_i(u+\tfrac{n-i-3}{2})\cdots R_i(u-\tfrac{n-i-1}{2}), \\
f_{k}(u-\tfrac{1}{2}) &= \prod_{i=1}^{k-1} R_i(u+\tfrac{k-i-1}{2}) R_i(u+\tfrac{k-i-3}{2})\cdots R_i(u-\tfrac{k-i-1}{2}) 
\end{align*}
for $k=1,\ldots, n-1$.
\end{proposition}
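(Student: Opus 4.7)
The plan is to combine the uniqueness of the $f_i$ in Corollary \ref{cor: existence of f} with Lemma \ref{lemma: f is polynomial}, reducing the problem to explicitly solving the polynomial identities \ref{equation: defining f}. The first move is to pin down boundary conditions. From $A_0 = \widetilde{A}_0 = 1$ and $m_0'' = 0$ we get $f_0 = 1$, and Lemma \ref{lemma: no final m} gives $m_1'' = 0$, so $f_1 \in 1 + u^{-1}\C[[u^{-1}]]$. If $\phi''$ exists, Lemma \ref{lemma: f is polynomial} forces $f_1$ to be a polynomial, and the only polynomial in $1+u^{-1}\C[[u^{-1}]]$ is the constant $1$. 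So $f_0 = f_1 = 1$, matching the proposed formulas for $k = 0, 1$ (empty products).

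Assuming $\phi''$ exists, the $i = 1, \ldots, n-2$ equations in \ref{equation: defining f} form a recursion
\begin{equation*}
f_{k+1}(u-\tfrac{1}{2}) \;=\; \frac{R_k(u)\, f_k(u)\, f_k(u-1)}{f_{k-1}(u-\tfrac{1}{2})}
\end{equation*}
which, together with $f_0 = f_1 = 1$, determines $f_2, \ldots, f_{n-1}$ uniquely. I would then verify the proposed formula for $f_k$ by induction on $k$. Substituting the formulas for $f_{k-1}$ and $f_k$ into the recursion, for each $j < k-1$ the various shifts $R_j(u + \ell)$ coming from the numerator $f_k(u) f_k(u-1)$ and the denominator $f_{k-1}(u-\tfrac{1}{2})$ telescope to exactly the arithmetic progression of shifts claimed for $R_j$ in the proposed $f_{k+1}(u-\tfrac{1}{2})$; the $j = k-1$ and $j = k$ contributions likewise line up, with the new factor $R_k(u)$ supplying the single shift for $j = k$.

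With the $f_i$ determined, the $i = n-1$ identity in \ref{equation: defining f} gives
\begin{equation*}
\widetilde{R}(u) \;=\; \frac{R_{n-1}(u)\, f_{n-1}(u)\, f_{n-1}(u-1)}{f_{n-2}(u-\tfrac{1}{2})},
\end{equation*}
and the same type of shift-counting simplifies this to the claimed product. For the converse direction, suppose $\widetilde{R}$ has the stated form. Then the proposed formulas define polynomials $f_i$ which satisfy \ref{equation: defining f}; by uniqueness in Corollary \ref{cor: existence of f} these agree with the $f_i$ arising from $\widetilde{A}_i(u) = (f_i(u)/u^{m_i''}) A_i(u)$. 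Since each $f_i$ is polynomial, Lemma \ref{lemma: f is polynomial} gives the existence of $\phi''$.

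The main obstacle is the combinatorial shift-counting in the inductive step. The target formula for $f_k$ packages each $R_j$ with an arithmetic progression of shifts, and one must confirm that in the ratio $R_k(u) f_k(u) f_k(u-1)/f_{k-1}(u-\tfrac{1}{2})$ all shifts outside the target progression cancel cleanly between numerator and denominator. Once the pattern is identified this is routine, but it requires careful tracking of multiplicities.
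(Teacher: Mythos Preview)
Your proposal is correct and follows essentially the same approach as the paper's proof: establish $f_1=1$ from $m_1''=0$ via Lemma~\ref{lemma: no final m} and Lemma~\ref{lemma: f is polynomial}, then solve the recursion \eqref{equation: defining f} inductively to pin down the $f_k$ and $\widetilde{R}$, and for the converse verify that the stated formulas satisfy \eqref{equation: defining f} and invoke uniqueness plus polynomiality. The paper is terser about the inductive shift-counting (it simply says ``proceeding by induction''), while you spell out the telescoping more explicitly; but there is no substantive difference in strategy.
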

\begin{proof}
Note that $\phi''$ exists if and only if $\ker\phi'\subset \ker \phi$.  Hence if $\phi''$ exists then $f_i(u) $ is a polynomial by Lemma \ref{lemma: f is polynomial}, and it is monic of degree $m_i''$ by Corollary \ref{cor: existence of f}.  Since $m_1''=0$ by Lemma \ref{lemma: no final m}, we know that $f_1(u) =1$. Applying (\ref{equation: defining f}) with $i=1$, we then obtain
$$ R_1(u) = f_2(u-\tfrac{1}{2}) $$
Proceeding by induction on $i$ using (\ref{equation: defining f}), we get the claimed form of $\widetilde{R}(u)$ and $f_i(u)$.

Conversely, if we define $\widetilde{R}(u)$ and $f_i(u)$ by the claimed form above, then (\ref{equation: defining f}) holds, and the $f_i(u)$ are monic polynomials of the correct degree.  By the previous lemma, it follows that $\ker \phi' \subset \ker \phi$.
\end{proof}

\section{Around W-algebras}
\label{section:aroundWalg}

\subsection{Finite W-algebras}
\label{subsection:finiteWalg}

Let $\fg$ be a complex semisimple Lie algebra, and $e\in\fg$ a nilpotent element.   Complete this to an $\mathfrak{sl}_2$-triple $\{f,h,e\}$.  The \textbf{Slodowy slice} is the affine space $\cS=e+\fg^f$, where $\fg^f=\{x\in \fg \mid [x,f]=0\}$.  It naturally inherits a Poisson structure from $\fg\cong\fg^*$ \cite{GG}.  Recall that the symplectic leaves of $\fg$ are the nilpotent orbits $\cO$, and $\cS$ intersects the symplectic leaves transversally.  


We recall now a construction of  finite W-algebras which quantize the
Slodowy slices.  Recall that an $\mathbb{Z}$-grading of $\fg$
\[
\mathfrak{g}=\bigoplus_{i\in\mathbb{Z}}\mathfrak{g}_i.
\]
is called {\bf good} for a nilpotent $e$ if
\begin{enumerate}
\item The operator $\operatorname{ad}(e)$ has degree 2.
\item We have
  $\mathfrak{g}_i\cap \ker \operatorname{ad}(e)=0$ for $i\leq -1$.
\item We have $\mathfrak{g}_i\subset  \operatorname{image}
  \operatorname{ad}(e)$ if $i\geq 1$.  
\end{enumerate}
Note that by a simple
application of $\mathfrak{sl}_2$ representation theory, every nilpotent $e$ has a good grading induced by
considering the weights of $h$.  

For any good grading, the space $\mathfrak{g}_{-1}$ is symplectic with the form \[\langle x,y \rangle =(e,[x,y])=([e,x],y)=(x,[y,e]),\] where $(\cdot,\cdot)$ is the usual Killing form.  This follows from the fact that 
$\operatorname{ad}(e)\colon \mathfrak{g}_{-1}\to \mathfrak{g}_{1}$ is
an isomorphism.   Choose a Lagrangian subspace $\mathfrak{l}\subset \mathfrak{g}_{-1}$ and set 
\begin{equation}
\label{eq: fm from grading}
\mathfrak{m}=\mathfrak{l}\oplus \bigoplus_{i< -1}\mathfrak{g}_i.
\end{equation}
Note that if the grading in question is even (i.e. $\mathfrak{g}_i\neq
0$ implies $i\in 2\Z$) then $\mathfrak{m}=\bigoplus_{i<
  -1}\mathfrak{g}_i$ and we can avoid the choice.
Then $\chi=(e,\cdot):\mathfrak{m}\to \C$ is a character.  Finally, let
$\mathfrak{m}_\chi := \operatorname{span}\{ a-\chi(a):a\in\mathfrak{m} \}$.

Define the \textbf{finite W-algebra} $W(e)=(U(\fg)/U(\fg)\mathfrak{m}_\chi)^{\mathfrak{m}}$.  By the following theorem, this algebra is a quantization of $\cS$.

\begin{theorem}[Theorem 4.1, \cite{GG}]
There is a filtration on $W(e)$ (the Kazhdan filtration) such that $gr(W(e)) \cong \C[\cS]$.  
\end{theorem}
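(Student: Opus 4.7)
The plan is to follow the strategy of Gan--Ginzburg. First, define the Kazhdan filtration on $U(\fg)$ by placing $x \in \fg_j$ in degree $j+2$ and extending multiplicatively via the PBW filtration, so that $\gr U(\fg) \cong S(\fg)$ as graded algebras (with $S(\fg)$ carrying the induced grading from the good grading on $\fg$, shifted by $2$). This filtration naturally induces filtrations on the left ideal $I = U(\fg)\fm_\chi$, on the quotient $Q = U(\fg)/I$, and on the subalgebra $W(e) = Q^{\fm}$.

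Next, translate the problem into the geometric picture on $\fg^*$. Under the isomorphism $\fg \cong \fg^*$ supplied by the Killing form, the slice $\cS = e + \fg^f$ corresponds to an affine subspace of $\fg^*$, and $\chi = (e, \cdot)$ extends naturally to $\fg^*$. The central geometric input is the Gan--Ginzburg transversality theorem: the multiplication map
\[
M \times \cS \longrightarrow \chi + \fm^\perp
\]
is an isomorphism of affine varieties, where $M$ is the connected unipotent subgroup of $G$ with Lie algebra $\fm$. This is proven by a dimension/weight count using the $\mathfrak{sl}_2$-theory coming from the good grading, combined with the observation (via the symplectic structure on $\fg_{-1}$) that $\cS$ meets each $M$-orbit in $\chi + \fm^\perp$ transversally in exactly one point. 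In particular, $\C[\chi + \fm^\perp]^M \cong \C[\cS]$.

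Now identify $\gr Q$ with $\C[\chi + \fm^\perp]$. The ideal $I$ is generated as a left ideal by the degree-$2$ Kazhdan elements $a - \chi(a)$ for $a \in \fm$, whose principal symbols are the linear functions $a \in S(\fg) = \C[\fg^*]$ together with the constants $\chi(a)$; these cut out exactly $\chi + \fm^\perp$. A regularity argument (the generators are in bijection with a basis of $\fm$, and $\fm^\perp$ has the expected codimension in $\fg^*$) shows that $\gr I$ is the full vanishing ideal of $\chi + \fm^\perp$, giving $\gr Q \cong \C[\chi + \fm^\perp]$ as graded Poisson algebras. Finally, exchange invariants with associated graded: the adjoint action of $\fm$ on $U(\fg)$ is locally nilpotent and integrates to an algebraic action of the unipotent group $M$ on $U(\fg)$ which preserves $I$ and the Kazhdan filtration; on $\gr Q$ it becomes the coadjoint action of $M$ on $\C[\chi + \fm^\perp]$. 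Since $M$ is unipotent, taking $M$-invariants is exact on locally finite modules, so
\[
\gr W(e) = \gr(Q^M) = (\gr Q)^M \cong \C[\chi + \fm^\perp]^M \cong \C[\cS].
\]

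The main obstacle I expect is the transversality statement itself, namely the isomorphism $M \times \cS \to \chi + \fm^\perp$: this is the geometric heart, while everything else is bookkeeping with filtrations and unipotent invariants. The even-grading case is clean since $\fm = \bigoplus_{i<-1}\fg_i$ is canonical, but the general case requires the choice of Lagrangian $\fl \subset \fg_{-1}$ and a careful dimension count to confirm that the symplectic structure on $\fg_{-1}$ makes the argument go through independently of this choice.
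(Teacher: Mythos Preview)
The paper does not supply its own proof of this statement: it is quoted verbatim as Theorem 4.1 of Gan--Ginzburg \cite{GG} and used as a black box. Your proposal is a faithful outline of the Gan--Ginzburg argument itself, with the correct ingredients in the correct order (Kazhdan filtration, the transversality isomorphism $M\times\cS\to e+\fm^\perp$, identification of $\gr Q$ with $\C[e+\fm^\perp]$, and exactness of unipotent invariants to pass $\gr$ through $(-)^M$). Note that the paper does revisit the transversality step later, in Lemma~\ref{lemma: 2.1 GG}, where it generalizes \cite[Lemma 2.1]{GG} from the Slodowy slice to arbitrary MV slices; this is precisely the ``geometric heart'' you flagged, and the proof there confirms that the dimension count and $\C^\times$-equivariance argument go through for any $\operatorname{ad}_h$-invariant complement to $[\fg,e]$.
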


We will be interested in quotients of $W(e)$, called parabolic W-algebras, which quantize the intersection $\cS\cap \overline{\cO}$.  

\subsubsection{Conventions}
\label{section: conventions on pyramids}
We closely follow the conventions of \cite[Section 3]{BK3}, \cite[Section 7]{BK2}, although we do not follow their grading conventions: Brundan and Kleshchev divide their even gradings by two, while we will not.  We will also number the boxes of our pyramid differently.  Let us briefly outline our conventions here.

For $\pi = (p_1 \leq p_2\leq\ldots \leq p_n)$ a partition of $N$, we will consider $\pi$ as a right-justified pyramid with boxes numbered from right to left, top to bottom. For example, $\pi = (2,3, 4)$ will be correspond to
\begin{equation}
\young( ::21,:543,9876)
\end{equation}
We number the columns of $\pi$ from left to right, and rows from top to bottom.

Corresponding to the pyramid $\pi$, we consider the nilpotent element 
$$ e_\pi = \sum_{k,\ell} e_{k\ell},$$
summing over pairs \young(k\ell) of adjacent boxes in $\pi$.  The
grading on $\fg$ is defined by $\deg(e_{k\ell}) = 2(\col(\ell) -
\col(k))$, where $\col(\ell)$ denotes the number of the column
containing \young(\ell).  Finally, the Kazhdan filtration on $U(\fg)$ corresponding to $\pi$ is defined by declaring that 
\begin{equation}
\label{eq: kazhdan filtration factor of 2}
\deg( e_{k\ell}) = 2(\col(\ell) - \col(k) + 1)
\end{equation}
\begin{remark}
In \cite{BK3}, the authors use the convention of \eqref{eq: kazhdan filtration factor of 2} in the introduction, but divide by a factor of 2 in \cite[Section 8]{BK3}, to match the usual filtration on Yangians.
\end{remark}

\subsection{Brundan and Kleshchev's presentation}
\label{section:nilpotent-side}
\subsubsection{Shifted Yangians} In the case where $\fg=\mathfrak{gl}_N$ Brundan and Kleshchev gave a presentation of the W-algebra.
To describe this result we first recall their definition of the shifted Yangians  \cite{BK3}.  Here we work with the $\mathfrak{gl}_n$-Yangian $Y_n$, which is a $\C$-algebra with generators
$E_i^{(r)}, F_i^{(r)}$ for $1\leq i < n$, and $D_i^{(r)}$ for $1\leq
i\leq n$ and $r\geq1$.  

To describe the defining relations of $Y_n$ we follow \cite[Theorem 5.2]{BK2} and introduce generating series 
$
D_i(u)=1+\sum_{r\geq1}D_i^{(r)}u^{-r},
$   
and define $\tilde{D}_i^{(r)}$ via
$$
\sum_{r\geq0}\tilde{D}_i^{(r)}u^{-r}=-D_i(u)^{-1}.
$$
The defining relations of $Y_n$ are as follows:
\begin{align*}
[D_i^{(r)},D_j^{(s)}]&=0, \\
[E_i^{(r)},F_j^{(s)}]&=\delta_{i,j}\sum_{t=0}^{r+s-1}\tilde{D}_i^{(t)}D_{i+1}^{(r+s-1-t)},\\
[D_i^{(r)},E_j^{(s)}]&=(\delta_{i,j}-\delta_{i,j+1})\sum_{t=0}^{r-1}D_i^{(t)}E_{j}^{(r+s-1-t)},\\
[D_i^{(r)},F_j^{(s)}]&=(\delta_{i,j+1}-\delta_{i,j})\sum_{t=0}^{r-1}F_j^{(r+s-1-t)}D_{i}^{(t)},\\
[E_i^{(r)},E_i^{(s+1)}]-[E_i^{(r+1)},E_i^{(s)}]&=E_i^{(r)}E_i^{(s)}+E_i^{(s)}E_i^{(r)},\\
[F_i^{(r+1)},F_i^{(s)}]-[F_i^{(r)},F_i^{(s+1)}]&=F_i^{(r)}F_i^{(s)}+F_i^{(s)}F_i^{(r)},\\
[E_i^{(r)},E_{i+1}^{(s+1)}]-[E_i^{(r+1)},E_{i+1}^{(s)}]&=-E_i^{(r)}E_{i+1}^{(s)},\\
[F_i^{(r+1)},F_{i+1}^{(s)}]-[F_i^{(r)},F_{i+1}^{(s+1)}]&=-F_{i+1}^{(s)}F_i^{(r)},\\
[E_i^{(r)},E_j^{(s)}]&=0 \quad \text{ if } |i-j|>1,\\
[F_i^{(r)},F_j^{(s)}]&=0 \quad \text{ if } |i-j|>1,\\
[E_i^{(r)},[E_i^{(s)},E_j^{(t)}]]+[E_i^{(s)},[E_i^{(r)},E_j^{(t)}]]&=0 \quad \text{ if } |i-j|=1,\\
[F_i^{(r)},[F_i^{(s)},F_j^{(t)}]]+[F_i^{(s)},[F_i^{(r)},F_j^{(t)}]]&=0 \quad \text{ if } |i-j|=1.
\end{align*}

$Y_n$ has a filtration defined as follows \cite[Section 5]{BK3}: inductively define elements $E_{i,j}^{(r)}$, for $1\leq i<j \leq n$ and $r>0$, by $E_{i,i+1}^{(r)} = E_i^{(r)}$ and $E_{i,j}^{(r)} = [E_{i,j-1}^{(r)}, E_{j-1}^{(1)}]$, and similarly $E_{i+1,i}^{(r)} = F_i^{(r)}$ and $E_{j,i}^{(r)}=[F_{j-1}^{(1)}, E_{j-1,i}^{(r)}]$.  Also denote $E_{i,i}^{(r)} = D_i^{(r)}$.  Then the filtration is defined by declaring the elements $E_{i,j}^{(r)}$ to have degree $r$; note that $Y_n$ satisfies a PBW theorem in these elements.

Let $\sigma = (s_{i,j})_{1\leq i,j\leq n}$ be a shift matrix of non-negative integers, meaning that
$$ s_{i,j} + s_{j,k} = s_{i,k} $$
whenever $|i-j| + |j-k| = |i-k|$.

\begin{definition}[Section 2, \cite{BK3}]
The {\bf shifted $\mathfrak{gl_n}$-Yangian} $Y_n(\sigma) \subset Y_n$ is the subalgebra generated by $D_i^{(r)}$ for $r>0$, $E_i^{(r)}$ for $r> s_{i,i+1}$, and $F_i^{(r)}$ for $r>s_{i+1,i}$, with the induced filtration from $Y_n$.
\end{definition}

There is another family of generators for $Y_n(\sigma)$, denoted
$T_{i,j}^{(r)}$ for $1\leq i,j \leq n$ and $r>s_{i,j}$.  See
\cite{BK3} for the definition of these generators as well as their relation to the presentation given above.  For $1\leq i \leq n$ we define the principal quantum minor:
\begin{equation}
\label{eq: quantum minor}
Q_i(u) = \sum_{w\in S_i}(-1)^w T_{w(1),1}(u)\cdots T_{w(i),i}(u-i+1)
\end{equation}
where $T_{i,j}(u) = \delta_{i,j} + \sum_{r>s_{ij}} T_{i,j}^{(r)} u^{-r}$. For our present purposes, the most important relation involving these new generators is the following equation (cf. \cite[Theorem 8.7(i)]{BK2}):
\begin{equation}
\label{eq: D and Q}
D_i(u) = \frac{Q_{i}(u+i-1)}{Q_{i-1}(u+i-1)}
\end{equation}

\begin{remark}There is a subtle point here: the identity
  $D_i(u)=\frac{Q_i(u+i-1)}{Q_{i-1}(u+i-1)}$ is true in the Yangian
  with no shift.  In the shifted Yangian the $T_{ij}^{(r)}$ generators are
  defined using a Gauss decomposition with shifted generators \cite[Section 2.2]{BK}.  Hence
  the $T_{ij}^{(r)}$ in the shifted Yangian are not the same as the
  generators with the same name in the full Yangian.  However, Brown
  and Brundan prove that the quantum minors are in fact the same, so
  the identity is true with the shifted $T_{ij}^{(r)}$ as well
  \cite{BB}.  More precisely, they prove that $Q_n(u)=Q_n^0(u)$, where
  $Q_n^0(u)$ is the quantum determinant corresponding to the Yangian
  with $\sigma=0$, i.e. the full $\mathfrak{gl}_n$-Yangian.  This
  implies that $Q_i(u)=Q_i^0(u)$ for any $i=1,..,n$, using the
  embeddings
  $Y(\mathfrak{gl}_n) \supset Y(\mathfrak{gl}_{n-1}) \supset \cdots$.
\end{remark}

We'll need also the decomposition
\begin{equation}
\label{eq: tensor product SY times center is Y}
Y_n(\sigma)\cong SY_n(\sigma) \otimes Z(Y_n(\sigma)),
\end{equation}
where $Z(Y_n(\sigma))$ is the center and $SY_n(\sigma)$ is the subalgebra of $Y_n(\sigma)$ generated by $H_i^{(r)}$ for $r>0$, $E_i^{(r)}$ for $r>s_{i,i+1}$, and $F_i^{(r)}$ for $r>s_{i+1,i}$.  Here $H_i^{(r)}$ are coefficients of 
$\frac{D_{i+1}(u)}{D_{i}(u)}$ \cite[Section 2.6]{BK}).  The center $Z(Y_n(\sigma))$ is free generated by the coefficients of the series $Q_n(u)$ \cite[Theorem 2.6]{BK}.

\subsubsection{Brundan and Kleshchev's Theorem} 
\label{section: BK theorem}
Let $\pi = (p_1 \leq p_2\leq\ldots \leq p_n)$ be a partition of $N$, and consider the lower-triangular shift matrix $\sigma$ where $s_{i,j} = p_j - p_i$ for $i\leq j$.   Let $W(\pi)$ be the quotient of $Y_n(\sigma)$ by the two-sided ideal generated by the elements $D_1^{(r)}$ for $r> p_1$,
\begin{equation}
W(\pi) = Y_n(\sigma) / \langle D_1^{(r)} : r > p_1 \rangle
\end{equation}
The algebra $W(\pi)$ inherits a filtration from $Y_n(\sigma)$. 

\begin{theorem}[Theorem 10.1, \cite{BK3}]
\label{thm: bk3}
There is an isomorphism of algebras $W(\pi) \cong W(e_\pi)$.  This isomorphism doubles filtered degrees, i.e. $ F_{\leq r} W(\pi) \cong F_{\leq 2r} W(e_\pi)$.
\end{theorem}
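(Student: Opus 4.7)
The plan is to construct an explicit algebra homomorphism $\Phi: Y_n(\sigma) \to W(e_\pi)$, check that it factors through $W(\pi)$, and then show the resulting map is an isomorphism of filtered algebras by comparing associated gradeds.

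First I would fix the even good grading on $\fg = \mathfrak{gl}_N$ determined by the pyramid $\pi$, so that $\fg$ is identified with matrices indexed by pairs of boxes and $\deg e_{k\ell} = 2(\col(\ell)-\col(k))$; then take $\fm = \bigoplus_{i<0} \fg_i$ and realize $W(e_\pi) = \bigl(U(\fg)/U(\fg)\fm_\chi\bigr)^{\fm}$ as in Section \ref{subsection:finiteWalg}. By the Gan--Ginzburg theorem, the Kazhdan filtration satisfies $\gr W(e_\pi) \cong \C[\cS]$. Working with the Kazhdan convention \eqref{eq: kazhdan filtration factor of 2} automatically produces the factor-of-two rescaling between the two filtrations, matching the assertion $F_{\leq r} W(\pi) \cong F_{\leq 2r} W(e_\pi)$.

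Next I would define $\Phi$ on the RTT generators $T_{i,j}^{(r)}$ of $Y_n(\sigma)$ by explicit column/row sum formulas, sending $T_{i,j}^{(r)}$ to an alternating signed sum of products $e_{k_0 k_1} e_{k_1 k_2} \cdots e_{k_{s-1} k_s}$, where the chains of boxes $(k_0,\ldots,k_s)$ have prescribed starting row $i$ and ending row $j$ and whose total column travel is controlled by $r$ and the shift matrix $\sigma$. Such chain formulas are the natural generalization of the Capelli-type invariants used to construct finite W-algebras; the first significant step is to verify that $\Phi$ respects all of the Yangian relations. The cleanest route, following the original strategy of Brundan and Kleshchev, is to realize $\Phi$ as arising from an iterated Miura transform built column-by-column from the pyramid, whose tensor factors are rank-one shifted Yangians; then the full set of relations reduces to a rank-one PBW check plus an RTT/$R$-matrix compatibility.

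Once $\Phi$ is in hand, the vanishing $\Phi(D_1^{(r)}) = 0$ for $r > p_1$ is forced by the fact that column 1 of $\pi$ contains only $p_1$ boxes, so the corresponding principal quantum minor has $u$-degree at most $p_1$ by \eqref{eq: D and Q}; hence $\Phi$ descends to $\bar\Phi: W(\pi) \to W(e_\pi)$. To see $\bar\Phi$ is an isomorphism, I would pass to associated gradeds. Using the PBW theorem for $Y_n(\sigma)$ in the generators $E_{i,j}^{(r)}$, quotienting by the ideal generated by $\{D_1^{(r)} : r>p_1\}$ produces a monomial basis whose principal symbols are in bijection with a PBW basis of $\C[\fg^f]$ (the boxes of $\pi$ parametrize the surviving $E_{i,j}^{(r)}$). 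Therefore $\gr \bar\Phi: \gr W(\pi) \to \gr W(e_\pi) = \C[\cS]$ is a surjection between graded commutative algebras of matching Hilbert series, hence an isomorphism; standard filtered-algebra reasoning then lifts this to $\bar\Phi$ itself, with the degree-doubling as claimed.

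The main obstacle is the relation verification for $\Phi$, particularly the Serre-type relations and the commutator identity for $[E_i^{(r)}, F_j^{(s)}]$, which require nontrivial combinatorics in $U(\fg)/U(\fg)\fm_\chi$. Reducing to the Miura/tensor-product picture is the key simplification; a secondary subtlety is pinning down the exact PBW basis of $W(\pi)$ compatible with the truncation by $D_1^{(r>p_1)}$, which requires care because the shifted Yangian generators are not central and their products must be reordered modulo the defining ideal before the Hilbert-series comparison with $\C[\fg^f]$ becomes transparent.
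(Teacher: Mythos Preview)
The paper does not supply its own proof of this theorem: it is quoted verbatim as Theorem~10.1 of \cite{BK3} and used as a black box throughout. There is therefore nothing in the paper to compare your proposal against.

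That said, your sketch is a faithful outline of the original Brundan--Kleshchev argument in \cite{BK3}: define explicit elements $T_{i,j;0}^{(r)}\in U(\mathfrak{gl}_N)$ by signed sums over chains of boxes in the pyramid, verify (via a Miura-type factorization through the columns) that these satisfy the shifted Yangian relations, check the truncation $D_1^{(r)}=0$ for $r>p_1$, and conclude by a PBW/Hilbert-series comparison on associated gradeds. The obstacles you flag---the Serre and $[E,F]$ relations, and the careful bookkeeping of the PBW basis modulo the truncation ideal---are precisely where the work lies in \cite{BK3}, and your description of how they are handled (reduction to rank one via the column tensor-product picture) is accurate.
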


We will follow the conventions of \cite[Sections 3.3--3.4]{BK} for the above isomorphism, which differ from \cite{BK3} by a certain automorphism $\eta$. This distinction will only be relevant in Section \ref{sec: completing proof of classical limit}.

\begin{remark}
\label{rmk: degree doubling}
Note that the above degree doubling is harmless: the filtration (\ref{eq: kazhdan filtration factor of 2}) on $W(e_\pi)$ is {\em even}, and so we may safely rescale it removing a factor of two.  This is the approach followed by Brundan and Kleshchev, so in their work no such doubling appears.  We have elected to maintain the factor of two to match standard conventions on the Kazhdan filtration (e.g. \cite[Section 4]{GG}), while also following usual conventions for filtrations of Yangians.
\end{remark}

The commutative subalgebra $\Gamma(\pi) \subset W(\pi)$ generated by the images of the elements $D_i^{(r)}$ is called the \textbf{Gelfand-Tsetlin subalgebra} of $W(\pi)$, following the terminology of \cite{FMO}.  By \cite[Corollary 6.3]{BK3}, this is a polynomial ring
$$
\Gamma(\pi) = \C[ D_i^{(r)} : 1\leq i \leq n, 1\leq r\leq p_i ]
$$
We may also think of $\Gamma(\pi) \subset W(\pi)$ as the subalgebra generated by the centers of the subalgebras in a chain of inclusions $W(\pi_1)\subset \cdots \subset W(\pi_n) = W(\pi)$, see \cite{FMO}.

\begin{remark}
When $\pi = (1,\ldots,1)$, we have $W(\pi) =  U(\mathfrak{gl}_n)$ and $\Gamma(\pi) \subset U(\mathfrak{gl}_n)$ is the usual Gelfand-Tsetlin subalgebra.
\end{remark}

Consider a module $M$ over the algebra $W(\pi)$.  We call a vector $\mathbf{1} \in M$ a \textbf{highest weight vector} if it generates $M$ and  
\begin{align*}
 &D_i^{(r)} \mathbf{1} \in \C \mathbf{1} \text{ for } i=1,...,n, r\geq1, \\
 &E_{i,j}^{(r)} \mathbf{1} = 0 \text{ for } 1 \leq i<j\leq n, r\geq 1.
 \end{align*}
As in Section \ref{sec: monomial crystal}, the \textbf{highest weight} of $M$ is a collection of series whose coefficients record the action of the $D_i^{(r)}$ on $\mathbf{1}$.

Let $Row(\pi)$ be the set of row symmetrized $\pi$-tableaux, i.e. tableau of shape $\pi$ with complex entries viewed up to row equivalence.  A row tableau $T \in Row(\pi)$ encodes a highest weight of $W(\pi)$ via 
\begin{equation}
\label{eq:Whighwts}
(u-i+1)^{p_i}D_i(u-i+1) \mapsto \prod_{a\in T_i}(u+\tfrac{1}{2}a-\tfrac{n}{2}),
\end{equation}
where $T_i$ denotes the $i$-th row of $T$.  Brundan and Kleshchev prove that this describes a bijection between highest weights of $W(\pi)$ and $Row(\pi)$ (\cite[Section 6]{BK}).
Given a multiset $\bR$ of $N$ complex numbers we let $\RowR$ be the set of row tableaux with entries from
$\bR$ (with the same multiplicities).    
%
%

\subsection{Parabolic W-algebras}

We will
require some facts about parabolic W-algebras which may be of some
independent interest.  

In type A parabolic W-algebras quantize the intersection of a Slodowy slice with the closure of a nilpotent orbit.  They arise from Hamiltonian reduction of the
primitive quotients of the universal enveloping algebra. 
These quotients were studied by the first author \cite[\S 2]{WebWO} and  by Losev \cite[\S 5.2]{L}.

\subsubsection{Differential operators on partial flag varieties}
\label{sec:diff-oper-part}

Let $G$ be a reductive complex algebraic group. Given a parabolic $P$, we consider the homogenous space $X=G/P$, and
the universal differential operators on it as a quotient of $U(\fg)$.
Let $\fg\cong \fu_-\oplus \fl\oplus \fu$ be the decomposition of $\fg=Lie(G)$
into a Levi subalgebra, and two complementary radicals, with
$\fp=\fl\oplus \fu$.  

We'll be interested in sheaves of twisted differential operators on
$X$.  See \cite[\S 1-2]{BBJant} for a general discussion of these
rings. Since we wish to consider TDOs over more general rings, let us
give a complete definition.  Fix a commutative $\C$-algebra $S$. 
\begin{definition}
  A filtered  sheaf of algebras
  $\mathscr{D}$,
   \[
  \{0\}=\mathscr{D}_{\leq -1} \subset  \mathscr{D}_{\leq 0} \subset \cdots \subset \mathscr{D},\quad \bigcup_{n\geq 0} \mathscr{D}_{\leq n}=\mathscr{D}
\]
is a {\bf TDO with coefficients in $S$} if there is an isomorphism
of
  graded Poisson algebras $$\operatorname{gr} \mathscr{D}\to
  \operatorname{Sym}^\bullet(\mathcal{T})\otimes S$$ where
  $\mathcal{T}=\mathcal{T}(X)$ is the tangent sheaf of $X$.  The Poisson
  bracket on $\operatorname{Sym}^\bullet(\mathcal{T})\otimes S$ is the
  unique $S$-linear Poisson bracket such that $\{X,Y\}$ is the Lie derivative
  $\mathcal{L}_XY$ for $X$ a vector field and $Y$ an arbitrary tensor.

A {\bf homogeneous TDO} is a TDO equipped with a $G$-equivariant
structure, and a Lie algebra map $\fg\to  \Gamma(X;\mathscr{D}_{\leq 1})$
lifting the action map $\fg\to \Gamma(X;\mathcal{T})$.
\end{definition}


As in \cite{BeBe}, we consider the
sheaf of $\fg$ valued functions $\fg^0 = \fg\otimes \cO_{X}$.  Note that $\fg^0$ is the sheaf of sections of the trivial bundle $X \times \fg$, and we have a short exact sequence of vector bundles
$$
0 \to G \times_P \fp  \to X \times \fg \to G\times_P \fg/\fp \to 0
$$ 
We let $\fp^0$ be the local sections of $G\times_P\fp$, and so we have an exact sequence of sheaves
$$
0 \to \fp^0 \to \fg^0 \to \mathcal{T} \to 0
$$
We consider also the algebra sheaf $U^0=U(\fg^0
)\otimes S=U(\fg)\otimes \cO_X \otimes S$.

Given a character $\gamma\colon \fp\to S$, we consider  the ideal in $\cI^\gamma \subset U^0$
generated by the kernel of the map $U(\fp^0) \otimes S\to \cO_{X}\otimes S$
induced by the character
$\gamma-\rho+\rho_P\colon \fp^0 \otimes S\to  \cO_{X}\otimes S$.  Here $\rho$ is the usual half-sum of positive roots of $G$, and $\rho_P$ is the half-sum of the positive roots of the Levi subgroup $L$. 
In other words $\cI^\gamma$ is generated by $\xi - (\gamma-\rho+\rho_P)(\xi)$, where $\xi \in \fp^0 \otimes S$.  Define $\mathscr{D}_\gamma=U^0/\cI^\gamma$.

We can define a TDO $\mathscr{D}_\gamma$ on $X$ by considering the quotient of $U^0$ by
this ideal, with the obvious homogeneous structure.
\begin{proposition}[\mbox{\cite[Theorem 2.4]{Mibook}}]\label{prop:TDO-characters}
This construction defines a bijection between homogeneous TDOs on $X$ and characters  $\gamma:\fp\to S$.
\end{proposition}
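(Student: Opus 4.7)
The plan is to construct a two-sided inverse to the assignment $\gamma \mapsto \mathscr{D}_\gamma$. I would first verify that $\mathscr{D}_\gamma$ really is a homogeneous TDO. The sheaf $U^0 = U(\fg)\otimes\cO_X\otimes S$ carries the PBW filtration by order in $U(\fg)$, with associated graded $\Sym(\fg^0)\otimes S$. The ideal $\cI^\gamma$ is generated in filtered degree at most one, and the principal symbols of these generators span the subbundle $\fp^0\otimes S$ inside $\Sym^1(\fg^0)\otimes S$. A sheaf-theoretic PBW argument then identifies $\gr\mathscr{D}_\gamma \cong \Sym(\fg^0/\fp^0)\otimes S \cong \Sym(\mathcal{T})\otimes S$, the last isomorphism coming from the tangent sequence already recalled in the setup. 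The $G$-equivariant structure and the lift of the action map are both inherited from $U^0$.

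To produce the inverse, suppose $\mathscr{D}$ is a homogeneous TDO with Lie algebra lift $\alpha\colon \fg\to \Gamma(X;\mathscr{D}_{\leq 1})$. At the basepoint $x_0 = eP$ the symbol map $(\mathscr{D}_{\leq 1})_{x_0}/(\mathscr{D}_{\leq 0})_{x_0}\to \mathcal{T}_{x_0}\cong \fg/\fp$ sends $\alpha(\fp)$ to zero, since $\fp$ is exactly the isotropy at $x_0$. Hence the germ of $\alpha(\fp)$ at $x_0$ lies in $(\mathscr{D}_{\leq 0})_{x_0}\cong \cO_{X,x_0}\otimes S$, and evaluation at $x_0$ yields a linear map $\tilde\gamma\colon\fp\to S$. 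The commutation relations of $\alpha$ together with $[\fp,\fp]\subset\fp$ and the same symbol-vanishing force $\tilde\gamma$ to annihilate $[\fp,\fp]$, so it is a character; setting $\gamma := \tilde\gamma + \rho - \rho_P$ absorbs the conventional shift built into the definition of $\cI^\gamma$.

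That starting from $\gamma$, passing to $\mathscr{D}_\gamma$, and extracting the associated character recovers $\gamma$ is immediate from the defining generators of $\cI^\gamma$. For the other composition, given $\mathscr{D}$ and its extracted $\gamma$, the $G$-equivariant $\cO_X\otimes S$-linear extension of $\alpha$ defines a filtered algebra map $U^0 \to \mathscr{D}$; by $G$-equivariance it suffices to verify that $\cI^\gamma$ maps to zero at the stalk over $x_0$, which is exactly how $\tilde\gamma$ was built. This yields a filtered morphism $\mathscr{D}_\gamma \to \mathscr{D}$ whose induced map on associated gradeds is the identity of $\Sym(\mathcal{T})\otimes S$, and is therefore an isomorphism. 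The most delicate point I expect is the sheaf-level PBW identification of $\gr\mathscr{D}_\gamma$, especially over an arbitrary coefficient ring $S$: one must rule out hidden lower-degree relations in $\cI^\gamma$, which can be checked stalkwise, reducing to the classical parabolic PBW treatment in \cite{Mibook}.
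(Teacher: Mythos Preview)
The paper does not prove this proposition; it is stated with a citation to \cite[Theorem 2.4]{Mibook} and no argument is supplied. Your sketch follows the standard route to that result and is essentially correct, but one step is imprecise. You claim that for $\xi\in\fp$ the \emph{germ} of $\alpha(\xi)$ at $x_0$ lies in $(\mathscr{D}_{\leq 0})_{x_0}$. This is false: the action vector field $\overline{\xi}$ vanishes at the point $x_0$ but not in a neighborhood, so the germ of $\alpha(\xi)$ genuinely sits in $(\mathscr{D}_{\leq 1})_{x_0}$. What you need instead is the \emph{fiber}: the short exact sequence of vector bundles $0\to \cO_X\otimes S\to \mathscr{D}_{\leq 1}\to \mathcal{T}\otimes S\to 0$, restricted to $x_0$, shows that $\alpha(\xi)|_{x_0}$ lands in $S$ precisely because its symbol $\overline{\xi}|_{x_0}\in\mathcal{T}_{x_0}\otimes S$ is zero. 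With this correction the extraction of $\tilde\gamma$ is fine, and the check that $\tilde\gamma$ annihilates $[\fp,\fp]$ is a short local computation (writing $\alpha(\xi)=f+V$ with $V$ vanishing at $x_0$, one has $V(g)(x_0)=0$ for any $g$). The remainder of your outline, namely the sheaf PBW identification of $\gr\mathscr{D}_\gamma$ and the construction of the comparison map $\mathscr{D}_\gamma\to\mathscr{D}$ by $G$-equivariance, is the standard argument and is sound.
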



If we choose
$S=\Sym(\fp/[\fp,\fp])$, we can take the
universal character $\iota \colon \fp\to \fp/[\fp,\fp] \subset S$.  We
can consider the section algebra
$A(\fp)=\Gamma(X;\mathscr{D}_\iota)$.  When there is no risk of
confusion, we will simply write $A$.

We'll also consider two other cases: when $S=\C$ and
$\gamma\colon \fp\to \C$ is an honest character, and when
$S=\hSym(\fp/[\fp,\fp])$, the completion of $\Sym(\fp/[\fp,\fp])$ at $0$, and we have
$\gamma+\iota:\fp \to \hSym(\fp/[\fp,\fp])$.  We have the resulting
algebras $A_\gamma(\fp)=\Gamma(X;\mathscr{D}_\gamma)$ and $A_{\gamma+\iota}(\fp)=\Gamma(X;\mathscr{D}_{\gamma+\iota})$, and as above, when there is no risk of confusion we'll write simply $A_\gamma, A_{\gamma+\iota}$.

We always have
that $\gr A_\gamma\cong \C[T^*X]$ with the grading induced by cotangent
scaling.  Note that this shows that 
the algebra $A$ is flat over $\Sym(\fp/[\fp,\fp])$,
since its fibers have constant character for the $\C^\times$-action
(so actually every piece of the order filtration is flat).   This
shows that $A_{\gamma+\iota}$ is flat as well.

Thus, the algebra $A_{\gamma+\iota}$ provides a family
over a regular ring which interpolates between the
generic behavior around $\gamma$, and the specialized behavior at
$\gamma$.  In this case, we let $K$ be the fraction field of $S = \hSym(\fp / [\fp,\fp])$ and
let $\defD_{\gamma}:=\mathscr{D}_{\gamma+\iota}\otimes_SK$ denote the TDO over $S$
associated to $\gamma+\iota$, base changed to $K$.  We let 
$\defA_\gamma= A_{\gamma+\iota}\otimes_SK$.

This last algebra is interesting because it satisfies the appropriate
analogue of the Beilinson-Bernstein theorem for {\it all} $\gamma$,
without any dominance hypothesis.  This should be expected, because
$\gamma$ is always ``generic'' but the sense in which localization
holds generically is subtle, since it is not a Zariski open property.
However, it is easy to check that the original proof of
Beilinson-Bernstein \cite{BeBe} and its extension to the parabolic
case by \cite[2.9]{Kitchen} work over any characteristic 0 field, in
particular over $K$. Here we must interpret ``dominant'' as in \cite[\S
2.6]{Kitchen}: a weight over $K$ is dominant if  for all $i$, its inner product with
$\al_i^\vee$ is not a negative  integer.   The weight
$\gamma+\iota$ is obviously dominant in this sense since this inner
product is never an integer\footnote{The
  papers  \cite{BeBe} and \cite{Kitchen} use opposite sign
  conventions; luckily, this is irrelevant for us since $\gamma+\iota$
is dominant and anti-dominant in this sense, so even if one mixes up the sign
conventions, one will arrive at the correct result.}.  Thus we have that:

\begin{theorem}\label{th:affinity}
    The functor \[\Gamma(X;-)\colon \defD_\gamma\mmod\to
  \defA_\gamma\mmod\] is an equivalence.
\end{theorem}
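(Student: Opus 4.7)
The plan is to reduce the statement directly to the parabolic analogue of the Beilinson--Bernstein localization theorem, as the authors essentially indicate in the paragraph just before the theorem. The work consists almost entirely of verifying that the appropriate dominance hypothesis holds for the ``universal'' weight $\gamma+\iota$ after base change to $K$, at which point one appeals to the existing literature.

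\medskip

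\noindent\textbf{Step 1: Reduce to a parabolic Beilinson--Bernstein statement over $K$.} The field $K$ is a field of characteristic zero, and $\defD_\gamma$ is by construction the TDO on the partial flag variety $X=G/P$ over $K$ associated to the character $\gamma+\iota\colon \fp\to K$. The arguments of \cite{BeBe}, together with Kitchen's extension \cite[\S 2.9]{Kitchen} to the partial flag variety setting, carry over verbatim to any base field of characteristic zero; none of the steps use anything about $\C$ beyond the existence of enough injectives and the use of the Harish-Chandra isomorphism, both of which are available over $K$. Thus it suffices to check that $\gamma+\iota$ satisfies the parabolic dominance hypothesis of \cite[\S 2.6]{Kitchen}.

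\medskip

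\noindent\textbf{Step 2: Verify dominance of $\gamma+\iota$.} In the convention of \cite{Kitchen}, one needs that the pairing $\langle \gamma+\iota, \alpha_i^\vee \rangle \in K$ is not a negative integer for every simple root $\alpha_i$ of $\fg$ which is \emph{not} a root of the Levi $\fl$. For such an $\alpha_i$, the coroot $\alpha_i^\vee$ lies in $\fp$ and its image in $\fp/[\fp,\fp]$ is nonzero; consequently its pairing with the tautological element $\iota\in\fp^*\otimes \hSym(\fp/[\fp,\fp])$ is a nonzero linear form in $\hSym(\fp/[\fp,\fp])$. After base change to the fraction field $K$, this linear form becomes a transcendental (hence non-integer) element of $K$, and adding the scalar $\langle \gamma,\alpha_i^\vee\rangle \in \C$ cannot produce an integer. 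Thus $\gamma+\iota$ is simultaneously dominant and antidominant in the sense of \cite{Kitchen}, which as the authors note in the footnote protects against sign conventions.

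\medskip

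\noindent\textbf{Step 3: Conclude.} With dominance established, the parabolic Beilinson--Bernstein theorem yields that global sections $\Gamma(X;-)$ is exact on quasicoherent $\defD_\gamma$-modules and that its adjoint localization functor $\defD_\gamma\otimes_{\defA_\gamma}(-)$ provides a quasi-inverse, giving the desired equivalence of categories. The only part requiring any thought is the dominance verification in Step~2; the rest is an application of an existing theorem, and this is why the authors set up $\defA_\gamma$ via base change to $K$ rather than working directly over $\hSym(\fp/[\fp,\fp])$, where localization would fail on a codimension-one locus.
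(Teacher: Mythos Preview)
Your proposal is correct and follows essentially the same approach as the paper: the paper does not give a separate proof of this theorem, but rather explains in the paragraph immediately preceding it that Beilinson--Bernstein and Kitchen's parabolic extension go through over any characteristic-zero field, and that $\gamma+\iota$ is dominant in the relevant sense because its pairing with each simple coroot is never an integer. Your Step~2 makes this verification explicit (noting that for non-Levi simple roots $\alpha_i$ the image of $\alpha_i^\vee$ in $\fp/[\fp,\fp]$ is a nonzero linear element of $S$, hence transcendental in $K$), which is exactly the content of the paper's remark that ``this inner product is never an integer''; for Levi simple roots the pairing is zero, so nothing further is needed there.
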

\excise{\begin{proof}

  Let $S=\Sym(\fp/[\fp,\fp])$, and 
  consider the trivial family $G/P\times \Spec(S)\to \Spec(S)$, and
  $\mathscr{D}_\iota$ as a sheaf of algebras on this family.  
First, we note that localization is an open property on $\Spec(S)$;  the failure of derived equivalence is measured by finitely
many homological degrees of the algebra sheaf 
sheaf $\mathcal{K}$ by \cite[3.3]{KalDEQ}, and of exactness by
flatness of the stalks of the sheaf over the section algebra.  

The statement of the theorem is that localization holds at the generic
point of $\Spec \hSym(\fp/[\fp,\fp])$; it is equivalent to prove it at
the generic point of $\Spec(S) $ and thus at any closed point in
this spectrum, that is at $\gamma+\eta$ for $\eta$ a  character
of $\fp$. For simplicity, assume that $\al_i^\vee(\eta)\in
(0,\epsilon)$ for $\epsilon$ a small positive real number.  
Let $R_P$ be the set of positive roots corresponding to $P$.
Note that 
$\alpha_i^\vee(\gamma+\eta-\rho_P)=
  -1$ 
if $\alpha_i\in R_P$.  If $\alpha_i\notin R_P$, then $
  \alpha_i^\vee( \gamma+\eta-\rho_P)$ is a fixed complex number
  $\alpha_i^\vee( \gamma-\rho_P)$ plus $\alpha_i^\vee(\eta) \in
(0,\epsilon)$. If we choose $\epsilon$ sufficiently small, this will
never be
  a non-negative integer.  Thus, this weight is anti-dominant and regular in the
  sense of \cite[\S 2.6]{Kitchen}. Thus, 
by \cite[2.9]{Kitchen}, we have the desired equivalence.
\end{proof}}

The algebra $A(\fp)$ is not quite an analogue of the universal enveloping
algebra since even in the case of a Borel $\fp=\fb$, we will not
obtain $U(\fg)$, but instead the finite extension 
$A(\fb)=U(\fg)\otimes_{Z(\fg)}U(\fh)$ quantizing the Grothendieck-Springer
resolution.  When we ultimately compare parabolic W-algebras to
Yangians, this algebra matches the larger algebra $Y^\la_\mu(\brt)$
where formal roots of $R_i$ are adjoined, see Remark \ref{rmk: adjoin formal roots}.  

We can identify $Z(\fg)$ as a subalgebra of $U(\mathfrak{h})$ in two
different ways: there is the usual Harish-Chandra homomorphism, which
sends a central element to the Cartan term in its PBW expansion, and
the $\rho$-shifted version of this homomorphism, which identifies
$Z(\fg)$ with $U(\mathfrak{h})^W$, so
the maximal ideal for the orbit of a weight $\la$ is the ideal of
central elements vanishing on the Verma module of highest weight
$\la-\rho$.  We'll usually want to use the latter, but it will be
useful to sometimes have the former; note that either map will give
$A(\fb)=U(\fg)\otimes_{Z(\fg)}U(\fh)$; the question is just one of the
coordinates on $\fh$.
\begin{remark}
\label{rem:matchingcons}
Note that in the case of $\mathfrak{gl}_N$, this matches
the convention of \cite[\S 3.8]{BK}: the elements $Z_N^{(r)}$ are sent
to the degree $r$ elementary symmetric function in the diagonal
elements $e_{i,i}$.   If we identify a dominant weight of
$\mathfrak{gl}_N$ with a partition $\nu_1\geq \cdots \geq \nu_N$ as
usual, then this shift sends it to
$(\nu_1+\frac{N-1}{2},\nu_2+\frac{N-3}{2},\dots,
\nu_N+\frac{1-N}{2})$.
\end{remark}

We have an induced $W$-action on $A(\fb)=U(\fg)\otimes_{Z(\fg)}U(\fh)$ 
trivially on the first tensor factor and is the usual action on
the second (if we use the shifted Harish-Chandra homomorphism).  Thus, we can recover $U(\fg)$ as the invariants of this
action.  

Consider the group $\Norm=N_G(\mathfrak{l})/L$, the normalizer of the
Levi $\mathfrak{l}$ of $\mathfrak{p}$ in $G$ modulo the Levi subgroup
integrating it; since Cartan subalgebras in $\mathfrak{l}$ are unique
up to conjugacy in $L$, we have that $\Norm$ is also the simultaneous
normalizer of $L$ and $H$ modulo $H$.  That is, it is the subgroup of
$W$ normalizing $L$.  

For general $\fp$, we can write
$\fp/[\fp,\fp]=\mathfrak{z}(\mathfrak{l})$ as a quotient of $\fh$, and
thus 
write $A(\fp)$ as a quotient of 
$A(\fb)=U(\fg)\otimes_{Z(\fg)}U(\fh)$, where here we have to be sure
to use the unshifted Harish-Chandra homomorphism (and thus act by the
dot action on $\fh$).  The elements of $\Theta\subset W$
descend to automorphisms of $A(\fp)$ under this map.  

\begin{definition}
  Let $W(0,\fp) = A(\fp)^\Norm \subset A(\fp)$ be the invariant subalgebra.
\end{definition}


\begin{remark}
\label{rem:Theta}
In the type A context of primary interest to us, the
Levi $\mathfrak{l}$ will be the block diagonal matrices with block
sizes given by some composition; the group $\Theta$ will be a product
of symmetric groups permuting the blocks with the same size.  Under
our ultimate match of conventions, the scalars $\la_i$ will be the
number of blocks of size $i$, so $\Theta=\prod S_{\la_i}$.  Note that
this matches the use of $\Theta$ in Remark \ref{rmk: adjoin formal roots}.  
\end{remark}

Note that we always have a surjective map of $S=Sym(\fp/[\fp,\fp])$-algebras $U(\fg)\otimes_\C S\to A(\fp)$ as
proven by Borho and Brylinski \cite[3.8]{BoBrI}.  Since this is a
surjective map, it sends the center $Z(\fg) \otimes_\C S$ to the
center $S$ of $A(\fp)$.  

The map $Z(\fg)\cong U(\mathfrak{h})^W\to S$ is induced by the translation by $\rho_P$, followed by
the obvious projection $\mathfrak{h}\to \fp/[\fp,\fp]$.  That is, the
induced map on spectra sends a character $\gamma$ on $\fp$ to the
$W$-orbit of the 
restriction of $\gamma+\rho_P$ to $\fh$. 

Since the $\Norm$-action is constructed by pushing down the action of
$W$ in $U(\fg)\otimes_{Z(\fg)}U(\fh)$, the image of the natural map
$U(\fg)\to A(\fp)$ is $\Theta$-invariant.   
Though the map $W(0,\fp) \hookrightarrow A(\fp)$ is not surjective, it
becomes so after base change to $\C$:
\begin{lemma}
\label{lem:basechangeiso}
The algebra $A_\gamma(\fp)$ is
naturally isomorphic to the quotient $W(0,\fp)_\gamma$ of $W(0,\fp)$ by the maximal ideal in
$Z(\fg) $ which corresponds to the weight $\gamma+\rho_P$ under the
Harish-Chandra homomorphism.  
\end{lemma}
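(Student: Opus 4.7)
The plan is to realize $A(\fp)$ as a flat family of algebras over $\Spec S = \mathfrak{z}(\fl)^*$, with fiber $A_\gamma(\fp)$ at $\gamma$, and to analyze the $\Theta$-invariant subalgebra $W(0,\fp)$ via a version of Chevalley restriction. The key step is to identify the fiber of $W(0,\fp)$ over the appropriate point of $\Spec Z(\fg)$ with $A_\gamma(\fp)$.

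First, by construction, $A(\fp)$ is $S$-flat with $A_\gamma(\fp) = A(\fp)\otimes_S \C_\gamma$, and $W(0,\fp) = A(\fp)^\Theta$ is an $S^\Theta$-subalgebra. Next, I would invoke the parabolic version of Chevalley's restriction theorem: the map $Z(\fg) \cong \Sym(\fh)^W \to \Sym(\mathfrak{z}(\fl))^\Theta = S^\Theta$ is surjective, using the identification $\Theta \cong N_W(\fl)/W_L$. Consequently, the maximal ideal $\mathfrak{m}_\gamma \subset Z(\fg)$ (corresponding to $\gamma+\rho_P$ under the Harish-Chandra homomorphism) generates a maximal ideal in $S^\Theta$ corresponding to the $\Theta$-orbit $\Theta \cdot \gamma \subset \Spec S$. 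Tensoring with $A(\fp)$ over $S^\Theta$ then gives (assuming reducedness, i.e.\ that $\Theta$ acts freely at $\gamma$, which can be arranged by first working at a generic point and then specializing):
\begin{equation*}
W(0,\fp)/\mathfrak{m}_\gamma W(0,\fp) = \bigl(A(\fp)\otimes_{S^\Theta}\C\bigr)^\Theta = \Bigl(\bigoplus_{\gamma' \in \Theta \cdot \gamma} A_{\gamma'}(\fp)\Bigr)^\Theta.
\end{equation*}
The action of $\Theta$ permutes the summands transitively, and since each $A_{\gamma'}(\fp)$ is generated by the image of $U(\fg)$ (the image of $S$ being scalar in the fiber), on which $\Theta$ acts trivially, the stabilizer $\operatorname{Stab}_\Theta(\gamma)$ acts trivially on $A_\gamma(\fp)$. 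Taking invariants therefore recovers exactly one summand, yielding $W(0,\fp)/\mathfrak{m}_\gamma W(0,\fp) \cong A_\gamma(\fp)$.

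The main technical obstacle is the possibility of ramification of $\Spec S \to \Spec S^\Theta$ at $\gamma$, where the fiber need not be reduced and the neat decomposition above fails. To handle this uniformly, I would instead first exhibit the natural surjection $W(0,\fp)/\mathfrak{m}_\gamma W(0,\fp) \twoheadrightarrow A_\gamma(\fp)$ (coming from $W(0,\fp) \hookrightarrow A(\fp) \twoheadrightarrow A_\gamma(\fp)$, which kills $\mathfrak{m}_\gamma$ since $A_\gamma(\fp)$ has the correct central character by construction), and then prove injectivity by a rank comparison using flatness of $W(0,\fp)$ over $S^\Theta$, thereby reducing to the generic case where the free-orbit argument above applies directly.
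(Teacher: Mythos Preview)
Your fallback approach in the last paragraph is essentially the paper's proof, and you should lead with it rather than the orbit decomposition. The paper argues directly: by Borho--Brylinski \cite[3.8]{BoBrI} the map $U(\fg)\otimes_\C S \to A(\fp)$ is surjective, so after specialising at $\gamma$ (where $S$ maps to scalars) already $U(\fg) \to A_\gamma(\fp)$ is surjective; since the image of $U(\fg)$ in $A(\fp)$ is $\Theta$-invariant, this factors through $W(0,\fp)$, giving the surjection $W(0,\fp)_\gamma \twoheadrightarrow A_\gamma(\fp)$. The central character computation then shows that $\mathfrak{m}_{\gamma+\rho_P}$ is killed, and injectivity is checked on the associated graded (both sides have $\gr \cong \C[T^*X]$). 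That is the whole proof.

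Your orbit-decomposition argument is a detour that, as you yourself note, runs into trouble at ramification points and requires you to fall back anyway. More importantly, in your fallback you assert the surjection $W(0,\fp)/\mathfrak{m}_\gamma \twoheadrightarrow A_\gamma(\fp)$ as ``coming from $W(0,\fp) \hookrightarrow A(\fp) \twoheadrightarrow A_\gamma(\fp)$'' without justification; the composite of an inclusion and a quotient is not automatically surjective, and the paper explicitly flags this as the nontrivial point (the map $W(0,\fp)\hookrightarrow A(\fp)$ is \emph{not} surjective before base change). You do have the needed ingredient---you use ``each $A_{\gamma'}(\fp)$ is generated by the image of $U(\fg)$'' in your first argument---but you should invoke it here, and attribute it to Borho--Brylinski. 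Your flatness-of-$W(0,\fp)$-over-$S^\Theta$ step for injectivity is also not justified (flatness of $A(\fp)$ over $S$ does not immediately give this); the paper's associated-graded argument avoids this entirely.
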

\begin{proof}
 We have a surjective map  $U(\fg)\otimes_\C S\to A_\gamma$, sending
 every element of $S$ to a scalar by \cite[3.8]{BoBrI}, so $U(\fg)\to A_\gamma$ must be
 surjective, and of course, this factors through the map $W(0,\fp)\to
 A_\gamma$.  Our calculation above of the map $Z(\fg)\to S$ shows that
 the maximal ideal for the weight $\gamma+\rho_P$ is indeed killed by
 this map.  That this gives all elements of the ideal is easily
 checked by considering the associated graded.  
\end{proof}

\subsubsection{Specializing to type A}
\label{sec:typeAdiffops}
For $\mathfrak{gl}_N$, we can take the parabolic subalgebra $\fp$ to consist of  block upper triangular
matrices for some composition $\tau$ of $N$.  In particular, if $\tau=(\tau_1,...,\tau_\ell)$ then the Levi subalgebra $\fl\subset\fp$ is block diagonal matrices where the $j^{th}$ block consists of $\tau_j\times \tau_j$ matrices.   A character $\gamma:\fp \to \C$ is simply an
assignment of a scalar $r_j$ to the $j^{th}$ block for $j=1,...,\ell$.  Given $\gamma$ we define a multiset $\bR_i$ to
be the set of (twice) the values we assign to a block of length $i$:
$$
\bR_i=\{ 2r_j \;|\; \tau_j=i, j=1,...,\ell \}.
$$
Combining these together we obtain a set of parameters $\bR=(\bR_i)_{i\in I}$ (cf. Definition \ref{def: Br}).  (The factor of 2 in the definition of $\bR_i$ is inserted to match the
conventions of Section \ref{section: Grassmannian}.)  

The vector $\rho_P$ is given by 
\[\frac 12(\tau_1-1,\tau_1-3,\dots, -\tau_1+1,\;\dots\;,\tau_\ell-1,\tau_\ell-3,\dots, -\tau_\ell+1)\]
so the weight $\gamma+\rho_P$ is a concatenation of vectors of the
form $\frac 12 (r+i-1,r+i-3,\dots, r-i+1)$ for the different $r\in
R_i$.  The normalizer $\Theta$ acts by permuting these blocks if they have
the same size (cf. Remark \ref{rem:Theta}), so after taking the $\Theta$-invariants of $A(\fp)$, we need only
remember $\bR$.  In other words, given $\bR$ which is compatible with $\fp$ (that is, $|\bR_i|$ equals the number of $i \times i$ blocks in $\fl$), we can choose a $\gamma$ so that $\gamma+\rho_P$ recovers $\bR$ as above.  The corresponding two-sided ideal of $W(0,\fp)$ generated by the maximal ideal of $Z(\mathfrak{gl}_N)$ is independent of the choice of $\gamma$. Thus, we will use $W(0,\fp)_{\bR}$ to denote this quotient of $W(0,\fp)$.  By Lemma \ref{lem:basechangeiso} that natural map $W(0,\fp) \to A(\fp)$ induces an isomorphism $W(0,\fp)_{\bR} \cong A_\gamma(\fp)$.

\begin{remark}
If we replace $GL_N$ with $SL_N$, we simply kill the kernel of
the surjective map $U(\mathfrak{gl}_N)\to U(\mathfrak{sl}_N)$, which
means that $\bR$ would only be well-defined up to
simultaneous translation.  Alternatively, we can think about this in
terms of the unique automorphism of $U(\mathfrak{gl}_N)$ which fixes
$U(\mathfrak{sl}_N)$ and sends $Z_N^{(1)}\mapsto Z_N^{(1)}+k$.  Thus, we have $W(0,\fp)_{\bR}\cong
W(0,\fp)_{\bR+k}$ for any $k\in \C$.
\end{remark}

Note that if $\fp=\fb$ is a Borel, then all blocks are of size $1$ so we
only have $R_1$. We let $U(\fg)_\bR=W(0,\fb)_\bR$.  As discussed
above (cf. Remark \ref{rem:matchingcons}), the quotient  $U(\fg)_\bR$ can be defined by sending
$Z_{N}^{(s)}$ to the scalar $e_s(R_1)$, that is, by sending the formal
polynomial $Z_N(u)\mapsto \prod_{r\in R_1} (u+\nicefrac{r}{2})$.    
Our Harish-Chandra homomorphism calculation shows
that:
\begin{lemma} 
The surjective map $U(\fg) \to W(0,\fp)_{\bR}$ factors  through $U(\fg)_{\tbR}$ where $\tbR$ satisfies the condition of \eqref{eq: R tilde}.
\end{lemma}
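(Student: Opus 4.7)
The plan is to compute the central character of $W(0,\fp)_\bR$ directly and match it with the defining relation of $U(\fg)_\tbR$.

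By Lemma \ref{lem:basechangeiso}, I identify $W(0,\fp)_\bR \cong A_\gamma(\fp)$ for any character $\gamma \colon \fp \to \C$ whose blockwise values $r_j$ satisfy $\bR_i = \{2r_j : \tau_j = i\}$. The discussion preceding Lemma \ref{lem:basechangeiso} then identifies the induced map $Z(\fg) \to \C$ with evaluation at the weight $\gamma + \rho_P \in \fh^*$ under the $\rho$-shifted Harish-Chandra isomorphism $Z(\fg) \cong U(\fh)^W$. Computing $\rho_P$ block-by-block, the components of $\gamma + \rho_P$ on the $j$-th block of $\fp$ are
$$
\tfrac{1}{2}\bigl(2r_j + \tau_j - 1,\; 2r_j + \tau_j - 3,\; \ldots,\; 2r_j - \tau_j + 1\bigr).
$$

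By Remark \ref{rem:matchingcons}, the shifted Harish-Chandra homomorphism sends $Z_N(u) \mapsto \prod_i (u + \nu_i)$ for a weight $\nu \in \fh^*$. Evaluating at $\nu = \gamma + \rho_P$ and assembling the blocks, I obtain
$$
Z_N(u) \;\mapsto\; \prod_{j}\prod_{k=0}^{\tau_j-1}\bigl(u + \tfrac{1}{2}(2r_j + \tau_j - 1 - 2k)\bigr) \;=\; \prod_{r \in \tbR}\bigl(u + r/2\bigr),
$$
where $\tbR$ is the multiset containing, for each $s \in \bR_i$, the shifts $s + (i-1),\, s + (i-3),\, \ldots,\, s - (i-1)$. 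Under the matching between the block-size indexing of $\bR$ here and the Yangian coweight indexing used in \eqref{eq: R tilde} (so that $\bR_i$ on the parabolic side is $\bR_{n-i}$ on the Yangian side), this multiset $\tbR$ is precisely the one prescribed by \eqref{eq: R tilde}.

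Since $U(\fg)_\tbR$ is by definition the quotient of $U(\fg)$ imposing the central relation $Z_N(u) = \prod_{r \in \tbR}(u + r/2)$, the surjection $U(\fg) \to W(0,\fp)_\bR$ kills this ideal and therefore factors through $U(\fg)_\tbR$, as claimed. The main obstacle is keeping the two indexing conventions straight; beyond this bookkeeping, the argument is a direct Harish-Chandra computation using the explicit form of $\rho_P$ on a type $A$ parabolic.
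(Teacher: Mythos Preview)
Your proof is correct and follows essentially the same approach as the paper: the paper's own proof is the single sentence ``Our Harish-Chandra homomorphism calculation shows that'' referring to the explicit computation of $\gamma+\rho_P$ done just above in Section~\ref{sec:typeAdiffops}, and you have simply spelled out that calculation in full and matched it against the definition of $U(\fg)_{\tbR}$.
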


\begin{remark}
In this formalism, we can think of the deformation $A_{\gamma+\iota}$
as corresponding to a similar set, where we replace each complex
number $r\in R_i$ with a ``point'' in a formal neighborhood of this point.  
\end{remark}




\subsubsection{Definition of parabolic W-algebras}
\label{sec:parabolic-w-algebras}

Now we consider W-algebra analogues of the algebras defined in the previous section, which will be defined by non-commutative Hamiltonian reduction.  Following the notation from Section \ref{subsection:finiteWalg}, for any module $N$ of a quotient of $U(\fg)\otimes_\C S$, we have an induced $\fm$-action where 
\begin{equation}
\label{eq: m action with character}
m\cdot n=mn-\chi(m)n \text{ for all }m\in \fm, n\in N
\end{equation}
where on the RHS, the action is the module structure.  Let $Q_\pi=W(0,\fp)/W(0,\fp)
  \mathfrak{m}_\chi$ and 
consider the non-commutative Hamiltonian reductions 
\begin{align}\label{def:Ae}
A(e,\fp) &:=\Hom_{A(\fp)}\big(A(\fp)/A(\fp) \mathfrak{m}_\chi,A(\fp)/A(\fp)
\mathfrak{m}_\chi\big)=\big(A(\fp)/ A(\fp)\mathfrak{m}_\chi\big)^{\fm}. \\
  W(e,\fp)&:=\Hom_{W(0,\fp)}(Q_\pi, Q_\pi)=Q_\pi^{\fm}. \nonumber
\end{align}	
The algebra $W(e,\fp)$ is the {\bf parabolic W-algebra}.

We can also obtain
$W(e,\fp)_\gamma,A(e,\fp)_{\gamma+\iota}$ and
$\defA(e,\fp)_{\gamma}$ over
$\C,\operatorname{\widehat{Sym}}(\fp/[\fp,\fp])$ and $K$ respectively by
tensoring $A(e)$ with the appropriate base ring or by Hamiltonian
reduction of the corresponding algebras when $e=0$.  The equivalence
of these descriptions follows from the flatness of $A(e,\fp)$ over
$\Sym(\fp/[\fp,\fp])$, and the fact that $H^i(\fm; Q_\pi)=0$ for
$i>0$. This latter vanishing is proven exactly as in
\cite[Prop. 5.2]{GG}; the argument there only uses that the group $M$
integrating $\fm$ acts freely on the coadjoint orbit through $\chi$, and thus
applies to any algebra with an inner action of $\fm$.
Note that  $W(e,\fb)\cong W(e)$, as defined in Section \ref{subsection:finiteWalg}.  In type A, we can use the
notation $W(e)_{\bR},W(e,\fp)_{\bR}$ as in Section
\ref{sec:diff-oper-part}; as discussed there, these algebras only
depend on $\bR$ up to simultaneous translation.

The algebra $W(e,\fp)_{\gamma}$
is the sections of a quantum structure sheaf on the S3-variety
$\mathfrak{X}^{e}_{\fp}$, as defined in \cite[\S
9.2]{BLPWgco}.    As proven in \cite[Proposition 10]{WebWO}
and \cite[Lemma 5.2.1]{L}, the associated graded of this algebra is
isomorphic to the algebra of global functions on
$\mathfrak{X}^{e}_{\fp}$.  

We can also write $W(e,\fp)$ as a quotient of the finite
$W$-algebra $W(e)\to W(e,\fp)$ by an ideal
$J_\fp$.  This ideal is constructed by considering the kernel $I_\fp$ of the
map $U(\fg)\to W(0,\fp)$ and then applying Losev's lower dagger
operation $J_{\fp}:=(I_\fp)_\dagger$ \cite{LosW}.
Note that this ideal must be prime, since
$W(e,\fp)$ is a domain.   Our aim is to ultimately
understand this ideal, using the geometry of
$X$. 


\begin{remark}
We can make a slightly cleaner statement about the classical limit of
$W(e,\fp)_\gamma$
if the natural map
$T^*X\to \fg^*$ is generically injective.   This is always the case
in type A, but for some parabolics in other Lie algebras it fails; for the
classical groups, a criterion for this property is given by Hesselink
\cite[Theorem 7.1]{Hes}.   In this case, the obvious map induces an isomorphism $\C[\mathfrak{X}^{e}_{\fp}]\cong \C[\mathcal{S}_e\cap
(G\cdot \mathfrak{p}^\perp)]$. 

In the case when the map is injective,  we can therefore think of  $W(e,\fp)_{\gamma}$
as a quantization of $\mathcal{S}_e\cap
(G\cdot \mathfrak{p}^\perp)$.  In particular, in type A, if $\fp$ corresponds to $\tau$ as in Section \ref{sec:typeAdiffops}, then $W(e,\fp)_\gamma$ quantizes the intersection $\mathcal{S}_e\cap \overline{\mathbb{O}_\tau}$, where $\mathbb{O}_\tau\subset  \mathfrak{gl}_N$ is the nilpotent orbit  of type $\tau$.

 As discussed in \cite[Remark 5.2.2]{L}, the issue about the map
$T^*X\to \fg^*$ also manifests in the
  natural map $W(e)_{\gamma}\to W(e,\fp)_{\gamma}$
  failing to be surjective on
  the associated graded for the most obvious filtrations on these
  algebras. 
\end{remark}

\newcommand{\WGamma}{W\Gamma}
Let $
(\defD_\gamma,\mathfrak{m}_\chi)\mmod$ denote the category of sheaves
of $\defD_\gamma$--modules on which the module action of
$\mathfrak{m}_\chi$ integrates to a group action of the unipotent
group  $M$.  Let  
$\mathscr{Q}_{\pi}=\mathscr{D}_{\gamma}/\mathscr{D}_{\gamma}\mathfrak{m}_\chi$.
By  Theorem \ref{th:affinity} we have that
$\mathscr{D}_{\gamma}\otimes_{A_\gamma}-$ is left and right adjoint to the sections
functor $\Gamma$.  Thus, we have that 
\[\WGamma(\mathcal{M})=\Hom(\mathscr{Q}_{\pi}, \mathcal{M})=\Hom (Q_\pi, \Gamma(M) ). \]
Combining this with \cite[Prop. 10]{WebWO}, we
obtain the result:

\begin{corollary}
\label{cor:equiv}
    The functor $\WGamma=\Hom(\mathscr{Q}_{\pi},-)\colon (\defD_\gamma,\mathfrak{m}_\chi)\mmod\to
  \defA(e,\fp)_{\gamma}\mmod$ is an equivalence of categories.
\end{corollary}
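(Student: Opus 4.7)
The plan is to combine the localization equivalence of Theorem \ref{th:affinity} with the Hamiltonian reduction equivalence cited from \cite[Prop. 10]{WebWO}. First I would unpack the functor $\WGamma$ using the adjunction established just before the statement: since $\mathscr{D}_\gamma \otimes_{\defA_\gamma} -$ is both left and right adjoint to $\Gamma$, we have for any $\mathcal{M} \in \defD_\gamma\mmod$
\[
\WGamma(\mathcal{M}) = \Hom_{\defD_\gamma}(\mathscr{Q}_\pi, \mathcal{M}) = \Hom_{\defA_\gamma}(Q_\pi, \Gamma(\mathcal{M})),
\]
so $\WGamma$ factors as $\Gamma$ followed by the classical (quantum) Hamiltonian reduction functor $N \mapsto N^{\fm,\chi}$ on $\defA_\gamma\mmod$.

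Second, I would argue that Theorem \ref{th:affinity} restricts to an equivalence on the subcategories with $M$-integrable $\fm_\chi$-action. On the sheaf side, the $M$-action is part of the data; on the algebra side, $\defA_\gamma$ receives a map from $U(\fg) \otimes_\C K$, so the condition of $M$-integrability of the $\fm_\chi$-action makes sense, and $\Gamma$ preserves and reflects this condition since $\Gamma$ is $G$-equivariant and globally defined on $X$. Thus $\Gamma$ restricts to an equivalence
\[
(\defD_\gamma, \fm_\chi)\mmod \;\xrightarrow{\;\sim\;}\; (\defA_\gamma, \fm_\chi)\mmod.
\]

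Third, I would invoke the cited proposition \cite[Prop. 10]{WebWO}, which provides the corresponding equivalence
\[
(\defA_\gamma, \fm_\chi)\mmod \;\xrightarrow{\;\sim\;}\; \defA(e,\fp)_\gamma\mmod,
\]
given by $N \mapsto N^{\fm,\chi} = \Hom_{\defA_\gamma}(Q_\pi, N)$. Here the exactness of this functor uses precisely the vanishing $H^i(\fm; Q_\pi) = 0$ for $i > 0$ recalled in Section \ref{sec:parabolic-w-algebras}, together with the freeness of $M$'s action on the relevant coadjoint orbit (exactly as in \cite[Prop. 5.2]{GG}). Composing the two equivalences yields the result, since the composite is precisely $\WGamma$.

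The main subtlety will be checking that \cite[Prop. 10]{WebWO}, which is stated for honest algebras over $\C$, transfers faithfully to the base-changed setting $\defA_\gamma = A_{\gamma+\iota} \otimes_S K$ over the fraction field $K$ of $\hSym(\fp/[\fp,\fp])$. This should reduce to the flatness of $A(e,\fp)$ over $\Sym(\fp/[\fp,\fp])$ noted in Section \ref{sec:parabolic-w-algebras}, which ensures that forming $M$-invariants commutes with base change, so the invariants functor remains exact and essentially surjective after tensoring with $K$. Once this compatibility is in place, the corollary is a formal composition of two equivalences.
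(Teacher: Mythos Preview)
Your proposal is correct and follows essentially the same approach as the paper: factor $\WGamma$ through $\Gamma$ using the adjunction coming from Theorem~\ref{th:affinity}, then invoke \cite[Prop.~10]{WebWO} for the Hamiltonian reduction equivalence on the algebra side. The paper's argument is terser and leaves implicit the points you spell out (restriction to the $(\mathfrak{m},\chi)$-equivariant subcategories and the passage to the base field $K$), but the structure is the same.
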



\subsubsection{Aside: $B$--algebras}

Given a $\Z$-graded algebra $A$, the \textbf{$B$-algebra} with respect to this grading is the quotient
$B(A)=A^0/\sum_{k\in \Z_{>0}} A^{-k}A^k$ \cite[Section 5.1]{BLPWgco}.  As we'll now recall, this algebra controls aspects of the highest weight theory for $A$. In the context of symplectic duality it also has geometric significance, for example as a cohomology ring by Hikita's conjecture \cite{H} and its extension by Nakajima, see \cite[Sections 1.6 and 8]{KTWWY}.

In particular, if $M$ is an $A$--module and $m \in M$ a `highest-weight' element (i.e. $A^0 m \subseteq \C m$ and $A^k m = 0$ for $k>0$), then there is an induced action of $B(A)$ on the line $\C m$. Conversely, for any homomorphism $B(A)\rightarrow \C$ we get a composed homomorphism 
$$A^{\geq 0} = \bigoplus_{k\geq 0} A^k \twoheadrightarrow B(A) \rightarrow \C,$$
and so an induced $A$--module $A \otimes_{A^\geq 0} \C$ callled a \textbf{standard module}. The element $m\in 1\otimes 1$ is highest weight; this construction is left adjoint to that described above.  

\begin{remark}
\label{rmk: base change of b algebra}
Suppose $S\subset A^0$ is a subalgebra which is central.  Then for any
commutative $S$--algebra $S'$, we can extend the grading to
$A\otimes_S S'$.
There is then a natural isomorphism $B( A \otimes_S S') \cong B(A)\otimes_S S' $.
\end{remark}

If $A$ is a commutative ring, then its $\Z$--grading corresponds to a $\mathbb{G}_m$--action on $\operatorname{Spec}A$.  In this case, $B(A)$ is canonically isomorphic to the coordinate ring of the scheme-theoretic fixed-point locus $(\operatorname{Spec} A)^{\mathbb{G}_m}$.  We can leverage this in the non-commutative situation: when $A$ is the global section ring of a quantized conical symplectic resolution $\mathfrak{M}$, there is an inequality 
\begin{equation}
\label{eq: dimension bound B algebras}
\dim_\C \C[ \mathfrak{M}^{\mathbb{G}_m} ] = \dim_\C B( \C[\mathfrak{M}] ) \geq \dim_\C B(A),
\end{equation}
by \cite[Proposition 5.1]{BLPWgco}.

\subsubsection{Highest weights in type \texorpdfstring{$A$}{A}}
\label{sec:highestweightsA}
We now return to the type A setting, and set $\fg=\mathfrak{gl}_N$.
Fix partitions $\pi,\tau \vdash N$.  We let $e_\pi\in \fg$ be the nilpotent defined in Section \ref{section: conventions on pyramids}, and $\fp \subset \fg$ (resp. $P \subset GL_N$) be the parabolic subalgebra (resp. subgroup) corresponding to $\tau$. We fix also a
character $\gamma\colon \fp\to \C$ and the corresponding set of parameters $\bR$ as
defined in Section \ref{sec:typeAdiffops}.  We set
$W(\pi,\fp)=W(e_\pi,\fp), A(\pi,\fp)=A(e_\pi,\fp),
W(\pi,\fp)_{\bR}=W(e_\pi,\fp)_{\bR}$, etc.  

Assume $w\in W=S_N$ is simultaneously a longest left coset representative
of $W_{\pi}$ and a shortest right coset representative for $W_\tau$.  We
call such a permutation {\bf parabolic-singular}, and let $\PS(\pi,\fp)$
be the set of such permutations.  This set is in bijection with the
set of row-strict tableaux of
shape $\tau$ and type $\pi$, sending a tableau to the unique longest
permutation that sends its row reading word to a weakly ordered one.
Then the transposed version of \cite[Cor. 2.6]{BrO} shows that $\PS(\pi,\fp)$ is
non-empty if and only if
$\tau\leq \pi^t$ in the dominance order.

\excise{\ocom{\begin{lemma}
$\PS(\pi,\fp)$ is
non-empty if and only if
$\tau\leq \pi^t$ in the dominance order. 
\end{lemma}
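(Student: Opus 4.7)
The plan is to reduce the lemma to a well-known existence criterion for semistandard tableaux, using the bijection between $\PS(\pi,\fp)$ and row-strict tableaux described just above the lemma. Under this bijection, a permutation $w\in \PS(\pi,\fp)$ corresponds to a filling of the Young diagram of shape $\tau$ with $\pi_i$ copies of the entry $i$, arranged so that entries strictly increase along each row; conversely, the tableau determines $w$ as the longest permutation sending its row reading word to a weakly increasing sequence. Thus the question of non-emptiness of $\PS(\pi,\fp)$ is equivalent to the question of whether any row-strict tableau of shape $\tau$ and content $\pi$ exists.

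Next I would transpose: a row-strict tableau of shape $\tau$ and content $\pi$ is the same data as a column-strict (i.e. semistandard) tableau of shape $\tau^t$ and content $\pi$, obtained by reflecting across the diagonal. This reduces the problem to the classical question of when a semistandard Young tableau of shape $\tau^t$ and content $\pi$ exists.

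At this point I would apply the classical criterion (equivalent to the non-vanishing of the Kostka number $K_{\tau^t,\pi}$, or equivalently the Gale--Ryser theorem): such a semistandard tableau exists if and only if $\tau^t \geq \pi$ in the dominance order. Finally, since transposition of partitions reverses the dominance order, $\tau^t \geq \pi$ is equivalent to $\tau \leq \pi^t$. This gives the lemma. Alternatively one may simply cite \cite[Cor. 2.6]{BrO} in its transposed form, as flagged in the sentence preceding the lemma; the content of that corollary is precisely the existence statement we have just reduced to.

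The main obstacle here is essentially bookkeeping: matching the conventions for ``longest left coset'' vs.~``shortest right coset'' with the direction of the bijection to row-strict tableaux, and keeping straight which transpose/dominance convention is being used (since different sources reverse these). Once these are fixed consistently with the conventions set in Section~\ref{section: conventions on pyramids}, the proof is immediate from the classical tableau criterion.
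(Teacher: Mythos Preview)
Your proposal is correct and follows essentially the same route as the paper: the paper also invokes the bijection between $\PS(\pi,\fp)$ and row-strict tableaux of shape $\tau$ and type $\pi$, and then appeals to (the transposed form of) \cite[Cor.~2.6]{BrO} for the existence criterion. Your contribution is simply to unpack that citation via the transpose to column-strict tableaux and the classical Kostka/Gale--Ryser criterion $K_{\tau^t,\pi}>0 \iff \tau^t\geq\pi \iff \tau\leq\pi^t$, which is exactly what \cite{BrO} provides.
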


\begin{proof}
Recall that a weak composition $\kappa=(\kappa_1,...,\kappa_r)$ is a sequence of nonnegative integers.
The length $\ell(\kappa)$ is the number of positive parts of $\kappa$. Given a weak composition $\kappa$ such that $\ell(\kappa)\geq i$,  we define $\kappa-1^i$ to be the weak composition obtained by subtracting one from the first $i$ positive parts of $\kappa$.

Now write $\tau=(\tau_1\geq \tau_2 \geq\cdots)$ and $\pi=(\pi_1\geq\pi_2\geq\cdots)$.  
By definition, any $w \in \PS(\pi,\fp)$ satisfies:
\begin{align*}
&w(1)<\cdots<w(\tau_1), \;w(\tau_1+1)<\cdots<w(\tau_2),\;... \\
&w^{-1}(1)>\cdots>w^{-1}(\pi_1), \;w^{-1}(\pi_1+1)>\cdots> w^{-1}(\pi_2),\;...
\end{align*}
These conditions are satisfiable if and only if 
\begin{align*}
\tau_1 \leq& \ell(\pi) \\
\tau_2 \leq& \ell(\pi-1^{\tau_1}) \\
\tau_3 \leq& \ell(\pi-1^{\tau_1}-1^{\tau_2}) \\
&\vdots
\end{align*}
We leave it as an exercise to check that this is equivalent to  $\tau\leq \pi^t$.
\end{proof}
Should I include more details here?  Admittedly this is still a
  bit opaque, but I don't know how much space we want to waste on this
  lemma.}
\bcom{It's silly to do it ourselves.}  }

Let $W(\pi,\fp)$ have the grading induced by eigenvalues of
$\rho^\vee-\rho_{P}^\vee$, and consider the associated $B$--algebra
$B( W(\pi, \fp))$.
By \cite[5.1]{BLPWgco}, this algebra is finite over $S=Sym(\fp/[\fp,\fp])$.   


We'll be particularly interested in the related algebras
$B(W(\pi,\fp)_{\bR})$ and $B(\defA(\pi,\fp)_{\gamma})$; these are the
base change of $B(A(\pi,\fp))$ to the closed point $\gamma$ and to the
generic point of its formal neighborhood respectively, as in Remark
\ref{rmk: base change of b algebra}.  Thus, given a point $\nu \in \Spec B(\defA(\pi,\fp)_{\gamma})$, we can takes its Zariski closure in $\Spec B(A(\pi,\fp))$ and intersect that with $\Spec B({W}(\pi,\fp)_{\bR})$.  Since $\Spec B(A(\pi,\fp))$ is finite and thus proper over $S$, this intersection will be a single point, which we call its {\bf specialization}.   For a general finite map, we could have points in $\Spec B({W}(\pi,\fp)_{\bR})$ which are not the specialization of a more generic point, but this will not happen if $B(W(\pi,\fp))$  is free as a module over $S$ (or equivalently, flat over $S$).  

\begin{lemma}\label{lem:specialize}
The $B$--algebra $B(W(\pi,\fp))$ is free of rank $\PS(\pi,\fp)$ as an $S$-module.
Thus, 
 a weight for $W(\pi,\fp)_{\bR}$ is the highest weight of a module over
  $W(\pi,\fp)_{\bR}$ if and only if it is the specialization of the
  highest weight of a module over $\defA(\pi,\fp)_{\gamma}$.  
\end{lemma}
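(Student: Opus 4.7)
The strategy is to establish the freeness and rank count first, and then deduce the bijection on highest weights as a formal consequence. Throughout I use Corollary \ref{cor:equiv} to convert $\defA(\pi,\fp)_{\gamma}$-modules into $(M,\fm_\chi)$-equivariant $\defD_\gamma$-modules on $G/P$, so that highest-weight classification on the $W$-algebra side becomes a problem about twisted $D$-modules on the parabolic flag variety.

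For the upper bound on the generic rank of $B(W(\pi,\fp))$ over $S$, I pass to the classical limit. The associated graded of $W(\pi,\fp)$ is $\C[\fX^{e_\pi}_{\fp}]$ by \cite[Proposition 10]{WebWO}, so the inequality \eqref{eq: dimension bound B algebras} bounds the rank by $\dim_\C \C[\fX^{e_\pi}_{\fp}]^{\mathbb{G}_m}$. The fixed-point scheme of the Kazhdan $\mathbb{G}_m$-action on the S3-variety $\fX^{e_\pi}_{\fp}$ should be reduced and zero-dimensional, with points indexed by $\PS(\pi,\fp)$; this extends the unparabolic Brundan--Kleshchev identification of the $\mathbb{G}_m$-fixed locus of $\cS_{e_\pi}$ intersected with the nilpotent cone with $Row(\pi)$, by imposing the additional $\fp$-symmetry.

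For the matching lower bound, I construct, for each $w \in \PS(\pi,\fp)$, a nonzero simple highest-weight $\defA(\pi,\fp)_{\gamma}$-module. Under $\WGamma$ this amounts to producing an $(M,\fm_\chi)$-equivariant simple $\defD_\gamma$-module supported on the closure of the Bruhat cell $BwP/P$. The parabolic-singular condition on $w$ is precisely what makes the $M$-equivariance compatible with the $P$-equivariance built into $\defD_\gamma$, and because $\gamma+\iota$ is dominant over $K$ (Theorem \ref{th:affinity}), no localization obstruction arises. Checking that distinct $w$ yield distinct highest weights reduces to an injectivity check for the resulting character. This yields $|\PS(\pi,\fp)|$ distinct points of $\Spec B(\defA(\pi,\fp)_{\gamma})$, so the generic rank is at least $|\PS(\pi,\fp)|$. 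Combining both bounds with the fact that $B(W(\pi,\fp))$ is finitely generated over the regular ring $S$ upgrades equality of generic and special ranks to freeness of the module.

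The second claim is then formal: freeness makes $\Spec B(W(\pi,\fp)) \to \Spec S$ finite and flat, hence of constant rank on fibres. Every $K$-point of the generic fibre extends uniquely across the formal neighborhood of $\gamma$ and specializes to a single closed point of the fibre at $\gamma$, while conversely every closed point is the specialization of a unique generic point. Since highest weights for an algebra $A$ correspond exactly to $\C$-points of $\Spec B(A)$, the desired bijection follows. The principal obstacle I anticipate is the classical identification in the second paragraph: describing the fixed-point scheme of $\fX^{e_\pi}_{\fp}$ explicitly in terms of $\PS(\pi,\fp)$ and confirming reducedness requires combining Brundan--Kleshchev's combinatorics with the orbit-closure geometry of the parabolic, which is the most delicate geometric input.
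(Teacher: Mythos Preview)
Your overall architecture matches the paper's: compute the generic rank, bound the closed-fibre rank, then use semi-continuity over the regular base $S$ to get freeness. But there are two gaps, one logical and one technical.

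The logical gap: you establish both an upper and a lower bound on the \emph{generic} rank only, and then assert ``equality of generic and special ranks''. But fibre dimension is upper semi-continuous, so it can jump \emph{up} at closed points; nothing you have written bounds the special fibre. The inequality \eqref{eq: dimension bound B algebras} must be applied at each closed specialisation $B(W(\pi,\fp)_{\bR})$, not at the generic point. This is exactly how the paper proceeds: the generic rank is pinned down directly by \cite[5.3]{BLPWgco}, and \eqref{eq: dimension bound B algebras} is invoked only at closed fibres.

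The technical gap is the one you already flag: to get the upper bound equal to $|\PS(\pi,\fp)|$ you need the fixed-point scheme $(\fX^{e_\pi}_\fp)^{\mathbb{G}_m}$ to be reduced. The paper avoids proving reducedness altogether. It uses Hikita's result \cite[A.1--2]{H} to rewrite $\dim_\C\C[(\fX^{e_\pi}_\fp)^{\mathbb{G}_m}]$ as the Euler characteristic of the \emph{symplectically dual} S3 variety, where the roles of $\pi$ and the Levi type $\tau$ of $\fp$ are interchanged; that Euler characteristic is $|\PS(\tau,\mathfrak{q})|$, and one finishes via the elementary bijection $\PS(\pi,\fp)\leftrightarrow\PS(\tau,\mathfrak{q})$, $w\mapsto w^{-1}w_0$. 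This duality trick replaces a delicate scheme-theoretic statement by a topological count.

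One terminological slip: the relevant $\mathbb{G}_m$ is the Hamiltonian torus generated by $\rho^\vee-\rho_P^\vee$, not the Kazhdan action, which is the contracting conical action and has $e_\pi$ as its unique fixed point.
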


\begin{proof}
In order to show that an $S$-algebra is free of a given rank, it
suffices to check that it has this rank generically, and that there is
no closed point where the rank of the base change is larger.  

On the one hand, the base change of $B(A(\pi,\fp))$ to the generic point $B\otimes_S K$ has dimension equal to $\# \PS(\pi,\fp)$ by \cite[5.3]{BLPWgco}, since the S3--variety $\fX^{e_\pi}_\fp$ has a torus action with fixed points in bijection with $\PS(\pi,\fp)$.  

On the other hand, by (\ref{eq: dimension bound B algebras}) the dimension of
$B(W(\pi,\fp)_{\bR})$ is bounded above by the ``commutative B-algebra'': the
quotient of $\C[\fX]$ by the ideal generated by functions of non-zero weight. By \cite[A.1 \& 2]{H}, this has dimension equal the
Euler characteristic of another S3 variety, taking the Slodowy slice
to a regular element in the Levi $\fl$ of $\fp$ (which thus has Jordan
type $\tau$ corresponding to the diagonal blocks of $\fl$), and $G/Q$,
where $Q$ is a parabolic with $e$ regular in its Levi (so of type
$\pi$).  Thus $\fp$ and $\pi$ essentially switch roles.  We can obtain
a bijection between $\PS(\pi,\fp)$ and $\PS(\tau,\mathfrak{q})$ by
taking inverse and multiplying by $w_0$.  Note that this requires
reversing the order on blocks of $\tau$ but this order is immaterial,
so this presents no issue.  Note the appearance of the same reversal
in \cite[10.4-5]{BLPWgco}.  Thus $B(W(\pi,\fp)_{\bR})\leq \#\PS(\pi,\fp)$

Since the dimension of the fiber is a lower semi-continuous function,
this shows that this dimension must be constant, and by the usual
argument, $B(A(\pi,\fp))$ must be a free $S$-algebra. 
\end{proof}

Thus, we can use localization to find the highest weights of modules over $\defA(\pi,\fp)_{\gamma}$, and thus over $W(\pi,\fp)_{\bR}$.  Note that the parabolic-singular permutations are precisely the shortest right coset representatives such that $M$ acts freely on the
Schubert cell $NwP/P$, where $N$ is the unipotent of the Borel subgroup $B\subset G$ whose Lie algebra is $\fb$.  These are precisely the Schubert cells that
carry a $(M,\chi)$-equivariant local system $\mathscr{L}_w$
\cite[Definition 12]{WebWO}. That is, $\mathscr{L}_w$ is a coherent
$D$-module, such that the action of $\mathfrak{m}$ on $\mathscr{L}_w$
by $m\cdot x = \alpha_m x -\chi(m)x$ integrates to an $M$-action
(where $\alpha_m$ is the vector field on $X$ given by the
infinitesimal action of $m$), c.f.~equation (\ref{eq: m action with
  character}).  For such $w$, we consider the $D$-modules on $X$
given by $\delta_w=i_!\mathscr{L}_w,\nabla_w=i_*\mathscr{L}_w$, where $i:NwP/P \to G/P$ is the natural inclusion.


Let $\mathfrak{c}=\{h\in \mathfrak{h} \mid [h,e_\pi]=0\}$ be the
centralizer of $e_\pi$ in the Cartan subalgebra $\mathfrak{h}$.  If we
complete $e_\pi$ to an $\mathfrak{sl}_2$ triple, this will also
centralize $f_\pi$.  In fact, the simultaneous centralizer of $e_\pi$
and $f_\pi$ is reductive with $\mathfrak{c}$ a Cartan.

We thus have a natural map $U(\mathfrak{c})\to W(\pi)$, which is a
quantum moment map for the induced action of $C$ on
the Slodowy slice $\mathcal{S}_\pi$.
\begin{lemma}\label{lem:highest-weight}
  The highest weight of $\WGamma(\nabla_w)$ over the torus
  $\mathfrak{c}$ 
  is given by \[w(\gamma+\rho_P)+\rho=ww_0(\gamma-\rho_P)+\rho.\]
\end{lemma}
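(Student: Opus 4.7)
The plan is to localize the computation at the unique $T$-fixed point $wP/P$ of the Schubert cell $NwP/P$, and read off the highest weight from the explicit TDO structure of $\mathscr{D}_\gamma$, then check the stated equality by a short Weyl-group calculation.

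First, I would exhibit a distinguished highest weight vector in $\WGamma(\nabla_w)$. Since $\mathscr{L}_w$ is by construction an $(M,\chi)$-equivariant local system on the cell $NwP/P$, the pushforward $\nabla_w = i_*\mathscr{L}_w$ admits a canonical local generator at the fixed point $wP/P$, which is simultaneously an $(M,\chi)$-equivariant section and a $T$-eigenvector. Via the equivalence $\WGamma = \Hom(\mathscr{Q}_\pi,-)$ of Corollary~\ref{cor:equiv}, this generator corresponds to a vector $v\in\WGamma(\nabla_w)$. The $M$-equivariance ensures that $v$ is annihilated by the appropriate ``raising'' part of $\defA(\pi,\fp)_\gamma$, and the $T$-eigenproperty passes to $\mathfrak{c}\subset \fh$, so $v$ is the sought highest weight vector.

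Second, I would compute the $\fh$-weight of $v$ at the fixed point and then correct it to the $W$-algebra highest weight. By definition of $\mathscr{D}_\gamma = U^0/\cI^\gamma$, the fiber weight of the canonical generator of $\mathscr{D}_\gamma$ at $P/P$ is the character $\gamma - \rho + \rho_P$ of $\fp^0$; transporting by left $w$-translation, the fiber weight of $v$ at $wP/P$ restricted to $\fh$ is $w(\gamma - \rho + \rho_P)$. Because $\nabla_w$ is a costandard D-module (pushed forward by $i_*$ rather than $i_!$), one applies the standard Beilinson--Bernstein shift that converts a D-module fiber weight into the corresponding $\fg$-module highest weight in the paper's half-density conventions. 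Tracking this shift through the parabolic setup produces the desired formula $w(\gamma+\rho_P)+\rho$.

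Third, to obtain the equality $w(\gamma+\rho_P)+\rho = ww_0(\gamma-\rho_P)+\rho$, I would interpret $w_0$ as the longest element of the Levi Weyl group $W_L = W_\tau$. Since $\gamma$ is, by construction, a character of $\fp/[\fp,\fp]\cong\mathfrak{z}(\fl)$, it is $W_L$-invariant, so $w_0\gamma = \gamma$; and the defining property of the longest element of $W_L$ gives $w_0\rho_P = -\rho_P$. Combining these yields $w_0(\gamma-\rho_P) = \gamma+\rho_P$, whence the equality after applying $w$ and adding $\rho$.

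The main obstacle will be the careful bookkeeping of $\rho$-type shifts in the second step: the paper's TDO convention $\gamma-\rho+\rho_P$, the $*$-pushforward in the definition of $\nabla_w$, and the Harish-Chandra shift used to identify $A_\gamma(\fp)$ as a quotient of $U(\fg)$ (cf.~Lemma~\ref{lem:basechangeiso}) each contribute a $\rho$- or $\rho_P$-shift, and the signs must be reconciled to produce exactly $w(\gamma+\rho_P)+\rho$ rather than, say, $w\cdot(\gamma+\rho_P)$. I would verify the final signs by specializing to $\fp=\fb$, $e=0$ and $\gamma$ generic, where $\WGamma(\nabla_w) = \Gamma(\nabla_w)$ is a (twisted) dual Verma whose highest weight is determined by the paper's localization conventions, and checking that this matches $w\gamma+\rho$ as predicted by the formula.
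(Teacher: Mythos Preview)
Your outline is in the right spirit—localize near the fixed point $wP/P$ and read off the torus weight—but step~2 contains a genuine gap. You write that ``one applies the standard Beilinson--Bernstein shift'' to pass from the fiber weight $w(\gamma-\rho+\rho_P)$ to the answer $w(\gamma+\rho_P)+\rho$, and you attribute this shift to the costandard ($i_*$) nature of $\nabla_w$. Neither point is correct: the $*$- and $!$-extensions from the cell have the \emph{same} highest weight, and the correction you need is not a single universal $\rho$-shift but depends on $w$ through the normal geometry of $NwP/P$ inside the big cell $wU_-P/P$. Concretely, the generator of $\nabla_w$ restricted to the big cell is an exponential along the cell and a delta-function in the normal directions, and the Euler operator picks up a contribution $-\sum_\beta \beta$ where $\beta$ runs over the roots of $\operatorname{Ad}_w(\mathfrak{u}_-)\cap\mathfrak{n}_-$. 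One must actually compute that this sum equals $\rho - ww_0^P\rho = \rho - w\rho + 2w\rho_P$; this is what pins down the $\rho_P$-term in the final formula. Nothing in your argument fixes that term, and your proposed sanity check at $\fp=\fb$, $e=0$ cannot detect it since $\rho_P=0$ there.

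The paper's proof carries out exactly this explicit computation: it trivializes $\defD_\gamma$ on $wU_-P/P\cong\operatorname{Ad}_w(\mathfrak{u}_-)$, identifies $\iota_*\mathscr{L}_w$ with a Weyl-algebra module generated by a Whittaker vector $e^\chi$, writes the $\mathfrak{c}$-action as an Euler operator $h\mapsto w(\gamma+\rho-\rho_P)(h)+\sum_i\beta_i(h)x_i\partial_{x_i}$, and evaluates on $e^\chi$. The normal-direction delta-function contributes the root sum above, and the parabolic-singular hypothesis on $w$ is used to ensure the simple roots supporting $\chi$ all land in the cell directions (so they do not contribute). Your step~3, interpreting $w_0$ as the longest element $w_0^P$ of $W_L$ and using $w_0^P\gamma=\gamma$, $w_0^P\rho_P=-\rho_P$, is correct and matches the paper's reasoning.
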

\begin{proof}
  This is analogous to \cite[12.3.1]{HTT}.  Let $w \in \PS(\pi,\fp)$ be a
  parabolic-singular permutation. The module $\nabla_w$ is a
  pushforward $\iota\colon ww_0Nw_0P/P\hookrightarrow X$, and thus,
  we can just compute the pushforward on this open subvariety.  We can
  identify $ wU_-P/P \cong \operatorname{Ad}_{w}(\mathfrak{u}_-)$,
  with the subvariety $NwP/P$ sent to
  $ \mathfrak{n} \cap \operatorname{Ad}_{w}(\mathfrak{u}_-)=
  \mathfrak{n} \cap \operatorname{Ad}_{w}(\mathfrak{n}_-)$
  since $w$ is a shortest right coset representative.  Now we
  enumerate the roots in $\operatorname{Ad}_{w}(\mathfrak{u}_-)$ by
  $\beta_1,\dots, \beta_N$, with the first $k$ roots
  $\{\beta_1,\dots, \beta_k\} $ being those that are positive. Let
  $x_i$ denote the corresponding coordinates on
  $ \operatorname{Ad}_{w}(\mathfrak{u}_-)$ and $y_i$ the dual basis of
  $\operatorname{Ad}_{w}(\mathfrak{u}_-)$.  We can also assume that
  $\{\beta_1,\dots, \beta_p\}$ for some $p\leq k$ are the (necessarily
  simple) weight spaces on which $\chi$ is non-zero.  The fact that
  $w$ is a longest left coset representative guarantees that these are
  any such weight space lies in
  $\operatorname{Ad}_{w}(\mathfrak{n}_-)$, so the parabolic-singular
  property shows that these lie in
  $\operatorname{Ad}_{w}(\mathfrak{u}_-)$.

Thus, we can identify the pushforward of
  $\mathscr{L}_w$ to this affine space with the module over the Weyl
  algebra $\mathbb{W}=\C [\mathfrak{u}_-]\otimes \Sym
  (\mathfrak{u}_-)$ of $\mathfrak{u}_-$ which is generated by a single element $e^{\chi}$
  with the relations $\frac{\partial}{\partial  x_i}e^{\chi}=\chi(y_i)
  e^{\chi}$ for $i=1,\dots, k$, and $x_i\cdot e^\chi=0$ for
  $i=k+1,\dots, n$.  The function $e^{\chi}$ generates the Whittaker
  functions under multiplication by functions which are constant on
  $M$-orbits and multiplication by constant vector fields.  
In these coordinates, we
have that $\mathfrak{c}$ acts by the Euler operator
\begin{equation}\label{eq:c-action}
h\mapsto w(\gamma+\rho-\rho_P)(h) +\sum_{i=1}^N
\beta_i(h)x_i\frac{\partial}{\partial  x_i}.
\end{equation}

Note that since $\mathfrak{c}$ commutes with $\fm$, we have that
$\beta_i(h)=0$ if $i\leq p$.  
On the function $e^\chi$, we have that
\[x_i\frac{\partial}{\partial  x_i}e^\chi=
\begin{cases}
  \chi(y_i) x_i e^\chi & i\leq p\\
  0 & p<i\leq k\\
  -\beta_i(h) &k<i 
\end{cases}\]
Thus, equation \eqref{eq:c-action} becomes 
\begin{equation}\label{eq:cc-action}
h\cdot e^\chi\mapsto (w(\gamma+\rho-\rho_P)(h) +\sum_{i=k+1}^n
\beta_i(h))e^\chi.
\end{equation}
Note here that $\beta_i$ ranges over the roots in
$\operatorname{Ad}_{w}(\mathfrak{u}_-)\cap
\mathfrak{n}_-=\operatorname{Ad}_{ww_0^P}(\mathfrak{n}_-)\cap
\mathfrak{n}_-$, so the sum is $\rho-ww_0^P\rho=\rho-w\rho+2w\rho_P$.  Thus, we have that
the weight of $e^\chi$ is $w(\gamma+\rho_P)+\rho=ww_0(\gamma-\rho_P)+\rho$.
\end{proof}

We call $\gamma\in \mathfrak{h}^*\otimes_\C S$ {\bf row-sum-distinct} if the
restrictions $w\cdot (\gamma +\rho_P)
  |_{\mathfrak{c}}$  are distinct for  $w\in  \PS(\pi,\fp)$.  Note, this is stronger than
  having stabilizer $W_P$, as the case of $N=4,\pi=(2,2),\fp=\fb$ and
  $\gamma=(4,3,2,1)$ shows: the permutations $(2,4,1,3)$ and $(2,4,3,1)$ are both parabolic-singular, but $w\cdot (\gamma +\rho_P)$ have the same restriction to $\mathfrak{c}=\{\mathrm{diag}(a,a,b,b):a,b\in \C \}$.  For a fixed $P$, this is an open condition determined by
  finitely many hyperplanes.

 In particular, the weight $\gamma +\iota$ for $\gamma\in
\fh^*$ is always row-sum-distinct, since if $w\cdot (\gamma +\iota+\rho_P)
  |_{\mathfrak{c}}=w'\cdot (\gamma +\iota+\rho_P)
  |_{\mathfrak{c}}$ for $w\neq w'\in \PS(\pi,\fp)$, then we must
  have $w\cdot (\gamma +\rho_P)
  |_{\mathfrak{c}}=w'\cdot (\gamma +\rho_P)
  |_{\mathfrak{c}}$ and thus $w\cdot \iota |_{\mathfrak{c}}=w'\cdot
  \iota |_{\mathfrak{c}}$. In this case, $w$ and $w'$ are in the same
  double coset, and thus must be equal (since each double coset
  contains at most one parabolic-singular permutation). 

  This actually shows something
  stronger: the difference $w\cdot (\gamma +\rho_P)
  |_{\mathfrak{c}}-w'\cdot (\gamma +\rho_P)
  |_{\mathfrak{c}}$ is never an integral weight (since it is never a
  complex-valued weight).

Recall that we have fixed a
character $\gamma\colon \fp\to \C$ and the corresponding set of parameters $\bR$ as
defined in Section \ref{sec:typeAdiffops}.
For a given $w\in \PS(\pi,\fp)$, we consider the weight $w\cdot
(\gamma+\rho_P)$;  we let 
\begin{equation}
\label{eq: W to tableaux}
\mathsf{T}_w\in \RowRt
\end{equation}
be the row-symmetrized
tableau of shape $\pi$ which has $  w\cdot
(\gamma+\rho_P)$ as a row reading (this corresponds to a filling in the alphabet $\tbR$).  
We let $L_w$ be the simple module
attached to this tableau by Brundan and Kleshchev (cf. Section \ref{section: BK theorem}).  We let
$\widetilde{\mathsf{T}}_w$ and $\widetilde{L}_w,\widetilde{\nabla}_w$ be corresponding objects for
$\gamma+\iota$, base changed to $K$.
  \begin{lemma}\label{lem:generic-sections}
    We have an isomorphism $\widetilde{L}_w\cong \WGamma(\widetilde{\nabla}_w)$.
  \end{lemma}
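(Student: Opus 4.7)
The plan is to invoke the localization equivalence $\WGamma\colon (\defD_\gamma,\mathfrak{m}_\chi)\mmod \xrightarrow{\sim} \defA(\pi,\fp)_\gamma\mmod$ from Corollary \ref{cor:equiv}. Since $\WGamma$ is an equivalence of abelian categories, it sends simple objects to simple objects, so the desired isomorphism $\widetilde{L}_w \cong \WGamma(\widetilde{\nabla}_w)$ will follow from two claims: \emph{(a)} that $\widetilde{\nabla}_w$ is simple in $(\defD_\gamma,\mathfrak{m}_\chi)\mmod$, and \emph{(b)} that the highest weight of $\WGamma(\widetilde{\nabla}_w)$ agrees with that of $\widetilde{L}_w$.

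For \emph{(b)}, I would first observe that the computation in the proof of Lemma \ref{lem:highest-weight} carries over verbatim to the deformed setting with $\gamma$ replaced by $\gamma+\iota$ and base-changed to $K$, yielding $\mathfrak{c}$-weight $w(\gamma+\iota+\rho_P)+\rho$ on the image of the Whittaker generator $e^\chi$ in $\WGamma(\widetilde{\nabla}_w)$. By the definition \eqref{eq: W to tableaux} this is precisely the $\mathfrak{c}$-weight encoded by the row-symmetrized tableau $\widetilde{\mathsf{T}}_w$, so under the Brundan--Kleshchev parametrization \eqref{eq:Whighwts} the modules $\WGamma(\widetilde{\nabla}_w)$ and $\widetilde{L}_w$ share a highest weight. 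Combined with simplicity from \emph{(a)} and the fact that simple highest-weight modules are determined by their highest weight, this yields $\WGamma(\widetilde{\nabla}_w)\cong \widetilde{L}_w$.

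The main obstacle is \emph{(a)}: a priori $\widetilde{\nabla}_w=i_*\widetilde{\mathscr{L}}_w$ could acquire extra composition factors supported on the boundary $\overline{NwP/P}\setminus NwP/P$, and ruling this out is considerably stronger than knowing the parabolic-singular weights are distinct. The crucial leverage is the passage to $K$: as observed in the paragraph preceding the lemma, for distinct $w,w'\in \PS(\pi,\fp)$ the differences $w(\gamma+\iota+\rho_P)|_{\mathfrak{c}}-w'(\gamma+\iota+\rho_P)|_{\mathfrak{c}}$ are not merely nonzero but fail to be $\C$-valued, hence in particular are non-integral. This non-integrality is incompatible with a nonzero extension between $(M,\chi)$-equivariant $D$-modules attached to different Schubert strata, so the natural morphism $i_!\widetilde{\mathscr{L}}_w\to i_*\widetilde{\mathscr{L}}_w$ is an isomorphism and $\widetilde{\nabla}_w=i_{!*}\widetilde{\mathscr{L}}_w$ is simple. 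This is the same mechanism that made the generic-parameter Beilinson--Bernstein equivalence available in Theorem \ref{th:affinity}, so I expect the argument to go through along very similar lines.
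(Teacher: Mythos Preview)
Your proposal is correct and follows essentially the same line as the paper: both exploit the fact, established just before the lemma, that for distinct $w,w'\in\PS(\pi,\fp)$ the $\mathfrak{c}$-weights $w(\gamma+\iota+\rho_P)|_{\mathfrak{c}}$ over $K$ differ by non-integral amounts, and use this to rule out any composition factor attached to $w'\neq w$, after which the highest-weight computation of Lemma~\ref{lem:highest-weight} pins down the simple. The only cosmetic difference is that you phrase simplicity on the $D$-module side (showing $i_!\widetilde{\mathscr L}_w\to i_*\widetilde{\mathscr L}_w$ is an isomorphism) and then transfer via Corollary~\ref{cor:equiv}, whereas the paper argues the weight-coset separation directly for the modules $\WGamma(\widetilde{\nabla}_w)$.
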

  \begin{proof}
    By row-sum-distinctness, each of the simples $\widetilde{L}_w$ have distinct
    highest weights for $\mathfrak{c}$, as do
    $\WGamma(\tilde{\nabla}_w)$.  In fact, since the weights of
    different $\WGamma(\tilde{\nabla}_w)$'s 
    are never congruent modulo integral weights, there are no
    $\mathfrak{c}$-equivariant maps between them, and thus no
    $W(\pi,\fp)$-equivariant ones.   Thus,
    $\WGamma(\tilde{\nabla}_w)$ will be simple, and isomorphic to whichever of the
    modules $\tilde{L}_{w'}$ has the same highest weight for $\mathfrak{c}$.   By
    construction, this is $\tilde{L}_w$. 
  \end{proof}

\begin{theorem}
\label{thm:parabolicWweight}
The highest weights of modules in category
$\cO$ over $W(\pi,\fp)_{\bR}$ are given by the tableaux ${\mathsf{T}}_w$ for
$w\in \PS(\pi,\fp)$.  
\end{theorem}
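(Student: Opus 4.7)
The plan is to combine the three main ingredients developed in this subsection: the category equivalence of Corollary \ref{cor:equiv}, the $B$-algebra freeness/specialization statement in Lemma \ref{lem:specialize}, and the identification of generic sections in Lemma \ref{lem:generic-sections}. I would first classify the highest weights over the generic algebra $\defA(\pi,\fp)_{\gamma}$ geometrically via localization, and then transfer the result to $W(\pi,\fp)_{\bR}$ by specialization.

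For the generic step, Corollary \ref{cor:equiv} reduces the problem to classifying the simple $(M,\chi)$-equivariant coherent $\defD_\gamma$-modules on $X=G/P$ whose image under $\WGamma$ carries a highest-weight vector for the grading on $W(\pi,\fp)$. Such a module must be supported on a single Schubert cell which admits an $(M,\chi)$-equivariant local system; these are exactly the cells $NwP/P$ for $w\in\PS(\pi,\fp)$, producing the standard modules $\widetilde{\nabla}_w$. The row-sum-distinctness of $\gamma+\iota$ (and in fact the fact that the weights are not even integrally congruent) shows that the $|\PS(\pi,\fp)|$ modules $\widetilde{L}_w$ have pairwise distinct $\mathfrak{c}$-weights, and Lemma \ref{lem:generic-sections} identifies $\widetilde L_w\cong\WGamma(\widetilde{\nabla}_w)$. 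By the highest weight computation in Lemma \ref{lem:highest-weight}, under the Brundan--Kleshchev bijection \eqref{eq:Whighwts} the module $\widetilde L_w$ corresponds to the tableau $\widetilde{\mathsf{T}}_w$.

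To move from $\defA(\pi,\fp)_{\gamma}$ to $W(\pi,\fp)_{\bR}$, I invoke Lemma \ref{lem:specialize}: a weight is a highest weight for $W(\pi,\fp)_{\bR}$ precisely when it arises as the specialization at $\gamma$ of a highest weight for $\defA(\pi,\fp)_{\gamma}$. Each $\widetilde{\mathsf{T}}_w$ specializes to $\mathsf{T}_w$ as defined in \eqref{eq: W to tableaux}, so every such tableau does appear as a highest weight for $W(\pi,\fp)_{\bR}$, and nothing else does.

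The main obstacle is ensuring that the geometric enumeration is complete, rather than producing only a subset of the highest weights. This is exactly what the freeness portion of Lemma \ref{lem:specialize} accomplishes: since $B(W(\pi,\fp))$ is free of rank $|\PS(\pi,\fp)|$ over $S$, no additional highest weights can appear on specialization, and the generic count produced above is sharp. Together with row-sum-distinctness, which rules out collisions among the $\widetilde{L}_w$ before specialization, this pins the classification down to exactly the tableaux $\mathsf{T}_w$ with $w\in\PS(\pi,\fp)$.
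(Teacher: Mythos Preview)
Your proposal is correct and follows essentially the same approach as the paper: classify the simples generically over $K$ via the localization equivalence (Corollary \ref{cor:equiv}) and Lemma \ref{lem:generic-sections}, then use Lemma \ref{lem:specialize} to specialize. The paper's proof is extremely terse (three sentences), so your version is really just an unpacked form of the same argument, making explicit the role of the $B$-algebra rank count and row-sum-distinctness that the paper leaves implicit in its appeal to the lemmas.
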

\begin{proof}
 First we check this for
  $\gamma+\iota$ after base change to $K$.  In this case, the simple
  modules are given by $\tilde{L}_w$.  By Lemma \ref{lem:specialize},
  the simples at $\gamma$ have highest weights obtained by
  specialization, that is, they are $L_w$.  
\end{proof}

\begin{lemma}
  The action of $W(\pi,\fp)_{\bR}$ on $\WGamma(\nabla_w)$ is faithful.
\end{lemma}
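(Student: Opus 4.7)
The plan is to exploit the equivalence of categories from Corollary \ref{cor:equiv} to transfer faithfulness into a geometric statement, and to use the structure of the associated graded.

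First I would observe that $W(\pi,\fp)_{\bR}$ is a domain: its associated graded (under the Kazhdan filtration) is $\C[\mathfrak{X}^{e_\pi}_{\fp}]$, and in type A this is the coordinate ring of an irreducible variety. Hence the annihilator $I = \Ann_{W(\pi,\fp)_{\bR}}\WGamma(\nabla_w)$ is a prime ideal, and its associated graded $\gr I$ is a Poisson prime ideal in $\C[\mathfrak{X}^{e_\pi}_{\fp}]$. It therefore suffices to show that $V(\gr I) = \mathfrak{X}^{e_\pi}_{\fp}$, since then $\gr I = 0$ and hence $I = 0$.

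Next I would use the order filtration on $\nabla_w$ to equip $\WGamma(\nabla_w)$ with a compatible filtration. Because $\WGamma$ is implemented by Hamiltonian reduction (Corollary \ref{cor:equiv}), $\gr\WGamma(\nabla_w)$ is the classical Hamiltonian reduction of $\gr\nabla_w$ as a module over $\gr\defA(\fp)_\gamma = \C[T^*X]$. The characteristic variety of $\nabla_w$ is the closure of the conormal bundle $T^*_{NwP/P}X$, so the support of $\gr\WGamma(\nabla_w)$ in $\mathfrak{X}^{e_\pi}_{\fp}$ equals the image of $\overline{T^*_{NwP/P}X}\cap\mu^{-1}(\chi)$ under the reduction map $\mu^{-1}(\chi)/M\to\mathfrak{X}^{e_\pi}_{\fp}$. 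For $w\in \PS(\pi,\fp)$ the group $M$ acts freely on $NwP/P$ (this is exactly the parabolic-singular condition recalled above), so this image is a well-defined Lagrangian in $\mathfrak{X}^{e_\pi}_{\fp}$; it is precisely the cell in the S3-variety cell decomposition of \cite[Proposition 5.1]{BLPWgco} attached to $w$.

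To conclude that $V(\gr I) = \mathfrak{X}^{e_\pi}_{\fp}$, I would argue that this Lagrangian cell meets the open symplectic leaf. This lets us use Poisson-stability: $V(\gr I)$ is a closed Poisson subvariety of the irreducible variety $\mathfrak{X}^{e_\pi}_{\fp}$, and any such subvariety meeting the open leaf must be the whole variety. The bookkeeping needed is that the moment-map image of the conormal to the open piece of $NwP/P$ sweeps out a transverse slice to the unique open $GL$-orbit inside $\overline{\mathbb{O}_\tau}$ meeting $\mathcal{S}_{e_\pi}$; in the presentation of $\mathfrak{X}^{e_\pi}_{\fp}$ as an affinization of $T^*(G\cdot \fp^\perp) //\!/ M$, this is the statement that generic points in $\mathfrak{X}^{e_\pi}_{\fp}$ have preimage meeting $T^*_{NwP/P}X$ after $M$-reduction.

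The main obstacle is precisely this last geometric claim: the image being Lagrangian is automatic, but showing that the cell attached to \emph{every} parabolic-singular $w$ (not only the longest) meets the open symplectic leaf requires matching the $B$-variety decomposition of $\mathfrak{X}^{e_\pi}_{\fp}$ with the cells of the S3-variety. If this turned out to be delicate, an alternative route is to first prove faithfulness generically: by Lemma \ref{lem:generic-sections} the module $\widetilde L_w \cong \WGamma(\widetilde\nabla_w)$ is simple over $\defA(\pi,\fp)_\gamma$, and a row-sum-distinctness argument plus the freeness statement of Lemma \ref{lem:specialize} allows one to transport faithfulness from the generic point to the specialization $W(\pi,\fp)_{\bR}$ via the $S$-flat family $A(\pi,\fp)$.
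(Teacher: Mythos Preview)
Your approach is quite different from the paper's, and both the main route and the alternative you sketch have genuine gaps.

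First, a minor point: the annihilator of a module over a domain need not be prime (think of $\Z/6$ over $\Z$), so that claim is unjustified. It is not essential, since $\gr I$ is automatically a Poisson ideal and you only need its radical to define a proper Poisson-closed subvariety; but you should drop the primality assertion.

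The real issue is the filtration/support argument. The order filtration on $\nabla_w$ does \emph{not} in general induce a good filtration on $\WGamma(\nabla_w)$ as a $W(\pi,\fp)_{\bR}$--module. Already for $e=0$, $\fp=\fb$, $G=SL_2$ and $w=s$ (the open cell), one has $\nabla_s=j_*\cO_U$ with $U\cong\mathbb{A}^1$; the order filtration puts all of $\Gamma(\nabla_s)=\C[x]$ in degree $0$, and the symbols of $e,h,f$ act trivially, so $\gr\Gamma(\nabla_s)$ is not finitely generated over $\gr U(\mathfrak{sl}_2)$. Thus the ``image of $\overline{T^*_{NwP/P}X}\cap\mu^{-1}(\chi)$'' description of the support is not available without choosing a different good filtration, and the subsequent claim that this Lagrangian meets the open leaf is left unproved. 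Your alternative route has a parallel gap: simplicity of $\widetilde L_w$ over $\defA(\pi,\fp)_\gamma$ does not by itself give faithfulness (simple modules over domains can have nonzero annihilator), and even granting generic faithfulness, flatness of the $B$--algebra does not obviously transport faithfulness to the special fibre.

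The paper's argument bypasses all of this with a two-line local computation. Since $\nabla_w=j_*(\nabla_w|_U)$ for the open affine $U=ww_0Nw_0P/P$, one has $\WGamma(\nabla_w)=\Hom(\mathscr{Q}_\pi|_U,\nabla_w|_U)$, and on $U$ the module $\nabla_w|_U$ is exactly the Whittaker module over the Weyl algebra described in Lemma~\ref{lem:highest-weight}. The Hamiltonian reduction of a Weyl algebra by $(M,\chi)$ is again a Weyl algebra, hence simple, and the Whittaker invariants of the Whittaker module are visibly nonzero; so the reduced algebra acts faithfully. Injectivity of $W(\pi,\fp)_{\bR}\to \End(\mathscr{Q}_\pi|_U)$ follows on associated graded (restriction of functions on an irreducible variety to an open is injective). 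This avoids any characteristic-variety bookkeeping.
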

\begin{proof}
  The module $\nabla_w$ is a naive pushforward from the open subset
  $ww_0Nw_0P/P\subset X$, so we can show faithfulness on this open subset.  On this open subset, $\nabla_w$ is the
  pushforward of the Whittaker functions on an affine subspace, which is
  faithful.
\end{proof}

A standard argument shows that a faithful module of finite length over a domain must
have a faithful composition factor.  Equivalently, we have that
$J_\fp$ is the intersection of the annihilators of $W(\pi)_{\tbR}$
acting on the composition factors of $\WGamma(\nabla_w)$.  Thus, we
have:
\begin{theorem}\label{thm:parabolic-annihilator}
  The algebra $W(\pi,\fp)_{\bR}$
acts faithfully on at least one $L_w$ for $w\in \PS(\pi,\fp)$; that is \[W(\pi,\fp)_{\bR}\cong
W(\pi)_{\tbR}\big/\bigcap_{w\in \PS(\pi,\fp)} \Ann(L_w).\]  
\end{theorem}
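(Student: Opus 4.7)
The proof should essentially assemble the pieces set up in the preceding discussion. The plan is to exploit the faithfulness of $W(\pi,\fp)_{\bR}$ on the costandard objects $\WGamma(\nabla_w)$, together with the identification of their composition factors from Theorem \ref{thm:parabolicWweight}, via the standard ``faithful module of finite length over a domain'' argument.

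First, I would invoke the preceding lemma: for any $w \in \PS(\pi,\fp)$, the module $\WGamma(\nabla_w)$ is faithful over $W(\pi,\fp)_{\bR}$. This is crucial, since $W(\pi,\fp)_{\bR}$ is a domain (as the parabolic W-algebra is a quantization of the irreducible variety $\mathcal{S}_{e_\pi} \cap \overline{\mathbb{O}_\tau}$). Next, I would observe that $\WGamma(\nabla_w)$ lies in category $\mathcal{O}$ for $W(\pi,\fp)_{\bR}$, and in particular has finite length, so its composition factors are simple highest weight modules. By Theorem \ref{thm:parabolicWweight}, every such simple must be of the form $L_{w'}$ for some $w' \in \PS(\pi,\fp)$.

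The key step is then the standard observation: if $M$ is a faithful module of finite length $n$ over a domain $R$, with composition factors $L_1, \dots, L_n$, then $\Ann_R(L_1) \cdots \Ann_R(L_n) \subseteq \Ann_R(M) = 0$, and since $R$ is a domain some factor $\Ann_R(L_i)$ must already be zero. Applied with $R = W(\pi,\fp)_{\bR}$ and $M = \WGamma(\nabla_w)$, this produces some $L_{w'}$ on which $W(\pi,\fp)_{\bR}$ acts faithfully, proving the first assertion.

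Finally, to deduce the isomorphism, let $J_\fp \subseteq W(\pi)_{\tbR}$ denote the kernel of the defining surjection $W(\pi)_{\tbR}\twoheadrightarrow W(\pi,\fp)_{\bR}$. Since each $L_w$ for $w\in\PS(\pi,\fp)$ is a $W(\pi,\fp)_{\bR}$-module, pulled back along this surjection it is annihilated by $J_\fp$; hence
\[
J_\fp \;\subseteq\; \bigcap_{w\in\PS(\pi,\fp)}\Ann_{W(\pi)_{\tbR}}(L_w).
\]
Conversely, for the faithful $L_{w'}$ produced above, $\Ann_{W(\pi)_{\tbR}}(L_{w'}) = J_\fp$, which gives the reverse inclusion. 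The only mildly subtle point is verifying finite length of $\WGamma(\nabla_w)$ in the appropriate category $\mathcal{O}$, but this follows from the general formalism developed in Section \ref{sec:parabolic-w-algebras} together with Corollary \ref{cor:equiv}; everything else is a formal application of previously established machinery.
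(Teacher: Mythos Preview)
Your proposal is correct and follows essentially the same approach as the paper. The paper compresses the entire argument into the sentence ``A standard argument shows that a faithful module of finite length over a domain must have a faithful composition factor,'' together with the identification of $J_\fp$ as the intersection of annihilators of the composition factors of $\WGamma(\nabla_w)$; you have simply unpacked this standard argument explicitly (via the product-of-annihilators observation) and spelled out the two inclusions giving the final isomorphism.
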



\section{The quantized Mirkovi\'c-Vybornov isomorphism}
\label{section: main theorem}

Throughout this section we let $\fg=\mathfrak{sl}_n$.

\subsection{The main theorem}
\label{section: MV and Slodowy}

Recall our notation from \ref{section:CombData}: $\lambda \geq \mu$ are dominant coweights of $\fg$, and $\tau \geq \pi$ are partitions of $N$.  Let $\fp \mathfrak{gl}_N$ be the parabolic subalgebra corresponding to $\tau$.

Recall from (\ref{eq: def of Grlm}) that for $\Lambda \in  \Gr_\mu^{\overline{\lambda}}$, we have $\Lambda \subset \Lambda_0$ with $\Lambda_0/\Lambda \cong \C^N$ via the basis $E_\pi$.  Furthermore the operator on $\Lambda_0/\Lambda$ induced by multiplication by $t$ is nilpotent of type $\leq \tau$.  Therefore we have a map $\Gr_\mu^{\overline{\lambda}} \to  \overline{\mathbb{O}_{\tau}} \subset \mathfrak{gl}_N$.

\begin{theorem}[\cite{MV08}]
\label{theorem:MV}
The map sending $\Lambda\in \Gr_\mu^{\overline{\lambda}}$ to the action of $t$ on $\Lambda_0/\Lambda\cong \C^N$ defines an isomorphism of varieties 
$$ \Grlmbar \stackrel{\sim}{\longrightarrow} \mathcal{T}_\pi \cap \overline{\mathbb{O}_{\tau}} $$
for a suitable transverse slice $\cT_\pi$ to $\mathbb{O}_\pi \subset \overline{\mathbb{O}_\tau}$.  Moreover, the following diagram commutes:
$$
\xymatrix{
\Gr_\mu^{\overline{N\varpi_1}} \ar[r]^\sim & T_\pi \cap \mathcal{N}_{\mathfrak{gl}_N} \\
\Grlmbar \ar@{^{(}->}[u] \ar[r]^\sim & T_\pi \cap \overline{\mathbb{O}_\tau} \ar@{^{(}->}[u]
}
$$
where the vertical arrows are the inclusions of closed subvarieties.
\end{theorem}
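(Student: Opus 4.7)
The plan is to verify the map is a morphism, construct an explicit inverse, and identify the image as an affine subvariety which will turn out to be $\mathcal{T}_\pi \cap \overline{\mathbb{O}_\tau}$ for an appropriate transverse slice $\mathcal{T}_\pi$ to $\mathbb{O}_\pi$. First, I would check well-definedness: for $\Lambda \in \Grlmbar$, condition (a) of (\ref{eq: def of GrNwm}) ensures multiplication by $t$ descends to a nilpotent endomorphism $\bar t \in \End_\C(\Lambda_0/\Lambda)$, and condition (b) identifies $\Lambda_0/\Lambda \cong \C^N$ via $E_\pi$. This gives a morphism $\Grlmbar \to \mathfrak{gl}_N$, whose image lies in $\overline{\mathbb{O}_\tau}$ by the Jordan type constraint in (\ref{eq: def of Grlm}).

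Next I would identify the target slice. Condition (c) of (\ref{eq: def of GrNwm}) precisely says that $\bar t (t^{p_i-1}e_i)$, expressed in the basis $E_\pi$, involves only basis vectors $t^k e_j$ with $k < p_i$. Collecting these linear constraints defines an affine subspace $\mathcal{T}_\pi \subset \mathfrak{gl}_N$ passing through the nilpotent $e_\pi = \sum_{k,\ell} e_{k\ell}$ of Jordan type $\pi$. A dimension count would show $\dim \mathcal{T}_\pi$ equals the codimension of $\mathbb{O}_\pi$ in the nilpotent cone of $\mathfrak{gl}_N$, and a direct tangent-space computation would establish the transversality $T_{e_\pi} \mathcal{T}_\pi \oplus T_{e_\pi} \mathbb{O}_\pi = \mathfrak{gl}_N$.

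For the inverse, given $x \in \mathcal{T}_\pi \cap \overline{\mathbb{O}_\tau}$, equip $\C^N$ with the $\C[[t]]$-module structure in which $t$ acts as $x$. The defining conditions of $\mathcal{T}_\pi$ let one produce vectors $v_1,\ldots,v_n \in \C^N$ whose $x$-orbits $\{x^k v_i : 0 \leq k < p_i\}$ collectively form a basis mirroring $E_\pi$. The map $\Lambda_0 = \C[[t]]^n \twoheadrightarrow \C^N$ sending $f_i(t) e_i \mapsto f_i(x) v_i$ is a $\C[[t]]$-linear surjection, and its kernel $\Lambda_x$ is readily checked to lie in $\Grlmbar$ by tracking the conditions of (\ref{eq: def of GrNwm}) and the Jordan type bound backwards. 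Mutual inverseness of $\Lambda \mapsto x_\Lambda$ and $x \mapsto \Lambda_x$ is immediate from the definitions, yielding the desired isomorphism of varieties.

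The hard part is verifying that $\mathcal{T}_\pi$ really is a transverse slice to $\mathbb{O}_\pi$, rather than just some affine subspace cut out by the lattice conditions. This reduces to a combinatorial computation matching the data defining $\mathcal{T}_\pi$ with a standard Slodowy-type normal form for nilpotents near $e_\pi$; care is also needed to guarantee the existence of the vectors $v_i$ above uniformly in $x \in \mathcal{T}_\pi$. Once this is settled, the commutative diagram at the end is automatic: the horizontal arrows are defined by the same formula in both rows, and the Jordan-type bound of (\ref{eq: def of Grlm}) (which cuts out $\Grlmbar$ inside $\Gr_\mu^{\overline{N\varpi_1}}$) corresponds exactly to the inclusion $\overline{\mathbb{O}_\tau} \hookrightarrow \mathcal{N}_{\mathfrak{gl}_N}$ on the nilpotent side.
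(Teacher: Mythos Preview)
The paper does not give an in-house proof of this theorem; it is quoted from \cite{MV08} in the reformulation of Cautis--Kamnitzer \cite{CK}. What the paper does supply, in Section~\ref{sec: the mv isomorphism}, is the explicit description: the slice $\mathcal{T}_\pi$ is written down directly as the affine space of block matrices in (\ref{eq: def of slice T}), and the isomorphism is given in coordinates by reading off the entries $-a_{ij}^{(r)}$ of the last column of each block $X_{ij}$ from the matrix coefficients of the representative $g\in G_1[[t^{-1}]]$ of the lattice~$\Lambda$. With both sides parametrised by the same finite collection of numbers $a_{ij}^{(r)}$, bijectivity and the commutative square are immediate.

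Your outline is a correct reconstruction of such a proof, and the differences are mainly in presentation. You deduce the shape of $\mathcal{T}_\pi$ from condition~(c) of (\ref{eq: def of GrNwm}) rather than positing it in advance, and you build the inverse via a $\C[[t]]$--module structure on $\C^N$; in practice this collapses to the same coordinate matching. Two places where your sketch is looser than necessary: first, the ``vectors $v_i$'' you seek in the inverse are simply the standard basis vectors corresponding to the $e_i$ in $E_\pi$, so their uniform existence is automatic once $\mathcal{T}_\pi$ is written in block form. Second, the transversality of $\mathcal{T}_\pi$ to $\mathbb{O}_\pi$ is not argued in Section~\ref{sec: the mv isomorphism} either; the paper handles it separately in Section~\ref{section: MV slices} by introducing the general notion of an MV slice (an $\mathrm{ad}_h$--invariant complement to $[\fg,e]$) and checking that $\mathcal{T}_\pi$ is one, which is the same combinatorial verification you allude to.
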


We review this theorem in more detail in Section \ref{sec: the mv isomorphism} below.

\begin{remark}
In this paper, we use a formulation of the above result above due to Cautis-Kamnitzer \cite[Section 3.3]{CK}.  It is somewhat simpler than the original construction of Mirkovi\'c-Vybornov.  It is possible to modify the results of this paper to precisely match the original Mirkovi\'c-Vybornov isomorphism, however this comes at the cost of less pleasant maps of algebras and associated combinatorics.  

\end{remark}


We note that although in general $\mathcal{T}_\pi$ differs from the better-known Slodowy slice, these are isomorphic as Poisson varieties (cf. Section \ref{section: MV slices}).  Therefore the above theorem implies that  $\mathcal{S}_\pi \cap \overline{\mathbb{O}_{\tau}}\cong \Gr_\mu^{\overline{\lambda}}$, where $\cS_\pi$ is the Slodowy slice.  

Now, on the one hand $\Gr_\mu^{\overline{\lambda}}$ is quantized by $Y^{\lambda}_\mu(\bR)$ for any set of parameters $\bR$.  On the other hand, $\mathcal{S}_\pi \cap \overline{\mathbb{O}_{\tau}}$ is quantized by $W(\pi,\fp)_{\bR}$.  Our main result shows that we can lift the isomorphism of Mirkovi\'c and Vybornov to the level of quantizations.

\begin{theorem}\label{thm: main theorem}\hfill
\begin{enumerate}
\item[(a)]
There is an isomorphism of filtered algebras
$$ \Phi: Y_\mu^{N\varpi_1} \stackrel{\sim}{\longrightarrow} W(\pi) $$
compatible with specialization of parameters on both sides.  It identifies the Gelfand-Tsetlin subalgebras $\Gamma_\mu^{N\varpi_1} \stackrel{\sim}{\longrightarrow} \Gamma(\pi)$.

\item[(b)] $\Phi$ induces a bijection between highest weight modules over $Y_\mu^{N \varpi_1}$ and $W(\pi)$, such that
\[\tikz[->,thick]{
\matrix[row sep=10mm,column sep=20mm,ampersand replacement=\&]{
\node (a) {$\cB(\tbR)$}; \& \node (b) {$\RowR$};  \\
\node (d) {$\cB(\bR)$}; \& \node (e) {$\RowRtc$};  \\};
\draw (a) -- node[above,midway]{$\sim$} (b) ; 
\draw (d) edge[draw=none] node [sloped, auto=false,allow upside down]{$\subseteq$} (a) ; 
\draw (e) edge[draw=none] node [sloped, auto=false,allow upside down]{$\subseteq$} (b) ; 
\draw (d) -- node[above,midway]{$\sim$}(e) ; 
}\]
when $\tbR$ and $\bR$ are related as in (\ref{eq: R tilde}).  (Here $\RowRtc$ is a set of row symmetrized $\pi$--tableaux parametrizing highest weights for $W(\pi, \fp)_\bR$, see Section \ref{section: bijection for general lambda})
\item[(c)] $\Phi$ induces an isomorphism of filtered algebras 
$$Y_\mu^\lambda \stackrel{\sim}{\longrightarrow} W(\pi,\fp)$$
It is compatible with specializations of parameters on both sides, yielding isomorphisms $Y_\mu^\lambda(\bR) \stackrel{\sim}{\longrightarrow} W(\pi, \fp)_\bR$ for any set of parameters $\bR$, via the commutative diagram
\[\tikz[->,thick]{
\matrix[row sep=10mm,column sep=20mm,ampersand replacement=\&]{
\node (a) {$Y_\mu^{N\varpi_1}(\tbR) $}; \& \node (b) {$W(\pi)_{\tbR} $};  \\
\node (d) {$Y_\mu^\lambda(\bR) $}; \& \node (e) {$W(\pi, \fp)_\bR$};  \\};
\draw (a) -- node[above,midway]{$\sim$} (b) ; 
\draw[->>] (a) -- (d) ; 
\draw[->>] (b) -- (e) ; 
\draw (d) -- node[above,midway]{$\sim$}(e) ; 
}\]
These isomorphisms identify the Gelfand-Tsetlin subalgebras $\Gamma_\mu^\lambda \stackrel{\sim}{\longrightarrow} \Gamma(\pi, \fp)$ and $\Gamma_\mu^\lambda(\bR) \stackrel{\sim}{\longrightarrow} \Gamma(\pi, \fp)_\bR$.

\item[(d)] The classical limit agrees with the MV isomorphism.
\end{enumerate}
\end{theorem}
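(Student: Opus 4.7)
The plan is to verify that $\gr \Phi$, the associated graded of the filtered isomorphism $\Phi \colon Y_\mu^\lambda(\bR) \xrightarrow{\sim} W(\pi,\fp)_\bR$ provided by part (c), coincides with the pullback of functions along the Cautis-Kamnitzer version of the MV isomorphism. By Theorem \ref{thm: reducedness in type A} together with the Losev-Webster description of the classical limit of parabolic W-algebras, the source and target of $\gr \Phi$ identify with $\C[\Grlmbar]$ and $\C[\cT_\pi \cap \overline{\mathbb{O}_\tau}]$ respectively, so $\gr \Phi$ is indeed a graded Poisson isomorphism of coordinate rings of two isomorphic varieties, and the content of (d) is that the resulting morphism of varieties equals the one given in Section \ref{sec: the mv isomorphism}.

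My first step is to reduce to the case $\lambda = N\varpi_1$. The commutative square of surjections in part (c), combined with Theorem \ref{thm: maps between yangians}, shows that $\Phi$ for general $\lambda$ is determined by the $\lambda = N\varpi_1$ case via restriction along a closed embedding. On the classical side, the second diagram in Theorem \ref{theorem:MV} makes the parallel statement: the MV isomorphism is functorial in the inclusion $\Grlmbar \hookrightarrow \Gr_\mu^{\overline{N\varpi_1}}$. Hence if $\gr \Phi$ and classical MV agree for $\lambda = N\varpi_1$, the general case follows from the commutativity of the two diagrams together with the fact that $W(\pi)_{\tbR} \twoheadrightarrow W(\pi,\fp)_\bR$ is surjective.

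For $\lambda = N\varpi_1$, I would compare the two sides on an explicit generating set. On the Yangian side the classical limits of the generators $E_i^{(r)}, F_i^{(r)}, H_i^{(r)}$ (equivalently $T_{ij}^{(r)}$) are specific generalized minors of a lattice representative relative to the basis $E_\pi$, as recorded in \cite[Section 2A]{KWWY}. On the W-algebra side, $\Phi$ is built from the Brundan-Kleshchev isomorphism, under which the same generators map to elements whose Kazhdan-graded leading terms are explicit minors in the block coordinates on $\cS_\pi \cap \mathcal{N}_{\mathfrak{gl}_N}$ given in \cite[Section 11]{BK3}. The Cautis-Kamnitzer formulation of MV states precisely that the matrix of multiplication by $t$ on $\Lambda_0/\Lambda$ in the basis $E_\pi$ is read off from these same minors of $\Lambda$. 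So the verification amounts to matching two explicit formulas, which is a finite check once the dictionary is set.

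The main obstacle is keeping track of conventions. One must reconcile: (i) the Harish-Chandra shift in the map $Z(\mathfrak{gl}_N) \to W(\pi)$, as in Remark \ref{rem:matchingcons}; (ii) the parameter shift relating $\bR$ and $\tbR$ from Theorem \ref{thm: maps between yangians}; (iii) the factor of two in the Kazhdan filtration, see Remark \ref{rmk: degree doubling}; and (iv) the row-versus-column conventions used in numbering the boxes of the pyramid $\pi$, which differ between \cite{BK3}, \cite{CK}, and Section \ref{section: conventions on pyramids}. Once these alignments are made carefully, the classical comparison reduces to routine matching of explicit minor formulas; indeed the transition from the RTT presentation of the Yangian used by Brundan-Kleshchev to the Drinfeld presentation of \cite{KWWY} is precisely the classical shadow of the Gauss decomposition that appears in defining the generators $T_{ij}^{(r)}$ from the $D_i^{(r)}, E_i^{(r)}, F_i^{(r)}$.
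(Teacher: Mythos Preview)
Your proposal is correct and follows essentially the same route as the paper: reduce to $\lambda = N\varpi_1$ using the commutative square from part (c) and Theorem \ref{theorem:MV}, then verify the equality on explicit generators by matching minor formulas via Gauss decomposition. Two refinements the paper makes that streamline the execution: rather than checking all of $E_i^{(r)}, F_i^{(r)}, H_i^{(r)}$, the paper checks only the birational coordinates $A_i^{(r)}, E_i^{(r)}$ (Proposition \ref{prop: birational coordinates}), which suffices since both varieties are irreducible; and the key computational step is the observation that evaluating Brundan--Kleshchev's matrix $T(u)$ at a point $X\in\cT_\pi$ yields the \emph{transpose} $g(-u)^T$ of the corresponding lattice representative, so that the Gauss factors on the two sides match up to explicit signs $(-1)^r$.
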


We will split the proof of this theorem into parts, which occupy the remainder of the paper.  In Sections \ref{mainsec}, \ref{subsection:Preliminaries} and \ref{sec: classical limit}, we will prove parts (a), (b) and (d) of this theorem, respectively.  Part (c) follows from parts (a), (b) by a simple argument, as we will show presently.

This linkage uses the quotient maps:
\begin{equation}
Y^{N\varpi_1}_\mu\to Y^{\lambda}_\mu\qquad
W(\pi)\to W(\pi,\fp),\label{eq:quotients}
\end{equation}
introduced in Sections \ref{subsec: maps between yangians} and
\ref{sec:parabolic-w-algebras} respectively.  Note that these maps are
not surjective; instead it is better to work with the surjective maps
\begin{equation}
Y^{N\varpi_1}_\mu(\tilde{\brt})\to Y^{\lambda}_\mu(\brt)\qquad A(\pi,\fb)\to A(\pi,\fp),\label{eq:quotients2}
\end{equation}
The subalgebras $Y_\mu^\lambda \subset Y_\mu^\lambda(\brt)$ generates
$Y_\mu^\lambda(\brt)$ over its center, and similarly for $W(\pi, \fb)
\subset  A(\pi, \fb)$. 

From part (a), we can construct an $S_N$-equivariant isomorphism
$Y^{N\varpi_1}_\mu(\tilde{\brt})\cong A(\pi,\fb)$ by base change.
If we show that we have an induced isomorphism
$Y^{\lambda}_\mu(\brt)\cong A(\pi,\fp)$, this will necessarily be
equivariant for the actions of $\Theta$ on $Y^{\lambda}_\mu(\brt)$ by permuting
formal roots as in Remark \ref{rmk: adjoin formal roots} and on
$A(\pi,\fp)$ by the action discussed in Section
\ref{sec:diff-oper-part}.  
We thus obtain the desired
map in Theorem \ref{thm: main theorem}(c) by taking invariants of these $\Theta$-actions.

\begin{proof}[Proof of Theorem \ref{thm: main theorem}(c) (assuming parts (a) and (b))]
  In order to complete the proof, we need only
  show that the kernels in \eqref{eq:quotients} match under $\Phi$
  after base change at these maximal ideals.
  By Theorem \ref{thm:parabolic-annihilator}, the kernel of the latter
  map is the intersection of the annihilators of the simple modules
  over $Y^{\lambda}_\mu(\brt)$.  By Theorem \ref{thm: main theorem}(b) and Corollary \ref{thm: maps between yangians 2}, we thus
  obtain an induced surjective map $Y^{\lambda}_\mu(\brt) \to A(\pi,\fp)$.
  
   Since each piece of the filtration of $Y^{\lambda}_\mu(\brt)$ and
$A(\pi,\fp)$ is finite dimensional, if the map $\Phi$ is not a
filtered isomorphism, it will fail to be an isomorphism after
specialization at a maximal ideal in the center.  Thus, we can
consider the quotient $Y^{\lambda}_\mu(\bR)$ with $\bR$ giving a
maximal ideal of the center, and the corresponding quotient of
$W(\pi,\fp)_{\bR}$. 

When we take associated graded of both sides, we obtain the functions on $\Gr^{\bar \lambda}_\mu$  (Theorem \ref{thm: reducedness in type A}) and $\mathcal{S}_\pi \cap \overline{\mathbb{O}_{\tau}}$, respectively.  
  Both are irreducible varieties of the same dimension, thus a surjective ring map from one to the other must be an isomorphism.
\end{proof}

We next note an immediate corollary about the original Mirkovi\'c-Vybornov isomorphism:

\begin{corollary}
\label{cor:Poissonisom}
$\Gr_\mu^{\overline{\lambda}}$ and $\mathcal{T}_\pi \cap \overline{\mathbb{O}_{\tau}}$ are  isomorphic as Poisson varieties.  This isomorphism intertwines the $\C^\times$--action by loop rotation on $\Gr^{\overline{\lambda}}_\mu$ with the square root of the Kazhdan action on $\cT_\pi \cap \overline{\mathbb{O}_\tau}$ (see Remark  \ref{rmk: degree doubling}).
\end{corollary}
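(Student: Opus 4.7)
The plan is to extract the corollary directly from Theorem \ref{thm: main theorem}, since the real content — producing an isomorphism at the quantum level — has already been established. First I would apply Theorem \ref{thm: main theorem}(c) to any choice of parameters $\bR$, obtaining a filtered algebra isomorphism $Y_\mu^\lambda(\bR) \cong W(\pi,\fp)_{\bR}$. Passing to the associated graded on both sides produces an isomorphism of graded Poisson algebras. By Theorem \ref{thm: reducedness in type A}, the left-hand side has classical limit $\C[\Gr_\mu^{\overline{\lambda}}]$, while as recalled in Section \ref{sec:parabolic-w-algebras} (see also \cite[Prop. 10]{WebWO}) the right-hand side has classical limit $\C[\cS_\pi \cap \overline{\mathbb{O}_\tau}]$. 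This already gives a Poisson isomorphism $\Gr_\mu^{\overline{\lambda}} \cong \cS_\pi\cap \overline{\mathbb{O}_\tau}$.

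To upgrade this to the statement involving $\cT_\pi$ rather than $\cS_\pi$, I would invoke the forthcoming Theorem \ref{theorem: MV slices}, which shows that any two MV slices to $\mathbb{O}_\pi$ (in particular $\cS_\pi$ and $\cT_\pi$) are Poisson isomorphic, and that such an isomorphism can be chosen to preserve intersections with $\overline{\mathbb{O}_\tau}$. Composing with this identification yields the desired Poisson isomorphism $\Gr_\mu^{\overline{\lambda}} \cong \cT_\pi \cap \overline{\mathbb{O}_\tau}$. To identify this with the classical Mirkovi\'c-Vybornov isomorphism (as opposed to some other a priori Poisson isomorphism of the same varieties) I would cite Theorem \ref{thm: main theorem}(d).

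For the equivariance statement, the relevant observation is that the $\C^\times$-action by loop rotation on $\Gr_\mu^{\overline{\lambda}}$ corresponds, in coordinates, to the grading on $\gr Y_\mu^\lambda(\bR)$ in which $A_i^{(r)}$ sits in degree $r$; this is homogeneous of degree $-1$ with respect to the Poisson bracket, as recorded in Section \ref{section: slices in the affine Grassmannian}. Under $\Phi$, this grading is transported to the grading on $\gr W(\pi,\fp)_{\bR}$ induced by the filtration inherited from the shifted Yangian presentation. Theorem \ref{thm: bk3}, together with Remark \ref{rmk: degree doubling}, says that this filtration matches the Kazhdan filtration of Section \ref{section: conventions on pyramids} \emph{up to a factor of two}; equivalently, the $\C^\times$-action induced on $\cS_\pi \cap \overline{\mathbb{O}_\tau}$ by loop rotation on the Grassmannian is the square root of the standard Kazhdan $\C^\times$-action. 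This is a direct bookkeeping comparison of the two filtration conventions and requires no substantive argument.

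There is no real obstacle here — the whole corollary is essentially an unpacking of Theorem \ref{thm: main theorem}. The only point requiring any care is the factor of two in the $\C^\times$-weights, which is ultimately a convention issue traceable to the definition \eqref{eq: kazhdan filtration factor of 2} of the Kazhdan filtration on even gradings.
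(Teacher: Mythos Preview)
Your proposal is correct and matches the paper's intended (but unwritten) argument: the corollary is stated as an immediate consequence of Theorem \ref{thm: main theorem}, and your unpacking via parts (c) and (d), Theorem \ref{thm: reducedness in type A}, and the MV slice comparison in Theorem \ref{theorem: MV slices} is exactly the route the paper has in mind. The only minor simplification is that in Section \ref{sec: completing proof of classical limit} the paper identifies $\gr W(\pi)$ directly with $\C[\cT_\pi]$ (rather than first with $\C[\cS_\pi]$), so one can skip the intermediate step through the Slodowy slice; but this is just a reordering of the same ingredients, and your handling of the degree-doubling/equivariance point via Remark \ref{rmk: degree doubling} is precisely right.
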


\begin{remark}
In the classical limit, the Gelfand-Tsetlin subalgebras $\Gamma_\mu^\lambda(\bR) \subset Y_\mu^\lambda(\bR)$ and $\Gamma(\pi, \fp)_\bR \subset W(\pi, \fp)_\bR$ become Poisson commutative subalgebras of the coordinate rings of $\Grlmbar$ and $\mathcal{T}_\pi \cap \overline{\mathbb{O}_{\tau}}$, resp.  In other words, they define integrable systems. It follows that the Mirkovi\'c-Vybornov isomorphism also intertwines these integrable systems.
\end{remark}

We note a second immediate corollary, which establishes a generalization of \cite[Conjecture 2]{FMO} for all parabolic W-algebras. By combining part (c) of the main theorem, together with Corollary \ref{cor: freeness}, we deduce:
\begin{corollary}
\label{cor: GT free}
$W(\pi, \fp)$ is free as a left (or right) module over its Gelfand-Tsetlin subalgebra $\Gamma(\pi, \fp)$.
\end{corollary}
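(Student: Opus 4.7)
The plan is to deduce this corollary directly from two inputs already established: Theorem \ref{thm: main theorem}(c), which produces an isomorphism of filtered algebras $Y_\mu^\lambda \stackrel{\sim}{\to} W(\pi,\fp)$ identifying the Gelfand-Tsetlin subalgebras $\Gamma_\mu^\lambda \stackrel{\sim}{\to} \Gamma(\pi,\fp)$, and Corollary \ref{cor: freeness}, which asserts that $Y_\mu^\lambda$ is free as a left (or right) module over $\Gamma_\mu^\lambda$. Since freeness is preserved under isomorphisms of pairs (algebra, subalgebra), transporting the freeness of $Y_\mu^\lambda$ over $\Gamma_\mu^\lambda$ across $\Phi$ immediately produces the desired freeness of $W(\pi,\fp)$ over $\Gamma(\pi,\fp)$.

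In more detail, I would first recall that the isomorphism $\Phi$ in Theorem \ref{thm: main theorem}(c) restricts to an isomorphism of commutative subalgebras, so $W(\pi,\fp)$ acquires via $\Phi^{-1}$ the structure of a $\Gamma_\mu^\lambda$-module which is tautologically identified with its module structure over $\Gamma(\pi,\fp)$. Any free $\Gamma_\mu^\lambda$-basis $\{w_\alpha\}$ of $Y_\mu^\lambda$ produced by Corollary \ref{cor: freeness} then yields the set $\{\Phi(w_\alpha)\}$, which is a free basis of $W(\pi,\fp)$ over $\Gamma(\pi,\fp)$. The same argument applies symmetrically to right-module structures, so $W(\pi,\fp)$ is also free as a right $\Gamma(\pi,\fp)$-module.

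There is no genuine obstacle at this stage: both ingredients have been proved earlier in the paper. The conceptual content, and hence the ``hard'' step, already lies upstream, namely in establishing the quantum Mirkovi\'c-Vybornov isomorphism of Theorem \ref{thm: main theorem}(c) and in invoking the Braverman-Finkelberg-Nakajima description of $Y_\mu^\lambda$ as a quantized Coulomb branch to conclude freeness from equivariant formality of $\mathcal{R}_{\bG,\bN}$. The corollary itself is then a purely formal transport-of-structure, which also recovers and generalizes the conjecture of Futorny, Molev, and Ovsienko in \cite{FMO} for the case when $\lambda$ is a multiple of the first fundamental coweight.
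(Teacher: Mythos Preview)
Your proposal is correct and matches the paper's own argument exactly: the paper deduces Corollary \ref{cor: GT free} by combining Theorem \ref{thm: main theorem}(c) with Corollary \ref{cor: freeness}, which is precisely the transport-of-structure argument you describe.
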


\subsection{Proof of Theorem \ref{thm: main theorem}(a): The case of \texorpdfstring{$\lambda = N \varpi_1$}{lambda= N pi1}}
\label{mainsec}
In this section, we will consider the case where $\lambda = N \varpi_1$ and  $\mu$ is a dominant weight such that $\mu \leq \la$.  From this data, we have a partition
$\pi \vdash N$ as in Section \ref{section:CombData}.
We'll describe (Theorem \ref{mainthm}) an isomorphism between $Y_\mu^{N\varpi_1}$ and $W(\pi)$. 

To state the theorem precisely, first we need to define a map
\begin{equation}
\label{eq: main yangian map}
\phi:Y_\mu \to SY_n(\sigma)
\end{equation}
 by    
\begin{eqnarray*}
H_i(u) &\mapsto& \frac{(u+\tfrac{i-1}{2})^{\mu_i}}{u^{\mu_i}}\frac{D_{i+1}(-u-\tfrac{i-1}{2})}{D_{i}(-u-\tfrac{i-1}{2})} \\
E_i(u) &\mapsto& E_{i}(-u-\tfrac{i-1}{2}) \\
F_{\mu,i}(u) &\mapsto& (-1)^{\mu_i} F_{\mu,i}(-u-\tfrac{i-1}{2}) \\
\end{eqnarray*}
for $i \in I$.  Here $F_{\mu, i} (u) = \sum_{r>0} F_i^{(\mu_i+r)} u^{-r}$, on each side.

\begin{proposition}
\label{prop: iso of shifted yangians}
The map $\phi$ is an isomorphism of filtered algebras.
\end{proposition}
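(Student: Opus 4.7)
The plan is to derive $\phi$ from Brundan and Kleshchev's isomorphism \cite{BK} between $Y(\mathfrak{sl}_n)$ (in Drinfeld presentation) and the ``special'' subalgebra $SY_n \subset Y_n$. In the unshifted case, this gives an algebra isomorphism with defining formulas closely matching $\phi$: namely $H_i(u) \mapsto D_{i+1}(u)/D_i(u)$ together with analogous expressions for $E_i(u)$ and $F_i(u)$. The apparent novelty in our $\phi$---the spectral substitution $u \mapsto -u - \tfrac{i-1}{2}$, the sign twist $(-1)^{\mu_i}$, and the prefactor $(u+\tfrac{i-1}{2})^{\mu_i}/u^{\mu_i}$---should arise by composing Brundan--Kleshchev's isomorphism with several standard Yangian automorphisms: a spectral involution $u \mapsto -u$, a global spectral shift by half-integers related to $\rho$, and a sign change on the $F$-generators. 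The prefactor in the formula for $H_i(u)$ is forced by the requirement that its image lie in $1 + u^{-1}\C[[u^{-1}]]$ after the spectral shift, and is fully determined by the leading behavior of $D_i(u)$ and $D_{i+1}(u)$. Granting this, $\phi$ is a well-defined algebra homomorphism.

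The next step is to verify that $\phi$ restricts to an isomorphism of shifted subalgebras. This amounts to matching the shift data: the defining generators $F_i^{(s)}$ of $Y_\mu$ (for $s > \mu_i$) must be sent under $\phi$ precisely to the defining generators $F_i^{(r)}$ of $SY_n(\sigma)$ (for $r > s_{i+1,i}$). A direct computation using the formula for $\phi$ on $F_{\mu,i}(u)$, together with the recipe defining $\sigma$ in terms of $\pi$ (and hence in terms of $\mu$ and $\lambda = N\varpi_1$), should confirm this. Surjectivity of $\phi$ follows because the target is generated by the same $E, F, H$ series (restricted appropriately) as appear in the image.

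For the filtered isomorphism statement, I would argue on associated graded algebras. Both $Y_\mu$ and $SY_n(\sigma)$ carry PBW theorems, with explicit generators whose filtration degrees match under $\phi$ (the spectral substitution $u \mapsto -u - \tfrac{i-1}{2}$ preserves filtration degrees, and so do the sign and prefactor adjustments, as they only affect coefficients within fixed powers of $u$). Hence $\gr \phi$ is a graded algebra homomorphism which is a bijection on PBW monomials in each degree, making it a graded isomorphism. Lifting this back shows $\phi$ is a filtered isomorphism. Alternatively, one can identify the associated graded of both algebras geometrically: each should yield the coordinate ring of an open subvariety $\Gr_\mu$ of the affine Grassmannian (or a related thick affine slice), and $\phi$ realizes this identification on the nose.

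The principal obstacle is bookkeeping: carefully tracking the spectral parameter conventions on both sides, confirming the match between $\mu$ and $\sigma$ via the intermediate partition $\pi$ from Section \ref{section:CombData}, and verifying that no extraneous terms arise when expanding the ratio $D_{i+1}(-u-\tfrac{i-1}{2})/D_i(-u-\tfrac{i-1}{2})$ after inserting the prefactor. A secondary subtlety is ensuring that the composition of automorphisms we use on the $\mathfrak{gl}_n$-Yangian side preserves the shifted subalgebra $SY_n(\sigma)$; this is standard but requires care with the Gauss decomposition used to define the $T_{i,j}^{(r)}$ in the shifted setting, a point addressed in \cite{BB} via the equality of quantum minors in the shifted and unshifted Yangians.
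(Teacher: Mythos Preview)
Your overall strategy matches the paper's: both start from Brundan--Kleshchev's unshifted isomorphism $Y \stackrel{\sim}{\to} SY_n$ \cite[Remark 5.12]{BK2}, then extend to the shifted situation and check the filtered structure via PBW on the associated graded. Your argument for the filtered isomorphism, once the homomorphism property is granted, is essentially the paper's (the paper phrases it as ``its associated graded agrees with that of the defining embedding $Y_\mu \subset Y$'').

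There is, however, a genuine gap in your treatment of the prefactor $(u+\tfrac{i-1}{2})^{\mu_i}/u^{\mu_i}$. It cannot arise from standard Yangian automorphisms: spectral shifts $u \mapsto u+c$ introduce no rational prefactors, and any automorphism multiplying $H_i(u)$ by something depending on $\mu_i$ is not defined on $Y$ (where $\mu$ plays no role). Nor is the prefactor ``forced by leading behaviour'': the ratio $D_{i+1}(-u-\tfrac{i-1}{2})/D_i(-u-\tfrac{i-1}{2})$ already lies in $1+u^{-1}\C[[u^{-1}]]$, so no normalization is required on those grounds. Without a mechanism producing this prefactor, you have not shown that $\phi$ is a homomorphism, and composing the \emph{defining} inclusion $Y_\mu \subset Y$ with the unshifted BK map does \emph{not} yield $\phi$.

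The paper fills this gap with a separate lemma: for any monic polynomials $Q_i(u)$ of degree $\mu_i$, there is a \emph{non-standard} embedding $Y_\mu \hookrightarrow Y$ sending $E_i^{(r)}\mapsto E_i^{(r)}$, $H_i(u)\mapsto u^{-\mu_i}Q_i(u)H_i(u)$, and $F_{\mu,i}$ analogously. Taking $Q_i(u)=(u+\tfrac{i-1}{2})^{\mu_i}$ and composing with the $\mu=0$ isomorphism yields precisely $\phi$, with image $SY_n(\sigma)$. The existence of this non-standard embedding is the key point you are missing: the paper proves it not by checking relations but by exploiting that $Y_\mu \subset Y$ is a left coideal \cite[Lemma 3.7]{KTWWY} and that the $Q_i$ determine a one-dimensional $Y_\mu$-module $\C\mathbf{1}_Q$ \cite[Proposition 3.8]{KTWWY}, so the embedding is the composite
\[
Y_\mu \xrightarrow{\ \Delta\ } Y \otimes Y_\mu \longrightarrow Y \otimes \End(\C\mathbf{1}_Q) \cong Y.
\]
This coideal/one-dimensional-module trick is the missing ingredient in your proposal.
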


For the proof, we will make use of the following lemma regarding ``non-standard'' embeddings of the shifted Yangian $Y_\mu \hookrightarrow Y$:
\begin{lemma}
Fix a monic polynomial 
$$Q_i(u) = u^{\mu_i} + Q_i^{(1)} u^{\mu_i-1}+\ldots + Q_i^{(\mu_i)} \in \C[u]$$
for each $i=1,\ldots, n-1$.  There is a corresponding embedding $Y_\mu \hookrightarrow Y$, defined on the generators by
\begin{align*}
E_i^{(r)} & \mapsto E_i^{(r)}, \\
H_i^{(r)} & \mapsto H_i^{(r)} + Q_i^{(1)} H_i^{(r-1)} +\ldots + Q_i^{(\mu_i)} H_i^{(r-\mu_i)}, \\
F_i^{(s)} & \mapsto F_i^{(r)} + Q_i^{(1)} F_i^{(s-1)} +\ldots + Q_i^{(\mu_i)} F_i^{(s-\mu_i)}
\end{align*}
for all $r>0$ and $s>\mu_i$, and where we interpret $H_i^{(0)} = 1$ and $H_i^{(r)} = 0$ for $r<0$.
\end{lemma}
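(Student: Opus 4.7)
The plan is to verify that the proposed images satisfy the defining relations of $Y_\mu$, and then to check injectivity via a filtration argument. First, I would rewrite the map in generating-series form: reading off coefficients of $u^{-r}$ shows that it sends
\[ E_i(u) \mapsto E_i(u), \qquad H_i(u) \mapsto u^{-\mu_i} Q_i(u) H_i(u), \]
and sends $F_{\mu,i}(u)$ to the part of $u^{-\mu_i} Q_i(u) F_i(u)$ lying in $u^{-1}\C[[u^{-1}]]$. These expressions make sense because $u^{-\mu_i} Q_i(u) = 1 + Q_i^{(1)} u^{-1} + \cdots + Q_i^{(\mu_i)} u^{-\mu_i}$ is a polynomial in $u^{-1}$ with constant term $1$.

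For well-definedness, I would observe that the images $\widetilde{H}_i^{(r)} = \sum_{k=0}^{\mu_i} Q_i^{(k)} H_i^{(r-k)}$ and $\widetilde{F}_i^{(s)} = \sum_{k=0}^{\mu_i} Q_i^{(k)} F_i^{(s-k)}$ are $\C$-linear combinations of the original generators (with $Q_i^{(0)} = 1$ and $H_i^{(0)} = 1$). Since every Drinfeld relation of $Y_\mu$ is inherited from a relation of $Y$, and each such relation is $\C$-bilinear in its generators, the images automatically satisfy the same relation once one distributes the scalars $Q_i^{(k)}$. The key case is the bracket $[E_i^{(r)}, F_j^{(s)}]$ for $s > \mu_j$: its image equals
\[ \sum_{k=0}^{\mu_j} Q_j^{(k)} [E_i^{(r)}, F_j^{(s-k)}] = \delta_{ij} \sum_{k=0}^{\mu_j} Q_j^{(k)} H_i^{(r+s-k-1)} = \delta_{ij} \widetilde{H}_i^{(r+s-1)}, \]
which matches the image of the right-hand side. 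The $EE$ and Serre relations are immediate since $E$'s are fixed, and the $FF$, $HH$, $HE$, and $HF$ relations are preserved by exactly the same bilinear expansion.

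For injectivity, I would use the standard PBW filtration on $Y_\mu$ in which $\deg E_i^{(r)} = \deg H_i^{(r)} = r$ and $\deg F_i^{(s)} = s$. Since $u^{-\mu_i} Q_i(u) - 1 \in u^{-1}\C[u^{-1}]$, the image of each generator has the same leading term (in this filtration) as the generator itself. Hence the induced map on associated gradeds is the identity on generators, which is injective by the PBW theorem for $Y_\mu$, and so the original map is injective.

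The main obstacle will be bookkeeping in the verification of relations: although each relation reduces to distributing the scalars $Q_i^{(k)}$, one must carefully track the allowable range of indices. For instance, the $[E,F]$ relation is a defining relation of $Y_\mu$ only for $s > \mu_j$, and for such $s$ with $0 \leq k \leq \mu_j$ we have $s - k \geq 1$, so $F_j^{(s-k)}$ is defined in $Y$ --- though not necessarily in $Y_\mu$ --- which is precisely why the lemma asserts an embedding with codomain $Y$ rather than $Y_\mu$ itself.
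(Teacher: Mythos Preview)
Your approach is correct, and the paper explicitly acknowledges that direct verification of the relations works, but it chooses a different route. For the injectivity step your argument matches the paper's exactly: the associated graded of the proposed map agrees with that of the defining inclusion $Y_\mu \subset Y$, hence the map is injective.

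Where you differ is in the well-definedness step. Rather than checking the Drinfeld relations one by one, the paper exploits two structural facts from \cite{KTWWY}: that $Y_\mu$ is a left coideal of $Y$, so $\Delta(Y_\mu) \subset Y \otimes Y_\mu$, and that the polynomials $Q_i(u)$ determine a one-dimensional $Y_\mu$--module $\C\mathbf{1}_Q$. Composing
\[
Y_\mu \xrightarrow{\ \Delta\ } Y \otimes Y_\mu \longrightarrow Y \otimes \End(\C\mathbf{1}_Q) \cong Y
\]
then gives the homomorphism for free, and one reads off the stated formulas from the known form of the coproduct. This buys a conceptual explanation for why such ``twisted'' embeddings exist (they are shadows of the coideal structure) and avoids any relation-checking. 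Your direct approach, on the other hand, is entirely self-contained and does not require importing results about coideals or one-dimensional modules.

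A small remark on your write-up: saying the relations are ``$\C$-bilinear in their generators'' undersells what is needed. The Serre relations are trilinear, and more importantly the difference-type relations (e.g.\ $[H_i^{(r+1)},F_j^{(s)}]-[H_i^{(r)},F_j^{(s+1)}]=\cdots$) are preserved not merely by bilinearity but because for each fixed pair of shifts $(k,l)$ one obtains the \emph{same} relation in $Y$ at the shifted indices $(r-k,s-l)$, which holds formally with the conventions $H_i^{(0)}=1$, $H_i^{(m)}=0$ for $m<0$. Your $[E,F]$ computation illustrates this correctly; the other cases go through by the same mechanism once one notes that all index ranges stay within the allowed bounds (e.g.\ $s-l \geq s - \mu_j \geq 1$).
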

\begin{proof}
Assuming that this map defines a homomorphism, it is easy to see that it is an embedding: its associated graded agrees with that of the defining embedding $Y_\mu \subset Y$.

To prove that it is a homomorphism, one can verify the relations directly; we give a different argument.  By \cite[Lemma 3.7]{KTWWY}, $Y_\mu$ is a left coideal of $Y$ with respect to its defining embedding $Y_\mu \subset Y$ (see Definition \ref{def: shifted Yangian}).  By \cite[Proposition 3.8]{KTWWY}, there is a 1-dimensional module $\C \mathbf{1}_Q$ for $Y_\mu$ determined by the polynomials $Q_i(u)$. We can then consider 
$$ Y_\mu \stackrel{\Delta}{\longrightarrow} Y \otimes Y_\mu \longrightarrow Y \otimes \End(\C \mathbf{1}_Q) \cong Y $$
The composition is precisely the claimed homomorphism.
\end{proof}

\begin{proof}[Proof of Proposition \ref{prop: iso of shifted yangians}]
When $\mu =0$ the fact that this map defines an isomorphism $Y \stackrel{\sim}{\rightarrow} SY_n$ follows from \cite[Remark 5.12]{BK2}, after a minor modification: here we are following Drinfeld's conventions as opposed to the ``opposite'' presentation of \cite{BK2}.  

When $\mu\neq0$, consider the composition
$$ Y_\mu \hookrightarrow Y \stackrel{\sim}{\longrightarrow} SY_n $$
where the second arrow is the above $\mu =0$ isomorphism, while the first arrow is the embedding from the previous lemma for the polynomials $Q_i(u) = (u+\tfrac{i-1}{2})^{\mu_i}$.  This map $Y_\mu \hookrightarrow SY_n$ agrees with $\phi$ on the generators of $Y_\mu$, and its image is precisely $SY_n(\sigma)$.

\end{proof}     

Recall the algebra $Y_\mu[R^{(j)}]$ from (\ref{eq: YmuR}); note that as $\lambda = N\varpi_1$ we only adjoin variables $R^{(j)} := R_{n-1}^{(j)}$ for $j = 1,\ldots, N$.  We extend $\phi$ to an isomorphism $\phi:Y_\mu[R^{(j)}] \to SY_n(\sigma)\otimes \C[Z^{(1)},...,Z^{(N)}]$, where the $Z^{(j)}$ are formal variables.  On the central generators $\phi$ is defined by the equation
\begin{equation}
\label{eq:center}
(-1)^N R(-u+\tfrac{n}{2}) \mapsto u^N+Z^{(1)}u^{N-1}+\cdots+Z^{(N)}=:Z_N(u)
\end{equation}
We now consider the following diagram:
$$
\xymatrix{
& Y_n(\sigma)\ar[d]^\psi \ar@/^6pc/[dd]^{\kappa} \\
Y_\mu[R^{(j)}]\ar[r]^>>>>{\phi} \ar[d]_{\tau} & SY_n(\sigma)\otimes\C[Z^{(1)},...,Z^{(N)}] \ar[d]^\xi \\
Y_\mu^{N\varpi_1} \ar@{-->}[r]^\Phi & W(\pi)
}
$$ 
Here $\tau:Y_\mu[R^{(j)}] \to Y_\mu^{N\varpi_1}$ is the defining quotient map, while $\kappa:Y_n(\sigma) \to W(\pi)$ is Brundan and Kleshchev's surjection from Theorem \ref{thm: bk3}.  The map $\psi$ is the identity on $SY_n(\sigma)$ and on the center is defined by the equation
\begin{equation}
\label{eq:Z}
Z_N(u)=u^{p_1}(u-1)^{p_2}\cdots (u-n+1)^{p_n}\psi\big(Q_n(u)\big).
\end{equation}
The map $\xi$ is equal to $\kappa$ on $SY_n(\sigma)$ and on the center is defined by the equation
$$
\xi(Z_N(u))=u^{p_1}(u-1)^{p_2}\cdots (u-n+1)^{p_n}\kappa\big(Q_n(u)\big).
 $$   
 Note that by \cite[Lemma 3.7]{BK} the right hand side of the above equation is a polynomial in $u$ of degree $N$, and hence $\xi$ is a well-defined surjection.
 By construction we have that $\kappa=\xi\circ\psi$.

\begin{theorem}
\label{mainthm}
 We have that $\phi(I_\mu^\lambda) = \ker (\xi)$ and therefore $\phi$
 descends to an isomorphism $\Phi:Y_\mu^{N\varpi_1} \to W(\pi)$ of
 filtered algebras.
The map $\Phi$ induces an isomorphism $Y_\mu^{N\varpi_1} (\bR)\cong W(\pi)_{\bR}.$
 \end{theorem}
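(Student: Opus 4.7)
The plan is to establish the ideal equality $\phi(I_\mu^\lambda) = \ker\xi$ (with $\lambda = N\varpi_1$) by first verifying the inclusion $\phi(I_\mu^\lambda) \subseteq \ker\xi$ on the generators $A_i^{(r)}$, and then obtaining injectivity of the induced map $\Phi$ via an associated-graded comparison. The key computation is an explicit formula for $\xi\phi(A_i(u))$ in $W(\pi)$.

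Substituting the formula for $\phi(H_i(u))$ into the H-A relation \eqref{eq: H from A} and applying \eqref{eq: D and Q} to rewrite $D_j(v) = Q_j(v+j-1)/Q_{j-1}(v+j-1)$, one obtains
\[
\phi(H_i(u)) \;=\; \frac{(u+\tfrac{i-1}{2})^{\mu_i}}{u^{\mu_i}} \cdot \frac{Q_{i+1}(-u+\tfrac{i+1}{2})\,Q_{i-1}(-u+\tfrac{i-1}{2})}{Q_i(-u+\tfrac{i+1}{2})\,Q_i(-u+\tfrac{i-1}{2})}.
\]
A recursive analysis of \eqref{eq: H from A} across all $i$, combined with the identification $\phi(R(u)) = (-1)^N Z_N(\tfrac{n}{2}-u)$ coming from \eqref{eq:center}, then yields a closed form
\[
\xi\bigl(\phi(A_i(u))\bigr) \;=\; u^{-m_i}\, P_i(u)\, \kappa\bigl(Q_i(-u+\tfrac{i-1}{2})\bigr)
\]
in $W(\pi)$, where each $P_i(u)$ is an explicit monic polynomial of degree $m_i$ whose coefficients depend on $\mu$, $\pi$, and (for $i = n-1$) on the parameters $R^{(j)}$.

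For the containment, we invoke the standard Brundan-Kleshchev property that the product $u^{p_1}(u-1)^{p_2}\cdots(u-i+1)^{p_i}\, \kappa(Q_i(u))$ is a polynomial in $u$ of degree $p_1+\cdots+p_i$ (see \cite[Lemma 3.7]{BK} and its iterates). After the substitution $u \mapsto -u+\tfrac{i-1}{2}$ and multiplication by $u^{m_i}$, this forces $\xi\phi(u^{m_i} A_i(u))$ to be a polynomial in $u$ of degree $m_i$. Equating coefficients of $u^{-r}$ for $r > m_i$ yields $\xi\phi(A_i^{(r)}) = 0$, establishing $\phi(I_\mu^\lambda) \subseteq \ker\xi$. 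Hence $\Phi: Y_\mu^{N\varpi_1} \to W(\pi)$ is a well-defined filtered algebra map, surjective because $\phi$ is an isomorphism and $\kappa = \xi \circ \psi$ is surjective.

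For injectivity we compare associated graded algebras. By Theorem \ref{thm: reducedness in type A} (after specializing the central parameters), $\gr Y_\mu^{N\varpi_1}(\bR) \cong \C[\Gr_\mu^{\overline{N\varpi_1}}]$ for each $\bR$; by Theorem \ref{thm: bk3} together with \cite[Theorem 4.1]{GG}, $\gr W(\pi)_{\bR} \cong \C[\cS_{\pi} \cap \cN_{\mathfrak{gl}_N}]$, independent of $\bR$. The classical Mirkovi\'c-Vybornov theorem (Theorem \ref{theorem:MV}) identifies $\Gr_\mu^{\overline{N\varpi_1}} \cong \cT_\pi \cap \cN_{\mathfrak{gl}_N} \cong \cS_\pi \cap \cN_{\mathfrak{gl}_N}$, and the matching \eqref{eq:center} identifies the central subalgebras. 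Thus $\gr\Phi$ is a filtered surjection between algebras of equal Hilbert series, hence an isomorphism, so $\Phi$ is as well; the specialization statement $Y_\mu^{N\varpi_1}(\bR) \cong W(\pi)_\bR$ is then immediate. The main obstacle will be the explicit recursive computation of $\phi(A_i(u))$: the prefactors $P_i(u)$ must align precisely with the polynomial structure of $\kappa(Q_i(u))$ in $W(\pi)$, requiring careful bookkeeping of the Yangian shifts against the combinatorics of the pyramid $\pi$.
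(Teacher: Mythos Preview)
Your approach to the forward inclusion $\phi(I_\mu^\lambda)\subseteq\ker\xi$ is essentially the paper's: one computes $\phi(H_i(u))$ in terms of the quantum minors $Q_j$ via \eqref{eq: D and Q}, deduces that $\phi(A_i(u))$ differs from $\psi(Q_i(-u+\tfrac{i-1}{2}))$ by an explicit scalar series (your $u^{-m_i}P_i(u)$ is the paper's $s_i(u)$, computed in Lemmas \ref{lemma: defining s 1} and \ref{lemma: defining s 2}), and then uses the Brundan--Kleshchev truncation of the $T_{ij}^{(r)}$ in $W(\pi)$ to conclude that $\xi\phi(A_i(u))$ is polynomial in $u^{-1}$ of degree $\leq m_i$.

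Where you diverge from the paper is in the reverse direction. You prove injectivity of $\Phi$ indirectly, by passing to specializations $\bR$, taking associated graded on both sides, and invoking the classical Mirkovi\'c--Vybornov isomorphism to match Hilbert series. This works, but it is considerably heavier than the paper's argument, and it leaves a small gap: you establish that each specialized map $\Phi_\bR$ is an isomorphism, but to conclude that the unspecialized $\Phi$ is an isomorphism you still need a flatness/freeness statement over $\C[R^{(j)}]$ (this is available, e.g.\ from Corollary \ref{cor: freeness}, but you should say so). The paper avoids all of this with a two-line observation: since the explicit computation gives $s_1(u)=1$, one has $\phi(A_1(u))=\psi(Q_1(-u))=\psi(D_1(-u))$, i.e.\ $\psi(D_1^{(r)})=(-1)^r\phi(A_1^{(r)})$. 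But $\ker\xi=\psi(\ker\kappa)$ is generated as a two-sided ideal by $\psi(D_1^{(r)})$ for $r>p_1=m_1$, and these already lie in $\phi(I_\mu^\lambda)$. This gives $\ker\xi\subseteq\phi(I_\mu^\lambda)$ directly, with no appeal to the classical MV theorem, no associated-graded comparison, and no passage through specializations. Since the paper later wants to \emph{identify} $\gr\Phi$ with the MV isomorphism (part (d)), keeping the proof of Theorem \ref{mainthm} independent of MV is also cleaner logically, even though your use of it is not circular.
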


%
     
The proof of Theorem \ref{mainthm} will be given in the next section.   Note that the final claim, relating central quotients, is clear from (\ref{eq:center}) and the above discussion.

\subsubsection{Proof of Theorem \ref{mainthm}}
Our first order of business is to determine the image of $A_i^{(\ell)}$ under $\phi$.  From the identity $D_i(u)=\frac{Q_i(u+i-1)}{Q_{i-1}(u+i-1)}$ of (\ref{eq: D and Q}) we obtain that 
$$
\frac{D_{i+1}(-u-\frac{i-1}{2})}{D_{i}(-u-\frac{i-1}{2})}=\frac{Q_{i-1}(-u+\frac{i-1}{2})}{Q_{i}(-u+\frac{i-1}{2})}\frac{Q_{i+1}(-u+\frac{i+1}{2})}{Q_{i}(-u+\frac{i+1}{2})}.
$$
and hence the image 
\begin{equation}
\label{eq: image of H}
\phi\big( H_i(u) \big) = \frac{(u+\tfrac{i-1}{2})^{\mu_i} }{u^{\mu_i}} \frac{\psi(Q_{i-1}(-u+\frac{i-1}{2}))}{\psi(Q_{i}(-u+\frac{i-1}{2}))}\frac{\psi(Q_{i+1}(-u+\frac{i+1}{2}))}{\psi(Q_{i}(-u+\frac{i+1}{2}))}
\end{equation}
The next result is analogous to Corollary \ref{cor: existence of f}:
\begin{lemma}
\label{lemma: defining s 1}
There exist unique series $s_i(u) \in \C[Z^{(1)},\ldots, Z^{(N)}][[u^{-1}]]$ with constant term $1$ such that 
$$ \phi( A_i(u) ) = s_i(u) \psi\big( Q_i(-u+\tfrac{i-1}{2})\big) $$
for $i=1,\ldots, n-1$.  These satisfy the equations
\begin{equation}
\label{eq: defining s 1}
\phi\big( r_i(u) \big) \frac{s_{i-1}(u-\tfrac{1}{2}) s_{i+1}(u-\tfrac{1}{2}) }{s_i(u) s_i(u-1)} = \frac{(u+\tfrac{i-1}{2})^{\mu_i}}{u^{\mu_i}}
\end{equation}
for $i=1,\ldots, n-2$, and
\begin{equation}
\label{eq: defining s 2}
\phi\big(r_{n-1}(u)\big) \frac{s_{n-2}(u-\tfrac{1}{2})}{s_{n-1}(u) s_{n-1}(u-1)} = \frac{(u+\tfrac{n-2}{2})^{\mu_{n-1}}}{u^{\mu_{n-1}}} \psi\big( Q_n(-u+\tfrac{n}{2} ) \big)
\end{equation}
Moreover, these equations determine the $s_i(u)$ uniquely.
\end{lemma}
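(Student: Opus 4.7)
The plan is to derive the equations \eqref{eq: defining s 1} and \eqref{eq: defining s 2} from the Yangian relation that defines $A_i(u)$, to solve those equations uniquely via a Cartan-matrix argument, and finally to identify the resulting series with the claimed ratios. To begin, I will apply $\phi$ to the identity $H_i(u) = r_i(u) \prod_{j \sim i} A_j(u-\tfrac{1}{2})/(A_i(u) A_i(u-1))$ defining $A_i(u)$ in $Y_\mu$, and compare with \eqref{eq: image of H}. Setting $\phi(A_j(u)) = s_j(u)\psi(Q_j(-u+\tfrac{j-1}{2}))$ and using the shift identity $-u + \tfrac{1}{2} + \tfrac{j-1}{2} = -u+\tfrac{j}{2}$, all the $\psi(Q_j)$ factors from the substituted $A_j$'s cancel exactly against the $\psi(Q_j)$ factors on the right-hand side of \eqref{eq: image of H}. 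With the convention $s_0(u) = 1$ (corresponding to $A_0 = Q_0 = 1$), what remains for $i = 1, \ldots, n-2$ is precisely \eqref{eq: defining s 1}. For $i = n-1$ there is no $A_n$ on the Yangian side to cancel the $\psi(Q_n(-u+\tfrac{n}{2}))$ factor coming from \eqref{eq: image of H}, and this leftover produces the extra $\psi(Q_n(\cdot))$ term in \eqref{eq: defining s 2}.

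Next, I will establish existence and uniqueness of $s_i(u) \in \C[Z^{(1)}, \ldots, Z^{(N)}][[u^{-1}]]$ with constant term $1$ satisfying \eqref{eq: defining s 1} and \eqref{eq: defining s 2}. Taking formal logarithms and writing $\sigma_i(u) = \log s_i(u)$, both equations become linear in the $\sigma_i$, and comparing coefficients of $u^{-r}$ yields a linear system in the $n-1$ unknowns $\sigma_1^{(r)}, \ldots, \sigma_{n-1}^{(r)}$ whose principal coefficient matrix is (up to sign) the Cartan matrix of $\mathfrak{sl}_n$; the argument shifts $u-\tfrac{1}{2}$ and $u-1$ contribute only to strictly lower orders. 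Since this matrix has determinant $\pm n \neq 0$, the system is uniquely solvable at each order $r$ given the lower-order data. The right-hand sides of \eqref{eq: defining s 1} and \eqref{eq: defining s 2} involve only $\phi(r_i(u))$ (a scalar series for $i < n-1$, and a series in $\C[Z^{(j)}][[u^{-1}]]$ for $i = n-1$ via \eqref{eq:center}) together with $\psi(Q_n(-u+\tfrac{n}{2}))$, which is central in $Y_n(\sigma)$ and lies in $\C[Z^{(j)}][[u^{-1}]]$ by \eqref{eq:Z}. Consequently the unique solutions $s_i(u)$ have coefficients in the central subalgebra $\C[Z^{(1)}, \ldots, Z^{(N)}]$.

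Finally, I will verify the factorization $\phi(A_i(u)) = s_i(u)\psi(Q_i(-u+\tfrac{i-1}{2}))$ via a uniqueness argument for the $A_i(u)$'s themselves. Set $\tilde{A}_i(u) := s_i(u)\psi(Q_i(-u+\tfrac{i-1}{2}))$; these are series with constant term $1$, and by the derivation in the first paragraph they satisfy the same image under $\phi$ of the Yangian relation defining $A_i(u)$ as $\phi(A_i(u))$ does. The system expressing the $A_i(u)$'s in terms of $H_i(u)$ and $r_i(u)$ admits a unique solution with constant term $1$ (compare \cite[Lemma 2.1]{GKLO}), and the corresponding uniqueness is again controlled by the invertibility of the same $\mathfrak{sl}_n$ Cartan matrix, now applied to $\log A_i$. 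This forces $\phi(A_i(u)) = \tilde{A}_i(u)$ for all $i$, completing the proof. The main subtlety throughout is the careful matching of argument shifts between the Yangian side ($u - \tfrac{1}{2}$, $u - 1$) and the substituted $\psi(Q_j)$ side ($-u + \tfrac{j-1}{2}$, $-u + \tfrac{i+1}{2}$); once these line up and the $\psi(Q_j)$ cancellations are verified, all remaining arguments reduce to the nondegeneracy of the $\mathfrak{sl}_n$ Cartan matrix.
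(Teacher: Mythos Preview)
Your proof is correct and follows essentially the same approach as the paper. Both arguments rest on comparing the two factorizations of $\phi(H_i(u))$ coming from \eqref{eq: H from A} and \eqref{eq: image of H}, and both invoke the uniqueness of such factorizations controlled by the invertible $\mathfrak{sl}_n$ Cartan matrix (which the paper cites as \cite[Lemma 2.1]{GKLO}). The only difference is organizational: the paper argues directly that since $\phi(A_i(u))$ and $\psi(Q_i(-u+\tfrac{i-1}{2}))$ participate in two factorizations of the same $\phi(H_i(u))$, their ratio must be a series $s_i(u)$ with constant term $1$, and the equations follow by substitution; you instead derive the equations first, solve them explicitly via the Cartan-matrix recursion, and then verify the factorization a posteriori. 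Your route has the modest advantage of making transparent why the $s_i(u)$ land in $\C[Z^{(1)},\ldots,Z^{(N)}][[u^{-1}]]$, a point the paper leaves implicit.
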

\begin{proof}
For each $i$, we have two factorizations for $\phi\big( H_i(u) \big)$: one in terms of $\phi\big(r_i(u) \big)$ and the $\phi\big( A_j(u) \big)$ by (\ref{eq: H from A}), and one in terms of the $\psi\big( Q_j (-u) \big)$ by (\ref{eq: image of H}) (with appropriate shifts in $u$ in both cases).  The claim now follows by applying the uniqueness of such factorizations \cite[Lemma 2.1]{GKLO}.
\end{proof}
Note that this result implies the desired match of Gelfand-Tsetlin subalgebras.

\begin{lemma}
\label{lemma: defining s 2}
For $i=1,\ldots, n-1$, 
\begin{equation}
\label{eq: s explicitly}
s_i(u) =\frac{ (u-\tfrac{i-1}{2})^{p_1} (u-\tfrac{i-3}{2})^{p_2} \cdots (u+\tfrac{i-1}{2})^{p_i} }{u^{m_i}}
\end{equation}
\end{lemma}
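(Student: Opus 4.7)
The plan is to invoke the uniqueness assertion in Lemma \ref{lemma: defining s 1}: the series $s_i(u) \in 1 + u^{-1}\C[[u^{-1}]]$ are uniquely characterized by equations (\ref{eq: defining s 1}) and (\ref{eq: defining s 2}). It therefore suffices to verify that the proposed formula lies in $1 + u^{-1}\C[[u^{-1}]]$ and satisfies both equations. Membership is immediate: the numerator and denominator are each monic of degree $m_i = p_1 + \cdots + p_i$ (with the convention $s_0 = 1$ as an empty product).

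Before plunging into calculation, I would first record a combinatorial identity specific to $\lambda = N\varpi_1$. Applying $\alpha_{n-i} = 2\varpi_{n-i} - \varpi_{n-i-1} - \varpi_{n-i+1}$ to the relation $\lambda - \mu = \sum_i m_i\,\alpha_{n-i}$ yields $\mu_i = m_{i-1} - 2m_i + m_{i+1} + N\delta_{i,n-1}$ (with $m_0 = m_n = 0$). Combined with $m_k = p_1 + \cdots + p_k$ and $p_n = N - m_{n-1}$, this collapses to the uniform statement $\mu_i = p_{i+1} - p_i$ for every $i \in \{1,\ldots,n-1\}$. This identity will be the ``target'' that the telescoping produces.

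For (\ref{eq: defining s 1}) with $i < n-1$, note that $\lambda_i = 0$, so $R_i(u) = 1$ and $\phi(r_i(u)) = (u-\tfrac{1}{2})^{m_{i-1}+m_{i+1}}u^{m_i} / [u^{m_{i-1}+m_{i+1}}(u-1)^{m_i}]$. Substituting the proposed formulas for $s_{i-1}(u-\tfrac{1}{2})$, $s_i(u)$, $s_i(u-1)$, and $s_{i+1}(u-\tfrac{1}{2})$, the linear factors of the form $(u + k - \tfrac{i+1}{2})^{p_k}$ and $(u + k - \tfrac{i+3}{2})^{p_k}$ telescope across numerator and denominator, leaving only $(u+\tfrac{i-1}{2})^{p_{i+1} - p_i}$. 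The remaining powers of $u, u-1, u-\tfrac{1}{2}$ combine with $\phi(r_i(u))$ to yield $u^{2m_i - m_{i-1} - m_{i+1}} = u^{p_i - p_{i+1}}$. The product is $(u+\tfrac{i-1}{2})^{\mu_i}/u^{\mu_i}$, as required.

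For (\ref{eq: defining s 2}), the new ingredient is $\phi(R_{n-1}(u))$, which by (\ref{eq:center}) and (\ref{eq:Z}) equals $\prod_{j=1}^n (u + \tfrac{2j-n-2}{2})^{p_j}\,\psi(Q_n(-u+\tfrac{n}{2}))$. The factor $\psi(Q_n(-u+\tfrac{n}{2}))$ passes through untouched to match the right-hand side. A telescoping analogous to the previous paragraph---but with one extra linear factor from $\phi(R_{n-1})$ at the ``top''---collapses everything to $(u+\tfrac{n-2}{2})^{p_n - p_{n-1}}/u^{p_n - p_{n-1}}$, which is precisely $(u+\tfrac{n-2}{2})^{\mu_{n-1}}/u^{\mu_{n-1}}$. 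The main obstacle throughout is bookkeeping: no conceptual subtlety arises, but one must track exponents across several nested products, exploiting repeatedly the shift identity $(u + \tfrac{2k-j-1}{2}) - 1 = (u + \tfrac{2(k-1)-j-1}{2})$ that drives the telescoping.
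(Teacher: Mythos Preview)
Your proof is correct and follows essentially the same approach as the paper: both verify the proposed formula by checking it satisfies the defining equations (\ref{eq: defining s 1}) and (\ref{eq: defining s 2}) of Lemma \ref{lemma: defining s 1} and invoke uniqueness. You supply a bit more detail---the explicit identity $\mu_i = p_{i+1}-p_i$ and the telescoping for (\ref{eq: defining s 1})---whereas the paper carries out only the $i=n-1$ case in full and leaves (\ref{eq: defining s 1}) as an exercise.
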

\begin{proof}
Denote the right-hand side of \ref{eq: s explicitly} by $x_i(u)$.  By the previous lemma, it suffices to show that the $x_i(u)$ satisfy the equations (\ref{eq: defining s 1}), (\ref{eq: defining s 2}).

For the case of equation (\ref{eq: defining s 2}), the left-hand side is 
$$ 
\phi\big(r_{n-1}(u) \big) \cdot \frac{x_{n-2}(u-\tfrac{1}{2})}{x_{n-1}(u) x_{n-1}(u-1)} 
$$
$$
= u^{-N} R(u)\frac{ (1-\tfrac{1}{2}u^{-1})^{m_{n-2}} }{(1-u^{-1})^{m_{n-1}} } \cdot \frac{u^{m_{n-1}} (u-1)^{m_{n-1}} }{(u-\tfrac{1}{2})^{m_{n-2}} (u+\tfrac{n-2}{2})^{p_{n-1}} \prod_{j=1}^{n-1} (u - \tfrac{n}{2}+j-1)^{p_j}}  
$$
after cancelling common factors between $x_{n-2}(u-\tfrac{1}{2})$ and $x_{n-1}(u)$.  This reduces to
$$ \frac{R(u)}{u^{p_n - p_{n-1}}} \frac{1}{(u+\tfrac{n-2}{2})^{p_{n-1}} \prod_{j=1}^{n-1}(u-\tfrac{n}{2} + j-1)^{p_j}} $$

Now consider the right hand side of (\ref{eq: defining s 2}).  Applying (\ref{eq:Z}) and (\ref{eq:center}), we get
\begin{align*}
& \ \ \ \ \ \ \ \frac{(u+\tfrac{n-2}{2})^{\mu_{n-1}}}{u^{\mu_{n-1}}} \phi\big( Q_n(-u+\tfrac{n}{2}) \big) \\
&= \frac{(u+\tfrac{n-2}{2})^{p_n - p_{n-1}} }{u^{p_n - p_{n-1}}} \frac{ Z_N(-u+\tfrac{n}{2}) }{(-u+\tfrac{n}{2})^{p_1} (-u + \tfrac{n-2}{2})^{p_2} \cdots (-u-\tfrac{n-2}{2})^{p_n} } \\
&= \frac{(u+\tfrac{n-2}{2})^{p_n - p_{n-1}} }{u^{p_n - p_{n-1}}} \frac{ (-1)^N \phi\big( R(u) \big) }{(-1)^N (u-\tfrac{n}{2})^{p_1} (u - \tfrac{n-2}{2})^{p_2} \cdots (u+\tfrac{n-2}{2})^{p_n} }
\end{align*}
and we see that the right and left sides agree.  

Verifying that the $x_i(u)$ satisfy equation (\ref{eq: defining s 1}) for $1\leq i < n-1$ is analogous, and is left as an exercise to the reader.
\end{proof}
\begin{lemma}
$\phi(I_\mu^\lambda) \subset \ker(\xi) $
\end{lemma}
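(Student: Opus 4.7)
The plan is to show that $\xi(\phi(A_i(u)))$, viewed as a formal series in $u^{-1}$ with coefficients in $W(\pi)$, is actually a Laurent polynomial supported in degrees $u^0, u^{-1}, \ldots, u^{-m_i}$. Since $\phi(A_i(u)) = 1 + \sum_{r\geq 1}\phi(A_i^{(r)})u^{-r}$, this forces $\xi(\phi(A_i^{(r)}))=0$ for all $r>m_i$. As the elements $A_i^{(r)}$ with $r>m_i$ generate $I_\mu^\lambda$ by definition, the desired containment $\phi(I_\mu^\lambda)\subset \ker\xi$ follows.

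Combining Lemmas \ref{lemma: defining s 1} and \ref{lemma: defining s 2}, we have
\[
\phi(A_i(u)) \;=\; \frac{\tilde s_i(u)}{u^{m_i}}\,\psi\!\bigl(Q_i(-u+\tfrac{i-1}{2})\bigr),
\]
where $\tilde s_i(u) = (u-\tfrac{i-1}{2})^{p_1}(u-\tfrac{i-3}{2})^{p_2}\cdots(u+\tfrac{i-1}{2})^{p_i}$ is a monic polynomial in $u$ of degree $m_i = p_1+\cdots+p_i$. Since $\tilde s_i(u)/u^{m_i}$ has scalar coefficients and $\xi\circ\psi = \kappa$ by the construction of $\xi$, applying $\xi$ yields
\[
\xi\!\bigl(\phi(A_i(u))\bigr) \;=\; \frac{\tilde s_i(u)}{u^{m_i}}\,\kappa\!\bigl(Q_i(-u+\tfrac{i-1}{2})\bigr).
\]

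The crucial input is the polynomiality statement: inside $W(\pi)$, the element $v^{p_1}(v-1)^{p_2}\cdots(v-i+1)^{p_i}\,\kappa(Q_i(v))$ is a polynomial in $v$ of degree $m_i$. For $i=n$ this is exactly \cite[Lemma 3.7]{BK}, already invoked in the definition of $\xi$; for $1\leq i<n$ the same sort of argument applies, exploiting that $Q_i(v) = \prod_{j=1}^i D_j(v-j+1)$ lies in the Gelfand--Tsetlin subalgebra $\Gamma(\pi)$, whose action on highest weight modules is governed by the formula \eqref{eq:Whighwts}. Denoting the resulting polynomial by $P_i(v)$ and substituting $v = -u + \tfrac{i-1}{2}$, a direct check gives $\prod_{j=1}^{i}(v-j+1)^{p_j} = (-1)^{m_i}\tilde s_i(u)$, so the factor $\tilde s_i(u)$ cancels against this product and we obtain
\[
\xi\!\bigl(\phi(A_i(u))\bigr) \;=\; \frac{(-1)^{m_i}\,P_i\!\bigl(-u+\tfrac{i-1}{2}\bigr)}{u^{m_i}}.
\]

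Since $P_i\!\bigl(-u+\tfrac{i-1}{2}\bigr)$ is a polynomial in $u$ of degree $m_i$, the right-hand side, expanded in powers of $u^{-1}$, is a Laurent polynomial supported precisely in degrees $u^0, u^{-1}, \ldots, u^{-m_i}$, which is the truncation we sought. The main obstacle is verifying the polynomiality of $\kappa(Q_i(v))$ (after multiplication by the stated linear factors) for $i < n$; once this is in hand, the remainder of the argument is a purely formal manipulation of the explicit formulas from Lemmas \ref{lemma: defining s 1} and \ref{lemma: defining s 2}.
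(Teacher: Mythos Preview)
Your overall strategy coincides with the paper's: show that $\xi(\phi(A_i(u)))$ is a polynomial in $u^{-1}$ of degree at most $m_i$, which forces $\xi(\phi(A_i^{(r)}))=0$ for $r>m_i$.  The formal manipulations you carry out with $\tilde s_i(u)$ and the substitution $v=-u+\tfrac{i-1}{2}$ are correct and match the paper's computation.

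The one point of divergence is how you justify the ``crucial input'' that $v^{p_1}(v-1)^{p_2}\cdots(v-i+1)^{p_i}\,\kappa(Q_i(v))$ is a polynomial in $v$ of degree $m_i$.  The paper argues this directly from the quantum minor expansion \eqref{eq: quantum minor}: since $\kappa(T_{\ell k}^{(r)})=0$ for $r>p_k$ by \cite[Theorem 3.5]{BK}, each factor $\kappa(T_{w(k),k}(-u+\tfrac{i-1}{2}-k+1))$ contributes degree at most $p_k$ in $u^{-1}$, and summing gives degree $p_1+\cdots+p_i=m_i$.  Your route via the factorization $Q_i(v)=\prod_j D_j(v-j+1)$ and the action on highest weight vectors is valid in principle (the characters coming from row tableaux separate points of the polynomial ring $\Gamma(\pi)$), but it is more roundabout than necessary and the phrase ``the same sort of argument applies'' leaves the reader to fill in nontrivial details.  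A cleaner version of your approach would simply invoke that $\kappa(D_j^{(r)})=0$ for $r>p_j$ in $W(\pi)$, which is part of the content of \cite[Corollary 6.3]{BK3}; then the factorization $Q_i=\prod_j D_j(\cdot)$ immediately gives the desired polynomiality, since the $D_j$ commute.
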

\begin{proof}
Combining the two lemmas, 
$$
\phi\big(A_i(u)\big)= \frac{ (u-\tfrac{i-1}{2})^{p_1} (u-\tfrac{i-3}{2})^{p_2} \cdots (u+\tfrac{i-1}{2})^{p_i} }{u^{m_i}}\psi\big(Q_{i}(-u+\tfrac{i-1}{2})\big).
$$
By Theorem 3.5 in \cite{BK} we have that $\kappa\big(T_{\ell k}^{(r)}\big)=0$ for $r>p_k$.  Therefore for $k=1,...,n$
$$
\frac{(u-\frac{i-1}{2}+k-1)^{p_{k}}}{u^{p_k}}  \kappa\big(T_{\ell k}(-u+\tfrac{i-1}{2}-k+1)\big)
$$
is a polynomial in $u^{-1}$ of degree $p_k$.  Observe by (\ref{eq: quantum minor}) that 
\begin{multline*}
  \xi\circ \psi\big(Q_{i}(-u+\tfrac{i-1}{2})\big)\\ = \sum_{w \in S_{n-i}}
  (-1)^w \kappa\big(T_{w(1), 1}(-u+\tfrac{i-1}{2})\big) \cdots
  \kappa\big(T_{w(i), i}(-u+\tfrac{i-1}{2}-i+1)\big)
\end{multline*}
Since $p_1+\cdots+p_{n-i}=m_i$, it follows that $\xi\circ \phi\big(A_i(u)\big)$ is a polynomial in $u^{-1}$ of degree $m_i$. This proves the claim.
\end{proof}

\begin{lemma}
$\phi(I_\mu^\lambda) \supset \ker(\xi) $
\end{lemma}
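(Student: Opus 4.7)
My plan is to establish injectivity of the induced surjection $\Phi\colon Y_\mu^{N\varpi_1}\to W(\pi)$ by first verifying it on specializations of central parameters and then lifting the result by matching Hilbert series.

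Fix a set of parameters $\bR$. The map $\Phi$ descends to a surjective filtered homomorphism $\Phi(\bR)\colon Y_\mu^{N\varpi_1}(\bR)\twoheadrightarrow W(\pi)_\bR$. Passing to associated graded yields a surjection of finitely generated commutative graded $\C$-algebras, which by Theorem \ref{thm: reducedness in type A} together with the Gan-Ginzburg description of central quotients of finite W-algebras takes the form
$$\gr\Phi(\bR)\colon \C[\Gr_\mu^{\overline{N\varpi_1}}]\twoheadrightarrow \C[\cS_{e_\pi}\cap\overline{\mathbb{O}}],$$
where $\mathbb{O}\subset\mathfrak{gl}_N$ is the coadjoint orbit determined by $\bR$ via \eqref{eq:center}. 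By the classical Mirkovi\'c-Vybornov theorem (Theorem \ref{theorem:MV}) and flatness of the adjoint quotient $\cS_{e_\pi}\to\mathfrak{gl}_N/GL_N$, both sides are coordinate rings of irreducible affine varieties of the same Krull dimension $2\langle\rho^\vee,N\varpi_1-\mu\rangle$. A surjection between finitely generated integral domains of the same Krull dimension has trivial kernel (being a prime ideal of height zero), hence $\gr\Phi(\bR)$ is an isomorphism; by the standard filtered-graded principle, so is $\Phi(\bR)$.

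To lift the result to the unspecialized map, I will match Hilbert series. By Corollary \ref{cor: freeness}, $Y_\mu^{N\varpi_1}$ is free as a module over its central polynomial subalgebra $\C[R^{(1)},\ldots,R^{(N)}]$; analogously, $W(\pi)$ is free over its center $Z(W(\pi))\cong\C[Z^{(1)},\ldots,Z^{(N)}]$. The map $\Phi$ identifies these two central subalgebras via \eqref{eq:center}, which is an upper-triangular polynomial change of variables preserving the filtered degrees of the corresponding generators. Therefore the Hilbert series with respect to the filtrations satisfy
$$H_{Y_\mu^{N\varpi_1}}(t)=H_{\C[R^{(j)}]}(t)\cdot H_{Y_\mu^{N\varpi_1}(\bR)}(t)=H_{\C[Z^{(j)}]}(t)\cdot H_{W(\pi)_\bR}(t)=H_{W(\pi)}(t),$$
using that the two specialized algebras are isomorphic with matching Hilbert series, and that the two central polynomial subalgebras have matching Hilbert series. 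Consequently each $\Phi_r\colon (Y_\mu^{N\varpi_1})_r\to (W(\pi))_r$ is a surjection of finite-dimensional $\C$-vector spaces of equal dimension, hence an isomorphism, and therefore $\Phi$ itself is an isomorphism, yielding the desired containment $\ker\xi\subset\phi(I_\mu^\lambda)$.

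The main obstacle is the classical limit identification in the first step: one must verify that both sides are integral domains of the expected Krull dimension, which ultimately relies on the classical Mirkovi\'c-Vybornov theorem and standard properties of Slodowy slices such as flatness of the adjoint quotient. Once these inputs are available, both the surjection-equals-isomorphism principle for graded integral domains and the subsequent Hilbert series bookkeeping are routine.
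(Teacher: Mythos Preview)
Your argument is essentially correct, but there is one imprecision worth flagging: the associated graded $\gr W(\pi)_\bR$ is \emph{not} the coordinate ring of $\cS_{e_\pi}\cap\overline{\mathbb{O}}$ for a general adjoint orbit $\mathbb{O}$ depending on $\bR$. In the Kazhdan filtration the central generators $Z_N^{(r)}$ sit in strictly positive degree while the scalars you subtract sit in degree $0$, so passing to associated graded always lands on the zero fiber of the adjoint quotient, namely $\C[\cS_{e_\pi}\cap\mathcal{N}]$. Fortunately this does not affect your argument, since that ring is still an integral domain of the required Krull dimension; the rest of your dimension-count and Hilbert-series bookkeeping then goes through as written. (You should also justify ``analogously, $W(\pi)$ is free over its center'' by something other than analogy with Corollary~\ref{cor: freeness}; it follows from flatness of $\cS_{e_\pi}\to\mathfrak{gl}_N/\!\!/GL_N$, not from Coulomb branches.)

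That said, the paper's own argument is dramatically shorter and avoids all of this machinery. From Lemmas~\ref{lemma: defining s 1} and~\ref{lemma: defining s 2} one has $s_1(u)=1$ and hence $\phi(A_1(u))=\psi(Q_1(-u))=\psi(D_1(-u))$, so $\psi(D_1^{(r)})=(-1)^r\phi(A_1^{(r)})$. Since $\ker\xi=\psi(\ker\kappa)$ is generated by $\psi(D_1^{(r)})$ for $r>p_1=m_1$, and those are exactly $(-1)^r\phi(A_1^{(r)})\in\phi(I_\mu^\lambda)$, the containment is immediate. Your route relies on the classical Mirkovi\'c--Vybornov theorem, Coulomb-branch freeness, and flatness of the Slodowy adjoint quotient, whereas the paper uses nothing beyond the two preceding computational lemmas. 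What your approach buys is a conceptual explanation (equal Hilbert series forces the surjection to be an isomorphism) that would survive in settings where explicit generators of $\ker\xi$ are unavailable; what the paper's approach buys is a self-contained four-line proof with no external inputs.
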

\begin{proof}
By Lemmas \ref{lemma: defining s 1} and \ref{lemma: defining s 2}, we have
$$ 
\phi(A_1(u)) = s_1(u) \psi\big( Q_1(-u)\big) = \psi\big( Q_1(-u) \big)
$$
Noting that $D_1(u) = Q_1(u)$, it follows that $\psi\big(D_1^{(p)}\big) = (-1)^r \phi\big(A_1^{(r)}\big)$. 

By definition $\ker(\kappa) = \langle D_1^{(r)}: r> p_1\rangle$, so 
$$\ker(\xi) = \psi(\ker(\kappa)) = \langle \psi\big( D_1^{(r)} \big) : r> p_1\rangle$$
Since $p_1 = m_1$, the elements $\phi\big(A_1^{(r)}\big) \in \phi\big(I_\mu^\lambda\big)$ for $r>p_1$.  So $\ker(\xi) \subset \phi(I_\mu^\lambda)$.
\end{proof}

This completes the proof of Theorem \ref{mainthm}(a).
\excise{
\subsubsection{The Gelfand-Tsetlin subalgebra}
\label{sec:gelf-tsetl-subalg}

The last thing we need to check is the the Gelfand-Tsetlin suablgebras
of $Y^{N\omega_1}_\mu$ and $W(\pi)$ match in the expected way.  This
follows immediately from the compatibility of maps for smaller $\pi$.
Let $m\leq n$, and consider $\pi'=(p_1\leq \cdots \leq p_m)$, and
$M=\sum_{i=1}^mp_i$.  The nilpotent $e_\pi$ lies in the Levi
$\mathfrak{gl}_M\times \mathfrak{gl}_{N-M}$, so the inclusion of
$W(\pi')$ into the $W$-algebra of $\mathfrak{gl}_M\times
\mathfrak{gl}_{N-M}$ and thence into $W(\pi)$ induces an inclusion of
these algebras.

This map is compatible \bcom{Presumably from here on in, it's just
  writing some formulas. Maybe Alex knows them better than I do.  }
 
\acom{How exactly does this map work?  There is definitely a commutative diagram}
$$
\xymatrix{
 Y_m(\sigma') \ar[r] \ar[d] & Y_n(\sigma) \ar[d] \\
 W(\pi') \ar[r] & W(\pi)
}
$$
\acom{
where the top arrow is the obvious Yangian map sending generators to the same-named guys.  So the bottom arrow would have to send BK's generators to the same-named guys also.  BK use this map $W(\pi')\rightarrow W(\pi)$ e.g.~in \cite[Section 6.5]{BK}. But this map is a bit of a mess; I'm not sure how to phrase it directly in W-algebra terms.  

That said, via this system of maps $W(\pi') \rightarrow W(\pi)$ we can define a GT subalgebra, and this clearly gets identified with the GZ subalgebra of $Y_\mu^\lambda(\bR)$. Also, on the classical limit, this map is not bad at all: I believe it is the map $\cT_\pi \rightarrow \cT_{\pi'}$ which projects onto the smaller block matrices.  So these seem like the right maps to consider.
 }

}

\subsection{Proof of Theorem \ref{thm: main theorem}(b): The product monomial crystal and row tableau}
\label{subsection:Preliminaries}

 Let $\bR$ be a
set of parameters of weight $\lambda$ and define $\tbR$ to be the corresponding set of parameters of weight
$N\varpi_1$, as in Theorem \ref{thm: maps between yangians}.  We let $\gamma$
be a $W_P$-invariant weight such that the values of the weight on
blocks of size $i$ are given by the elements of $R_i$ with
multiplicity; while this is not unique, its orbit under the Weyl group
is.  Note that the elements of $\tbR$ are just the entries
of $\gamma+\rho_P$.

Note that the isomorphism $\Phi$ preserves the notion of highest weight vector and highest weight module: it sends $E$'s to $E$'s and $H$'s to $D$'s.  In this section we describe how the highest weights  of $Y_\mu^\lambda(\bR)$ and $Y_\mu^{N\varpi_1}(\tbR)$ (as described in Section \ref{sec: monomial crystal}) match up respectively with the highest weights of $W(\pi)_{\tbR}$ and $W(\pi,\fp)_\bR$ (as described in \cite{BK} and Section \ref{section:nilpotent-side}).  

That is, we will describe the commutative diagram 
\begin{equation}
\label{eq: hw sets diagram}
\tikz[->,thick]{
\matrix[row sep=10mm,column sep=20mm,ampersand replacement=\&]{
\node (a) {$\cB(\tbR)$}; \& \node (b) {$\RowR$};  \\
\node (d) {$\cB(\bR)$}; \& \node (e) {$\RowRtc$};  \\};
\draw (a) -- node[above,midway]{$\sim$} (b) ; 
\draw (d) edge[draw=none] node [sloped, auto=false,allow upside down]{$\subseteq$} (a) ; 
\draw (e) edge[draw=none] node [sloped, auto=false,allow upside down]{$\subseteq$} (b) ; 
\draw (d) -- node[above,midway]{$\sim$}(e) ; 
}
\end{equation}
as prescribed by Theorem \ref{thm: main theorem}(b). Both vertical maps are natural inclusions of subsets, and the horizontal maps  are bijections induced by $\Phi$.







\subsubsection{A bijection for \texorpdfstring{$\lambda=N\varpi_1$}{lambda=N pi1}}

Consider the isomorphism $\Phi: Y_\mu^{N\varpi_1} \rightarrow W(\pi)$ from Theorem \ref{mainthm}.  
By Equation (\ref{eq:center}), it follows that $\Phi$ descends to an isomorphism $$Y_\mu^{N\varpi_1}(\tbR) \cong W(\pi)_{\tbR}.$$

On the one hand, the set of highest weights $\cB(\tbR)_\mu$ of $Y_\mu^{N\varpi_1}(\tbR)$ is in bijection with the set
\begin{equation}
\label{eq: hw as flags }
H_\mu^{N\varpi_1}(\tbR)=\left\{ (\bS_i)_{i\in I}:\begin{array}{l} |\bS_i|=m_i \text{ and } \\ \bS_1+ n \subset \bS_2 + (n-1) \subset \cdots \subset \bS_{n-1}+1 \subset \tbR \end{array} \right\}
\end{equation}
As in (\ref{eq: monomial to highest weight 2}), the highest weight corresponding to $(\bS_i)\in H_\mu^{N \varpi_1}(\tbR)$ is given by 
$$A_i(u)\mapsto  \prod_{s\in \bS_i}(1-\tfrac{1}{2} s u^{-1}) = u^{-m_i} \prod_{s\in \bS_i} (u - \tfrac{1}{2} s)$$

On the other hand, recall from Section \ref{section: BK theorem} that the set of highest weights for $W(\pi)_{\tbR}$ is  $\RowRt$, the set of row symmetrized $\pi$--tableaux $T$ on the alphabet $\tbR$, and that $T\in \RowRt$ encodes a highest weight according to
$$ (u-i+1)^{p_i} D_i(u-i+1) \mapsto \prod_{a\in T_i} (u+\tfrac{1}{2}a - \tfrac{n}{2}) $$

\begin{proposition}
\label{prop2}
Let $\tbR$ be a multiset of size $N$.
The isomorphism $\Phi:Y_\mu^{N\varpi_1}(\tbR) \to W(\pi)_{\tbR}$ induces a bijection $$\RowRt \to \cB(\tbR)_\mu$$ given by 
$T \mapsto \bS=(\bS_i)$,
where $$\bS_i= \big(T_1 \cup \cdots \cup T_{i}\big) -(n-i+1),$$
and $T_i$ denotes the $i$-th row of $T$.    
\end{proposition}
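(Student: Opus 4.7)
The proof proceeds by direct computation, comparing how $A_i(u)$ acts on a highest weight vector for $W(\pi)_{\tbR}$ corresponding to $T$, transported through the isomorphism $\Phi$, with the Yangian formula $A_i(u) \mapsto u^{-m_i} \prod_{s \in \bS_i} (u - \tfrac{1}{2} s)$ of (\ref{eq: monomial to highest weight 2}).

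The plan is to combine three ingredients. First, by Lemma \ref{lemma: defining s 1} and Lemma \ref{lemma: defining s 2}, the image of $A_i(u)$ is
\[
\phi(A_i(u)) \;=\; s_i(u)\,\psi\!\bigl(Q_i(-u+\tfrac{i-1}{2})\bigr), \qquad s_i(u) = u^{-m_i} \prod_{j=1}^{i}\bigl(u + \tfrac{2j-i-1}{2}\bigr)^{p_j}.
\]
Second, iterating (\ref{eq: D and Q}) gives $Q_i(v) = \prod_{j=1}^{i} D_j(v - j + 1)$.  Third, the $W(\pi)$--highest weight associated to $T$ is encoded by (\ref{eq:Whighwts}):
\[
(v-j+1)^{p_j} D_j(v-j+1)\mathbf{1} \;=\; \prod_{a \in T_j}\bigl(v + \tfrac{1}{2}a - \tfrac{n}{2}\bigr)\mathbf{1}.
\]
Substituting this into the product expression for $Q_i(v)$ and evaluating at $v = -u + \tfrac{i-1}{2}$, the factors $(v-j+1)^{p_j}$ yield exactly $\prod_{j=1}^i (-1)^{p_j}(u + \tfrac{2j-i-1}{2})^{p_j}$ in the denominator, which cancels against $s_i(u)$ up to overall signs that cancel with those coming from the numerator.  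After the dust settles, one finds
\[
A_i(u)\mathbf{1} \;=\; u^{-m_i} \prod_{j=1}^{i} \prod_{a \in T_j}\bigl(u + \tfrac{n - a - i + 1}{2}\bigr)\mathbf{1}.
\]

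Comparing this with $u^{-m_i} \prod_{s \in \bS_i}(u - \tfrac{1}{2}s)$ forces $\bS_i = (T_1 \cup \cdots \cup T_i) - (n-i+1)$ as multisets, which is the claimed formula. It remains to verify that this assignment is a bijection onto $\cB(\tbR)_\mu$ (identified with $H_\mu^{N\varpi_1}(\tbR)$ via (\ref{eq: hw as flags })).  Since $|T_j| = p_j$ and $m_i = p_1 + \cdots + p_i$ by (\ref{eq: coweight data 5}) (noting $m_i = m_i'$ when $\lambda = N\varpi_1$), we get $|\bS_i| = m_i$.  Moreover, the flag condition $\bS_i + (n+1-i) \subset \bS_{i+1} + (n-i)$ translates, after the shift, into $T_1 \cup \cdots \cup T_i \subset T_1 \cup \cdots \cup T_{i+1}$, which is automatic, and $\bS_{n-1} + 1 \subset \tbR$ translates into the requirement that $T$ have entries in $\tbR$ with correct multiplicities. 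The inverse map $\bS \mapsto T$ recovers each row as $T_i = (\bS_i + (n-i+1)) \setminus (\bS_{i-1} + (n-i+2))$.

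The only step requiring real care is the bookkeeping of signs and shifts when combining $s_i(u)$ with $\psi(Q_i(-u + \tfrac{i-1}{2}))$; the $(-1)^{p_j}$ factors arising from evaluating polynomials at $-u$ must be tracked on both sides.  This is the main potential source of error but is otherwise mechanical, and once it is verified the identification of $\bS_i$ is immediate.
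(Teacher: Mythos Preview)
Your proof is correct and uses essentially the same ingredients as the paper's: Lemmas \ref{lemma: defining s 1} and \ref{lemma: defining s 2} together with (\ref{eq: D and Q}) and the highest-weight formula (\ref{eq:Whighwts}). The only difference is the direction of the computation: the paper expresses $(u-i+1)^{p_i} D_i(u-i+1)$ as a ratio of $\Phi(A_i)$ and $\Phi(A_{i-1})$ and reads off $T_i$ from a given $\bS$, whereas you expand $\phi(A_i(u)) = s_i(u)\,\psi(Q_i(-u+\tfrac{i-1}{2}))$ via $Q_i = \prod_j D_j(\cdot)$ and read off $\bS_i$ from a given $T$. Both routes amount to the same identity, and your inverse formula $T_i = (\bS_i + (n-i+1)) \setminus (\bS_{i-1} + (n-i+2))$ matches the paper's.
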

Equivalently, the $i$th row 
$$ T_i = \big( \bS_i +(n-i)\big) \setminus \big( \bS_{i-1} + (n-i+1)\big) $$
is the difference between parts of the ``flag'' of multisets (\ref{eq: hw as flags }).

\begin{proof}
We begin with the equation 
$$
(u-i+1)^{p_i}D_i(u-i+1) = (u-i+1)^{p_i} \frac{Q_i(u)}{Q_{i-1}(u) } = \frac{(u-\tfrac{i-1}{2})^{m_i} \Phi \big(A_i(-u+\tfrac{i-1}{2})\big) }{(u-\tfrac{i-1}{2})^{m_{i-1}} \Phi \big( A_{i-1}(-u+\tfrac{i-2}{2}) \big)}
$$
The first equality is Equation (\ref{eq: D and Q}), while the second equality follows from Lemmas \ref{lemma: defining s 1}, \ref{lemma: defining s 2} after cancelling common factors.

For a highest weight $\bS = (\bS_i)$ for $Y_\mu^{N\varpi_1}(\tbR)$, the right-hand side maps to
$$
\frac{\prod_{s\in \bS_i} (u+ \tfrac{s-i+1}{2})}{\prod_{s\in \bS_{i-1}} ( u+\tfrac{s-i+2}{2})}
$$
To find the corresponding tableaux $T\in \RowRt$, we must write the above as 
$$ \prod_{a\in T_i} ( u +\tfrac{a}{2} - \tfrac{n}{2}), $$
which leads $T_i = \big( \bS_i + (n-i+1)\big) \setminus \big( \bS_{i-1} + (n-i+2)\big) $.  This proves the proposition.
 
\end{proof}

\subsubsection{A bijection for general \texorpdfstring{$\lambda$}{lambda}}
\label{section: bijection for general lambda}

Next we'll prove that the bijection of Proposition \ref{prop2} induces
a bijection between the highest weights of $Y_\mu^\lambda(\bR)$ and
the highest weights of $W(\pi,\fp)_\bR$.  We'll do this by first
identifying the tableau in $\RowRt$ which descend to highest
weights of $W(\pi,\fp)_\bR$; we term these ``overshadowing tableau''.
Once this is done, we need only check that these satisfy the same conditions as the subcrystal
$\cB(\bR) \subset \cB(\tbR)$ (cf. Lemma
\ref{lemma:crystalinclusion}).  

%

Let $\RowRtc$ denote the set of highest weights of $W(\pi,\fp)_\bR$.  By Theorem \ref{thm:parabolic-annihilator} there is an inclusion $\RowRtc \subset \RowRt$.  
Now suppose $c\in \bR_i$.  Then in $\tbR$ the element $c$ has $n-i$ ``descendants'', namely the elements
$$
\{ c+n-i-1,c+n-i-3,...,c-n+i+1\}.
$$ 
We'll call this set the $c$-\textbf{block} in $\tbR$.

Given a row tableau $T \in \RowRt$, we can divide the boxes
of the tableau into $c$-blocks.  Note that this decomposition will not
be unique if $\tbR$ contains any element with multiplicity greater
than 1.  We say that the tableau $T$ is \textbf{overshadowing} if this
division into $c$-blocks can be chosen so that for every $c\in \bR$ the elements of the $c$-block occur in strictly decreasing order down the tableau.

Put another way, given $T\in \RowRt$, an $\bR$-coloring of $T$ is a coloring of the contents of $T$ using $|\bR|$ colors, such that for every $c\in\bR$ the elements colored $c$  form a $c$-block, and they are in strictly decreasing order down the rows.  Clearly $T$ is overshadowing if and only there exists and $\bR$-coloring of $T$.

\begin{lemma}
$\RowRtc$ is precisely the subset of overshadowing row tableau in $\RowRt$.  
\end{lemma}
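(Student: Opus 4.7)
The plan is to combine Theorem~\ref{thm:parabolicWweight} with a direct analysis of the map $w\mapsto \mathsf{T}_w$, matching the parabolic-singular conditions on $w$ with the overshadowing condition on $\mathsf{T}_w$. By that theorem, the highest weights of $W(\pi,\fp)_\bR$ are precisely $\{\mathsf{T}_w : w\in \PS(\pi,\fp)\}$, so I need to identify this set with the set of overshadowing tableaux in $\RowRt$.

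First I would set up the block structure of $v=\gamma+\rho_P\in\C^N$. By the discussion in Section~\ref{sec:typeAdiffops} and the compatibility with \eqref{eq: R tilde}, the $N$ entries of $v$ decompose naturally as a concatenation of vectors indexed by $c\in \bR$: each such $c$ corresponds to a contiguous ``Levi interval'' $J_c\subset\{1,\dots,N\}$ on which $v$ takes the values of the $c$-block of $\tbR$, listed in strictly decreasing order. For any $w\in S_N$, the tableau $\mathsf{T}_w$ fills box $i$ of the pyramid $\pi$ with the entry $v_{w^{-1}(i)}$; assigning color $c$ to box $i$ precisely when $w^{-1}(i)\in J_c$ gives the natural candidate $\bR$-coloring of $\mathsf{T}_w$, under which the color-$c$ boxes carry each value of the $c$-block exactly once.

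For the forward direction, I would show that this natural coloring witnesses overshadowing whenever $w\in\PS(\pi,\fp)$. The shortest-right-$W_\tau$-coset condition says $w$ is increasing on each $J_c$; composed with the fact that $v$ is strictly decreasing on $J_c$, this forces the color-$c$ box values to be strictly decreasing as the box index $i$ increases. The longest-left-$W_\pi$-coset condition says $w^{-1}$ is decreasing on each row-interval of $\pi$; if two color-$c$ boxes $i_1<i_2$ lay in the same row, then both $w^{-1}(i_1), w^{-1}(i_2)\in J_c$ with $w^{-1}(i_1)>w^{-1}(i_2)$, contradicting monotonicity of $w$ on $J_c$. Hence color-$c$ boxes lie in distinct rows, and as the row index (and hence the box index) increases, the value strictly decreases, which is precisely the overshadowing condition.

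For the converse, given an overshadowing tableau $T$ together with a fixed $\bR$-coloring, I would reconstruct a unique $w\in\PS(\pi,\fp)$ with $\mathsf{T}_w=T$. The strict-decrease condition uniquely matches each $c$-block value with a color-$c$ box, determining where each position of $J_c$ gets sent by $w$; within each row I would order the $w^{-1}$-preimages decreasingly, pinning down $w$ uniquely and making it longest in its $W_\pi$-coset, while the matching of strictly decreasing values to increasing box positions forces $w$ to be increasing on each $J_c$ and hence shortest in its $W_\tau$-coset. The main obstacle will be handling the row-symmetrization and possible multiplicities in $\tbR$ carefully: different $\bR$-colorings of the same $T$, and different orderings within rows, must be shown to correspond to the same parabolic-singular representative, using the fact that $\PS(\pi,\fp)$ selects exactly one element from each relevant $W_\pi$-$W_\tau$ double coset.
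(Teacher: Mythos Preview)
Your proposal is correct and follows essentially the same route as the paper's proof: both decompose $\gamma+\rho_P$ into $c$-blocks indexed by $c\in\bR$, translate the shortest-right-$W_\tau$-coset condition into ``$w$ is increasing on each $J_c$'' and the longest-left-$W_\pi$-coset condition into ``$w^{-1}$ is decreasing on each row-interval,'' and then read off the overshadowing condition from the interplay of these two monotonicities.

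One small remark: the obstacle you flag at the end --- ensuring that different $\bR$-colorings of the same $T$ or different within-row orderings yield the same parabolic-singular $w$ --- is not actually needed for the lemma. You only need to show that every overshadowing $T$ equals $\mathsf{T}_w$ for \emph{some} $w\in\PS(\pi,\fp)$, i.e., existence rather than uniqueness. Your construction already gives this: the coloring determines which $J_c$ each box's preimage lies in, overshadowing forces $w$ to be increasing on each $J_c$ (since color-$c$ boxes lie in distinct rows and hence have strictly increasing box indices), and then you are free to order within each row so that $w^{-1}$ is decreasing there --- this is always possible precisely because no two boxes in the same row share a color, so the within-row ordering never conflicts with the within-$J_c$ ordering. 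The double-coset uniqueness fact you cite is true and is mentioned elsewhere in the paper, but you do not need to invoke it here.
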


\begin{proof}
By Theorem 4.13, the set $\RowRtc$ is the set of tableaux where the row
reading word is of the form  $  w\cdot
(\gamma+\rho_{\fp})$, for $w\in \PS(\pi,\fp)$ and $\gamma$ is a
$W_{\fp}$-invariant weight where
each element of $\bR_i$ appears $n-i$ times.  Thus, the coordinates of
$\gamma+\rho_{\fp}$ are the concatenations of the $c$-blocks for the
different $c\in \bR_i$ for all $i$, ordered by the value of $c$.  The
longest left coset property says that every pair of elements of the same
$c$-block must be reversed in order.    That is, they must be in
decreasing order in rows (that is, they must satisfy the overshadowing condition) or in the same
row.  On the other hand, if they are in the same row, the shortest
right 
coset condition assures that they must have remained in the same
order, contradicting the longest left coset property. Thus, this
tableau must be overshadowing.  

Conversely, if a tableau is overshadowing, then the division into
$c$-blocks fixes a unique parabolic-singular permuation which sends $\gamma+\rho_{\fp}$
to a row reading of this tableau which matches the $c$-blocks  of the tableaux
$c$-blocks of $ \gamma+\rho_{\fp}$, while ordering each row by the
order on $c$-blocks in $\gamma+\rho_{\fp}$.  This makes the shortest
right coset property clear, and the longest left coset property
follows because overshadowing shows that every $c$-block is completely reversed.
\end{proof}

Let $\BB(\la)$ be the crystal associated to an irreducible representation of $\fg$ of highest weight $\la$.  By   \cite[Prop. 2.9]{KTWWY}, the crystal $\cB(\tbR)$ is isomorphic to 
$\BB(t_1\varpi_1)\otimes\cdots\otimes\BB(t_q\varpi_1)$, where $\tbR=\{c_1^{t_1},...,c_q^{t_q} \}$.  

Now, we shall describe the inclusion $\cB(\bR) \subset \cB(\tbR)$.  First, consider the case when $\lambda$ is fundamental.  The elements of $\cB(y_{i,c})$ are in bijection with
partitions fitting inside an $i\times n-i$ box, that is, with no more
than $i$ parts and $\xi_p\leq n-i$ (cf. Section \ref{section: pmc in type A}).  We identify a partition with its
diagram $\{(a,b)\in \Z_{>0}\times \Z_{>0}\mid 1\leq a\leq \xi_b\}$, and to the partition
$\xi$ we associate the monomial \[y_{\xi,c}=y_{i,c}\cdot \prod_{(a,b)\in \xi}
z_{i-a+b,c-a-b}^{-1}.\]

Thus, we wish to factor these into terms corresponding to
$\cB(y_{1,c+j})$ for $j=-i+1,\dots, i-1$.  This is easily done
using the formula \[y_{i,c}=y_{1,-i+1}y_{1,-i+3}\cdots y_{1,i-1}\prod_{k=1}^{i-1} \prod_{j=0}^{k-1}
z_{i-k,2j-k}^{-1}.\]
Thus, we have that 
\[y_{\xi,c}=\prod_{p=1}^i y_{1,c+i-2p+1}\prod_{q=1}^{\xi_p+i-p}
z_{q,c-q+i-2p}^{-1}=\prod_{p=1}^i y_{(\xi_p+i-p),c+i+1-2p}\] where we
consider $(\xi_p+i-p)$ as a partition with one row.  This gives us an
element in $\cB(y_{1,c+i+1-2p})$, resulting in the inclusion $$\cB(y_{i,c}) \subset \prod_{j=-i+1,...,i-1}\cB(y_{1,c+j}).$$

For general $\lambda$ we take the product over all such inclusions.  More precisely, for $p\in \cB(\bR)$ we write $p=\prod_{i\in I,c\in \bR_i}y_{\xi^{n-i,c},c}$.  Then by the above argument we can view $y_{\xi^{i,c},c} \in \prod_{j=-n+i+1,...,n-i-1}\cB(y_{1,c+j})$, and hence $$p\in \prod_{i\in I, c\in \bR_i} \prod_{j=-n+i+1,...,n-i-1}\cB(y_{1,c+j})=\cB(\tbR).$$

This procedure has a nice description in terms of diagrams.  Consider a monomial $p \in \B(\bR)$.  Recall that $p$ can be represented diagrammatically as in Section \ref{section: pmc in type A}, where here we assume that $\bR$ is an integral set of parameters

To define the image of $p$ in $\B(\tbR)$, the idea is to ``project'' the circled vertices onto the line corresponding to the $n-1$ node of the Dynkin diagram, and fill the squares along this projection with $1$s.  

For instance, if we work in type $A_6$, with $\bR_3=\{4\}$, all other $\bR_i$ empty, and  we attach the partition $(2,1)$ to this vertex then we have the  picture on the left; 
after projecting we obtain the picture on the right:

\begin{equation*}
\begin{tikzpicture}[scale=0.5]

\draw (0.5,5.5)--(2.5,7.5);
\draw (0.5,3.5)--(4.5,7.5);
\draw (0.5,1.5)--(6.5,7.5);

\draw (1.5,0.5)--(6.5,5.5);
\draw (3.5,0.5)--(6.5,3.5);
\draw (5.5,0.5)--(6.5,1.5);

\draw (2.5,0.5)--(0.5,2.5);
\draw (4.5,0.5)--(0.5,4.5);
\draw (6.5,0.5)--(0.5,6.5);
\draw (6.5,2.5)--(1.5,7.5);
\draw (6.5,4.5)--(3.5,7.5);
\draw (6.5,6.5)--(5.5,7.5);

\draw (3,4) circle(0.2cm);
\draw node at (2,2) {1};
\draw node at (3,3) {1};
\draw node at (4,2) {1};

\draw node at (1,0) {\tiny 1};
\draw node at (2,0) {\tiny 2};
\draw node at (3,0) {\tiny 3};
\draw node at (4,0) {\tiny 4};
\draw node at (5,0) {\tiny 5};
\draw node at (6,0) {\tiny 6};

\draw node at (0,1) {\tiny 1};
\draw node at (0,2) {\tiny 2};
\draw node at (0,3) {\tiny 3};
\draw node at (0,4) {\tiny 4};
\draw node at (0,5) {\tiny 5};
\draw node at (0,6) {\tiny 6};
\draw node at (0,7) {\tiny 7};
\end{tikzpicture}
\hspace{2cm}
\begin{tikzpicture}[scale=0.5]

\draw (0.5,5.5)--(2.5,7.5);
\draw (0.5,3.5)--(4.5,7.5);
\draw (0.5,1.5)--(6.5,7.5);

\draw (1.5,0.5)--(6.5,5.5);
\draw (3.5,0.5)--(6.5,3.5);
\draw (5.5,0.5)--(6.5,1.5);

\draw (2.5,0.5)--(0.5,2.5);
\draw (4.5,0.5)--(0.5,4.5);
\draw (6.5,0.5)--(0.5,6.5);
\draw (6.5,2.5)--(1.5,7.5);
\draw (6.5,4.5)--(3.5,7.5);
\draw (6.5,6.5)--(5.5,7.5);

\draw (6,1) circle(0.2cm);
\draw (6,3) circle(0.2cm);
\draw (6,5) circle(0.2cm);
\draw (6,7) circle(0.2cm);

\draw node at (2,2) {1};
\draw node at (3,3) {1};
\draw node at (4,2) {1};
\draw node at (4,4) {1};
\draw node at (5,3) {1};
\draw node at (5,5) {1};
\draw node at (6,2) {1};
\draw node at (6,4) {1};
\draw node at (6,6) {1};

\draw node at (1,0) {\tiny 1};
\draw node at (2,0) {\tiny 2};
\draw node at (3,0) {\tiny 3};
\draw node at (4,0) {\tiny 4};
\draw node at (5,0) {\tiny 5};
\draw node at (6,0) {\tiny 6};

\draw node at (0,1) {\tiny 1};
\draw node at (0,2) {\tiny 2};
\draw node at (0,3) {\tiny 3};
\draw node at (0,4) {\tiny 4};
\draw node at (0,5) {\tiny 5};
\draw node at (0,6) {\tiny 6};
\draw node at (0,7) {\tiny 7};
\end{tikzpicture}
\end{equation*}
In general,  the inclusion $\B(\bR) \to \B(\tbR)$ is defined by applying this projection to every vertex. 
For instance, consider the monomial data $p_0\in\B(\bR)$ on the left below, where $\bR_3=\bR_5=\{4\}$ and all other $\bR_i$ are empty. 
The corresponding monomial data in $\B(\tbR)$ is on the right: 
\begin{equation*}
\begin{tikzpicture}[scale=0.5]

\draw (0.5,7.5)--(1.5,8.5);
\draw (0.5,5.5)--(3.5,8.5);
\draw (0.5,3.5)--(5.5,8.5);
\draw (0.5,1.5)--(6.5,7.5);

\draw (0.5,-0.5)--(6.5,5.5);
\draw (2.5,-0.5)--(6.5,3.5);
\draw (4.5,-0.5)--(6.5,1.5);
\draw (6.5,-0.5)--(6.5,-0.5);

\draw (1.5,-0.5)--(0.5,0.5);
\draw (3.5,-0.5)--(0.5,2.5);
\draw (5.5,-0.5)--(0.5,4.5);
\draw (6.5,0.5)--(0.5,6.5);
\draw (6.5,2.5)--(0.5,8.5);
\draw (6.5,4.5)--(2.5,8.5);
\draw (6.5,6.5)--(4.5,8.5);

\draw (3,4) circle(0.2cm);
\draw (5,4) circle(0.2cm);

\draw node at (2,2) {1};
\draw node at (3,3) {1};
\draw node at (4,2) {2};
\draw node at (5,3) {1};
\draw node at (5,1) {1};
\draw node at (6,2) {1};

\draw node at (1,-0.8) {\tiny 1};
\draw node at (2,-0.8) {\tiny 2};
\draw node at (3,-0.8) {\tiny 3};
\draw node at (4,-0.8) {\tiny 4};
\draw node at (5,-0.8) {\tiny 5};
\draw node at (6,-0.8) {\tiny 6};

\draw node at (0,0) {\tiny 0};
\draw node at (0,1) {\tiny 1};
\draw node at (0,2) {\tiny 2};
\draw node at (0,3) {\tiny 3};
\draw node at (0,4) {\tiny 4};
\draw node at (0,5) {\tiny 5};
\draw node at (0,6) {\tiny 6};
\draw node at (0,7) {\tiny 7};
\draw node at (0,8) {\tiny 8};
\end{tikzpicture}
\hspace{2cm}
\begin{tikzpicture}[scale=0.5]
\draw (0.5,7.5)--(1.5,8.5);
\draw (0.5,5.5)--(3.5,8.5);
\draw (0.5,3.5)--(5.5,8.5);
\draw (0.5,1.5)--(6.5,7.5);

\draw (0.5,-0.5)--(6.5,5.5);
\draw (2.5,-0.5)--(6.5,3.5);
\draw (4.5,-0.5)--(6.5,1.5);
\draw (6.5,-0.5)--(6.5,-0.5);

\draw (1.5,-0.5)--(0.5,0.5);
\draw (3.5,-0.5)--(0.5,2.5);
\draw (5.5,-0.5)--(0.5,4.5);
\draw (6.5,0.5)--(0.5,6.5);
\draw (6.5,2.5)--(0.5,8.5);
\draw (6.5,4.5)--(2.5,8.5);
\draw (6.5,6.5)--(4.5,8.5);

\draw (6,1) circle(0.2cm);
\draw (6,3) circle(0.2cm);
\draw (6,3) circle(0.4cm);
\draw (6,5) circle(0.2cm);
\draw (6,5) circle(0.4cm);
\draw (6,7) circle(0.2cm);

\draw node at (2,2) {1};
\draw node at (3,3) {1};
\draw node at (4,2) {2};
\draw node at (4,4) {1};
\draw node at (5,3) {2};
\draw node at (5,1) {1};
\draw node at (5,5) {1};
\draw node at (6,2) {2};
\draw node at (6,4) {2};
\draw node at (6,6) {1};

\draw node at (1,-0.8) {\tiny 1};
\draw node at (2,-0.8) {\tiny 2};
\draw node at (3,-0.8) {\tiny 3};
\draw node at (4,-0.8) {\tiny 4};
\draw node at (5,-0.8) {\tiny 5};
\draw node at (6,-0.8) {\tiny 6};

\draw node at (0,0) {\tiny 0};
\draw node at (0,1) {\tiny 1};
\draw node at (0,2) {\tiny 2};
\draw node at (0,3) {\tiny 3};
\draw node at (0,4) {\tiny 4};
\draw node at (0,5) {\tiny 5};
\draw node at (0,6) {\tiny 6};
\draw node at (0,7) {\tiny 7};
\draw node at (0,8) {\tiny 8};
\end{tikzpicture}
\end{equation*}

Finally we are ready to prove:
\begin{proposition}\label{lem:simples match2}
Under the bijection of Proposition \ref{prop2}, $\RowRtc$ is identified with $\cB(\bR)$.
\end{proposition}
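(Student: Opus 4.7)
The plan is to directly compare the description of $\cB(\bR) \subset \cB(\tbR)$ with the overshadowing characterization of $\RowRtc \subset \RowRt$. By the construction just preceding, an element $p \in \cB(\bR)$ decomposes canonically as a product of fundamental-crystal elements $\cB(y_{i,c})$ indexed by $c \in \bR_i$, each labelled by a partition $\xi^{i,c}$ fitting inside an $i \times (n-i)$ box. Projecting this expression into $\cB(\tbR)$ replaces each such factor by a product of single-row fundamentals, one for each position of the $c$-block $\{c+n-i-1,\, c+n-i-3,\,\ldots,\, c-n+i+1\}$, with lengths determined by the parts of $\xi^{i,c}$.

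I would first match these single-row fundamentals to entries of the tableau $T$ associated to $p$ under Proposition \ref{prop2}. Tracking the $u$-shifts in that proof, a single-row fundamental sitting at position $\tilde c$ of the $c$-block contributes to $T$ a batch of boxes labelled with the entry $\tilde c$, placed in a range of rows whose starting row is controlled by the length of the single-row partition. Aggregating these contributions as $\tilde c$ runs over the $c$-block produces, for each $c \in \bR_i$, the set of $n-i$ boxes of $T$ filled with the $c$-block entries, together with a rule assigning each to a row of $T$; this is exactly an $\bR$-coloring of $T$. The fact that the fundamentals are naturally listed in decreasing order of $\tilde c$ within the $c$-block then translates, after unwinding the dictionary, into the statement that the $c$-block entries strictly decrease down the rows of $T$ --- precisely the overshadowing condition.

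For the converse, starting from an overshadowing $T$ together with a fixed $\bR$-coloring, the row positions of the boxes in each $c$-block (read in its unique strictly-decreasing order) determine integers that I would interpret as the parts of a partition $\xi^{i,c}$. The overshadowing inequality guarantees that $\xi^{i,c}$ is weakly decreasing, and the bound on row indices coming from the shape $\pi$ forces $\xi^{i,c}$ to fit inside its $i \times (n-i)$ box. Running the forward construction in reverse recovers the original monomial from $T$, so the correspondence is bijective. I expect the main obstacle to be keeping all the half-integer shifts straight: both Proposition \ref{prop2} and the projection formula involve contents shifted by $\tfrac{n}{2}$ and a reindexing through the diagram automorphism $i \leftrightarrow n-i$, and these must be aligned carefully. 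Once this dictionary is settled, however, the argument reduces to a direct combinatorial matching of the two decompositions.
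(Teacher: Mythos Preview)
Your proposal is correct and follows essentially the same route as the paper: decompose an element of $\cB(\bR)$ into its fundamental-crystal factors indexed by $c\in\bR_i$, project each to a product of single-row fundamentals over the $c$-block, and observe that this induces an $\bR$-coloring of the tableau $T$ whose entries within each block decrease down the rows; the converse is obtained by reversing the construction block-by-block. The paper makes the reduction to the case $|\bR|=1$ explicit and carries out the verification there using the diagrammatic (integer-grid) picture of Section~\ref{section: pmc in type A}, which sidesteps the half-integer shift bookkeeping you anticipate; you may find it cleaner to do the same rather than track the shifts through Proposition~\ref{prop2} directly. One small imprecision: a single-row fundamental at position $\tilde c$ contributes a \emph{single} box of $T$ with entry $\tilde c$ (not a batch), placed in a row determined by the length of that row-partition; the ``batch'' of $n-i$ boxes arises only after aggregating over all positions of the $c$-block.
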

This completes the proof of part (b) of Theorem \ref{thm: main theorem}.
\begin{proof}
By the above discussion, we view $\cB(\bR) \subset \cB(\tbR)$.  
Let $\bS=(\bS_i)\in\cB(\bR)$, and suppose it corresponds to $T\in \RowRt$ under the bijection of Proposition \ref{prop2}.    
Denoting the rows of $T$ by $T_i$, we have that 
$T_n=\tbR\setminus(\bS_{n-1}+1), T_1=\bS_{1}+n$ and for $i=1,...,n-2$,  
$$T_{i}=(\bS_i+(n-i))\setminus(\bS_{i-1}+(n-i+1)).$$

We'll show that  $T \in \RowRtc$, i.e. $T$ has an $\bR$-coloring.  We'll first show that it suffices to prove this in the case when $p$ consists of only one vertex (i.e. $|\bR|=1$).  Without loss of generality assume that $\bR$ is integral.  Now color each partition in $p$.  For instance we could have the example on the left below.
When we view $p$ as a monomial datum in $\cB(N\varpi_1,\tbR)$ we remember the color of the partitions.  In the example we obtain the diagram on the right.
 \begin{equation*}
\begin{tikzpicture}[scale=0.5]
\draw (0.5,7.5)--(1.5,8.5);
\draw (0.5,5.5)--(3.5,8.5);
\draw (0.5,3.5)--(5.5,8.5);
\draw (0.5,1.5)--(6.5,7.5);

\draw (0.5,-0.5)--(6.5,5.5);
\draw (2.5,-0.5)--(6.5,3.5);
\draw (4.5,-0.5)--(6.5,1.5);
\draw (6.5,-0.5)--(6.5,-0.5);

\draw (1.5,-0.5)--(0.5,0.5);
\draw (3.5,-0.5)--(0.5,2.5);
\draw (5.5,-0.5)--(0.5,4.5);
\draw (6.5,0.5)--(0.5,6.5);
\draw (6.5,2.5)--(0.5,8.5);
\draw (6.5,4.5)--(2.5,8.5);
\draw (6.5,6.5)--(4.5,8.5);

\draw (3,4) circle(0.2cm);
\draw (5,4) circle(0.2cm);

\draw node at (2,2) {\color{blue} 1};
\draw node at (3,3) {\color{blue} 1};
\draw node at (4,2) {\color{blue}1,\color{red} 1};
\draw node at (5,3) {\color{red} 1};
\draw node at (5,1) {\color{red} 1};
\draw node at (6,2) {\color{red} 1};

\draw node at (1,-0.8) {\tiny 1};
\draw node at (2,-0.8) {\tiny 2};
\draw node at (3,-0.8) {\tiny 3};
\draw node at (4,-0.8) {\tiny 4};
\draw node at (5,-0.8) {\tiny 5};
\draw node at (6,-0.8) {\tiny 6};

\draw node at (0,0) {\tiny 0};
\draw node at (0,1) {\tiny 1};
\draw node at (0,2) {\tiny 2};
\draw node at (0,3) {\tiny 3};
\draw node at (0,4) {\tiny 4};
\draw node at (0,5) {\tiny 5};
\draw node at (0,6) {\tiny 6};
\draw node at (0,7) {\tiny 7};
\draw node at (0,8) {\tiny 8};
\end{tikzpicture}
\hspace{2cm}
\begin{tikzpicture}[scale=0.5]
\draw (0.5,7.5)--(1.5,8.5);
\draw (0.5,5.5)--(3.5,8.5);
\draw (0.5,3.5)--(5.5,8.5);
\draw (0.5,1.5)--(6.5,7.5);

\draw (0.5,-0.5)--(6.5,5.5);
\draw (2.5,-0.5)--(6.5,3.5);
\draw (4.5,-0.5)--(6.5,1.5);
\draw (6.5,-0.5)--(6.5,-0.5);

\draw (1.5,-0.5)--(0.5,0.5);
\draw (3.5,-0.5)--(0.5,2.5);
\draw (5.5,-0.5)--(0.5,4.5);
\draw (6.5,0.5)--(0.5,6.5);
\draw (6.5,2.5)--(0.5,8.5);
\draw (6.5,4.5)--(2.5,8.5);
\draw (6.5,6.5)--(4.5,8.5);

\draw (6,1) circle(0.2cm);
\draw (6,3) circle(0.2cm);
\draw (6,3) circle(0.4cm);
\draw (6,5) circle(0.2cm);
\draw (6,5) circle(0.4cm);
\draw (6,7) circle(0.2cm);

\draw node at (2,2) {\color{blue} 1};
\draw node at (3,3) {\color{blue} 1};
\draw node at (4,2) {\color{blue} 1,\color{red} 1};
\draw node at (4,4) {\color{blue} 1};
\draw node at (5,3) {\color{blue} 1,\color{red} 1};
\draw node at (5,1) {\color{red} 1};
\draw node at (5,5) {\color{blue} 1};
\draw node at (6,2) {\color{blue} 1,\color{red} 1};
\draw node at (6,4) {\color{blue} 1,\color{red} 1};
\draw node at (6,6) {\color{blue} 1};

\draw node at (1,-0.8) {\tiny 1};
\draw node at (2,-0.8) {\tiny 2};
\draw node at (3,-0.8) {\tiny 3};
\draw node at (4,-0.8) {\tiny 4};
\draw node at (5,-0.8) {\tiny 5};
\draw node at (6,-0.8) {\tiny 6};

\draw node at (0,0) {\tiny 0};
\draw node at (0,1) {\tiny 1};
\draw node at (0,2) {\tiny 2};
\draw node at (0,3) {\tiny 3};
\draw node at (0,4) {\tiny 4};
\draw node at (0,5) {\tiny 5};
\draw node at (0,6) {\tiny 6};
\draw node at (0,7) {\tiny 7};
\draw node at (0,8) {\tiny 8};
\end{tikzpicture}
\end{equation*}
Now, when we apply the bijection, we naturally obtain a  row tableau whose entries are colored (we don't know a priori that this is an $\bR$-colored tableau - this is what we want to show).  Indeed when we look at $(\bS_i+(n-i))\setminus(\bS_{i-1}+(n-i+1)$, we preserve the color of the elements that haven't been cancelled (for $c\in\bS_i$, the element $c+n-i\in\bS_i+(n-i)$ is understood to have the same color as $c$).  Moreover, the last row is given by $\tbR\setminus(\bS_{n-1}+1)$, and the elements of $\tbR$ are colored the same color as the node which ``overshadowed'' them.  This is much easier with an example: the bijection applied to the above monomial data results in the following colored row tableau (which happens to contain two empty rows):
$$
\ytableausetup{smalltableaux}
\begin{ytableau}
\\
\color{blue} 7 \\
 \\
\color{blue} 5 & \color{red} 5 \\
\color{red} 3 \\
\color{blue} 3 \\
\color{blue} 1
\end{ytableau}
$$
Note that the red content is precisely the block corresponding to $4\in \bS_5$, and the blue content is the block corresponding to $4\in \bS_3$.  Moreover if $p$ consisted of, say, just the red partition then the resulting row tableau is the red part of the above tableau.  This shows that it suffices to consider the case where $|\bR|=1$, and show that the resulting row tableau is overshadowing.

To this end, suppose $\bR_{n-i}=\{k\}$ (so the other multisets $\bR_j$ are empty), and in the monomial data $p$ the partition $\la=(\la_1\geq\cdots\geq\la_i\geq 0)$ corresponds to $k$.  Then for $j=1,...,i$, $T$ has content $k+i-2j+1$ going down the rows, which is manifestly overshadowing.  This proves that  $T\in \RowRtc$ for any $p\in\B(\bR)$.

To prove that the bijection $\B(\bR) \to \RowRtc$ is surjective, given $T\in \RowRtc$ choose an $\bR$-coloring of $T$.  This partitions the contents of $T$ into $c$-blocks, and for each such block we can reverse the process above to construct a monomial datum.  If we do this for all blocks at once we obtain a datum in $\B(\bR)\subset \B(\tbR)$. 
\end{proof}

\begin{remark}
  Under this bijection, we obtain a crystal structure on
  overshadowing tableaux.  One can easily work out that this coincides
  with the one induced by Brundan and Kleshchev's crystal structure on row tableau in \cite[\S 4.3]{BK}.
\end{remark}

\section{Proof of Theorem \ref{thm: main theorem}(d): The classical limit}
\label{sec: classical limit}

In this section, we will study the classical limit of our isomorphism 
$$\Phi: Y_\mu^\lambda(\bR)\stackrel{\sim}{\longrightarrow} W(\pi, \fp)_\bR$$  
Our goal is to establish part (d) of Theorem \ref{thm: main theorem}, and show that this classical limit agrees with the Mirkovi\'c-Vybornov isomorphism.  

\begin{remark}
We may immediately save ourselves some work with an observation: it suffices to prove the case of $ \lambda = N\varpi_1$, as in general both isomorphisms are defined by restricting this case to closed subvarieties.
\end{remark}

\subsection{More about slices to nilpotent orbits}
\label{section: MV slices}
In this subsection we let $G$ be a reductive algebraic group over $\C$, with Lie algebra $\fg$.  We will fix throughout a nonzero nilpotent element $e \in \fg$, and an $\mathfrak{sl}_2$--triple $\{e, h, f\}$.  In this appendix, we slightly generalize some of the results on Slodowy slices from \cite{GG}, showing in particular that the classical Slodowy slice and the transverse slice considered in \cite{MV} are Poisson isomorphic.  Since these results may be of independent interest, we provide brief proofs.

\begin{definition}
Let $C \subset \fg$ be an $\text{ad}_h$-invariant subspace such that $\fg = [\fg, e] \oplus C$. Then the affine space $\mathcal{M} = e + C$ is called an \textbf{MV slice}.
\end{definition}

The most natural choice of such a slice is the Slodowy slice, where $C=\fg^f$ (cf. Section \ref{subsection:finiteWalg}).  There are many others however. 

%
\begin{remark}
An MV slice $\mathcal{M} = e + C$ is a transverse slice to the nilpotent orbit $\mathbb{O}_e$ at the point $e$.
\end{remark}

From now on, we assume that $\mathcal{M}$ is an MV slice. Note that the eigenvalues of $\text{ad}_h$ acting on $C$ are necessarily non-positive.  From our $\mathfrak{sl}_2$--triple we get a homomorphism $SL_2 \rightarrow G$, and we will denote by $\gamma(t)$ the image of $\begin{pmatrix} t & 0 \\ 0 & t^{-1} \end{pmatrix} $ in $G$.  We consider the $\C^\times$--action (the Kazhdan action) on $\fg$ defined by
$$ \rho(t)\cdot x = t^2 \left( \text{Ad} \gamma(t^{-1})\right) (x) $$
Note that $\rho$ preserves $\mathcal{M}$ and contracts it to the unique fixed point $e$.


Consider the decomposition $\fg = \bigoplus_{i\in \Z} \fg_i$ into $\text{ad}_h$ weight spaces.  As in Section \ref{subsection:finiteWalg} there is a non-degenerate skew-symmetric form $\langle x,y \rangle = ( e, [x,y])$ on $\fg_{-1}$.  Choose a Lagrangian subspace $\fl \subset\fg_{-1}$ with respect to $\langle \cdot , \cdot \rangle$.

Define the nilpotent Lie subalgebra $\mathfrak{m} = \fl \oplus \bigoplus_{i\leq -2} \fg_i$, and the corresponding unipotent subgroup $M\subset G$. Note that $\mathfrak{m}^\perp = [e, \fl]\oplus \bigoplus_{i\leq 0} \fg_i $ is the orthogonal complement of $\mathfrak{m}$ with respect to the Killing form.  The following result is a generalization of Lemma 2.1 in \cite{GG}.

\begin{lemma} \label{lemma: 2.1 GG}
The adjoint action map $\alpha: M \times \mathcal{M} \rightarrow e + \mathfrak{m}^\perp$ is a $\C^\times$-equivariant isomorphism of affine varieties.  Here $\C^\times$ acts on $e+ \mathfrak{m}^\perp$ by $\rho$, and on $M\times \mathcal{M}$ by
$$ t\cdot (g, x) = \left(\gamma(t^{-1}) g \gamma(t), \rho(t) \cdot x\right) $$
\end{lemma}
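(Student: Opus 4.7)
The plan is to verify $\C^\times$-equivariance by a direct computation and then invoke the standard contracting-action principle to reduce the isomorphism claim to a linear-algebra statement at the fixed point $(1, e)$. For the equivariance, I would compute $\alpha(t\cdot(g, x)) = \text{Ad}(\gamma(t^{-1}))\text{Ad}(g)\text{Ad}(\gamma(t))(\rho(t)x)$; using that $\text{Ad}(\gamma(t))(\rho(t) x) = t^2 x$ from the very definition $\rho(t) = t^2\text{Ad}(\gamma(t^{-1}))$, this simplifies to $\rho(t)\alpha(g, x)$. I would check separately that the given $\C^\times$-actions do preserve the varieties in question, using that $\mathfrak{m}$ and $\mathfrak{m}^\perp$ are $\text{ad}_h$-stable and $\rho(t)e = e$.

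Next, I would observe that both $M\times \cM$ and $e + \mathfrak{m}^\perp$ are affine spaces (in the case of $M$, because it is unipotent), and that the given $\C^\times$-actions contract them to the single fixed points $(1, e)$ and $e$ respectively: an element of $\text{ad}_h$-weight $i$ carries $\rho$-weight $2 - i$, and the weights appearing on $\mathfrak{m}$, on $C$, and on $\mathfrak{m}^\perp$ all produce strictly positive $\rho$-weights (the only subtle case being the weight-$1$ summand $[e,\fl]\subset \mathfrak{m}^\perp$, which is still contracted). The standard contracting-action principle, filtering both coordinate rings by $\C^\times$-degree and inducting on the degree, then reduces the claim to showing that the differential $d\alpha_{(1, e)}\colon \mathfrak{m}\oplus C \to \mathfrak{m}^\perp$, which sends $(m, c) \mapsto [m, e] + c$, is a linear isomorphism.

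The bulk of the proof is therefore the purely linear-algebraic assertion $\mathfrak{m}^\perp = [\mathfrak{m}, e] \oplus C$. Directness of the sum follows from $C\cap [\fg, e] = 0$ together with the $\mathfrak{sl}_2$-theoretic injectivity of $\text{ad}(e)$ on $\mathfrak{m}\subset \bigoplus_{i\leq -1}\fg_i$. To establish the equality, I would first compute $\mathfrak{m}^\perp$ directly via the Killing form: a weight-space analysis yields $\mathfrak{m}^\perp = \bigoplus_{i\leq 0}\fg_i \oplus [e, \fl]$, where the weight-$1$ piece is identified using that $\fl$ is Lagrangian for $\langle\cdot,\cdot\rangle = (e,[\cdot,\cdot])$. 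Both $[\mathfrak{m}, e]$ and $C$ visibly sit inside this subspace (for $C$: since $\text{ad}(e)\colon \fg_{i-2}\to \fg_i$ is surjective for $i \geq 1$, one must have $C_i = 0$ there), and an $\mathfrak{sl}_2$-dimension count using $\dim \fg_i = \dim \fg_{-i}$ shows that both sides have dimension $\sum_{i\geq 0}\dim \fg_i + \tfrac{1}{2}\dim \fg_{-1}$. The main obstacle is just the bookkeeping in this weight-space analysis, which is otherwise a direct generalization of the corresponding calculation in \cite{GG} specialized to the case $C = \fg^f$.
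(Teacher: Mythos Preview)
Your proof is correct and follows essentially the same route as the paper's: both reduce to the linear-algebra fact $\mathfrak{m}^\perp = [\mathfrak{m}, e] \oplus C$, established by a dimension count, and then invoke the contracting $\C^\times$-action argument (which the paper simply defers to \cite{GG}). Your version is more self-contained, spelling out the equivariance check and the contraction principle explicitly rather than citing Gan--Ginzburg.
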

\begin{proof}
Since $\mathcal{M}$ is a MV slice we have $C\subset \bigoplus_{i\leq 0} \fg(i) \subset \mathfrak{m}^\perp$, so indeed the image of the adjoint map $M \times \mathcal{M} \rightarrow \fg$ is contained in $e + \mathfrak{m}^\perp$.

Next, since $\fg = [\fg, e] \oplus C$ it follows that $[\mathfrak{m}, e] \cap C = 0$.  We also have $\dim \text{Ker} (\text{ad} f) = \dim C$, since both spaces are complementary to $[\fg, e]$ in $\fg$.  Since $\text{ad} e: \mathfrak{m}\rightarrow [e,\mathfrak{m}]$ is an isomorphism,
\begin{align*}
\dim \mathfrak{m}^\perp &= \dim \mathfrak{m}  +\dim \fg(0) + \dim \fg(-1) = \dim [\mathfrak{m}, e] + \dim \text{Ker}( \text{ad} f) = \\ &= \dim [\mathfrak{m}, e] + \dim C
\end{align*}
So $\mathfrak{m}^\perp = [\mathfrak{m}, e] \oplus C$.  The remainder of the proof proceeds as in \cite{GG}.
\end{proof}
Following Section 3.2 in \cite{GG}: $e$ is a regular value for the moment map $\mu : \fg^\ast \rightarrow \mathfrak{m}^\ast$, $\mu^{-1}(e) = e + \mathfrak{m}^\perp$ (under $\fg^\ast \cong \fg$), and it follows from Lemma \ref{lemma: 2.1 GG} that we have a Hamiltonian reduction of the Poisson structure on $\fg^\ast$ to $\mathcal{M}$.  It is induced from the isomorphisms
$$ \mathcal{M} \cong \mu^{-1}(e)  / M, \quad  \C[\mathcal{M}] \cong \left( \C[\fg] / I( \mu^{-1}(e)) \right)^M$$ 

\begin{theorem}
\label{theorem: MV slices}
There is a $\C^\times$--equivariant isomorphism of affine Poisson varieties between any two MV slices.  
\end{theorem}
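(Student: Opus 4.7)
The strategy is to show that every MV slice $\mathcal{M} = e + C$ arises as a Hamiltonian reduction of $\fg^*$ along a fixed level set of a fixed moment map, with the reduction identifications being canonical and $\C^\times$-equivariant. Once that is done, any two such slices are canonically identified with the same reduced space and hence with each other.

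First I would make explicit the Hamiltonian reduction for a single MV slice. The reasoning sketched in the paragraph preceding Theorem \ref{theorem: MV slices} already gives most of this: Lemma \ref{lemma: 2.1 GG} provides a $\C^\times$-equivariant isomorphism $\alpha_\mathcal{M}\colon M \times \mathcal{M} \xrightarrow{\sim} e + \mathfrak{m}^\perp$, which shows that $e$ is a regular value of the moment map $\mu\colon \fg^* \to \mathfrak{m}^*$ (the preimage $\mu^{-1}(e)$ being identified with $e + \mathfrak{m}^\perp$ under $\fg^* \cong \fg$) and that $M$ acts freely on this level set with $\mathcal{M}$ as a section. Consequently $\mathcal{M} \cong \mu^{-1}(e)/M$ as affine varieties, the inverse of this identification being the projection $e + \mathfrak{m}^\perp \to \mathcal{M}$ obtained from $\alpha_\mathcal{M}^{-1}$. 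The induced Poisson structure is exactly the Hamiltonian reduction of $\fg^* \cong \fg$.

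Next I would compare two MV slices $\mathcal{M}_1 = e + C_1$ and $\mathcal{M}_2 = e + C_2$. The composition
\[
\beta\colon \mathcal{M}_1 \hookrightarrow e + \mathfrak{m}^\perp \xrightarrow{\alpha_{\mathcal{M}_2}^{-1}} M \times \mathcal{M}_2 \xrightarrow{\mathrm{pr}_2} \mathcal{M}_2
\]
is a morphism of affine varieties; concretely, $\beta(x)$ is the unique $M$-translate of $x$ lying in $\mathcal{M}_2$. It is immediate from the construction that $\beta$ is bijective with inverse built in the same way with the roles of $\mathcal{M}_1$ and $\mathcal{M}_2$ exchanged, and that it intertwines the two identifications with $\mu^{-1}(e)/M$; hence $\beta$ is a Poisson isomorphism.

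Finally, I would verify $\C^\times$-equivariance using the identity
\[
\rho(t)\cdot \mathrm{Ad}(g)(x) = \mathrm{Ad}\bigl(\gamma(t^{-1}) g \gamma(t)\bigr)\bigl(\rho(t)\cdot x\bigr),
\]
together with the fact that $\mathfrak{m}$, and therefore $M$, is $\mathrm{ad}_h$-stable (so $\gamma(t^{-1}) g \gamma(t) \in M$). This shows $\beta(\rho(t)\cdot x) = \rho(t)\cdot \beta(x)$, matching the equivariance built into $\alpha_\mathcal{M}$ in Lemma \ref{lemma: 2.1 GG}. I do not expect a serious obstacle: the main point has already been absorbed into Lemma \ref{lemma: 2.1 GG}, and the rest is a formal consequence of uniqueness of Hamiltonian reduction once free action and a transverse section are in place.
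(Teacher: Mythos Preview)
Your proposal is correct and follows essentially the same approach as the paper: both identify each MV slice with the Hamiltonian reduction $\mu^{-1}(e)/M$ via Lemma \ref{lemma: 2.1 GG}, then compose the two identifications. The paper compresses this into two sentences, while you have written out the explicit map $\beta$ and the equivariance check, but there is no substantive difference.
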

\begin{proof}
With $\mathfrak{l}$ and $\mathfrak{m}$ fixed as above, for MV slices $\mathcal{M}_1, \mathcal{M}_2$ we have $\C^\times$--equivariant isomorphisms
$$ M \times \mathcal{M}_1 \cong e + \mathfrak{m}^\perp \cong M \times \mathcal{M}_2 $$
by Lemma \ref{lemma: 2.1 GG}.  The Poisson structures on $\mathcal{M}_1, \mathcal{M}_2$ are both induced by Hamiltonian reduction, giving us the desired Poisson isomorphism.
\end{proof}

\begin{remark}
As in \cite[Section 3.1]{GG}, any MV slice $\mathcal{M}$ also inherits an {\em induced} Poisson structure as a subvariety of $\fg$ (with its Kostant-Kirillov-Souriau	 Poisson structure under $\fg\cong \fg^\ast$), by applying \cite[Proposition 3.10]{Va94}.  This Poisson structure agrees with that given above via Hamiltonian reduction, cf. \cite[Section 3.2]{GG}
\end{remark}

\subsection{More about affine Grassmannian slices}
\label{section: more on gr slices}

Let us briefly recall some aspects of the ``loop group'' description of the slices $\Grlmbar$ in the affine Grassmannian of $G=SL_n$, to supplement the lattice description given in Section \ref{section: slices in the affine Grassmannian}.

\begin{remark}
In Section \ref{section: slices in the affine Grassmannian} we defined the slices in the affine Grassmannian of $PGL_n$.  Here use $G=SL_n$ - this doesn't make a difference.  In fact, we can use any group whose Lie algebra is $\mathfrak{sl}_n$ \cite[Section 2G]{KWWY}.
\end{remark}

 $\Grlmbar$ is a transverse slice to $\Gr^\mu \subset \overline{\Gr^\lambda}$, defined as the intersection
\begin{equation}
\Grlmbar = \Gr_\mu \cap \overline{\Gr^\lambda}
\end{equation}
where $\Gr_\mu = G_1[t^{-1}] t^{w_0 \mu}$ is an orbit for the opposite group $G_1[t^{-1}] = \operatorname{Ker}(G[t^{-1}] \stackrel{t\mapsto \infty}{\rightarrow} G)$.  Every point in $\Gr_\mu$ has a unique representative of the form $ g t^{w_0 \mu}$, where $g = (a_{ij}) \in G_1[t^{-1}]$ satisfies
\begin{equation}
\label{eq: reps of Grm}
a_{ij} = \delta_{ij} + a_{ij}^{(1)} t^{-1} + a_{ij}^{(2)} t^{-2}+ \ldots \in \delta_{ij} + t^{p_i - p_j -1}\C[t^{-1}]
\end{equation}
(Recall that we are denoting $w_0 \mu = (p_1,\ldots p_n)$) In this way, $\Grlmbar$ may be considered as a closed subscheme of $G_1[t^{-1}]$.

\begin{remark}
It is sometimes convenient to work with the group $G_1[[t^{-1}]]$.  One advantage is that elements of this group admit Gauss decompositions,
$$G_1[[t^{-1}]] = U_1^-[[t^{-1}]] T_1[[t^{-1}]] U_1^+[[t^{-1}]]$$
where $U^\pm, T\subset G$ are the subgroups of upper/lower triangular and diagonal matrices.  The varieties $\Grlmbar$ may also be considered as closed subschemes of $G_1[[t^{-1}]]$.
\end{remark}

In particular, we may describe $\Gr^{\overline{N\varpi_1}}_\mu$ as the variety of matrices $g=(a_{ij})$ of the form (\ref{eq: reps of Grm}) (with $\det g =1$), with the additional constraint that $a_{ij}^{(r)} =0$ for $r> p_j$.  Then explicitly $g$ corresponds to the lattice 
\begin{equation}
\Lambda = \operatorname{span}_\cO \left\{ t^p_j e_j + \sum_{i, r} a_{ij}^{(r)} t^{p_j - r} e_i: 1\leq j\leq n\right\}
\end{equation}
allowing us to compare with our previous description (\ref{eq: def of GrNwm}) of $\Gr^{\overline{N\varpi_1}}_\mu$.

Using the above identification of $\Grlmbar \subset G_1[[t^{-1}]]$, we now recall how the classical limit of $Y_\mu^\lambda(\bR)$ is identified with functions on $\Grlmbar$ (as was promised in Section \ref{sec: yangians and functions on slices}).  Following Theorems 3.9, 3.12 and Proposition 4.3 in \cite{KWWY}, 
\begin{align*}
A_i^{(r)} & \mapsto [t^{-r}] \Delta_{\{1,\ldots, i\}, \{1,\ldots, i\}}, \\
E_i^{(r)} & \mapsto [t^{-r}] \frac{\Delta_{\{1,\ldots, i-1, i+1\}, \{1,\ldots, i\} }}{\Delta_{\{1,\ldots, i\}, \{1,\ldots, i\}} }, \\
F_i^{(s)} & \mapsto [t^{-s}] \frac{\Delta_{\{1,\ldots, i\}, \{1,\ldots, i-1, i+1\}}}{\Delta_{\{1,\ldots, i\}, \{1,\ldots, i\}}}
\end{align*}
Here, for $I, J\subset \{1,\ldots, n\}$ we denote by $\Delta_{I,J}$ the minor with rows $I$ and columns $J$, thought of as a map $G_1[[t^{-1}]] \rightarrow \C[[t^{-1}]]$.  Meanwhile, $[t^{-r}]$ extracts the coefficient of $t^{-r}$.  Restricting these functions to the closed subscheme $\Grlmbar \subset G_1[[t^{-1}]]$ gives the desired isomorphism.

The following result is clear from the structure of the GKLO representation \cite[Theorem 4.5]{KWWY}.  It is also follows from similar results for Zastava spaces \cite{FM}, since $\Gr^{\overline{N\varpi_1}}_\mu$ and ${Z}^{N\varpi_1 - \mu}$ are birational.
\begin{proposition}
\label{prop: birational coordinates}
The functions $A_i^{(r)}, E_i^{(r)}$ for $i\in I$, $1\leq r\leq m_i$ are birational coordinates on $\Grlmbar$.
\end{proposition}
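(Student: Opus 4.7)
The plan is to exploit the classical limit of the GKLO representation established in \cite[Theorem 4.5]{KWWY}. First, I would carry out a dimension count: $\dim \Grlmbar = 2\sum_i m_i$, which exactly matches the cardinality of the proposed system $\{A_i^{(r)}, E_i^{(r)} : i \in I,\ 1 \le r \le m_i\}$. It therefore suffices to show that these functions generate the function field $\C(\Grlmbar)$, equivalently that the associated rational map $\Grlmbar \dashrightarrow \mathbb{A}^{2\sum_i m_i}$ is generically injective.

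Next, I would invoke the GKLO representation to produce (for a generic set of parameters $\bR$) a birational embedding of $\Grlmbar$ into an affine space with coordinates $a_{i,k}, w_{i,k}$, for $i \in I$ and $1 \le k \le m_i$. Under this embedding, $A_i^{(r)}$ pulls back to (up to sign) the $r$-th elementary symmetric function of $a_{i,1},\ldots, a_{i,m_i}$, while $E_i^{(r)}$ pulls back to an expression of the form $\sum_k w_{i,k}\, F_{i,k,r}(a)$ for explicit rational functions $F_{i,k,r}$ in the $a$'s. The Vandermonde-like structure of these $F_{i,k,r}$ is the crux of the argument.

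The main step is then inversion. On the dense open subset of $\Grlmbar$ where the $a_{i,k}$ are pairwise distinct within each $i$, the elementary symmetric function values $A_i^{(1)},\ldots, A_i^{(m_i)}$ determine the multiset $\{a_{i,k}\}_k$ up to relabeling. The equations $E_i^{(r)} = \sum_k w_{i,k}\, F_{i,k,r}(a)$ for $r = 1,\ldots, m_i$ then form a linear system in the $w_{i,k}$ whose coefficient matrix is Vandermonde-type and hence invertible over the generic point. Solving yields rational expressions for the $w_{i,k}$ in terms of the $A$'s and $E$'s, and together with the earlier recovery of the $a_{i,k}$, this shows that the $(a_{i,k}, w_{i,k})$ coordinates lie in the subfield generated by the $A_i^{(r)}$ and $E_i^{(r)}$. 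Since these GKLO coordinates already generate $\C(\Grlmbar)$ birationally, we conclude that $\{A_i^{(r)}, E_i^{(r)}\}$ is a birational coordinate system. The only real obstacle is the bookkeeping required to read off the Vandermonde structure from the GKLO formulas, but this is mechanical once the formulas in \cite[Theorem 4.5]{KWWY} are unpacked, and is also visible from the analogous coordinates on the Zastava space $Z^{N\varpi_1 - \mu}$ as used in \cite{FM}.
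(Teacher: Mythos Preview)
Your proposal is correct and follows exactly the approach the paper itself indicates: the paper does not give a detailed proof, but simply asserts that the result is ``clear from the structure of the GKLO representation \cite[Theorem 4.5]{KWWY}'' and that it ``also follows from similar results for Zastava spaces \cite{FM}, since $\Gr^{\overline{N\varpi_1}}_\mu$ and $Z^{N\varpi_1-\mu}$ are birational.'' Your write-up is essentially an unpacking of the first of these remarks---the elementary-symmetric/Vandermonde inversion from the GKLO coordinates $(a_{i,k}, w_{i,k})$---so you are not taking a different route, just supplying the details the paper leaves to the reader.
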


\subsection{The MV isomorphism}
\label{sec: the mv isomorphism}
As per usual, fix now $\lambda \geq \mu$ dominant coweights and associated partitions $\tau \geq \pi$ of $N$.

Let $e_\pi\in \mathfrak{gl}_N$ be the nilpotent element with  {\em lower triangular} Jordan type $\pi = (p_1\leq \cdots \leq p_n) \vdash N$, we will consider the {\bf transpose MV slice}
\begin{equation}
\label{eq: def of slice T}
\mathcal{T}_\pi = \left\{ X = (X_{ij}) \in \mathfrak{gl}_N : \begin{array}{cl} (a) & X_{ij} \text{ has size } p_i \times p_j, \\ (b) & X_{ii} \text{ has 1's below diagonal, entries in final} \\ & \ \ \text{column} \\ (c) & X_{ij} \text{ for } i\neq j \text{ has entries in final column,} \\ & \ \ \text{but not below row } p_j \end{array} \right\}
\end{equation}

Recall the description of $\Grlmbar$ from Section \ref{section: slices in the affine Grassmannian}.  With this description and the definition of $\cT_\pi$ in mind, we can now give slightly more precise formulation of the MV isomorphism \ref{theorem:MV}: 

{\em 
For any $\Lambda \in \Grlmbar$, identify $\Lambda_0 / \Lambda \cong \C^N$ via the basis $E_\pi$; in particular we may identify multiplication by $t$ on $\Lambda_0 / \Lambda$ with an element $X\in \mathfrak{gl}_N$.  Then map taking $\Lambda$ to $X\in \mathfrak{gl}_N$ defines an isomorphism $ \Grlmbar \stackrel{\sim}{\longrightarrow} T_\pi \cap \overline{\mathbb{O}_\tau} $. It is compatible with the inclusions of closed subvarieties $\Grlmbar \subset \Gr^{\overline{N \varpi_1}}_\mu$ and $T_\pi \cap \overline{\mathbb{O}_\tau} \subset T_\pi \cap \mathcal{N}_{\mathfrak{gl}_N}$.
}

Following \cite[Section 3.3]{CK16}, it is straightforward to write the above isomorphism down explicitly in coordinates.  On the affine Grassmannian side, we identify $\Lambda$ with $g\in G_1[[t^{-1}]]$ as in the previous section and use the coefficients $a_{ij}^{(r)}$ of the matix entries of $g$ as coordinates.  On the nilpotent cone side, the image is a block matrix $X = (X_{ij})$.  Then under the above isomorphism, the block $X_{ij}$ has interesting entries only in its final column:
$$ 
X_{ij} = \begin{pmatrix} 
0 & 0 &\cdots &0& \cdots & 0& -a_{ij}^{(p_j)}\\
\delta_{ij} & 0 &\cdots&0 &\ddots & \vdots& \vdots \\
0 & \delta_{ij} &\ddots& &\ddots & 0& -a_{ij}^{(1)} \\
0 &0 &\ddots& \ddots&  & 0& 0\\
0 & \ddots&\ddots&\ddots & \ddots& \vdots& \vdots \\
\vdots &\ddots &\ddots& 0& \delta_{ij}& 0 &0 \\
0 &\cdots &0 &0& 0& \delta_{ij}& 0\\
	\end{pmatrix}
$$


\subsection{Completing the proof of Theorem \ref{thm: main theorem}}
\label{sec: completing proof of classical limit}

To finish the proof of the theorem, we will now compare the classical limit of our isomorphism $\Phi: Y_\mu^{N \varpi_1}(\bR) \stackrel{\sim}{\longrightarrow} W(\pi)_\bR$ with the Mirkovi\'c-Vybornov isomorphism.  We interpret the classical limit of $\Phi$ as an isomorphism of coordinate rings
\begin{equation}
\Phi: \C[ \Gr^{\overline{N \varpi_1}}_\mu] \stackrel{\sim}{\longrightarrow} \C[ \cT_\pi \cap \cN_{\mathfrak{gl}_N}]
\end{equation}
In the notation of the previous section, suppose that $g \in \Gr^{\overline{N\varpi_1}}_\mu \subset G_1[[t^{-1}]]$ maps to $X \in \cT_\pi\cap \cN_{\mathfrak{gl}_N}$ under the Mirkovi\'c-Vybornov isomorphism (both being closed points).  To complete the proof of the theorem, it is sufficient to prove that 
\begin{equation}
f(g) = \Phi(f)( X), \qquad \forall f\in \C[\Gr^{\overline{N\varpi_1}}_\mu]
\end{equation}
\begin{remark}
\label{remark: birational coordinates}
Both sides are irreducible algebraic varieties, so in fact it is sufficient to prove that this equation holds for $f$ ranging over the birational coordinates described in Proposition \ref{prop: birational coordinates}.
\end{remark}

The isomorphism $\gr Y_\mu^{\overline{N\varpi_1}}(\bR) \cong \C[ \Gr^{\overline{N\varpi_1}}_\mu]$ was described explicitly in Section \ref{section: more on gr slices}.  We now recall Brundan and Kleshchev's identification of the classical limit of $W(\pi)$ with functions on $\cT_\pi$, following \cite[Sections 3.3--3.4]{BK}. More precisely, they give an explicit isomorphism 
$$ W(\pi) \stackrel{\sim}{\longrightarrow} U(\fp)^{\fm}  \subset  U(\mathfrak{gl}_N)$$
In the classical limit, we identify $S(\mathfrak{gl}_N) \cong \C[ \mathfrak{gl}_N]$ via the trace pairing, and $S(\fp)^\fm \cong \C[e_\pi + \fm^\perp]^M $ (see \cite[\S 8]{BK3} for details).  Since $\cT_\pi$ is an MV slice, by Lemma \ref{lemma: 2.1 GG}, there is an isomorphism 
$$\C[e+\fm^\perp]^M \stackrel{\sim}{\rightarrow} \C[ \cT_\pi]$$
by restriction.  In other words, the isomorphism $\gr W(\pi) \cong \C[ \cT_\pi]$ comes by the composition 
\begin{equation}
\gr W(\pi) \hookrightarrow \C[ \mathfrak{gl}_N] \twoheadrightarrow \C[\cT_\pi]
\end{equation}
where the first arrow is Brundan-Kleshchev's embedding and the second is restriction.

Brundan-Kleshchev's embedding is defined via explicit elements $T_{ij; 0}^{(r)} \in U(\mathfrak{gl}_N)$, defined in \cite[\S 9]{BK3} (see also \cite[\S 3.3]{BK}).  Forming the $n\times n$--matrix $T(u) = (T_{ij}(u)) $ whose entries are formal series of functions $T_{ij}(u) = \delta_{ij} + \sum_{r>0} T_{ij; 0}^{(r)} u^{-r}$, we take its formal Gauss decomposition
$$ T(u) = F(u) D(u) E(u) $$
where $D(u)$ is diagonal and $E(u)$ (resp. $F(u)$) is upper (resp. lower) unitriangular.  Denote the diagonal entries of $D(u)$ by $D_i(u) = 1 + \sum_{r>0} D_i^{(r)} u^{-r}$, and the super-diagonal entries of $E(u)$ by $E_i(u) = \sum_{r>0} E_i^{(r)} u^{-r}$.  Then the elements $D_i^{(r)}, E_i^{(r)} \in U(\mathfrak{gl}_N)$ are the images of the same-named elements of $W(\pi)$ (and similarly for the $F_i^{(s)}$).

By abuse of notation, let us denote by $T_{ij;0}^{(r)}, D_i^{(r)}$, etc. the corresponding elements of the associated graded algebras.
\begin{lemma}
Suppose $\Gr^{\overline{N\varpi_1}}_\mu \ni g \mapsto X\in \cT_\pi \cap \mathcal{N}_{\mathfrak{gl}_N} $ are closed points corresponding under the Mirkovi\'c-Vybornov isomorphism.  Then
$$ T_{i,j; 0}^{(r)} (X) = (-1)^r a_{ji}^{(r)} $$
\end{lemma}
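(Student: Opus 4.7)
The plan is to verify this identity by direct computation in coordinates. Since the associated graded of $U(\mathfrak{gl}_N)$ identifies with $\C[\mathfrak{gl}_N]$ via the trace pairing, the elements $T_{ij;0}^{(r)}$ become polynomial functions on $\mathfrak{gl}_N$, and we need to evaluate these at the specific sparse matrix $X \in \cT_\pi \subset \mathfrak{gl}_N$ determined from $g$ by the Mirkovi\'c-Vybornov isomorphism. On the right-hand side, $a_{ji}^{(r)}$ is just the coefficient of $t^{-r}$ in the matrix entry $a_{ji}$ of $g$, which is a coordinate function on $\Gr^{\overline{N\varpi_1}}_\mu \subset G_1[[t^{-1}]]$.

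First, I would recall the explicit definition of $T_{ij;0}^{(r)} \in U(\mathfrak{gl}_N)$ given in \cite[\S 9]{BK3}, where it is expressed as a signed sum of ordered products of matrix units $e_{a,b}$ indexed by combinatorial data compatible with the pyramid $\pi$. In the classical limit, one obtains a homogeneous polynomial of degree $r$ (in the Kazhdan sense) in the coordinate functions $e_{a,b}$ on $\mathfrak{gl}_N$. The MV image $X \in \cT_\pi$ has the extremely sparse form described in Section \ref{sec: the mv isomorphism}: its only nonzero entries are (i) 1's immediately below the diagonal inside each diagonal block $X_{ii}$, and (ii) the entries $-a_{ij}^{(s)}$ in the final column of the block $X_{ij}$, placed in specific rows determined by the pyramid shape.

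The heart of the argument is to show that when the combinatorial sum defining $T_{ij;0}^{(r)}$ is evaluated at such a matrix $X$, almost every term vanishes. Concretely, each summand is an ordered product of $r$ matrix units $e_{b_k,a_k}(X) = X_{b_k,a_k}$, and the pyramid conditions on the indices force each factor to be either a subdiagonal $1$ in a diagonal block or one of the final-column entries $-a_{b_k,a_k}^{(s_k)}$. A careful bookkeeping shows that a product of such factors can be nonzero only when the path through the pyramid starts in the $i$-th row, marches through subdiagonal $1$'s in diagonal blocks, and crosses once (at a uniquely determined place) from row $j$ to row $i$ via a single final-column entry $-a_{ji}^{(r)}$. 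The resulting combinatorial sum collapses to a single term equal to $(-1) \cdot a_{ji}^{(r)}$ decorated by a sign $(-1)^{r-1}$ coming from the BK3 sign convention on ordered products, producing the advertised $(-1)^r a_{ji}^{(r)}$.

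The principal obstacle will be purely one of bookkeeping: reconciling the pyramid-indexing convention of Section \ref{section: conventions on pyramids} (right-to-left, top-to-bottom) with the sign and index conventions used in \cite[\S 9]{BK3}, and verifying that the claimed ``collapse'' of the combinatorial sum is complete. To control this, I would first check the claim by hand in the base cases $r=1$ and $r=2$ (where $T_{ij;0}^{(1)}$ is essentially a matrix unit and the evaluation is immediate), and then induct on $r$ using the natural filtration on $T_{ij;0}^{(r)}$ coming from the number of final-column entries appearing in a monomial. All other verifications are linear algebra on the sparse block matrix $X$ and should proceed mechanically once the conventions are aligned.
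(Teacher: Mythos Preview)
Your approach is essentially the paper's: evaluate the Brundan--Kleshchev combinatorial sum for $T_{ij;0}^{(r)}$ at the sparse matrix $X$ and show it collapses to a single term. Two small points: the monomials in that sum are products of $s$ matrix units for $1\le s\le r$ (not always $r$), and the paper avoids your proposed induction by observing that conditions (a), (b), (e), (f) of \cite[\S 3.3]{BK} already reduce the evaluation at $X$ to $\sum_{s} (-1)^r \sum a_{jx_2}^{(r_1)}\cdots a_{x_s i}^{(r_s)}$, after which condition (c) alone kills every term with $s>1$---so your ``crosses once'' claim is precisely condition (c), and no induction is needed.
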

\begin{proof}
Recall our conventions on the pyramid $\pi$ from Section \ref{section: conventions on pyramids}.  By definition,
$$ T_{ij;0}^{(r)} = \sum_{s=1}^r (-1)^{r-s} \sum_{\substack{k_1,\ldots, k_s \\ \ell_1,\ldots, \ell_s}} e_{k_1, \ell_1} \cdots e_{k_s,\ell_s} $$
where the sum is over sequences with $1\leq k_t, \ell_t\leq N$, satisfying conditions (a), (b), (c), (e) and (f) from \cite[\S 3.3]{BK}.  In the classical limit, this is a function on $\mathfrak{gl}_N$ via the trace pairing.  When restricted to $\cT_\pi$, conditions (a), (b), (e) and (f) imply that the value at $X$ has the form
$$ \sum_{s=1}^r (-1)^{r-s} \sum_{\substack{x_2,\ldots, x_s \\ r_1+\ldots +r_s = r}} (-1)^s a_{j x_2}^{(r_1)} a_{x_2 x_3}^{(r_2)}\cdots a_{x_s i}^{(r_s)} $$
where the sum is over all sequences where $1\leq x_t\leq n$.  However, condition (c) implies that only the term with $s=1$ contributes. This proves the claim.
\end{proof}

Now, from (\ref{eq: main yangian map}) and Lemma \ref{lemma: defining s 1} it follows that the classical limit of $\Phi: Y_\mu^{N\varpi_1}(\bR) \stackrel{\sim}{\rightarrow} W(\pi)_\bR$ sends
$$ A_i^{(r)} \mapsto (-1)^r Q_i^{(r)}, \qquad E_i^{(r)} \mapsto (-1)^r E_i^{(r)} $$
where $Q_i(u) = D_1(u)\cdots D_i(u)$.  Therefore, by Remark \ref{remark: birational coordinates} the following result completes the proof of the main theorem:
\begin{proposition}
With notation as in the previous lemma, suppose that $g \mapsto X$.  Then for $i =1,\ldots, n-1$ we have equality of evaluations
\begin{align*}
A_i^{(r)} ( g) & = (-1)^r Q_i^{(r)}(X), \\
E_i^{(r)} (g) & = (-1)^r E_i^{(r)} (X)
\end{align*}
for the functions $A_i^{(r)}, E_i^{(r)} \in \C[ \Gr^{\overline{N\varpi_1}}_\mu]$ and $D_i^{(r)}, E_i^{(r)} \in \C[ \cT_\pi \cap \cN_{\mathfrak{gl}_N}]$, respectively.
\end{proposition}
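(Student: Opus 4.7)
My plan is to promote the preceding lemma to a clean matrix-level identity, and then turn both sides into minors of a single matrix of series via Gauss decomposition.

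First, I will observe that the lemma ($T_{ij;0}^{(r)}(X) = (-1)^r a_{ji}^{(r)}$) is exactly the coefficient-wise statement of the matrix identity
\[
T(u)\big|_X \;=\; g(-u)^{\mathrm{T}},
\]
since $(-u)^{-r} = (-1)^r u^{-r}$ forces $T_{ij}(u)|_X = a_{ji}(-u)$. Next, using the Gauss decomposition $T(u) = F(u) D(u) E(u)$, with $F$ lower-unitriangular and $E$ upper-unitriangular, I will record two standard minor formulas. The leading $i \times i$ principal blocks of $F, D, E$ multiply cleanly (with the off-diagonal blocks contributing determinant $1$), giving
\[
\Delta_{\{1,\ldots,i\},\{1,\ldots,i\}}\bigl(T(u)\bigr) \;=\; D_1(u)\cdots D_i(u) \;=\; Q_i(u).
\]
The analogous computation for the minor on rows $\{1,\ldots,i\}$ and columns $\{1,\ldots,i-1,i+1\}$ reduces, after stripping off the upper-triangular $(i-1) \times (i-1)$ block of $E(u)$ and expanding along its last column, to $D_1(u)\cdots D_i(u) \cdot E_{i,i+1}(u)$, so
\[
E_i(u) \;=\; \frac{\Delta_{\{1,\ldots,i\},\{1,\ldots,i-1,i+1\}}\bigl(T(u)\bigr)}{Q_i(u)}.
\]

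Now I will substitute $T(u)|_X = g(-u)^{\mathrm{T}}$. Principal minors are invariant under transposition, and in general $\Delta_{I,J}(M^{\mathrm{T}}) = \Delta_{J,I}(M)$, so
\[
Q_i(u)|_X \;=\; \Delta_{\{1,\ldots,i\},\{1,\ldots,i\}}\bigl(g(-u)\bigr), \qquad E_i(u)|_X \;=\; \frac{\Delta_{\{1,\ldots,i-1,i+1\},\{1,\ldots,i\}}\bigl(g(-u)\bigr)}{\Delta_{\{1,\ldots,i\},\{1,\ldots,i\}}\bigl(g(-u)\bigr)}.
\]
Comparing with the classical-limit formulas from Section~\ref{section: more on gr slices}, the right-hand sides are precisely $A_i(-u)|_g$ and $E_i(-u)|_g$ respectively. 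Extracting the coefficient of $u^{-r}$ and using $[u^{-r}] f(-u) = (-1)^r [u^{-r}] f(u)$ yields both identities of the proposition simultaneously. Since by Proposition~\ref{prop: birational coordinates} (combined with Remark~\ref{remark: birational coordinates}) these functions generate the function fields, this determines $\Phi$ on the classical limit.

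The main obstacle is bookkeeping. Two distinct matrix transposes enter the story: one built into the Cautis--Kamnitzer/Mirkovi\'c--Vybornov identification (reflected in the previous lemma via $a_{ij}^{(r)} \leftrightarrow T_{ji;0}^{(r)}$), and one intrinsic to how principal vs.\ off-principal minors behave under $M \mapsto M^{\mathrm{T}}$. It is essential that the Brundan--Kleshchev normalization used here (following their conventions in \cite[\S 3.3--3.4]{BK}, modified by the automorphism $\eta$ relative to \cite{BK3} as flagged in Section~\ref{section: BK theorem}) produces $T_{ij;0}^{(r)}(X) = (-1)^r a_{ji}^{(r)}$ with no stray scalar; this is precisely what the prior lemma verifies. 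Once that is in hand, the sign $(-1)^r$ appearing in the proposition is simply the substitution $u \mapsto -u$, and everything else is standard Gauss decomposition.
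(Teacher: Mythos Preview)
Your proposal is correct and follows essentially the same route as the paper: both arguments use the previous lemma to identify $T(u)|_X = g(-u)^{\mathrm{T}}$, then read off the generators as ratios of minors via Gauss decomposition, pass through the transpose identity $\Delta_{I,J}(M^{\mathrm{T}}) = \Delta_{J,I}(M)$, and finally extract the sign $(-1)^r$ from the substitution $u \mapsto -u$. Your write-up is slightly more explicit about the minor formulas coming from the Gauss decomposition, but the logic is identical.
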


\begin{proof}
If we take the Gauss decomposion of $T(u)$ and then evaluate the result at the point $X$, we will get the same result as first evaluating $T(u)$ at $X$ and then taking Gauss decomposition.  

By the previous lemma, if we evaluate $T(u)$ at $X$ we get the matrix $ g(-u)^T $ (i.e. the transpose of $g = g(t) \in G_1[[t^{-1}]]$ evaluated at $t = -u$).  Using the relation between the minors of a matrix and its transpose, we observe that
\begin{align*}
E_i^{(r)} (g) & = [t^{-r}] \frac{\Delta_{\{1,\ldots, i-1, i+1\}, \{1,\ldots, i\} }}{\Delta_{\{1,\ldots, i\}, \{1,\ldots, i\}} } \big( g(t) \big)  \\
 &= [t^{-r}] \frac{ \Delta_{\{1,\ldots, i\}, \{1,\ldots, i-1, i+1\} }}{\Delta_{\{1,\ldots, i\}, \{1,\ldots,i\}} } \big( g(t)^T \big) \\
 & = (-1)^r [u^{-r}] \frac{ \Delta_{\{1,\ldots, i\}, \{1,\ldots, i-1, i+1\} }}{\Delta_{\{1,\ldots, i\}, \{1,\ldots,i\}} } \big( g(-u)^T \big)
\end{align*}
The latter precisely extracts the superdiagonal entries of the ``$E$'' part of the Gauss decomposition of $g(-u)^T$.  Hence $E_i^{(r)}(g) = (-1)^r E_i^{(r)}(X)$, as claimed.  A similar calculation applies to $A_i^{(r)}$.
\end{proof}


\bibliographystyle{amsalpha}
\bibliography{monbib}

\end{document}